\newtheorem{theorem}{Theorem}[section]
\theoremstyle{plain}
\newtheorem{corollary}[theorem]{Corollary}
\newtheorem{definition}[theorem]{Definition}
\newtheorem{lemma}[theorem]{Lemma}
\newtheorem{proposition}[theorem]{Proposition}
\newtheorem*{propositionA1}{Proposition A.1}
\newtheorem*{propositionA2}{Proposition A.2}
\theoremstyle{definition}
\newtheorem{example}[theorem]{Example}
\newtheorem{remark}[theorem]{Remark}
\newtheorem{claim}[theorem]{Claim}
\newtheorem*{remarkA.1}{Remark A.1}
\numberwithin{equation}{section}
\begin{document}
\title[Tauberians for wavelet and non-wavelet transforms]{Multidimensional Tauberian
theorems for wavelet and non-wavelet transforms}
\author[S. Pilipovi\'{c}]{Stevan Pilipovi\'{c}}
\address{Department of Mathematics and Informatics\\ University of Novi Sad\\ Trg Dositeja Obradovi\'ca 4\\ 21000 Novi Sad \\ Serbia }
\email{pilipovic@dmi.uns.ac.rs}
\author[J. Vindas]{Jasson Vindas}
\address{Department of Mathematics, Ghent University, Krijgslaan 281 Gebouw S22, B 9000 Gent, Belgium}
\email{jvindas@cage.Ugent.be}
\thanks{The work of S. Pilipovi\'{c} is supported by the Serbian Ministry of Science}
\thanks{J. Vindas gratefully acknowledges support by a Postdoctoral Fellowship of the Research
Foundation--Flanders (FWO, Belgium)}

\subjclass[2000]{Primary 40E05, 41A27, 42C40, 46F12. Secondary  26A12, 26A16, 26B35, 41A60, 41A65, 46F05, 46F10} \keywords{Wavelet transform; $\phi-$transform; regularizing transform; Abelian theorems; Tauberian theorems; vector-valued distributions; inverse
theorems; quasiasymptotics; weak-asymptotics; scaling exponents; slowly varying functions; regularity theory; H\"{o}lder continuity; Riemann's non-differentiable function}

\begin{abstract}
We study several Tauberian properties of regularizing transforms of tempered distributions with values in Banach spaces, that
is, transforms of the form $M^{\mathbf{f}}_{\varphi}(x,y)=(\mathbf{f}\ast\varphi_{y})(x)$, where the kernel $\varphi$ is a test
function and $\varphi_{y}(\cdot)=y^{-n}\varphi(\cdot/y)$. If the zeroth moment of $\varphi$ vanishes,
it is a wavelet type transform; otherwise, we say it is a non-wavelet type transform.

The first aim of this work is to show that the scaling (weak) asymptotic properties of distributions are \emph{completely} 
determined by boundary 
asymptotics of the regularizing transform plus natural 
Tauberian hypotheses. Our second goal is to characterize the spaces of Banach space-valued tempered distributions in terms of the transform $M^{\mathbf{f}}_{\varphi}(x,y)$. We investigate conditions which ensure that a distribution that a priori takes values in locally convex space actually takes values in a narrower Banach space. Special attention is paid to find the \emph{optimal} class of kernels $\varphi$ for which these Tauberian results hold. 

We give various applications of our Tauberian theory in the pointwise and (micro-)local regularity analysis of Banach space-valued 
distributions, and develop a number of techniques which are specially useful when applied to scalar-valued functions and distributions. Among such applications, we obtain the full weak-asymptotic series 
expansion of the family of Riemann-type distributions $R_{\beta}(x)=\sum_{n=1}^{\infty}e^{i\pi xn^{2}}/n^{2\beta}$, $\beta\in\mathbb{C}$, at every rational point.
We also apply the results to regularity theory within generalized function algebras,
to the stabilization of solutions for a class of Cauchy problems, and to Tauberian theorems for the Laplace transform; in addition, we find a necessary and sufficient condition for the existence of $f(t_0,\xi)\in\mathcal{S}'({\mathbb R}^n_\xi),$ where $f(t,\xi)\in\mathcal{S}'(\mathbb R^n_t\times\mathbb R^n_\xi).$
\end{abstract}
\maketitle

\newpage

\tableofcontents

\newpage

\section{Introduction}
\label{wnwi}

In this article we investigate several Tauberian aspects of integral transforms arising from regularizations of distributions. The seminal work of Drozhzhinov and Zavialov \cite{drozhzhinov-z2,drozhzhinov-z3,drozhzhinov-z5} has been used as the starting point for our analysis.  We shall connect our work with those of Meyer, Jaffard, Estrada, Holschneider and collaborators. This will be done via the fine pointwise analysis of vector-valued functions and distributions.

Fix $\varphi\in\mathcal{S}(\mathbb{R}^{n})$ and set
$\varphi_{y}(\cdot)=y^{-n}\varphi(\cdot/y)$. To a
tempered distribution $f$, we associate the \emph{regularizing transform},
given by
\begin{equation}
\label{wnwieq1}
M^{f}_{\varphi}(x,y):=(f\ast\varphi_{y})(x)\ , \ \ (x,y)\in\mathbb{R}^{n}\times\mathbb{R}_{+}.
\end{equation}
Such a transform is extremely important and useful in mathematical analysis. If the integral of $\varphi$ vanishes, one obtains the widely studied wavelet transform. In contrary case, the transform is rather obtained through an old procedure in analysis, namely, convolution with an approximation of the unity; we therefore choose to make a distinction and call it the non-wavelet transform. 

The aims of this article are the following:
\begin{enumerate}
\item [\textbf{1.}]To obtain a complete characterization of scaling (weak) asymptotic properties of distributions in terms of the transform (\ref{wnwieq1}).  
\item [\textbf{2.}] To show that (\ref{wnwieq1}) intrinsically characterizes the spaces of tempered distributions with values in arbitrary Banach spaces.
\item [\textbf{3.}] To identify the \emph{optimal} class of kernels $\varphi$ suitable for the \emph{Tauberian} analysis of the two problems stated above.
\item [\textbf{4.}] To develop a number of techniques which can be used as standard devices for pointwise and (micro-)local analysis of functions and distributions.
\end{enumerate}

The study and determination of the local behavior of functions (or generalized functions) is a fundamental necessity in almost any area of mathematical analysis and its applications. In the case of Schwartz distributions, the problem is not easy to handle: Distributions are not pointwisely defined objects, so, how can one study their behavior at individual points? There are various different views of the problem. One approach consists in fixing a global space of functions, typically a H\"{o}lder or Besov type space, and measuring the \emph{local regularity} with respect to it: a distribution is said to be regular at a point if it coincides near the point with an element of the global space, see e.g., \cite{holschneider94,moritoh-yamada,triebel}. 

In several contexts, such as propagation of singularities of PDE \cite{bony1,bony2}, image or signal processing \cite{adelson}, or multifractal analysis \cite{jaffard97a,jaffard97b,jaffard2006a}, one is most interested in finer \emph{pointwise} measurements that allow one to distinguish special features of a distribution in an irregular background. Such a pointwise behavior may drastically and suddenly change from point to point, which makes the \emph{local} regularity approach sometimes inadequate for these purposes. Perhaps a quite representative example of a function exhibiting this phenomenon is Riemann's ``non-differentiable'' function,  
\begin{equation}
\label{wnwieqR}
\sum_{n=1}^{\infty}\frac{\sin(\pi n^{2}t)}{n^{2}},
\end{equation}
which have attracted for more than a century the attention of many researchers \cite{hardy1916,hardy-littlewood1914,gerver,duistermaat,holschneider-t,jaffard96,jaffard-m}. Its behavior at a point deeply depends on Diophantine approximation properties of the point, and that makes it change radically from point to point \cite{jaffard96}. Another instance is provided by the related family of Riemann type distributions 
\begin{equation}
\label{wnwieq2} R_{\beta}(t)=\sum_{n=1}^{\infty}\frac{e^{i\pi n^{2}t}}{n^{2\beta}}, \ \ \ \beta\in\mathbb{C},
\end{equation}
whose pointwise properties at rational points will be throughly investigated in this article. 

Jaffard and Meyer \cite{jaffard-m} showed that a better understanding of (\ref{wnwieqR}), and other functions, can be achieved via the transform (\ref{wnwieq1}) and a careful analysis of the fine scaling and oscillating properties of distributions. The key pointwise scaling notion involved in their analysis is that of $2$-microlocal spaces, introduced by Bony \cite{bony1,bony2} in the context of PDE and subsequently studied by many authors \cite{jaff,jaffard2006,jaffard-m,kempka,meyer,moritoh-yamada,seuret-vehel}.

Back in the early 1970's, Zavialov introduced a natural notion of measuring for scaling asymptotic properties of distribution, which is actually closely related to the 2-microlocal spaces. His notion was originated in the setting of quantum field theory \cite{vladimirov-z1,vladimirov-z2,zavialov} and it has been extensively investigated by him, Drozhzhinov, and Vladimirov, see e.g., \cite{drozhzhinov-z1,drozhzhinov-z3,drozhzhinov-z4,drozhzhinov-z6,vladimirov-d-z1}. Meyer rediscovered in \cite{meyer} a particular case of this scaling concept and used it to define the pointwise \emph{weak scaling exponent}.
The study of Zavialov scaling asymptotics is the core of problem \textbf{1} stated above.

In this article we will employ the name \emph{weak-asymptotics} for Zavialov's notion, though we should mention that the same concept is more often known in the literature as \emph{quasiasymptotics}. The idea of the weak-asymptotic behavior is to look for scaling self-similarity properties of a distribution $f\in\mathcal{S}'(\mathbb{R}^{n})$ at either small or large scale, so one is interested in asymptotic representations of the form $f(ht)\sim c(h)g(t)$, in the distributional sense, that is, holding after evaluation at test functions $\rho\in \mathcal{S}(\mathbb{R}^{n})$,
\begin{equation}
\label{wnwieqwa1}
\left\langle f(ht),\rho(t)\right\rangle\sim c(h) \left\langle g(t),\rho(t)\right\rangle.
\end{equation}
One quickly realizes \cite{estrada-kanwal2,pilipovic-stankovic-takaci,vladimirov-d-z1} that the comparison function $c(h)$ must be asymptotically self-similar, i.e., a Karamata regularly varying function \cite{bingham,seneta}. Familiar functions such as  $h^{\alpha}$, $h^{\alpha}\left|\log h\right|^{\beta},$ $ h^{\alpha}\left|\log\left|\log h\right|\right|^{\beta}$, ..., are regularly varying. Similarly, one can define $f(ht)=O(c(h))$ in the weak sense through
\begin{equation}
\label{wnwieqwa2}
\left\langle f(ht),\rho(t)\right\rangle=O (c(h)).
\end{equation}
By translating and looking at small scales, both notions provide a natural and qualitative measure of the \emph{pointwise regularity} of distributions. The connection between weak-asymptotics and 2-microlocal spaces is as follows. It was shown by Meyer in \cite{meyer}. A tempered distribution $f$ belongs (locally) to the 2-microlocal space \cite{meyer} $C^{\alpha,s}_{x_{0}}$ for some $s$ if and only if it satisfies (\ref{wnwieqwa2}) with $c(h)=h^{\alpha}$, as $h\to0^{+}$, for all test function in the Lizorkin \cite{holschneider,lizorkin,samko} space $\mathcal{S}_{0}(\mathbb{R}^{n})\subseteq\mathcal{S}(\mathbb{R}^{n})$, i.e., for all those test functions with all vanishing moments. Therefore, we may say that the weak-asymptotics have a microlocal character.

The weak-asymptotics admit a precise characterization in terms of (\ref{wnwieq1}). Our aim \textbf{1} is to establish such a characterization. For a suitable kernel $\varphi$, we shall show that (\ref{wnwieqwa2}) holds (up to a possible polynomial correction) for all $\rho\in\mathcal{S}(\mathbb{R}^{n})$ if and only if there is $k$ such that  
\begin{equation}
\label{wnwieq3} \left|M^{f}_\varphi(hx,hy)\right|\leq y^{-k}O(c(h)), \ \ \ \mbox{uniformly for } \left|x\right|^{2}+y^{2}=1.
\end{equation}
On the other hand, the weak-asymptotic behavior $f(ht)\sim c(h)g(t)$ holds if and only if (\ref{wnwieq1}) has a related angular asymptotic behavior plus the Tauberian estimate (\ref{wnwieq3}). Drozhzhinov and Zavialov made quite substantial progress toward the understanding of such a characterization \cite{drozhzhinov-z2,drozhzhinov-z3}. The problem was considered by Meyer as well \cite{meyer}. We will revisit the problem and obtain optimal results. 

Naturally, we cannot expect the characterization to hold for any kernel $\varphi$. Thus, we have partly arrived to the aim \textbf{3}: The determination of the biggest class of kernels that makes possible such a result. Drozhzhinov and Zavialov proved that the desired characterization is valid if $\hat{\varphi}$, the Fourier transform of $\varphi$, satisfies a non-degenerateness requirement; specifically, if it has a Taylor polynomial at the origin that is non-degenerate, in the sense that such a Taylor polynomial does not identically vanish on any ray through the origin. We will identify the biggest class of kernels associated to this Tauberian problem by finding a more general condition of non-degenerateness. It turns out that the structure of the Taylor polynomials does not play any role in our condition.  

In Wiener Tauberian theory \cite{korevaar1965,wiener} and its extensions \cite{feichtinger-s,korevaarbook,pilipovic-stankovicWTD} the Tauberian kernels are those whose Fourier transforms do not vanish at any point. In our theory the Tauberian kernels will be those $\varphi$ such that $\hat{\varphi}$ does not identically vanish on any ray through the origin. This is precisely our notion of non-degenerateness, which fully answers the problem \textbf{3}.

We now explain the problem \textbf{2} stated above. We claim that the mere knowledge of the transform (\ref{wnwieq1}) along with its growth properties suffice to conclude whether a vector-valued distribution takes values in a Banach space (up to some correction term). More precisely, suppose that the vector-valued tempered distribution $\mathbf{f}$ takes a priori values in a ``broad'' locally convex space which contains as a continuously embedded subspace the narrower Banach space $E$, and that $M^{\mathbf{f}}_\varphi(x,y)\in E$, for almost every value of $(x,y)$. 
If it is a priori known that $\mathbf{f}$ takes values in $E$, then one can directly verify that it satisfies the estimate
\begin{equation}
\label{wnwieq4} 
\left\|M^{\mathbf{f}}_{\varphi}(x,y)\right\|_{E}\leq
C \frac{(1+y)^{k}\left(1+\left|x\right|\right)^{l}}{y^{k}},
\end{equation}
for some $k$, $l$, and $C$. We call (\ref{wnwieq4}) a \emph{class estimate}. The problem of interest is the converse one: Up to what extend does the class estimate (\ref{wnwieq4}) allow one to conclude that $\mathbf{f}$ actually takes values in $E$? 

The aim \textbf{2} is to address this question. We shall show that if $\mathbf{f}$ satisfies (\ref{wnwieq4}), where $\varphi$ is non-degenerate in our sense, then $\mathbf{f}$ takes values in $E$ up to  vector-valued entire functions whose Fourier transforms have compact supports that are totally controlled by $\varphi$.

We point out that this problem was first raised and studied by Drozhzhinov and Zavialov \cite{drozhzhinov-z2,drozhzhinov-z3}. Their results are robust and they demonstrated their usefulness with several very interesting applications; for example, they discovered a general abstract class of one-dimensional Besov type spaces in \cite{drozhzhinov-z2}, and established in \cite{drozhzhinov-z5} useful norm estimates in various Banach spaces for solutions to the Schr\"{o}dinger equation. However, their theory excludes a wide number of kernels that are of great interest in harmonic analysis. For instance, their theory does not cover wavelets with all vanishing moments, $\varphi \in\mathcal{S}_{0}(\mathbb{R}^{n})$. Observe that the wavelets that are involved in (continuous) Littlewood-Paley decompositions \cite{hormander} are of this kind. Our results will incorporate those (and many other) important types of wavelets. 

The two type of Tauberian results described above are powerful tools, especially when they are combined together. It is important to emphasize that they provide a new perspective in the study of classical pointwise and local regularity properties of ordinary \emph{functions} as well. They even supply techniques to study global properties, since, as we will show, global regularity may be explicitly transfer into weak pointwise properties of vector-valued distributions. 

Our aim \textbf{4} is to illustrate these facts with several concrete applications. Such applications will be detailed in the next subsection. 
Perhaps the most representative of them  concerns the pointwise analysis of the Riemann type distributions (\ref{wnwieq2}). We show that for \emph{any} $\beta$  the distribution (\ref{wnwieq2}) admits a full weak-asymptotic series at \emph{every} rational point. Interestingly, the pointwise behavior of (\ref{wnwieq2}) is intimately related to the analytic continuation properties of the generalized zeta-type function
\begin{equation*}
\zeta_{r}(z):=\sum_{n=1}^{\infty}\frac{e^{i\pi r n^{2}}}{n^{z}}, \ \ \ \Re e\:z>1,
\end{equation*} 
provided in this article for $r\in\mathbb{Q}$.

\subsection{Outline of Contents} 
The paper is organized as follows.

Section \ref{wnwn} is of preliminary character, we recall there basic facts about Banach space-valued (Schwartz and Lizorkin \cite{samko}) distribution spaces and the notion of weak-asymptotics. Several special cases of weak-asymptotics will be introduced as examples in order to show how such a kind of asymptotic behavior is involved in familiar notions from distribution theory. Those particular instances are: \L ojasiewicz point values \cite{lojasiewicz,lojasiewicz2} and jump behavior of distributions \cite{estrada-vindas,vindas-estrada1,vindas-estrada4}, the Estrada-Kanwal moment asymptotic expansion \cite{estrada-kanwal90,estrada-kanwal2}, and Meyer's weak scaling exponents \cite{meyer}.

We discuss in Section \ref{wnwtd} properties of the regularizing transform and extend the wavelet analysis given in
\cite{holschneider} to Banach space-valued distributions. We first treat the basic properties that are common to both the wavelet and non-wavelet transforms. After a normalization, we will follow \cite{estrada-vindasI,vindas-estrada5} and use the terminology \emph{$\phi$-transform} for the non-wavelet case. We introduce here the crucial concept of \emph{non-degenerate} test functions, which is wider than that introduced by Drozhzhinov and Zavialov in \cite{drozhzhinov-z4} and called here \emph{strong non-degenerateness}. They will be indeed the Tauberian kernels of our theory. We mention that the same class of test functions has been already used by other authors in other contexts, see e.g., \cite{hyton-lutz}. 

The extension of the scalar distribution
wavelet analysis from \cite{holschneider} to
Banach space-valued Lizorkin distributions will be accomplished in Subsection \ref{waE} with the aid of the nuclearity of the Schwartz spaces \cite{treves}. The wavelet desingularization formula and the characterization of bounded sets of Banach space-valued Lizorkin distributions will prepare the ground for the vector-valued theory given in the next four sections.

The Abelian results from Section \ref{wnwa} are
essentially due to Drozhzhinov and Zavialov
\cite{drozhzhinov-z2,drozhzhinov-z3}, but we shall refine their results
by adding new information through uniformity conditions occurring
in the angular asymptotic behavior of the transform (\ref{wnwieq1}).

Section \ref{wnwtc} is devoted to the Tauberian characterization of weak-asymptotics in the space   $\mathcal S_0'(\mathbb R^n,E)$, i.e., the space of Lizorkin distributions with values in a Banach space $E$ (cf. Subsection \ref{wnwdsp}). The key condition in our theory will be
an $E$-norm version of the Tauberian estimate (\ref{wnwieq3}) on the half-sphere of $\mathbb{R}^{n}\times \mathbb{R}_{+}$. We have extended all of the one-dimensional results from \cite{vindas-pilipovic-rakic}. We highlight that the technique employed here, comparing it with that from \cite{vindas-pilipovic-rakic}, has been considerably simplified and refined.

The main results of the article are obtained in Section \ref{wnwtt} and Section \ref{wnwce}. In Section \ref{wnwtt} we undertake the Tauberian study of weak-asymptotics in $\mathcal{S}'(\mathbb{R}^{n},E)$, the space of tempered distributions with values in a Banach space $E$. In Subsection \ref{wnwtphi} we characterize the weak-asymptotics in terms of the $\phi-$transform, while in Subsection \ref{wnwwaw} we show that such asymptotic properties can be also fully characterized (up to a polynomial correction) via the wavelet transform. The estimate (\ref{wnwieq3}) will play again the role of the Tauberian hypothesis. An important achievement here is the analysis of the weak-asymptotics of critical degrees, namely, positive integral degrees, not present in \cite{drozhzhinov-z2,drozhzhinov-z3} nor in Meyer's work \cite{meyer}. In such a critical case the classes of associate homogeneous and homogeneously bounded functions \cite{vindas2,vindas4}, defined in Subsection \ref{aah}, will appear as natural terms in the polynomial correction. 

Section \ref{wnwce} deals with the characterization of $\mathcal{S}'(\mathbb{R}^{n},E)$ and $\mathcal{S}'_{0}(\mathbb{R}^{n},E)$, where $E$ is a Banach space, through 
 \emph{global and local class estimates}. We assume that $\mathbf{f}$ takes a priori values in a broad locally convex space which contains the narrower Banach space $E$ as a continuously embedded subspace, and that $M^{\mathbf{f}}_\phi(x,y)\in E$, for almost every value of $(x,y)$, satisfies the Tauberian class estimate (\ref{wnwieq4}). We show first that the global class estimate (\ref{wnwieq4}) determines $\mathcal{S}'_{0}(\mathbb{R}^{n},E)$ if the wavelet is non-degenerate. On the other hand, for $\mathcal{S}'(\mathbb{R}^{n},E)$, we prove the existence of $\mathbf{G}$, with values in the broad
space, such that $\operatorname*{supp}\hat{\mathbf{G}}\subseteq\left\{0\right\}$
and 
\begin{equation}
\label{wnwieq5}
\mathbf{f}-\mathbf{G}\in\mathcal{S}'(\mathbb{R}^{n},E).
\end{equation} In
case when the broad space is a normed one, $\mathbf{G}$ reduces
simply to a polynomial. The important notions of strong non-degenerateness and that of
degree of non-degenerateness, introduced in Subsections \ref{lce} and \ref{wnwces}, enable us to analyze a local version of the class estimate (\ref{wnwieq4}), that is, when it is assumed to hold just for $(x,y)\in\mathbb{R}^{n}\times(0,1]$. For strongly non-degenerate wavelet, the result described above remains unchanged if one replaces the global class estimate (\ref{wnwieq4}) by a local one. It is then shown that if $\varphi$ is non-degenerate, (\ref{wnwieq5}) remains true where the support of $\hat{\mathbf{G}}$ may not be any longer the origin but it is completely determined by the wavelet. For the $\phi-$transform, $\mathbf{G}$ does not occur. 
Our results are \emph{sharp}; we supply various examples to support this claim.

On the basis of previous the two sections, we develop in Section \ref{wnwap} several useful techniques which aim to be tools for the pointwise, asymptotic, and (micro-)local analysis of functions and distributions. It is not the scope of this section to investigate general problems. Our purpose is rather to explore the applicability of our Tauberian theorems in concrete situations which allow us to show the methods and relate them with the work of other authors. Nevertheless, many of the methods provided here seem most likely to be generalizable to other contexts (such as a more detailed analysis of the classical 2-microlocal spaces \cite{bony1,bony2,jaffard-m} or the generalized ones in the sense of Jaffard \cite{jaffard2006}).

We introduce in Section \ref{wnwla} a new class of pointwise spaces of Banach space-valued distributions, the \emph{pointwise weak H\"{o}lder space} with respect to regularly varying functions, and characterize them by growth estimates on the wavelet transform. These spaces are extended versions of Meyer's pointwise spaces $\mathcal{O}^{s}(x_{0})$ and $\Gamma^{s}(x_{0})$ from \cite{meyer}. Remarkably, although our spaces are \emph{pointwisely} defined, they are also very effective tools for studying \emph{global} regularity. The key point is to translate weak pointwise behavior of vector-valued distributions into global regularity for scalar-valued distributions. For instance, this approach will lead to a generalization of Holschneider's wavelet inverse theory for global regularity \cite{holschneider94,holschneider}. As a second application, we recover two Tauberian theorems of Meyer \cite{meyer} and Jaffard \cite{jaff} on the determination of pointwise H\"{o}lder exponents from size estimates of the wavelet transform.

In Subsection \ref{wnwPDE} we study sufficient conditions for asymptotic stabilization in time of solutions to the evolution equation $U_t-P(\partial/\partial x)U=0,$ $U_{|t=0}=f\in\mathcal{S}'$, where $P$ is a homogeneous polynomial such that $\Re e\:P(iu)$ is negative with respect to a cone. Subsection \ref{wnwCA} explores applications of our Tauberian techniques to regularity theory within generalized function algebra \cite{colombeau,hor,ober 001}; we show the regularity theorem from \cite{hor}: under certain growth condition for $(f*\varphi_\varepsilon)_\varepsilon$, the distribution $f$ must be smooth with all derivatives being polynomially bounded. Subsection \ref{wnwds} provides a wavelet characterization of Estrada's distributionally small distributions \cite{estrada98,estrada-kanwal2}, namely, the elements of the dual space of $\mathcal K$ (the space of symbols studied in \cite{gls}). The pointwise analysis of the family of Riemann's distributions (\ref{wnwieq2}) at the rationals will carried out in Subsection \ref{wnwWRfunction}. We discuss in Subsection \ref{wnwLT} Tauberian theorems for Laplace transforms of distributions with supports in cones. Such theorems will be obtained under milder conditions over the cone than those from \cite{drozhzhinov-z0,
vladimirov-d-z1}. We apply then these ideas to produce a new proof of
Littlewood's Tauberian theorem \cite{hardy,korevaarbook,littlewood}. As our last application, we
give in Subsection \ref{wnwDS} a new and very short proof of the fact that the
weak-asymptotics of $f\in\mathcal
S'(\mathbb R^n;E)$ over $\mathcal D(\mathbb R^n)$ is equivalent to
the weak-asymptotics over $\mathcal S(\mathbb R^n).$

In Section 9 we extend our considerations to a more general setting; we explain that all Abelian and Tauberian results from Sections \ref{wnwa}--\ref{wnwce}
remain true if we replace the Banach space by a DFS space \cite{silvas} (strong duals of Fr\'{e}chet-Schwartz spaces), or more generally, by a regular inductive limit of an increasing sequence of Banach spaces which additionally has the Montel property \cite{kom}. Most spaces of generalized functions, such as spaces of distributions, ultradistributions \cite{kom} and Fourier hyperfunctions \cite{kaneko}, are of either this kind or projective limits of this kind of spaces. As an illustration of these abstract considerations, we give a necessary and sufficient condition for a tempered distribution $f$ on $\mathbb R^n_t\times\mathbb R^m_\xi$ to have trace at $t=t_0,$ i.e., for the existence of $ f(t_0,\cdot)\in\mathcal S'(\mathbb R^m).$

Finally, the purpose of the Appendix is to establish the precise connection between weak-asymptotics in $\mathcal{S}'_{0}(\mathbb{R}^{n},E)$ and
$\mathcal{S}'(\mathbb{R}^{n},E)$. Such a relation showed to be a valuable part of the technique employed in Section \ref{wnwtt}. 

\newpage

\section{Notation and Preliminaries}
\label{wnwn}
We use the notation $\mathbb{H}^{n+1} = \mathbb{R}^{n} \times
\mathbb{R_+};$ $\mathbb{S}^{n-1}$ is the unit sphere; $\left|x\right|$ is the  Euclidean norm, $x\in\mathbb{R}^{n}$; $\left|m\right|=m_{1}+m_{2}+\dots+m_{n}$, for $m\in\mathbb{N}^{n}$, where $\mathbb{N}$ includes $0$; $\varphi^{(m)}=(\partial^{\left|m\right|}/\partial x^{m})\varphi$, $m\in\mathbb{N}^{n}$. 

The space $E$ always denotes a fixed, but arbitrary, Banach space with norm $\left\|\:\cdot\:\right\|$.

If $\mathbf{a}:I\mapsto E$ and $T:I\mapsto\mathbb{R}_{+}$, where $I=(0,A)$ (resp. $I=(A,\infty)$) we write $\mathbf{a}(y)=o(T(y))$ as $y\to0^{+}$ (resp. $y\to\infty$) if $\left\|\mathbf{a}(y)\right\|=o(T(y))$. We shall use a similar convention for the big $O$ Landau symbol. Let $\mathbf{v}\in E$, we write $\mathbf{a}(y)\sim T(y) \mathbf{v}$ if $\mathbf{a}(y)=T(y) \mathbf{v}+o(T(y))$.

\subsection{Spaces of $E$-valued Distributions} 
\label{wnwdsp} The Schwartz spaces \cite{schwartz1} of smooth compactly supported and rapidly decreasing test functions are denoted by $\mathcal{D}(\mathbb{R}^{n})$ and $\mathcal{S}(\mathbb{R}^{n})$; their dual spaces, the scalar spaces of distributions and tempered distributions, are
$\mathcal{D}'(\mathbb{R}^{n})$ and $\mathcal{S}'(\mathbb{R}^{n})$. We denote by $\mathcal{E}(\mathbb{R}^{n})$ the
space of $C^{\infty}$-functions, while $\mathcal{E}'(\mathbb{R}^{n})$ stands for the space of compactly supported
distributions.  We will use the Fourier transform
$$\hat{\varphi}(u)=\int_{\mathbb{R}^{n}}\varphi(t)e^{-iu \cdot t}\mathrm{d}t, \ \ \ u\in\mathbb{R}^{n},\;
\varphi\in\mathcal{S}(\mathbb{R}^{n}).$$ Following
\cite{holschneider}, the space
$\mathcal{S}_0(\mathbb{R}^{n})$ of highly time-frequency localized
functions over $\mathbb{R}^{n}$ is defined as the subspace of $\mathcal{S}(\mathbb{R}^{n})$ consisting of  those elements for which all their moments vanish,
i.e., $$\eta\in \mathcal{S}_0(\mathbb{R}^{n})\; \mbox{ if and only
if }\;  \int_{\mathbb{R}^{n}}t^m\eta(t)\mathrm{d}t=0, \;\mbox{ for
all }\; m\in\mathbb{N}^{n}.$$ It is provided with the relative
topology inhered from  $\mathcal{S}(\mathbb{R}^{n})$. This
space is also known as the \emph{Lizorkin space} of test functions
\cite{lizorkin,samko}, and it is invariant under Riesz potential
operators. We must emphasize that
$\mathcal{S}_{0}(\mathbb{R}^{n})$ is different from the one used
in \cite{drozhzhinov-z4}. The corresponding space of highly
localized function over $\mathbb H^{n+1}$ is denoted by
$\mathcal{S}(\mathbb {H}^{n+1})$. It consists of those $\Phi\in
C^{\infty}(\mathbb{H}^{n+1})$ for which
$$\sup_{(x,y)\in
\mathbb {H}^{n+1}}\,\left(y+\frac
{1}{y}\right)^{k_{1}}\left(1+\left|x\right|\right)^{k_{2}}\,\left|\frac{\partial^{l}}{\partial y^{l}}\frac{\partial^{m}}{\partial x^{m}}\Phi (x,y)\right|<\infty,$$
for all $k_{1},k_{2},l\in \mathbb{N}$ and $m\in\mathbb{N}^{n}$.
The canonical topology of this space is defined in the standard way \cite{holschneider}.

Let $\mathcal{A}(\Omega)$ be a topological vector space of test function over
 an open subset $\Omega\subseteq\mathbb{R}^{n}$.
 We denote by $\mathcal{A}'(\Omega,E)=L_{b}(\mathcal{A}(\Omega),E)$, the space of continuous
 linear mappings from $\mathcal{A}(\Omega)$ to $E$ with the topology of uniform convergence over
 bounded subsets of $\mathcal{A}(\Omega)$ \cite{treves}. We are mainly concerned with
 the spaces $\mathcal{D}'(\mathbb{R}^{n},E)$, $\mathcal{S}'(\mathbb{R}^{n},E)$, $\mathcal{S}'(\mathbb{H}^{n+1},E)$, and the Lizorkin type space $\mathcal{S}'_{0}(\mathbb{R}^{n},E)$; see \cite{silva} for vector-valued
 distributions. Let $\mathbf{f}$ be in one of these spaces of $E$-valued generalized functions and let $\varphi$
 be in the corresponding space of test functions; the value of $\mathbf{f}$ at $\varphi$ will be denoted by
$
\left\langle \mathbf{f},\varphi\right\rangle=\left\langle \mathbf{f}(t),\varphi(t)\right\rangle\in E.
$
If $f$ is a scalar generalized function and $\mathbf{v}\in E$, we denote by $f\mathbf{v}=\mathbf{v} f$ the $E$-valued
generalized function given
by $\left\langle f(t)\mathbf{v},\varphi(t)\right\rangle=\left\langle f,
\varphi\right\rangle\mathbf{v}$. The Fourier transform of $\mathbf{f}\in\mathcal{S}'(\mathbb{R}^{n},E)$
is defined in the usual way, 
$$\left\langle \hat{\mathbf{f}}(u),\varphi(u)\right\rangle=
\left\langle\mathbf{f}(t),\hat{\varphi}(t)\right\rangle, \ \ \
\varphi\in\mathcal{S}(\mathbb{R}^{n}). $$

Observe that we have a well defined continuous linear projector
from $\mathcal{S}'(\mathbb{R}^{n},E)$ onto $\mathcal{S}'_0(\mathbb{R}^{n},E)$ as
the restriction of $E$-valued tempered distributions to the closed subspace
$\mathcal{S}_0(\mathbb{R}^{n})$. It is clear that this map is surjective. However, it has no continuous right inverse \cite{estrada3}. We
do not want to introduce a notation for this map, so if
$\mathbf{f}\in\mathcal{S}'(\mathbb{R}^{n},E)$, we will keep calling by $\mathbf{f}$ its projection onto $\mathcal{S}'_0(\mathbb{R}^{n},E)$. Note also
that the kernel of this map is the space of polynomials over $\mathbb{R}^{n}$ with coefficients in $E$ ($E$-\emph{valued polynomials}). Therefore, $\mathcal{S}'_{0}(\mathbb{R}^{n},E)$ can be regarded as the quotient space of $\mathcal{S}'(\mathbb{R}^{n},E)$ by the space of $E$-valued polynomials.

If $\mathbf{f}$ is a continuous $E$-valued function of tempered growth on $\mathbb{R}^{n},$ we
make the usual identification with the element $\mathbf{f}\in\mathcal{S}'(\mathbb{R}^{n},E)$, that is,
$$\left\langle \mathbf{f}(t),\varphi\right(t)\rangle:=\int_{\mathbb{R}^{n}}\mathbf{f}(t)\varphi(t)\mathrm{d}t.$$ On the other hand, our convention is different for the space $\mathcal{S}'(\mathbb{H}^{n+1},E)$. Let $\mathbf{K}\in C(\mathbb{H}^{n+1},E)$, we say that it is of \emph{slow growth} on $\mathbb{H}^{n+1}$ if there exist $C>0$ and $k,l\in\mathbb{N}$ such that
\begin{equation*}
\left\|\mathbf{K}(x,y)\right\|\leq C \left(\frac{1}{y}+y\right)^{k}(1+\left|x\right|)^{l},\ \ \ (x,y)\in\mathbb{H}^{n+1};
\end{equation*}
we shall identify $\mathbf{K}\in\mathcal{S}'(\mathbb{H}^{n+1},E)$ by
\begin{equation*}
\left\langle \mathbf{K}(x,y),\Phi(x,y)\right\rangle:=\int^{\infty}_{0}\int_{\mathbb{R}^{n}}\mathbf{K}(x,y)\Phi(x,y)\frac{\mathrm{d}x\mathrm{d}y}{y}, \ \ \
\Phi\in \mathcal{S}(\mathbb{H}^{n+1}).
\end{equation*}
The choice of $y^{-1}\mathrm{d}x\mathrm{d}y$ instead of $\mathrm{d}x\mathrm{d}y$ will be clear in Subsection \ref{waE} below.
\subsection{Weak-asymptotics}
\label{wnwnq} The weak-asymptotics
measure the scaling asymptotic properties of a
distribution by asymptotic comparison with Karamata regularly
varying functions \cite{drozhzhinov-z3, drozhzhinov-z4,
estrada-kanwal2,
pilipovic-stankovic-takaci,vindas1,vindas2,vindas-pilipovic1,vladimirov-d-z1}. As previously mentioned in the introduction, ``quasiasymptotics'' is the usual name employed in the literature for this concept; however, we will use the name weak-asymptotics which better reflects the distributional nature of this notion.

Recall that a measurable real-valued function,
defined and positive on an interval $(0,A]$ (resp. $ [A, \infty)),
$ $A>0$, is called \emph{slowly varying} at the origin (resp. at
infinity) \cite{bingham,karamata,seneta} if
\begin{equation*}
\lim_{\varepsilon\rightarrow0^+}\frac{L(a\varepsilon)}{L(\varepsilon)}=1\ \ \
\left(\mbox{ resp. }
\lim_{\lambda\rightarrow\infty}\frac{L(a\lambda)}{L(\lambda)}=1\right), \ \ \ \mbox{for each }a>0.
\end{equation*}
Observe that slowly varying functions are very convenient objects
to be employed in wavelet analysis since they are asymptotic
invariant under rescaling at small scale (resp. large scale). Familiar functions such as $L(t)=\left|\log t\right|^{\gamma}, \left|\log\left|\log t\right|\right|^{\beta}, \dots$, are slowly varying at both the origin and infinity.

In the next definition $\mathcal{A}(\mathbb{R}^{n})$ is assumed to be a space of functions for which the dilations and translations are continuous operators; consequently, these two operations can be canonically defined on $\mathcal{A}'(\mathbb{R}^{n},E)$. Our interest is in $\mathcal{A}=\mathcal{D},\mathcal{S},\mathcal{S}_{0}$.

\begin{definition}
\label{wnwd2} Let $\mathbf{f}\in\mathcal{A}'(\mathbb{R}^{n},E)$ and let $L$ be slowly varying at the origin (resp. at infinity). We say that:
\begin{itemize}
\item [(i)] $\mathbf{f}$ is weak-asymptotically bounded of degree $\alpha\in\mathbb{R}$ at the point $x_{0}\in\mathbb{R}^{n}$ (resp. at infinity) with respect to $L$ in $\mathcal{A}'(\mathbb{R}^{n},E)$ if

\begin{equation*}
\sup_{0<\varepsilon\leq1}\frac{1}{\varepsilon^{\alpha}L(\varepsilon)}\left\|\left\langle
\mathbf{f}\left(x_0+\varepsilon t\right),\varphi(t)\right\rangle\right\|<\infty, \ \ \ \mbox{for each }\varphi\in\mathcal{A}(\mathbb{R}^{n})
\end{equation*}
$$\left(\mbox{resp. } \sup_{1\leq\lambda}\frac{1}{\lambda^{\alpha}L(\lambda)}\left\|\left\langle\mathbf{f}\left(\lambda t\right)
,\varphi(t)\right\rangle\right\|<\infty\right).
$$
In such a case we write,
\begin{equation*}
\mathbf{f}\left(x_0+\varepsilon t\right)=O\left(\varepsilon^{\alpha}L(\varepsilon)\right)\ \ \ \mbox{as}\  \varepsilon\to0^{+}  \ \ \mbox{in}\ \mathcal{A}'(\mathbb{R}^{n},E)
\end{equation*}
\begin{equation*}
\left(\mbox{resp. }\mathbf{f}\left(\lambda t\right)=O\left(\lambda^{\alpha}L(\lambda)\right) \ \ \ \mbox{as}\ \lambda\to\infty \ \ \mbox{in}\ \mathcal{A}'(\mathbb{R}^{n},E)\:\right).
\end{equation*}
\item [(ii)] $\mathbf{f}$ has weak-asymptotic behavior of degree $\alpha\in\mathbb{R}$ at the point $x_{0}\in\mathbb{R}^{n}$ (resp. at infinity) with respect to $L$ in $\mathcal{A}'(\mathbb{R}^{n},E)$ if there exists $\mathbf{g}\in\mathcal{A}'(\mathbb{R}^{n},E)$ such that for each $\varphi\in\mathcal{A}(\mathbb{R}^{n})$ the following limit holds, with respect to the norm of $E$,
\begin{equation*}
\lim_{\varepsilon\to0^{+}}\frac{1}{\varepsilon^{\alpha}L(\varepsilon)}\left\langle
\mathbf{f}\left(x_0+\varepsilon t\right),\varphi(t)\right\rangle=\left\langle \mathbf{g}(t),\varphi(t)\right\rangle \in E
\end{equation*}
$$\left(\mbox{resp. } \lim_{\lambda\to\infty}\frac{1}{\lambda^{\alpha}L(\lambda)}\left\langle
\mathbf{f}\left(\lambda t\right),\varphi(t)\right\rangle\right).
$$
We write
\begin{equation}
\label{wnweq2.1}
\mathbf{f}\left(x_0+\varepsilon t\right)\sim \varepsilon^{\alpha}L(\varepsilon)\mathbf{g}(t) \ \ \ \mbox{as}\  \varepsilon\to0^{+}  \ \ \mbox{in}\ \mathcal{A}'(\mathbb{R}^{n},E)
\end{equation}
\begin{equation*}
 \left(\mbox{resp. }\mathbf{f}\left(\lambda t\right)\sim\lambda^{\alpha}L(\lambda)\mathbf{g}(t) \ \ \ \mbox{as}\ \lambda\to\infty \ \ \mbox{in}\ \mathcal{A}'(\mathbb{R}^{n},E)\:\right) .
\end{equation*}
\end{itemize}
\end{definition}

We shall also employ the following notation for denoting the weak-asymptotic behavior (\ref{wnweq2.1})
$$
\mathbf{f}\left(x_0+\varepsilon t\right)= \varepsilon^{\alpha}L(\varepsilon)\mathbf{g}(t)+o\left(\varepsilon^{\alpha}L(\varepsilon)\right) \ \ \ \mbox{as}\  \varepsilon\to0^{+}  \ \ \mbox{in}\ \mathcal{A}'(\mathbb{R}^{n},E)
$$
$$
\left(\mbox{resp. }\mathbf{f}\left(\lambda t\right)=\lambda^{\alpha}L(\lambda)\mathbf{g}(t)+o\left(\lambda^{\alpha}L(\lambda)\right) \ \ \ \mbox{as}\ \lambda\to\infty \ \ \mbox{in}\ \mathcal{A}'(\mathbb{R}^{n},E)\:\right),
$$
which has certain advantage when considering (weak-)asymptotic expansions.

It is easy to show \cite{estrada-kanwal2,pilipovic-stankovic-takaci,vladimirov-d-z1} that $\mathbf{g}$ in (\ref{wnweq2.1}) must be homogeneous with degree of homogeneity $\alpha$ as a generalized function in $\mathcal{A}'(\mathbb{R}^{n},E)$, i.e., $\mathbf{g}(a t)=a^{\alpha}\mathbf{g}(t)$, for all $a\in\mathbb{R}_{+}$. We refer to \cite{drozhzhinov-z4} for an excellent presentation of the theory of multidimensional homogeneous distributions; such results are valid for $E$-valued distributions too.

Let $\mathbf{f}\in\mathcal{S}'(\mathbb{R}^{n},E)$ have weak-asymptotic behavior (resp. be weak-asymptotically bounded) in $\mathcal{S}'(\mathbb{R}^{n},E)$, it is trivial to see that $\mathbf{f}$ has the same weak-asymptotic properties when it is seen as an element of $\mathcal{S}'_{0}(\mathbb{R}^{n},E)$; however, the converse is in general not true. The precise relation between weak-asymptotics in $\mathcal{S}'_{0}(\mathbb{R}^{n},E)$ and $\mathcal{S}'(\mathbb{R}^{n},E)$ will be of vital importance for our further investigations, it will be studied in detail in the Appendix A (see Propositions A.1 and A.2).

Let us discuss some examples of weak-asymptotics.

\begin{example}\label{wnwex2.0}\emph{Pointwise weak scaling exponents.}
Meyer \cite{meyer} defined the weak scaling exponent of $f\in\mathcal{S}'(\mathbb{R}^{n})$ at $x_{0}\in\mathbb{R}^{n}$ as the supremum over all $\alpha$ such that
$$
f(x_{0}+\varepsilon t)=O(\varepsilon^{\alpha}) \ \ \ \mbox{as} \ \varepsilon\to\infty \mbox{ in } \mathcal{S}'_{0}(\mathbb{R}).
$$
The weak scaling exponent is connected with the (local) membership of $f$ to $2$-microlocal spaces \cite{meyer}. The motivation behind the introduction of this scaling exponent is to consider the smallest pointwise scaling exponent that is stable under any differentiation and bigger or equal to the usual H\"{o}lder exponent. The wavelet transform is an effective tool in the calculation of the pointwise weak scaling exponent.

The pointwise weak scaling exponent of a function may be rather
big without having the function to be even differentiable at the
point. This phenomenon is due to cancellation caused by rapid
oscillation. For example the function
$t^{\gamma}\sin(t^{-\beta})$, $t>0$, admits a regularization at
the origin which has weak scaling exponent $\infty$ at $x_{0}=0$,
this can be done for all $\gamma\in\mathbb{R}$ and $\beta>0$
\cite{lojasiewicz}. In \cite{saka}, Saka provided useful bounds
for pointwise weak scaling exponents of self-similar functions,
they allow one to relate such a weak exponent with the H\"{o}lder
one. For example, consider the Weierstrass function
$$
f(t)=\sum_{j=1}^{\infty}a^{j}\sin(2^{j}),
$$
where $0<a<1$; it is shown in \cite[p. 1044]{saka} that at any point $x_{0}\in\mathbb{R}$ the pointwise weak scaling and H\"{o}lder exponents of $f$ coincide, and actually, they equal the constant $-\log a/\log2$.

Meyer \cite{meyer} calls a point where the weak scaling and H\"{o}lder exponents of a function coincide (and are finite) a \emph{cusp singularity}; on the other hand, if the weak exponent is strictly bigger than the H\"older one, the point is said to be an \emph{oscillating singularity}. So, the Weierstrass function defined above has cusp singularities everywhere, while the functions $t^{\gamma}\sin(t^{-\beta})$, $\beta>0$, $\gamma\geq0$, have oscillating singularities at the origin.
\end{example}

\begin{example}
\label{wnwex2.1}\emph{\L ojasiewicz point values.} If $\mathcal{A}'(\mathbb{R}^{n},E)=\mathcal{D}'(\mathbb{R}^{n},E)$, $\alpha=0$ and $\mathbf{g}(t)=\mathbf{v}\in E$, a constant $E$-valued distribution, in (ii) of Definition \ref{wnwd2} (as $\varepsilon\to0^{+}$), then we say that the \emph{distributional point value} of $\mathbf{f}$ at $x_0$ is $\mathbf{v}$. We denote this by $\mathbf{f}(x_{0})=\mathbf{v},$ \emph{distributionally}. The notion of point values for distributions is due to \L ojasiewicz \cite{lojasiewicz,lojasiewicz2} (see also \cite{estrada-vindasI,vindas-estrada1,walterw}). It will be very important for us in the future.

\L ojasiewicz concept of point values is an average notion. In fact, it
can be shown \cite{lojasiewicz,lojasiewicz2} that $\mathbf{f}\left(x_{0}\right)  =\mathbf{v},$ distributionally,
if and only if there exists a multi-index $m_{0}\in\mathbb{N}^{n}$
such that for all multi-indices $m\geq m_{0}$ there exists an
$m$-primitive of $\mathbf{f},$ $\mathbf{G}$ with $\mathbf{G}^{(m)}=\mathbf{f},$ that is
a continuous function in a neighborhood of $x_{0}$ and satisfies%
\begin{equation}
\mathbf{G}\left(x\right)  =\frac{\left(  x-x_{0}\right)
^{m}}{m!}\mathbf{v}+o\left(  \left\vert x-x_{0}\right\vert
^{\left\vert m\right\vert }\right),\ \ \ \mbox{as }x
\rightarrow x_{0}. \label{pr.3}%
\end{equation}

In the case of periodic distributions over the real line, it was
shown in \cite{estrada96} that there is a characterization of the Fourier series at
points where the distributional point value exists; indeed, if $f\in\mathcal{S}'(\mathbb{R})$ has Fourier
series $\sum_{n=-\infty}^{\infty}c_{n}e^{int}$ and $x_{0}\in\mathbb{R}$, then
$f(x_{0})=\beta$, distributionally, if and only if there exists $k$ such that
\begin{equation*}
\lim_{x\rightarrow\infty}\sum_{-x\leq n\leq ax}c_{n}e^{inx_{0}}=\beta
\ \ \ (\mathrm{C},k),
\end{equation*}
for each $a>0$, where $(\mathrm{C},k)$ means in the Ces\`{a}ro sense. Remarkably, an analog result is true
for Fourier transforms of arbitrary tempered distributions \cite{vindas-estrada2}.
\end{example}
\begin{example}\label{wnwex2.3}\emph{Moment asymptotic expansions.} Let $\mathbf{f}\in \mathcal{E}'(\mathbb{R}^{n},E)$, a compactly supported $E$-valued distribution. Then $\mathbf{f}$ satisfies the Estrada-Kanwal moment asymptotic expansion \cite{estrada-kanwal90,estrada-kanwal2},
\begin{equation}
\label{wnweq2.2}
\mathbf{f}(\lambda t)\sim \sum_{\left|m\right|=0}^{\infty}\frac{(-1)^{\left|m\right|}}{m!\lambda^{\left|m\right|+n}} \delta^{(m)}(t)\mu_{m}(\mathbf{f})  \ \ \ \mbox{as}\ \lambda\to\infty \ \ \mbox{in}\ \mathcal{S}'(\mathbb{R}^{n},E),
\end{equation}
where $\mu_{m}(\mathbf{f})=\left\langle \mathbf{f}(t),t^{m}\right\rangle\in E$ are the moments of $\mathbf{f}$, in the sense that if $\varphi\in \mathcal{S}(\mathbb{R}^{n})$, then, for each $N\in\mathbb{N}$,
$$
\left\langle \mathbf{f}(\lambda t),\varphi(t)\right\rangle=\sum_{\left|m\right|\leq N} \frac{\varphi^{(m)}(0)}{m!\lambda^{\left|m\right|+n}}\mu_{m}(\mathbf{f})+O\left(\frac{1}{\lambda^{N+n+1}}\right) \ \ \ \mbox{as }\lambda\to\infty.
$$
Consequently, this shows that the weak-asymptotics of
distributions is not a local notion at infinity; in contrast with the case at finite points where the notion is actually local (cf. Subsection \ref{wnwDS}, where this well known \cite{vindas-pilipovic1,zavialov88} fact is reproved). The
moment asymptotic expansion is valid in many other important
distribution spaces \cite{estrada-kanwal2}. Distributions having
an expansion of the type (\ref{wnweq2.2}) are said to be
\emph{distributionally small at infinity}, we shall provide a
wavelet characterization of such distributions in Subsection
\ref{wnwds}. We refer to \cite{estrada-kanwal2} for the numerous
and interesting applications of the moment asymptotic expansions.
\end{example}
\begin{example}Let $f\in\mathcal{S}'(\mathbb{R})$ satisfy the moment
asymptotic expansion at infinity. Denote by $H$ the Heaviside
function, i.e., the characteristic function of $(0,\infty)$. Then,
it is easy to verify that the distributions
$$g_{\pm}(t)=\left|t\right|^{\gamma}f((t-x_{0})^{-\beta})H(\pm
(t-x_0)), \ \ \ t\neq x_{0},\;
\beta>0,\; \gamma\in\mathbb{R},$$ admit \emph{unique} extensions
to the whole $\mathbb{R}$ so that they are supported in
$(-\infty,x_{0}]$ and $[x_{0},\infty)$, respectively, and they
have weak scaling exponent equal to $\infty$ at the point $x_{0}$.
For instance, consider the Riemann ``non-differentiable function''
$$
w(t)=\sum_{n=1}^{\infty}\frac{\sin(\pi tn^{2})}{n^{2}}\: .
$$
Jaffard and Meyer showed \cite[Thm 7.1]{jaffard-m} that it has a
trigonometric chirp at each rational point of the form
$r=(2\nu+1)/(2j+1)$, $\nu,j\in\mathbb{Z}$; this gives in
particular that in a neighborhood of any such a point $w$ can be
written as
$$w(r+t)=\left|t\right|^{\frac{3}{2}}\left(f_{+}\left(t^{-1}\right)H(t)+f_{-}\left(t^{-1}\right)H(-t)\right)+G(t),$$
where $G\in C^{\infty}(\mathbb{R})$ and $f_{\pm}$ are distributionally small at infinity \cite[p. 146]{estrada-kanwal2}. Thus, we conclude that $w$ has pointwise weak scaling exponent equal to $\infty$ at those rational points. It can be also shown that the pointwise scaling H\"{o}lder exponents at those points is equal to $3/2$. Therefore, $w$ has oscillating singularities at the points $r=(2\nu+1)/(2j+1)$, $\nu,j\in\mathbb{Z}$.

We shall analyze in Subsection \ref{wnwWRfunction} the pointwise properties of the family of Riemann type distributions
$$R_{\beta}(t)=\sum_{n=1}^{\infty}\frac{e^{i\pi tn^{2}}}{n^{2\beta}}\: , \ \ \ \beta\in\mathbb{C},$$
and obtain their complete pointwise weak-asymptotic expansion at
\emph{every rational} point.
\end{example}
\begin{example}
\label{wnwexjb}\emph{Distributional jump behavior.} An useful extension of the notion of \L ojasiewicz point
values is that of pointwise jump behavior of distributions \cite{estrada-vindas,vindas-estrada1,vindas-estrada4}.
 Let $f\in\mathcal{S}'(\mathbb{R})$. Recall $H$ denotes the Heaviside function. We say $f$ has distributional jump
 behavior at $x_{0}$ if
\begin{equation}
\label{jlaeq1}
f\left(x_0+\varepsilon t\right)=\beta_{-}H(-t)+\beta_{+}H(t)+o(1) \ \ \ \mbox{as }\varepsilon\rightarrow0^{+} \mbox{ in }\mathcal{D}'(\mathbb{R}).
\end{equation}
for constants $\beta_{\pm}:=f(x_0^{\pm})$, its distributional lateral point values. Naturally, this notion can be
considered for $E$-valued distributions as well.

Suppose that $f$ is either a periodic distribution or an element
of $L^{p}(\mathbb{R})$, $p\in(1,\infty)$. Under this assumption we
can speak about its Hilbert transform, i.e., the distribution
$$\tilde{f}= \displaystyle\frac{1}{\pi}\mathrm{p.v.}\frac{1}{t}\ast f.$$ It can be shown
\cite{vindas-estrada4} that (\ref{jlaeq1}) implies the following
weak-asymptotic behavior for the Hilbert transform
\begin{equation}
\label{jumpconj}
\tilde{f}(x_0+\varepsilon t)\sim-\frac{1}{\pi}[f]_{x=x_0}\log(1/\varepsilon) \ \ \ \mbox{as }\varepsilon\rightarrow0^{+} \mbox{ in }\mathcal{S}'(\mathbb{R}),
\end{equation}
where $[f]_{x=x_0}=\beta_{+}-\beta_{-}$, the pointwise jump of $f$
at $x_{0}$. If $f$ is periodic with Fourier series
$$f(t)=\frac{a_0}{2}+\sum^{\infty}_{n=0}\left(a_n\cos nt+b_n\sin
nt\right),$$ then the weak-asymptotic behavior (\ref{jumpconj})
can be used to extend Zygmund \cite{zygmund} and M\'{o}ricz
\cite{moricz2} versions of the classical Luk\'{a}cs theorem, that
is, we can recover the jump from the Abel-Poisson means of the
conjugate series \cite{vindas-estrada4},
\begin{equation}
\label{jumpconj2}
\lim_{r\rightarrow 1^{-}}\frac{1}{\log(1-r)}\sum^{\infty}_{n=1}\left(a_n\sin nx_0-b_n\cos nx_0\right)r^n=\frac{1}{\pi}[f]_{x=x_0}\:.
\end{equation}
When $f\in L^{p}(\mathbb{R})$, $1<p<\infty$, the jump can be determined by its conjugate Poisson integral,
\begin{equation}
\label{jumpconj3}
\lim_{y\rightarrow 0^{+}}\frac{1}{\log y}\int_{-\infty}^{\infty}\frac{x_{0}-t}{(x_0-t)^{2}+y^{2}}\:f(t)\mathrm{d}t=[f]_{x=x_0}\:.
\end{equation}
Observe that we may single out a canonical Hilbert transform even in less restrictive situations, for example as in \cite{estrada-kanwal1}, and an appropriate version of (\ref{jumpconj2}) and (\ref{jumpconj3}) still holds  in those cases.
\end{example}

\newpage

\section{Wavelet and Non-Wavelet Transforms of $E$-valued Distributions}
\label{wnwtd}
We shall present in  Subsection \ref{wnwt} some basic properties of wavelet and non-wavelet transforms of $E$-valued tempered distributions. We then discuss examples in Subsection \ref{wnwex}. Section \ref{waE} deals with wavelet analysis on the space $\mathcal{S}'_{0} (\mathbb{R}^{n},E)$. For test functions we set $\check{\varphi}(\cdot)=\varphi(-\:\cdot\:)$ and $\varphi_{y}(\cdot)=y^{-n}\varphi( \cdot/y)$. The moments of $\varphi$ are denoted by $$\mu_{m}(\varphi)=\int_{\mathbb{R}^{n}}t^{m}\varphi(t)\mathrm{d}t, \ \ \ m\in\mathbb{N}^{n}.$$

\subsection{Regularizing Transforms}
\label{wnwt}
Let $\mathbf{f}\in\mathcal{S}'(\mathbb{R}^{n},E)$. We set, as in the introduction,

\begin{equation}
\label{wnwneq1}
M^{\mathbf{f}}_{\varphi}(x,y):=(\mathbf{f}\ast\varphi_{y})(x)\in E, \ \ \ (x,y)\in\mathbb{H}^{n+1},
\end{equation}
the \emph{regularizing transform} of $\mathbf{f}$ with respect to the test function $\varphi\in\mathcal{S}(\mathbb{R}^{n})$. Notice that $M^{\mathbf{f}}_{\varphi}\in C^{\infty}(\mathbb{H}^{n+1},E)$.

We shall distinguish two cases of the regularizing transform.

If $\mu_{0}(\varphi)\neq 0$, we say that (\ref{wnwneq1}) is a \emph{non-wavelet} type transform. Furthermore, let $\phi\in\mathcal{S}(\mathbb{R}^{n})$ be such that $\mu_{0}(\phi)=\int_{\mathbb{R}^{n}}\phi(t)\mathrm{d}t=1$. \emph{The $\phi-$transform} of $\mathbf{f}$ is
\begin{equation}
\label{wnwneq2}
F_{\phi}\mathbf{f}(x,y):=M^{\mathbf{f}}_{\check{\phi}}(x,y)=\left\langle \mathbf{f}(x+yt),\phi(t)\right\rangle\in E,  \ \ \ (x,y)\in\mathbb{H}^{n+1}.
\end{equation}
It should be observed that the $\phi-$transform essentially encloses all non-wavelet cases of (\ref{wnwneq1}) after a normalization. The terminology of $\phi-$transforms is from \cite{estrada-vindasI,vindas-estrada1,vindas-estrada5,vindas-estrada6}.

The second case of (\ref{wnwneq1}) is when $\mu_{0}(\varphi)=0$, namely, the wavelet one. Let $\psi\in\mathcal{S}(\mathbb{R}^{n})$ satisfy $\mu_{0}(\psi)=\int_{\mathbb{R}^{n}}\psi(t)\mathrm{d}t=0$, we then call $\psi$ a wavelet. \emph{The wavelet transform} of $\mathbf{f}$ with respect to $\psi$ is defined by
\begin{equation}
\label{wnwneq3}
\mathcal{W}_{\psi}\mathbf{f}(x,y):=M^{\mathbf{f}}_{\check{\bar{\psi}}} (x,y)=\left\langle \mathbf{f}(x+yt),\bar{\psi}(t)\right\rangle\in E, \ \ \ (x,y)\in\mathbb{H}^{n+1} .
\end{equation}
In the subsequent sections we shall restrict our attention to those wavelets which posses nice reconstruction properties (cf. Subsection \ref{waE} below).

\begin{definition}
\label{wnwd1}
We say that the test function $\varphi\in\mathcal{S}(\mathbb{R}^{n})$ is non-degenerate if for any $\omega\in\mathbb{S}^{n-1}$ the function of one variable $R_{\omega}(r)=\hat{\varphi}(r\omega)\in C^{\infty}[0,\infty)$ is not identically zero, that is,
$$
\operatorname*{supp} R_{\omega}\neq\emptyset , \ \ \ \text{for each }\omega\in\mathbb{S}^{n-1} .
$$
We say that $\psi\in\mathcal{S}(\mathbb{R}^{n})$ is a non-degenerate wavelet if it is a non-degenerate test function and additionally $\mu_{0}(\psi)=0$.
\end{definition}
Obviously, test functions for which $\mu_{0}(\varphi)\neq0$ are always non-degenerate. We will discuss particular important cases of non-degenerate wavelets in Example \ref{wnwex3.3}.

There is a remarkable difference between the wavelet and non-wavelet transforms. Indeed, the following proposition shows such a difference. We give a quick proof of it by using \L ojasiewicz point values (cf. Example \ref{wnwex2.1}); the argument is essentially the same as in \cite{vindas-estrada5}.

\begin{proposition}
\label{wnwp1} Let $\mathbf{f}\in\mathcal{S}'(\mathbb{R}^{n},E)$ and let $\varphi\in\mathcal{S}(\mathbb{R}^n)$, then
\begin{equation}
\label{wnwneq4}
\lim_{y\to0^{+}}M^{\mathbf{f}}_{\varphi}(\cdot,y)=\mu_{0}(\varphi) \mathbf{f}, \ \ \ \text{in}\ \mathcal{S}'(\mathbb{R}^{n},E).
\end{equation}
\end{proposition}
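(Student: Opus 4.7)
The plan is to test (\ref{wnwneq4}) against an arbitrary $\rho\in\mathcal{S}(\mathbb{R}^{n})$, reduce it to a convergence statement in the topology of $\mathcal{S}(\mathbb{R}^{n})$, and then conclude by the continuity of $\mathbf{f}\in L_{b}(\mathcal{S}(\mathbb{R}^{n}),E)$.

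First I would transpose the convolution: for $\rho\in\mathcal{S}(\mathbb{R}^{n})$,
$$
\bigl\langle M^{\mathbf{f}}_{\varphi}(\cdot,y),\rho\bigr\rangle=\bigl\langle\mathbf{f}\ast\varphi_{y},\rho\bigr\rangle=\bigl\langle\mathbf{f},\check{\varphi}_{y}\ast\rho\bigr\rangle,
$$
and rescaling the variable of integration gives the Riemann-sum representation
$$
(\check{\varphi}_{y}\ast\rho)(s)=\int_{\mathbb{R}^{n}}\varphi(u)\rho(s+yu)\,\mathrm{d}u,
$$
from which pointwise convergence to $\mu_{0}(\varphi)\rho(s)$ as $y\to 0^{+}$ is immediate by dominated convergence. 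The problem thus reduces to proving
$$
\check{\varphi}_{y}\ast\rho\ \longrightarrow\ \mu_{0}(\varphi)\rho\quad\text{in }\mathcal{S}(\mathbb{R}^{n})\ \text{as}\ y\to 0^{+}.
$$

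To upgrade pointwise to Schwartz-topology convergence, I would differentiate under the integral sign and apply the mean value theorem to $\rho^{(m)}(s+yu)-\rho^{(m)}(s)$ for every multi-index $m$. Splitting the $u$-integral at $|u|=1/\sqrt{y}$, the near region contributes $O(\sqrt{y}\,(1+|s|)^{-N})$ thanks to the mean value bound combined with the Schwartz decay of $\nabla\rho^{(m)}$ (and the fact that $y|u|\leq\sqrt{y}$ there), while the far region is controlled directly by the rapid decay of $\varphi$, uniformly in $s$. Consequently every Schwartz seminorm of $\check{\varphi}_{y}\ast\rho-\mu_{0}(\varphi)\rho$ tends to zero, and then continuity of $\mathbf{f}$ on $\mathcal{S}(\mathbb{R}^{n})$ gives $\lim_{y\to 0^{+}}\langle M^{\mathbf{f}}_{\varphi}(\cdot,y),\rho\rangle=\mu_{0}(\varphi)\langle\mathbf{f},\rho\rangle$ in the norm of $E$, which is (\ref{wnwneq4}).

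The only delicate point is the uniform control of \emph{all} Schwartz seminorms, which is the reason for the scale splitting at $|u|\sim 1/\sqrt{y}$; this is the main technical obstacle. All other steps (transposition, change of variable, dominated convergence, continuity of $\mathbf{f}$) are routine. The statement is, in essence, a vector-valued formulation of the familiar fact that the family $\{\varphi_{y}\}_{y>0}$ acts on tempered distributions as an approximate identity of total mass $\mu_{0}(\varphi)$.
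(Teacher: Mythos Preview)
Your argument is correct. It takes a different, more direct route than the paper. The paper rewrites the pairing as $\langle M^{\mathbf{f}}_{\varphi}(\cdot,y),\rho\rangle=\langle\mathbf{h}(yt),\varphi(t)\rangle$, where $\mathbf{h}(u)=\langle\mathbf{f}(x),\rho(x+u)\rangle$ is a smooth $E$-valued function of slow growth, and then observes that the \L ojasiewicz distributional point value $\mathbf{h}(0)$ coincides with the ordinary value $\langle\mathbf{f},\rho\rangle$, so the limit is $\mu_{0}(\varphi)\langle\mathbf{f},\rho\rangle$. (The reduction from strong to pointwise convergence in $\mathcal{S}'(\mathbb{R}^{n},E)$ is disposed of via Banach--Steinhaus and the Montel property of $\mathcal{S}(\mathbb{R}^{n})$.) Your approach bypasses the point-value formalism entirely by proving the approximate-identity statement $\check{\varphi}_{y}\ast\rho\to\mu_{0}(\varphi)\rho$ directly in the Schwartz topology, via mean-value estimates and a dyadic split of the $u$-integral, and then applying continuity of $\mathbf{f}$. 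Your proof is more elementary and self-contained; the paper's is shorter and intentionally ties the result to the \L ojasiewicz framework that pervades the rest of the article. One small remark: strictly speaking your argument yields pointwise (weak$^{\ast}$) convergence in $\mathcal{S}'(\mathbb{R}^{n},E)$; if the strong topology is intended, you can either invoke the same Banach--Steinhaus/Montel reduction, or observe that your Schwartz-seminorm estimates on $\check{\varphi}_{y}\ast\rho-\mu_{0}(\varphi)\rho$ are in fact uniform for $\rho$ in bounded subsets of $\mathcal{S}(\mathbb{R}^{n})$.
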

\begin{proof} Since $\mathcal{S}(\mathbb{R}^{n})$ is a Montel space \cite{treves}, the Banach-Steinhaus theorem implies that it is enough to show the convergence of (\ref{wnwneq4}) for the topology of pointwise convergence \cite{treves}. Let $\rho\in\mathcal{S}(\mathbb{R}^{n})$, we have
\begin{equation*}
\left\langle M^{\mathbf{f}}_{\varphi}\left(x,y\right)  ,\rho\left( x\right)
\right\rangle =\left\langle \mathbf{h}\left(  yt\right)  ,\varphi\left(t\right)  \right\rangle, \ \ \ 0<y<1 ,
\end{equation*}
where
$$\mathbf{h}\left( u\right)  =\left\langle \mathbf{f}\left(  x\right)
,\rho\left( x+u\right)  \right\rangle,\ \ \ u\in\mathbb{R}^{n},$$ is a
smooth $E$-valued function of slow growth. The \L ojasewicz point
value $\mathbf{h}\left( 0\right)$ exists and equals the ordinary
value and thus
\begin{equation*}
\lim_{y\rightarrow0^{+}}\left\langle \mathbf{h}\left(  yt\right)
,\varphi\left( t\right)  \right\rangle =\mathbf{h}\left( 0
\right)\int_{\mathbb{R}^{n}}\varphi(t)\mathrm{d}t  =\mu_{0}(\varphi)\left\langle \mathbf{f}\left(x\right)  ,\rho\left(  x
\right)  \right\rangle,
\end{equation*}
as required.
\end{proof}

Proposition \ref{wnwp1} therefore tells us that in the non-wavelet case we can always recover $\mathbf{f}$ as the distributional boundary values of its regularizing transform, while this is not any longer true for the wavelet transform.

Since for the $\phi-$transform
$$
\lim_{y\to0^{+}}F_{\phi}f(\cdot,y)=f, \ \ \ \text{in}\ \mathcal{S}'(\mathbb{R}^{n}),
$$
the Hahn-Banach theorem implies the ensuing important corollary.
\begin{corollary}\label{wnwc1} Let $\varphi\in\mathcal{S}(\mathbb{R}^{n})$ and let $\sigma>0$. Then, the linear span
of the set of the dilates (at scale less than $\sigma$) and
translates of $\varphi$, that is, $$\left\{\varphi((\:\cdot-x)/y):
(x,y)\in\mathbb{R}^{n}\times(0,\sigma)\right\},$$ is dense in
$\mathcal{S}(\mathbb{R}^{n})$ if and only if
$\mu_{0}(\varphi)\neq0$.
\end{corollary}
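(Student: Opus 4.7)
The plan is to convert the density statement into an annihilator statement via Hahn--Banach and then identify the annihilator condition with vanishing of a regularizing transform; both directions will then be easy consequences of material already at hand.

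First I would unravel the relation between the family in the statement and the regularizing transform. Writing $\psi = \check{\varphi}$, for $f \in \mathcal{S}'(\mathbb{R}^{n})$ and any $(x,y) \in \mathbb{R}^{n}\times\mathbb{R}_{+}$ we have
\begin{equation*}
\bigl\langle f(t),\,\varphi\bigl((t-x)/y\bigr)\bigr\rangle = y^{n}\,(f\ast \psi_{y})(x) = y^{n}\,M^{f}_{\psi}(x,y),
\end{equation*}
where I use the definition of $\psi_y$ and the identity $\varphi((t-x)/y) = y^{n}\psi_{y}(x-t)$. Note also that $\mu_{0}(\psi)=\mu_{0}(\varphi)$.

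By the Hahn--Banach theorem, the linear span of $\{\varphi((\cdot-x)/y):(x,y)\in \mathbb{R}^{n}\times(0,\sigma)\}$ is dense in $\mathcal{S}(\mathbb{R}^{n})$ if and only if every $f\in \mathcal{S}'(\mathbb{R}^{n})$ that annihilates the family must vanish. By the displayed identity, annihilation is equivalent to $M^{f}_{\psi}(x,y)=0$ for all $(x,y)\in\mathbb{R}^{n}\times(0,\sigma)$.

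Now I treat the two implications. Assume $\mu_{0}(\varphi)\neq 0$ and suppose $M^{f}_{\psi}(\cdot,y) \equiv 0$ for all $0<y<\sigma$. Letting $y\to 0^{+}$ and applying Proposition \ref{wnwp1} to $\psi$, we obtain $\mu_{0}(\psi)\,f = 0$ in $\mathcal{S}'(\mathbb{R}^{n})$, hence $f=0$, proving density. For the converse, if $\mu_{0}(\varphi)=0$, take $f\equiv 1$, which is a nonzero element of $\mathcal{S}'(\mathbb{R}^{n})$; a direct computation gives $(1\ast\psi_{y})(x) = \int \psi(t)\,\mathrm{d}t = \mu_{0}(\varphi)=0$ for every $(x,y)$, so $f=1$ annihilates the family without being zero, and the span cannot be dense. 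There is no real obstacle here; the only subtlety is keeping the dilation normalization consistent between $\varphi_{y}$ and $\varphi((\cdot-x)/y)$, which is handled by the extra factor $y^{n}$ in the opening identity and plays no further role.
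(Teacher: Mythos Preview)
Your proof is correct and follows essentially the same approach as the paper: apply Hahn--Banach to reduce density to the triviality of the annihilator, and then use Proposition~\ref{wnwp1} (the boundary limit $\lim_{y\to 0^{+}}M^{f}_{\psi}(\cdot,y)=\mu_{0}(\psi)f$) to conclude that any annihilating $f$ vanishes when $\mu_{0}(\varphi)\neq 0$. You also spell out the easy converse direction (via $f\equiv 1$), which the paper leaves implicit.
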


A property shared by the wavelet and non-wavelet transforms is the following one: They map continuously tempered distributions to smooth functions of slow growth on $\mathbb{H}^{n+1}$.
\begin{proposition}
\label{wnwp2} Let $\mathbf{f}\in\mathcal{S}'(\mathbb{R}^{n},E)$
and let $\varphi\in\mathcal{S}(\mathbb{R}^{n})$. Then,
$M^{\mathbf{f}}_{\varphi}\in C^{\infty}(\mathbb{H}^{n+1},E)$ is a
function of slow growth on $\mathbb{H}^{n+1}$. In addition, the
linear map  $$\mathbf{f}\in\mathcal{S}'(\mathbb{R}^{n},E)\mapsto
M^{\mathbf{f}}_{\varphi}\in\mathcal{S}'(\mathbb{H}^{n+1},E)$$ is
continuous for the topologies of uniform convergence over bounded
sets. Furthermore, if
$\mathfrak{B}\subset\mathcal{S}'(\mathbb{R}^{n},E)$ is bounded for
the topology of pointwise convergence, then there exist $k,l$ and
$C>0$ such that
\begin{equation}
\label{wnwneq5}
\left\|M_{\varphi}^\mathbf{f}(x,y)\right\|\leq C \left( \frac{1}{y}+y\right)^{k}\left(1+\left|x\right|\right)^{l} , \ \ \ \text{for all } \mathbf{f}\in\mathfrak{B}.
\end{equation}
\end{proposition}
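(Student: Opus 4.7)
The plan is to first establish the class estimate (\ref{wnwneq5}) for any pointwise bounded set $\mathfrak{B}$, and then to deduce both the slow-growth assertion and the continuity assertion from it. The smoothness $M^{\mathbf{f}}_{\varphi}\in C^{\infty}(\mathbb{H}^{n+1},E)$ is standard: the map $(x,y)\mapsto\varphi_{y}(x-\,\cdot\,)$ is a smooth $\mathcal{S}(\mathbb{R}^{n})$-valued map on $\mathbb{H}^{n+1}$ (because dilations are continuous in $\mathcal{S}$), and derivatives in $(x,y)$ can be moved under the continuous pairing with $\mathbf{f}$.

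For the key estimate (\ref{wnwneq5}), I will invoke the Banach--Steinhaus theorem. Since $\mathcal{S}(\mathbb{R}^{n})$ is barrelled, a pointwise (equivalently, weakly) bounded family $\mathfrak{B}\subset\mathcal{S}'(\mathbb{R}^{n},E)$ is equicontinuous: there exist $N\in\mathbb{N}$ and $C>0$ with
$$
\|\langle\mathbf{f},\rho\rangle\|\leq C\sup_{|m|\leq N,\ t\in\mathbb{R}^{n}}(1+|t|)^{N}|\rho^{(m)}(t)|,\qquad \mathbf{f}\in\mathfrak{B},\ \rho\in\mathcal{S}(\mathbb{R}^{n}).
$$
Applying this to $\rho(t)=\varphi_{y}(x-t)=y^{-n}\varphi((x-t)/y)$, substituting $u=(x-t)/y$, and invoking the elementary inequality $(1+|t|)^{N}\leq(1+|x|)^{N}(1+y)^{N}(1+|u|)^{N}$ together with the Schwartz seminorms of $\varphi$, I obtain a bound of the form $C'(1+|x|)^{N}(1+y)^{N}y^{-n-N}$, uniform in $\mathbf{f}\in\mathfrak{B}$. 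Since $(1+y)^{N}y^{-n-N}\leq 2^{N}(y+1/y)^{n+N}$ on all of $\mathbb{H}^{n+1}$, this is exactly (\ref{wnwneq5}) with $l=N$ and $k=n+N$.

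Specializing to the singleton $\mathfrak{B}=\{\mathbf{f}\}$ immediately gives that $M^{\mathbf{f}}_{\varphi}$ is of slow growth on $\mathbb{H}^{n+1}$, hence determines an element of $\mathcal{S}'(\mathbb{H}^{n+1},E)$ under the identification of Subsection \ref{wnwdsp}. For the continuity of $\mathbf{f}\mapsto M^{\mathbf{f}}_{\varphi}$ between the strong duals, I will use Fubini to write
$$
\langle M^{\mathbf{f}}_{\varphi}(x,y),\Phi(x,y)\rangle=\langle\mathbf{f}(t),\rho_{\Phi}(t)\rangle,\qquad \rho_{\Phi}(t):=\int_{0}^{\infty}\!\!\int_{\mathbb{R}^{n}}\varphi_{y}(x-t)\Phi(x,y)\,\frac{\mathrm{d}x\,\mathrm{d}y}{y},
$$
and verify that $\Phi\mapsto\rho_{\Phi}$ is continuous from $\mathcal{S}(\mathbb{H}^{n+1})$ into $\mathcal{S}(\mathbb{R}^{n})$; transposing, this yields the strong continuity claimed. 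The main obstacle I anticipate is exactly this seminorm estimate on $\rho_{\Phi}$: near $y=0$ the measure $y^{-1}\mathrm{d}y$ is non-integrable, but this is precisely compensated by the factor $(y+1/y)^{k_{1}}$ in the seminorms of $\mathcal{S}(\mathbb{H}^{n+1})$, which forces $\Phi(x,y)$ to vanish to infinite order as $y\to 0^{+}$; together with the large-$y$ decay and the Schwartz seminorms of $\varphi$, differentiating under the integral and tracking these factors is the only step requiring genuine care.
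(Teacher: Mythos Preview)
Your derivation of the uniform estimate (\ref{wnwneq5}) is essentially the paper's: Banach--Steinhaus on the barrelled space $\mathcal{S}(\mathbb{R}^{n})$ gives equicontinuity of $\mathfrak{B}$, and plugging in $\rho=\varphi_{y}(x-\cdot)$ yields the bound after the same elementary manipulations.

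For continuity you take a genuinely different route. The paper observes that $\mathcal{S}'(\mathbb{R}^{n},E)$ is bornological (as an inductive limit of Banach spaces), so it suffices to show that bounded sets map to bounded sets; this is immediate from (\ref{wnwneq5}) already in hand, since for $\mathbf{f}\in\mathfrak{B}$ and $\Phi$ in a bounded subset $\mathfrak{C}\subset\mathcal{S}(\mathbb{H}^{n+1})$ one simply bounds $\|\langle M^{\mathbf{f}}_{\varphi},\Phi\rangle\|$ by $C\int(y+1/y)^{k}(1+|x|)^{l}|\Phi(x,y)|\,\mathrm{d}x\,\mathrm{d}y/y$, which is uniformly finite. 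Your Fubini/transpose argument also works, but it asks you to verify that $\Phi\mapsto\rho_{\Phi}$ is continuous $\mathcal{S}(\mathbb{H}^{n+1})\to\mathcal{S}(\mathbb{R}^{n})$ and to justify the exchange of $\langle\mathbf{f},\cdot\rangle$ with the $(x,y)$-integral in the $E$-valued setting (e.g.\ via Riemann sums converging in $\mathcal{S}$). None of this is hard, but the bornological shortcut avoids it entirely and reuses (\ref{wnwneq5}) directly.
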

\begin{proof}
Since $\mathcal{S}'(\mathbb{R}^{n},E)$ is the inductive limit of a
strictly increasing sequence of Banach spaces
\cite{treves,vladimirovbook}, it is bornological. Therefore, we
should show that this map takes bounded sets to bounded ones. Let
$\mathfrak{B}\subset\mathcal{S}'(\mathbb{R}^{n},E)$ be a bounded
set. The Banach-Steinhaus theorem implies that $\mathfrak{B}$ is
bounded for the topology of bounded convergence if and only if it
is bounded for the topology of pointwise convergence; it is also
an equicontinuous set, from where we obtain the existence of
$k_{1}\in\mathbb{N}$ and $C_{1}>0$ such that for all $\rho\in \mathcal{S}(\mathbb{R}^{n})$ and $\mathbf{f}\in\mathfrak{B}$,

$$
\left\|\left\langle \mathbf{f},\rho\right\rangle\right\|\leq C_{1} \sup_{t\in\mathbb{R}^{n}, \left|m\right|\leq k_{1}} \left(1+\left|t\right|\right)^{k_1}\left|\rho^{(m)}(t)\right| .
$$
Consequently,
\begin{align*}
\left\|M^{\mathbf{f}}_{\varphi}(x,y)\right\| & =\frac{1}{y^{n}}\left\|\left\langle \mathbf{f}(t),\varphi\left(\frac{x-t}{y}\right) \right\rangle\right\|
\\
&
\leq C_{1}\left(\frac{1}{y}+y\right)^{n+k_1} \sup_{u\in\mathbb{R}^{n}, \left|m\right|\leq k_1} \left(1+\left|x\right|+y\left|u\right|\right)^{k_1}\left|\varphi^{(m)}\left(u\right)\right|
\\
&
\leq
C\left(\frac{1}{y}+y\right)^{n+2k_{1}}(1+\left|x\right|)^{k_{1}}  , \ \ \mbox{ for all } \mathbf{f}\in\mathfrak{B},
\end{align*}
where $$C=C_{1}\sup_{u\in\mathbb{R}^{n}, \left|m\right|\leq k_1}
\left(1+\left|u\right|\right)^{k_1}\left|\varphi^{(m)}\left(u\right)\right|.$$
So, we obtain (\ref{wnwneq5}) with $k=n+2k_{1}$ and $l=k_{1}$. If
$\mathfrak{C}\subset\mathcal{S}(\mathbb{H}^{n+1})$ is a bounded
set of test functions, we have
\begin{align*}
\left\|\left\langle M^{\mathbf{f}}_{\varphi}(x,y),\Phi(x,y)\right\rangle\right\|&=\left\|\int^{\infty}_{0}\int_{\mathbb{R}^{n}}M^\mathbf{f}_{\varphi}(x,y)\Phi(x,y)\frac{\mathrm{d}x\mathrm{d}y}{y}\right\|
\\
&
\leq C \int^{\infty}_{0}\int_{\mathbb{R}^{n}}\left(\frac{1}{y}+y\right)^{k}(1+\left|x\right|)^{l}\left|\Phi(x,y)\right|\frac{\mathrm{d}x\mathrm{d}y}{y}\, ,
\end{align*}
which stays bounded as $\mathbf{f}\in\mathfrak{B}$ and $\Phi\in\mathfrak{C}$. Therefore, the set $$\left\{M^{\mathbf{f}}_{\varphi}: \; \mathbf{f}\in\mathfrak{B}\right\}\subset\mathcal{S}'(\mathbb{H}^{n+1},E)$$
is bounded; hence the map is continuous.
\end{proof}
The regularizing transforms enjoy excellent localization properties as shown by the following simple proposition. This holds for both the wavelet and non-wavelet cases.

\begin{proposition} Let $\mathbf{f}\in\mathcal{S}'(\mathbb{R}^{n},E)$ and let $\varphi\in\mathcal{S}(\mathbb{R}^{n})$. Suppose that $K\subset \mathbb{R}^{n}\setminus \operatorname*{supp} \mathbf{f}$ is a compact set. Then, for any positive integer $k\in\mathbb{N}$ there exists $C=C_{k}$ such that
\begin{equation}
\label{wnweqLoc}
\sup_{x\in K}\left\|M^{\mathbf{f}}_{\varphi}(x,y)\right\|\leq C y^{k}, \  \ \ \mbox{for all }0<y<1.
\end{equation}
\end{proposition}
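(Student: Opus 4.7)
The plan is to exploit the positive distance $d:=\operatorname{dist}(K,\operatorname{supp}\mathbf{f})>0$ together with the Schwartz decay of $\varphi$. First I would fix a cut-off $\chi\in C^{\infty}(\mathbb{R}^{n})$ with bounded derivatives of all orders, taking values in $[0,1]$, such that $\chi(t)=0$ when $\operatorname{dist}(t,K)<d/3$ and $\chi(t)=1$ when $\operatorname{dist}(t,K)>2d/3$. Since $\chi\equiv 1$ on a neighbourhood of $\operatorname{supp}\mathbf{f}$, one has $\chi\mathbf{f}=\mathbf{f}$, and therefore
\begin{equation*}
M^{\mathbf{f}}_{\varphi}(x,y)=\left\langle \mathbf{f}(t),\rho_{x,y}(t)\right\rangle,\qquad \rho_{x,y}(t):=y^{-n}\chi(t)\varphi\!\left(\frac{x-t}{y}\right).
\end{equation*}

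Next I would invoke the continuity of $\mathbf{f}\in\mathcal{S}'(\mathbb{R}^{n},E)$ exactly as in the proof of Proposition \ref{wnwp2}: there exist $k_{1}\in\mathbb{N}$ and $C_{1}>0$ such that
\begin{equation*}
\left\|\left\langle\mathbf{f},\rho\right\rangle\right\|\leq C_{1}\sup_{|m|\leq k_{1},\,t\in\mathbb{R}^{n}}(1+|t|)^{k_{1}}\,|\rho^{(m)}(t)|,\qquad\rho\in\mathcal{S}(\mathbb{R}^{n}).
\end{equation*}
It therefore suffices to show that these Schwartz seminorms of $\rho_{x,y}$ are $O(y^{k})$, uniformly for $x\in K$ and $y\in(0,1)$.

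Applying the Leibniz rule to $\rho_{x,y}^{(m)}$, one produces a finite sum of terms of the form $y^{-n-|m_{2}|}\chi^{(m_{1})}(t)\,\varphi^{(m_{2})}((x-t)/y)$ with $m_{1}+m_{2}=m$. The two decisive observations are: (a) wherever $\chi^{(m_{1})}(t)$ is nonzero one has $|x-t|\geq d/3$ for every $x\in K$; and (b) for any $N$ there is $C_{N,m_{2}}$ with $|\varphi^{(m_{2})}(u)|\leq C_{N,m_{2}}(1+|u|)^{-N}$, so on the relevant set
\begin{equation*}
\left|\varphi^{(m_{2})}\!\left(\tfrac{x-t}{y}\right)\right|\leq C_{N,m_{2}}\left(\tfrac{y}{y+|x-t|}\right)^{N}\leq C_{N,m_{2}}'\,y^{N}(1+|x-t|)^{-N}.
\end{equation*}
Choosing $N\geq k+n+2k_{1}$ and using $(1+|t|)\leq(1+\sup_{x\in K}|x|)(1+|x-t|)$ for $x\in K$ absorbs the $(1+|t|)^{k_{1}}$ factor, yielding $(1+|t|)^{k_{1}}|\rho_{x,y}^{(m)}(t)|\leq C\,y^{k}$ uniformly in $x\in K$, $|m|\leq k_{1}$ and $y\in(0,1)$, which delivers (\ref{wnweqLoc}).

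The main technical point to watch is the Leibniz term with $m_{1}=0$: the function $\chi$ itself is supported in an \emph{unbounded} set, so the troublesome factor $(1+|t|)^{k_{1}}$ cannot be dismissed by compactness and must instead be swallowed by the rapid decay of $\varphi^{(m)}$ evaluated at $(x-t)/y$, using the lower bound $|x-t|\geq d/3$ on $\operatorname{supp}\chi$ and the compactness of $K$ to control $|x|$. For the remaining terms, $|m_{1}|\geq 1$ forces $t$ into the bounded shell $\{d/3\leq \operatorname{dist}(t,K)\leq 2d/3\}$ and the estimate is immediate. Everything else is routine book-keeping with Schwartz seminorms.
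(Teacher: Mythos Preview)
Your proof is correct and entirely self-contained. The argument via the cut-off $\chi\in\mathcal{O}_{M}(\mathbb{R}^{n})$ and direct Schwartz-seminorm estimates is the standard localization computation, and the bookkeeping you outline (Leibniz, the lower bound $|x-t|\ge d/3$ on $\operatorname{supp}\chi$, and the inequality $(1+|t|)\le(1+\sup_{K}|x|)(1+|x-t|)$) goes through exactly as you say.

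This is, however, a genuinely different route from the paper's proof. The paper argues indirectly through its weak-asymptotic machinery: it packages $\xi\mapsto M^{\mathbf{f}}_{\varphi}(\xi,y)$ into a $C(K,E)$-valued tempered distribution $\mathbf{G}(t)$, observes that $\langle\mathbf{G}(\varepsilon t),\rho(t)\rangle=0$ for every $\rho\in\mathcal{D}(\mathbb{R}^{n})$ once $\varepsilon$ is small (by the support condition), hence $\mathbf{G}(\varepsilon t)=O(\varepsilon^{k})$ in $\mathcal{D}'(\mathbb{R}^{n},C(K,E))$ for every $k$, and then invokes the forward reference Corollary~\ref{wnwDSc1} to upgrade this to $\mathcal{S}'(\mathbb{R}^{n},C(K,E))$, after which evaluation at $\varphi\in\mathcal{S}(\mathbb{R}^{n})$ gives (\ref{wnweqLoc}). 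Your approach buys elementarity and avoids the forward reference; the paper's approach buys an illustration of how the $\mathcal{D}'\to\mathcal{S}'$ transfer principle (one of the themes of the article) handles such localization questions without any explicit cut-off or seminorm computation.
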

\begin{proof} Define the $C(K,E)$-valued tempered distribution whose evaluation at $\rho\in\mathcal{S}(\mathbb{R}_{t}^{n})$ is given by
$$\left\langle \mathbf{G}(t),\rho(t)\right\rangle(\xi)=(\mathbf{f}\ast \rho)(\xi), \ \ \ \xi\in K.$$
Clearly, $\mathbf{G}\in\mathcal{S}'(\mathbb{R}_{t}^{n},C(K_{\xi},E))$. Then, since $K\subset \mathbb{R}^{n}\setminus \operatorname*{supp} \mathbf{f}$, we have that for each $\rho\in\mathcal{D}(\mathbb{R}^{n})$,
$$
\left\langle \mathbf{G}(\varepsilon t),\rho (t)\right\rangle=0,
$$
for sufficiently small $\varepsilon>0$. In particular, we obtain that, for a fixed $k\in\mathbb{N}$,
\begin{equation}
\label{wnweqpb1}
\mathbf{G}(\varepsilon t)=O(\varepsilon^{k}) \ \ \ \mbox{as }\varepsilon \to 0^{+}\ \mbox{ in }\mathcal{D}'(\mathbb{R}_{t}^{n},C(K,E)).
\end{equation}
Now, it is well know that the weak-asymptotic boundedness (\ref{wnweqpb1}) remains valid in the space $\mathcal{S}'(\mathbb{R}_{t}^{n},C(K,E))$. This fact is shown in \cite{vindas-pilipovic1,zavialov88}, but actually, we shall give a new proof in Corollary \ref{wnwDSc1} of Subsection \ref{wnwDS}. Thus, we have the right to evaluate the relation (\ref{wnweqpb1}) at $\varphi\in\mathcal{S}(\mathbb{R}^{n})$, which immediately yields
\begin{align*}
\sup_{x\in K} \left\|M^{\mathbf{f}}_{\varphi}(x,y)\right\|&= \left\|\left\langle \mathbf{G}(yt),\varphi(t)\right\rangle\right\|_{C(K,E)}
\\
&
\leq Cy^{k},
\end{align*}
for some $C>0$, as claimed.
\end{proof}
\subsection{Examples of Regularizing Transforms}
\label{wnwex}
Let us discuss some examples of regularizing transforms. We shall return to these examples in Section \ref{wnwap} where we will provide applications of the Tauberian theorems from Section \ref{wnwtt} and Section \ref{wnwce}.

Our first example is one-dimensional and shows how the $\phi-$transform is related to summability of numerical series.

\begin{example}
\label{wnwex3.1}\emph{The $\phi-$transform and summability of
series.} Let $\left\{ c_{n}\right\}^{\infty}_{n=0}$ be a sequence
of complex numbers and let $\rho\in\mathcal{S}(\mathbb{R})$ with
$\rho(0)=1$.  We say that the (possible divergent) series
$\sum_{n=0}^{\infty}c_{n}$ is $(\rho)$ summable to $\beta$
if
\begin{equation}
\label{wnweq3.1}
\sum_{n=0}^{\infty}c_{n} \rho(yn)\ \  \mbox{converges for all } y>0,
\end{equation}
and
\begin{equation}
\label{wnweq3.2}
\lim_{y\to0^{+}}\sum_{n=0}^{\infty}c_{n}\rho(yn) =\beta.
\end{equation}
One readily verifies that this summability method is regular \cite{hardy}, in the sense that it sums
convergent series to their actual values of convergence. Furthermore, different choices of the kernel $\rho$ lead to
 many familiar methods of summability. For example, if $\rho(u)=e^{-u}$ for $u>0$, one then recovers the well known
 Abel method \cite{hardy,korevaarbook}, in such a case one writes for Abel summable series
\begin{equation*}
\sum_{n=0}^{\infty}c_{n}=\beta \ \ \ (\mathrm{A}).
\end{equation*}
Another instance is provided by $\rho(u)=u/(e^{u}-1)$, $u>0$, the kernel of Lambert summability which is so important in number theory \cite{korevaarbook,wiener}.

Assume further that $\left\{c_{n}\right\}_{n=0}^{\infty}$ is of slow growth, i.e., there is $k\in\mathbb{N}$ such that $c_{n}=O\left(n^{k}\right)$. Obviously, (\ref{wnweq3.1}) is always fulfilled under this assumption. Define $f(t)=\sum_{n=0}^{\infty}c_{n}e^{itn}$, a periodic distribution over the real line. Moreover, set $\phi=(1/2\pi)\hat{\rho}$; thus, the $\phi-$transform of $f$ is precisely
$$
F_{\phi}f(x,y)=\frac{1}{2\pi}\left\langle e^{ixu}\hat{f}(u),\rho(yu)\right\rangle=\sum_{n=0}^{\infty}c_{n}e^{ixn} \rho(yn).
$$
Consequently, (\ref{wnweq3.2}) becomes equivalent to a statement on the (radial) boundary behavior of the $\phi-$transform at the origin, namely,
\begin{equation*}
\lim_{y\to0^{+}}F_{\phi}f(0,y)=\beta.
\end{equation*}
We shall use in Subsection \ref{wnwLT} these ideas to produce a new proof of Littlewood's Tauberian theorem for power series (Example \ref{wnwapexl}).
\end{example}
\begin{example}\label{wnwex3.2}
\emph{Embedding of distributions into generalized function  algebras.} The second
important example of $\phi-$transforms points out its relation with the theory of algebras of generalized functions \cite{colombeau,ober 001}.
If $\phi\in\mathcal{S}(\mathbb{R}^{n})$ is a \emph{mollifier} with all higher order vanishing moments, i.e., a test function such that
\begin{equation}
\label{wnweq3.3}
\mu_{0}(\phi)=1 \ \ \mbox{   and   }\ \ \mu_{m}(\phi)=0, \ \ \ \mbox{for all }\left|m\right|\geq1,
\end{equation}
then, for scalar distributions,
the $\phi-$transform is nothing but the standard embedding of $f\in\mathcal{S}'(\mathbb{R}^{n})$ into
the special Colombeau algebra $\mathcal{G}(\mathbb{R}^{n})$ of generalized functions (cf. Subsection \ref{wnwCA}), namely, the net
$$
f_{\varepsilon}(x)=F_{\phi}f(x,\varepsilon), \ \ \ 0<\varepsilon< 1, \ x\in\mathbb{R}^{n},
$$
which determines the class $[f_{\varepsilon}]\in\mathcal{G}(\mathbb{R}^{n})$. Likewise, the
$\phi-$transform also induces the embedding of $f\in\mathcal{S}'(\mathbb{R}^{n})$ into
the algebra $\mathcal{G}_{\tau}(\mathbb{R}^{n})$ of tempered generalized functions \cite{colombeau,ober 001}.
We will use this interpretation of the $\phi-$transform in
Subsection \ref{wnwCA} to give applications to
regularity theory within the framework of  algebras of generalized functions.
\end{example}

We now give examples of non-degenerate wavelets.
\begin{example}\label{wnwex3.3} \emph{Drozhzhinov-Zavialov wavelets.} We say that a polynomial $P$ is non-degenerate (at the origin) if
for each $\omega\in\mathbb{S}^{n-1}$ one has that
$$
P(r\omega)\not\equiv 0, \ \ \ r\in\mathbb{R}_{+}.$$
Drozhzhinov and Zavialov have considered the class of wavelets $\psi\in\mathcal{S}(\mathbb{R}^{n})$, $\mu_{0}(\psi)=0$,
for which there exists $N\in\mathbb{N}$ such that
$$T_{\hat{\psi}}^{N}(u)= \sum_{\left|m\right|\leq N}\frac{\hat{\psi}^{(m)}(0)u^{m}}{m!},$$ the Taylor polynomial of order $N$ at the origin, is non-degenerate; these wavelets were used in \cite{drozhzhinov-z3} to obtain Tauberian theorems for distributions. It should be noticed that this type of wavelets are included in Definition \ref{wnwd1}; naturally, Definition \ref{wnwd1} gives much more wavelets. For instance, any non-degenerate wavelet from $\mathcal{S}_{0}(\mathbb{R}^{n})$ obviously fails to be of this kind. An explicit example of a non-degenerate wavelet $\psi\in\mathcal{S}_{0}(\mathbb{R}^{n})$ is given in the Fourier side by
$$\hat{\psi}(u)=e^{-\left|u\right|-(1/\left|u\right|)},\ \ \ u \in
\mathbb{R}^{n} .$$ Furthermore, if
$\psi_{1}\in\mathcal{S}(\mathbb{R}^{n})$ satisfies
$$\hat{\psi}_{1}(u)=e^{-\left|u\right|-(1/\left|u\right|)}+u_{1}^{2},\ \ \ \mbox{for } \left|u\right|<1,$$ where $u=(u_{1},u_{2},\dots,u_{n})$,
then
$\psi_{1}\in\mathcal{S}(\mathbb{R}^{n})\setminus\mathcal{S}_{0}(\mathbb{R}^{n})$
is a non-degenerate wavelet but all its Taylor polynomials vanish
on the axis $u_{1}=0$.

Let $\phi\in\mathcal{S}(\mathbb{R}^{n})$ be a mollifier that satisfies (\ref{wnweq3.3}) (cf. Example \ref{wnwex3.2})
and let $P$ be a non-degenerate polynomial of degree $k$, then $\psi=P(-i\partial/\partial t)\phi$ is a wavelet of the type considered by Drozhzhinov and Zavialov;
indeed, $T^{k}_{\hat{\psi}}(u)=P(u)$. Wavelets of the form $\psi=\Delta^  d \phi$ were used in \cite{horman} to study H\"{o}lder-Zygmund regularity in algebras of generalized functions.
\end{example}

\begin{example}
\label{wnwex3.4}
\emph{The $\phi-$transform as solution to Cauchy problems}. When the test function
is of certain special form, the $\phi-$transform can become the solution to a PDE. We discuss a particular case
in this example. Let the set $\Gamma\subseteq\mathbb{R}^{n}$ be a closed convex cone with vertex at the origin.
In particular, we may have $\Gamma=\mathbb{R}^{n}$. Let $P$ be a homogeneous polynomial of degree $d$ such that
$$\Re e\:P(iu)<0 \ \ \  \mbox{ for all } \;u\in\Gamma\setminus\left\{0\right\}.$$ 
We denote \cite{vladimirovbook,vladimirov-d-z1} by $\mathcal{S}'_{\Gamma}\subseteq\mathcal{S}'(\mathbb{R}^{n})$ the subspace of distributions supported by $\Gamma$.

Consider the Cauchy problem
\begin{equation}\label{wnweq3.5}
\frac{\partial}{\partial t}U(x,t)=P\left(\frac{\partial}{\partial x}\right)U(x,t), \ \ \ \lim_{t\to0^{+}}U(x,t)=f(x)\ \ \ \mbox{in }\mathcal{S}'(\mathbb{R}^{n}_{x}),
\end{equation}
$$\operatorname*{supp}\hat{f}\subseteq\Gamma, \ \ \ (x,t)\in \mathbb{H}^{n+1},$$
within the class of functions of slow growth over $\mathbb{H}^{n+1}$, that is,
$$
\sup_{(x,t)\in \mathbb{H}^{n+1}} \left|U(x,t)\right|\left(t+\frac{1}{t}\right)^{-k_1}\left(1+\left|x\right|\right)^{-k_{1}}<\infty, \ \ \ \mbox{for some }k_{1},k_{2}\in\mathbb{N}.
$$
One readily verifies that (\ref{wnweq3.5}) has a unique solution. Indeed,
$$
U(x,t)=\frac{1}{(2\pi)^{n}}\left\langle \hat{f}(u), e^{ix\cdot u}e^{tP(iu)}\right\rangle=\frac{1}{(2\pi)^{n}}\left\langle \hat{f}(u), e^{ix\cdot u}e^{P\left(it^{1/d}u\right)}\right\rangle
$$
is the sought solution. We can find \cite{vladimirov-d-z1} a test
function $\eta\in\mathcal{S}(\mathbb{R}^{n})$ with the property
$$\eta(u)=e^{P(iu)},\; u\in\Gamma.$$
Setting $\phi=(2\pi)^{-n}\hat{\eta}$, we express $U$ as a
$\phi-$transform,
\begin{equation}
\label{wnweq3.8}U(x,t)=\left\langle f(\xi),\frac{1}{t^{n/d}}\phi\left(\frac{\xi-x}{t^{1/d}}\right)\right\rangle=F_{\phi}f(x,y), \ \ \ \mbox{with } y=t^{1/d}.
\end{equation}
If $d=2k$ is a positive even integer and $P(\xi)=(-1)^{k-1}\left|\xi\right|^{d}$, then we may take $\Gamma=\mathbb{R}^{n}$, the differential operator becomes $P(\partial/\partial x)=(-1)^{k-1}\Delta^{d}$, and $\phi$ is the Fourier inverse transform of $\eta(u)=e^{-\left|u\right|^{d}}$. In particular, when $d=2$, (\ref{wnweq3.5}) is the Cauchy problem for the heat equation and $\phi(\xi)=(2\sqrt{\pi})^{-n}e^{-\xi^{2}/4}$.

We will study in Subsection \ref{wnwPDE} sufficient conditions for the asymptotic stabilization in time of the solution $U$ to (\ref{wnweq3.5}).
\end{example}
\begin{example}\label{wnwex3.5}
\emph{Laplace transforms as  $\phi-$transforms.} Let $\Gamma$ be a
closed convex acute cone \cite{vladimirovbook,vladimirov-d-z1}
with vertex at the origin. Its conjugate cone is denoted by
$\Gamma^{\ast}$. The definition of an acute cone tells us that
$\Gamma^{\ast}$ has non-empty interior, set
$$C_{\Gamma}=\operatorname*{int} \Gamma^{\ast}\; \mbox{ and }
\;T^{C_{\Gamma}}=\mathbb{R}^{n}+iC_{\Gamma}.$$ We denote by
$\mathcal{S}'_{\Gamma}(E)$ the subspace of $E$-valued tempered
distributions supported by $\Gamma$. Given
$\mathbf{h}\in\mathcal{S}'_{\Gamma}(E)$, its \emph{Laplace
transform} \cite{vladimirovbook} is
$$
\mathcal{L}\left\{\mathbf{h};z\right\}=\left\langle \mathbf{h}(u),e^{iz\cdot u}\right\rangle, \ \ \ z\in T^{C_{\Gamma}};
$$
it is a holomorphic $E$-valued function on the tube domain
$T^{C_{\Gamma}}$. Fix $\omega\in C_{\Gamma}$. We may write
$\mathcal{L}\left\{\mathbf{h};x+i\sigma\omega\right\}$,
$x\in\mathbb{R}^{n}$, $\sigma>0$, as a $\phi-$transform. In fact,
choose $\eta_{\omega}\in\mathcal{S}(\mathbb{R}^{n})$ such that
$$\eta_{\omega}(u)=e^{- \omega\cdot u}, \ \ \  u\in\Gamma. $$
Then, with
$\phi_{\omega}=(2\pi)^{-n}\hat{\eta}_{\omega}$ and
$\hat{\mathbf{f}}=(2\pi)^{n}\mathbf{h}$,
\begin{equation}
\label{wnweq3.9}
\mathcal{L}\left\{\mathbf{h};x+i\sigma\omega\right\}=F_{\phi_{\omega}}\mathbf{f}(x,\sigma).
\end{equation}
Notice that this is a particular case of Example \ref{wnwex3.4} with $P_{\omega}(\xi)=i \omega\cdot\xi$.
\end{example}

\subsection{Wavelet Analysis on $\mathcal{S}'_{0}(\mathbb{R}^{n},E)$}
\label{waE}

In this subsection we indicate how to extend the scalar distribution wavelet analysis given in \cite{holschneider} to $E$-valued generalized functions. We complement the theory with some new results.

Although Proposition \ref{wnwp1} makes impossible to recover an $E$-valued tempered distribution as the boundary value of its wavelet transform, the non-degenerate wavelets from $\mathcal{S}_{0}(\mathbb{R}^{n})$ enjoy excellent reconstruction properties as long as we are interested in the projection of the tempered distribution onto $\mathcal{S}'_{0}(\mathbb{R}^{n},E)$. Observe that if the wavelet belongs to $\mathcal{S}_{0}(\mathbb{R}^{n})$, the wavelet transform with respect to this wavelet is continuous $\mathcal{S}'(\mathbb{R}^{n},E)\mapsto\mathcal{S}'(\mathbb{H}^{n+1},E)$, as can be inferred from Proposition \ref{wnwp2}; however it is not injective, since it maps every $E$-valued polynomial to $\mathbf{0}$, as follows from the moment vanishing properties of the wavelet. This fact makes necessary to work on $\mathcal{S}'_{0}(\mathbb{R}^{n},E)$ if one wishes to have reconstruction of distributions from their wavelet transforms.

Let $\psi\in\mathcal{S}_{0}(\mathbb{R}^{n})$. We have that \cite[Thm 19.0.1]{holschneider} $\mathcal{W}_{\psi}:\mathcal{S}_{0}(\mathbb{R}^{n})\mapsto\mathcal{S}(\mathbb{H}^{n+1})$ is a continuous linear map. We are interested in those wavelets for which $\mathcal{W}_{\psi}$ admits a left inverse. For wavelet-based reconstruction, we shall use the wavelet
synthesis operator \cite{holschneider}. Given $\Phi\in\mathcal{S}(\mathbb{H}^{n+1})$, we define the \textit{wavelet
synthesis operator} with respect to the wavelet $\psi$ as
\begin{equation}
\label{wnwneq6}
\mathcal{M}_{\psi}\Phi(t)=\int^{\infty}_{0}\int_{\mathbb{R}^{n}}\Phi(x,y)\frac{1}{y^{n}}\psi\left(\frac{t-x}{y}\right)\frac{\mathrm{d}x\mathrm{d}y}{y}\: ,
\ \ \ t \in \mathbb{R}^{n}.
\end{equation}
One can show that $\mathcal{M}_{\psi}:\mathcal{S}(\mathbb{H}^{n+1})\rightarrow
\mathcal{S}_0(\mathbb{R}^{n})$ is continuous \cite[p. 74]{holschneider}.

We shall say that the wavelet $\psi\in\mathcal{S}_{0}(\mathbb{R}^{n})$ admits a \textit{reconstruction wavelet} if there exists $\eta\in\mathcal{S}_{0}(\mathbb{R}^{n})$ such that
\begin{equation}
\label{wnwneq7}
c_{\psi,\eta}(\omega)=\int^{\infty}_{0}\overline{\hat{\psi}}(r\omega)\hat{\eta}(r\omega)\frac{\mathrm{d}r}{r}\: , \ \ \ \omega\in\mathbb{S}^{n-1} ,
\end{equation}
is independent of the direction $\omega$; in such a case we set $c_{\psi,\eta}:=c_{\psi,\eta}(\omega).$
The wavelet $\eta$ is
called a reconstruction wavelet for $\psi$.

It is easy to find explicit examples of wavelets admitting reconstruction wavelets; in fact, any non-trivial rotation invariant element of $\mathcal{S}_{0}(\mathbb{R}^{n})$ is itself its own reconstruction wavelet.

If $\psi$ is admits the reconstruction wavelet $\eta$, one has the reconstruction
formula \cite{holschneider} for the wavelet transform on $\mathcal{S}_{0}(\mathbb{R}^{n})$
\begin{equation}
\label{wnwneq8}
\mathrm{Id}_{\mathcal{S}_0(\mathbb{R}^{n})}=\frac{1}{c_{\psi,\eta}}\mathcal{M}_{\eta}\mathcal{W}_{\psi}.
\end{equation}

We now characterize those wavelets which have a reconstruction wavelet. Actually, the class of non-degenerate wavelets from $\mathcal{S}_{0}(\mathbb{R}^{n})$ (cf. Definition \ref{wnwd1}) coincides with the class of wavelets admitting reconstruction wavelets.

\begin{proposition}
\label{wnwp3}
Let $\psi\in\mathcal{S}_{0}(\mathbb{R}^{n})$. Then, $\psi$ admits a reconstruction wavelet if and only if it is non-degenerate.
\end{proposition}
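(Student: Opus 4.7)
My approach is to settle the equivalence by a direct computation of the integral $c_{\psi,\eta}(\omega)$. For the necessity direction, I would argue by contraposition: if $\psi$ is degenerate, pick $\omega_0\in\mathbb{S}^{n-1}$ with $\hat\psi(r\omega_0)\equiv 0$ for all $r\geq 0$. Then the integrand of $c_{\psi,\eta}(\omega_0)$ is identically zero for \emph{every} $\eta\in\mathcal{S}_0(\mathbb{R}^n)$, so $c_{\psi,\eta}(\omega_0)=0$. Directional independence would force $c_{\psi,\eta}\equiv 0$, which is incompatible with the reconstruction formula (\ref{wnwneq8}); hence the existence of a reconstruction wavelet forces non-degeneracy.

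For the sufficient direction, assume $\psi$ is non-degenerate and construct $\eta$ explicitly. Set
$$ N(\omega):=\int_{0}^{\infty}|\hat\psi(r\omega)|^{2}\,\frac{dr}{r}, \quad \omega\in\mathbb{S}^{n-1}. $$
Since $\psi\in\mathcal{S}_0(\mathbb{R}^n)$, the Fourier transform $\hat\psi$ is Schwartz and all its derivatives vanish at the origin, so $|\hat\psi(r\omega)|^2/r$ is uniformly integrable in $\omega$ both near $r=0$ and near $r=\infty$. Consequently $N$ is smooth on $\mathbb{S}^{n-1}$ by differentiation under the integral sign, and it is \emph{strictly positive} by non-degeneracy. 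Define $G(\xi):=1/N(\xi/|\xi|)$ on $\mathbb{R}^n\setminus\{0\}$, a smooth $0$-homogeneous function, and put $\hat\eta(\xi):=\hat\psi(\xi)G(\xi)$ for $\xi\neq 0$, $\hat\eta(0):=0$. Then the $0$-homogeneity of $G$ gives
$$ c_{\psi,\eta}(\omega)=G(\omega)\int_{0}^{\infty}|\hat\psi(r\omega)|^{2}\frac{dr}{r}=G(\omega)N(\omega)=1, $$
independent of $\omega$.

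The main technical obstacle will be to verify that the $\eta$ thus constructed actually lies in $\mathcal{S}_0(\mathbb{R}^n)$. Equivalently, I need $\hat\eta\in\mathcal{S}(\mathbb{R}^n)$ with $\partial^\alpha\hat\eta(0)=0$ for every multi-index $\alpha$. Away from the origin this is immediate, since $G$ is smooth there; for $|\xi|\geq 1$ the derivatives $\partial^\gamma G(\xi)$ are bounded by $0$-homogeneity, so Schwartz decay of $\hat\eta$ follows from that of $\hat\psi$. The delicate point is near the origin, where $|\partial^\gamma G(\xi)|\leq C_\gamma|\xi|^{-|\gamma|}$ blows up. This is compensated by the infinite flatness of $\hat\psi$ at $0$: from $\partial^\beta\hat\psi(0)=0$ for all $\beta$ and Taylor's theorem, $|\partial^\beta\hat\psi(\xi)|\leq C_{\beta,N}|\xi|^N$ for arbitrary $N$ near the origin, so Leibniz's rule yields $|\partial^\alpha\hat\eta(\xi)|=O(|\xi|^N)$ for every $N$ and every $\alpha$. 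This both continuously extends $\hat\eta$ and all its derivatives to the origin with value zero there and delivers the Schwartz bounds. Since the moment-vanishing condition $\eta\in\mathcal{S}_0(\mathbb{R}^n)$ is exactly the Fourier-side infinite flatness of $\hat\eta$ at the origin, the construction is complete.
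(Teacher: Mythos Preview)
Your argument is correct and follows essentially the same route as the paper: the necessity is identical, and for sufficiency you build the very same $\hat\eta(\xi)=\hat\psi(\xi)/c_{\psi,\psi}(\xi/|\xi|)$, giving $c_{\psi,\eta}=1$. The only difference is in the verification that $\hat\eta\in\mathcal{S}(\mathbb{R}^n)$ with all derivatives vanishing at the origin: the paper invokes the polar-coordinate characterization of Schwartz functions from \cite{drozhzhinov-z4}, reducing the check to $(\partial/\partial r)^k\varrho(r,\omega)|_{r=0}=0$, whereas you argue directly via Leibniz and the homogeneity bound $|\partial^\gamma G(\xi)|\le C_\gamma|\xi|^{-|\gamma|}$ together with the infinite flatness of $\hat\psi$ at $0$. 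Your route is more self-contained; the paper's is quicker once the cited characterization is granted.
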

\begin{proof} The necessity is clear, for if $\hat{\psi}(r w_{0})$ identically vanishes in the direction of $w_{0}\in\mathbb{S}^{n-1}$, then $c_{\psi,\eta}(w_{0})=0$ (cf. (\ref{wnwneq7})) for any $\eta\in\mathcal{S}_{0}(\mathbb{R}^{n})$.

Suppose now that $\psi$ is non-degenerate, we will construct a
reconstruction wavelet for it. As in (\ref{wnwneq7}), we write
$$c_{\psi,\psi}(\omega)=\int_{0}^{\infty}|\hat{\psi}(r\omega)|^{2}\frac{\mathrm{d}r}{r}>0, \ \ \
\omega\in\mathbb{S}^{n-1}.$$ Set

$$ \varrho(r,w)= \frac{\hat{\psi}(rw)}{c_{\psi,\psi}(w)}\ , \ \ \ (r,w)\in [0,\infty)\times\mathbb{S}^{n-1}; $$
obviously, if we prove that $\varrho(\left|u\right|, u/\left|u\right|)\in \mathcal{S}(\mathbb{R}^{n})$ and all its partial derivatives vanish at the origin, then $\eta$ given by $\hat{\eta}(u)=\varrho(\left|u\right|, u /\left|u\right|)$ will be a reconstruction wavelet for $\psi$; actually, $c_{\psi,\eta}=1$. By the characterization theorem  of test functions from  $\mathcal{S}(\mathbb{R}^{n})$ in polar coordinates \cite[Prop. 1.1]{drozhzhinov-z4}, the fact $\hat{\eta}\in\mathcal{S}(\mathbb{R}^{n})$ is a consequence of the relations

$$\left.\left(\frac{\partial}{\partial r}\right)^{k} \varrho(r,\omega)\right|_{r=0}=0,\ \ \ k=0,1,\dots \:; $$
the same relations show that all partial derivatives of $\hat{\eta}$ vanish at the origin, and hence $\eta\in\mathcal{S}_{0}(\mathbb{R}^{n})$.
\end{proof}

 In \cite{holschneider}, (\ref{wnwneq8}) was extended to $\mathcal{S}'_0(\mathbb{R}^{n})$ via duality arguments, the main step being the formula
$$
\int^{\infty}_{0}\int_{\mathbb{R}^{n}}\mathcal{W}_{\psi}f(x,y)\Phi(x,y)\frac{\mathrm{d}x\mathrm{d}y}{y}= \left\langle
f(t),\mathcal{M}_{\bar{\psi}}\Phi\:(t)\right\rangle ,
$$
valid for $\Phi \in
\mathcal{S}(\mathbb{H}^{n+1})$ and  $f\in\mathcal{S}'_{0}(\mathbb{R}^{n})$.
It can be easily extended to the $E$-valued case, as the next proposition shows.

\begin{proposition}
\label{wnwp3.4}
 Let $\mathbf{f}\in\mathcal{S}'_{0}(\mathbb{R}^{n},E)$ and $\psi\in\mathcal{S}_{0}(\mathbb{R}^{n})$. Then
\begin{equation}
\label{wnwneq9}
\int^{\infty}_{0}\int_{\mathbb{R}^{n}}\mathcal{W}_{\psi}\mathbf{f}(x,y)\Phi(x,y)\frac{\mathrm{d}x\mathrm{d}y}{y}= \left\langle
\mathbf{f}(t),\mathcal{M}_{\bar{\psi}}\Phi\:(t)\right\rangle,
\end{equation}
for all $\Phi \in
\mathcal{S}(\mathbb{H}^{n+1}).$
\end{proposition}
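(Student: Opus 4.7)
The plan is to reduce the $E$-valued identity to the scalar one via duality. First, I would verify that both sides of (\ref{wnwneq9}) are well-defined elements of $E$. For the left-hand side, Proposition \ref{wnwp2} ensures that $\mathcal{W}_{\psi}\mathbf{f}=M^{\mathbf{f}}_{\check{\bar{\psi}}}$ is a continuous $E$-valued function of slow growth on $\mathbb{H}^{n+1}$; since $\Phi\in\mathcal{S}(\mathbb{H}^{n+1})$ decays rapidly together with $1/y$, the scalar function $\|\mathcal{W}_{\psi}\mathbf{f}(x,y)\Phi(x,y)\|/y$ is integrable on $\mathbb{H}^{n+1}$, so the integral exists as a Bochner integral in $E$. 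For the right-hand side, the continuity $\mathcal{M}_{\bar{\psi}}:\mathcal{S}(\mathbb{H}^{n+1})\to \mathcal{S}_{0}(\mathbb{R}^{n})$ recalled above and $\mathbf{f}\in\mathcal{S}'_{0}(\mathbb{R}^{n},E)$ guarantee that $\langle\mathbf{f}(t),\mathcal{M}_{\bar{\psi}}\Phi(t)\rangle\in E$.

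Next, I would fix an arbitrary $\mathbf{e}^{\ast}\in E^{\ast}$ and consider the scalar Lizorkin distribution $\mathbf{e}^{\ast}\circ\mathbf{f}\in\mathcal{S}'_{0}(\mathbb{R}^{n})$. By the continuity of $\mathbf{e}^{\ast}$ and the definition (\ref{wnwneq3}) of the wavelet transform,
\begin{equation*}
\bigl\langle \mathbf{e}^{\ast},\mathcal{W}_{\psi}\mathbf{f}(x,y)\bigr\rangle=\mathcal{W}_{\psi}(\mathbf{e}^{\ast}\circ\mathbf{f})(x,y),\qquad (x,y)\in\mathbb{H}^{n+1}.
\end{equation*}
Applying $\mathbf{e}^{\ast}$ to both sides of (\ref{wnwneq9}) then amounts, on the left, to commuting a continuous linear functional with a Bochner integral, which is always legal, and, on the right, to the basic naturality of the distributional pairing in the Banach target.

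At this point the scalar identity already known from Holschneider (stated in the paragraph preceding Proposition \ref{wnwp3.4}) applied to $\mathbf{e}^{\ast}\circ\mathbf{f}$ yields
\begin{equation*}
\int_{0}^{\infty}\!\!\int_{\mathbb{R}^{n}}\mathcal{W}_{\psi}(\mathbf{e}^{\ast}\circ\mathbf{f})(x,y)\,\Phi(x,y)\,\frac{\mathrm{d}x\,\mathrm{d}y}{y}=\bigl\langle\mathbf{e}^{\ast}\circ\mathbf{f}(t),\mathcal{M}_{\bar{\psi}}\Phi(t)\bigr\rangle,
\end{equation*}
which by the two remarks above is precisely $\mathbf{e}^{\ast}$ applied to the two sides of (\ref{wnwneq9}). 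Since this equality holds for every $\mathbf{e}^{\ast}\in E^{\ast}$, the Hahn--Banach theorem forces the two elements of $E$ to coincide.

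The only non-routine point in the plan is the justification that $\langle\mathbf{e}^{\ast},\mathcal{W}_{\psi}\mathbf{f}(x,y)\rangle=\mathcal{W}_{\psi}(\mathbf{e}^{\ast}\circ\mathbf{f})(x,y)$ and the Bochner integrability of the integrand, but both follow directly from the continuous slow-growth estimate provided by Proposition \ref{wnwp2}; thereafter the argument is a standard scalarization/Hahn--Banach reduction.
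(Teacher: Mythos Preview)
Your argument is correct, but it follows a different route from the paper's proof. The paper observes that both sides of (\ref{wnwneq9}), viewed as maps $\mathbf{f}\mapsto\langle\,\cdot\,,\Phi\rangle$, are continuous from $\mathcal{S}'_{0}(\mathbb{R}^{n},E)$ into $E$; it then invokes the nuclearity of $\mathcal{S}'_{0}(\mathbb{R}^{n})$ to conclude that $\mathcal{S}'_{0}(\mathbb{R}^{n})\otimes E$ is dense in $\mathcal{S}'_{0}(\mathbb{R}^{n},E)$ and checks the identity only on elementary tensors $\mathbf{f}=f\mathbf{v}$, where the scalar formula applies directly. In short, the paper reduces on the \emph{domain} side (a dense class of distributions), whereas you reduce on the \emph{target} side (separating points of $E$ by $E^{\ast}$). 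Your approach is arguably more elementary, since it avoids nuclearity and tensor density altogether and needs only the standard commutation of a continuous functional with a Bochner integral; the paper's approach, on the other hand, packages the statement as equality of two continuous linear maps, which aligns with how the transform is treated elsewhere in the article. One minor remark: Proposition~\ref{wnwp2} is stated for $\mathcal{S}'(\mathbb{R}^{n},E)$, not for $\mathcal{S}'_{0}(\mathbb{R}^{n},E)$, so strictly speaking you are using that the same slow-growth estimate holds when $\psi\in\mathcal{S}_{0}(\mathbb{R}^{n})$ and $\mathbf{f}\in\mathcal{S}'_{0}(\mathbb{R}^{n},E)$; the paper makes exactly this observation at the start of its own proof, and the argument is indeed identical.
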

\begin{proof} The same argument used in Proposition \ref{wnwp2} shows that $$\mathcal{W}_{\psi}:\mathcal{S}'_{0}(\mathbb{R}^{n},E)\mapsto\mathcal{S}'(\mathbb{H}^{n+1},E)$$ is continuous. The linear map $T:\mathcal{S}'_{0}(\mathbb{R}^{n},E)\mapsto\mathcal{S}'(\mathbb{H}^{n+1},E)$ given by
$$
\left\langle (T\mathbf{f})(x,y),\Phi(x,y)\right\rangle=\left\langle \mathbf{f}(t),\mathcal{M}_{\bar{\psi}}\Phi\:(t)\right\rangle ,
$$
is continuous as well. Thus, if we show that $\mathcal{W}_{\psi}$ and $T$ coincide on a dense subset of $\mathcal{S}'_{0}(\mathbb{R}^{n},E)$, we would have (\ref{wnwneq9}). The nuclearity \cite{treves} of $\mathcal{S}'_{0}(\mathbb{R}^{n})$ implies that $\mathcal{S}'_{0}(\mathbb{R}^{n})\otimes E\subset\mathcal{S}'_{0}(\mathbb{R}^{n},E)$ is dense; thus, it is enough to verify (\ref{wnwneq9}) for $\mathbf{f}=f\mathbf{v}$, where $f\in\mathcal{S}'_{0}(\mathbb{R}^{n})$ and $\mathbf{v}\in E$. Now, the scalar case implies
\begin{align*}
\left\langle \mathcal{W}_{\psi}(f\mathbf{v})(x,y),\Phi(x,y)\right\rangle & =\left\langle \mathcal{W}_{\psi}f(x,y),\Phi(x,y)\right\rangle\mathbf{v}
\\
&
= \left\langle f(t),\mathcal{M}_{\bar{\psi}}\Phi\:(t)\right\rangle\mathbf{v}
\\
&
=\left\langle f(t)\mathbf{v},\mathcal{M}_{\bar{\psi}}\Phi\:(t)\right\rangle ,
\end{align*}
as required.
\end{proof}

We now extend the definition of the wavelet synthesis operator (\ref{wnwneq6}) to $\mathcal{S}'_{0}(\mathbb{H}^{n+1},E)$. Let $\mathbf{K}\in\mathcal{S}'_{0}(\mathbb{H}^{n+1},E)$. We define $\mathcal{M}_{\psi}:\mathcal{S}'_{0}(\mathbb{H}^{n+1},E)\mapsto\mathcal{S}'_{0}(\mathbb{R}^{n},E)$, a continuous linear map, as
\begin{equation*}
\left\langle
\mathcal{M}_{\psi}\mathbf{K}(t),\rho(t)\right\rangle=\left\langle
\mathbf{K}(x,y),\mathcal{W}_{\bar{\psi}}\rho(x,y)\right\rangle, \ \ \ \rho\in\mathcal{S}_{0}(\mathbb{R}^{n}).
\end{equation*}

 So, we have the ensuing reconstruction formula for the wavelet transform.
\begin{proposition}
\label{wnwth1}
Let $\psi\in\mathcal{S}_{0}(\mathbb{R}^{n})$ be non-degenerate and let $\eta\in\mathcal{S}_{0}(\mathbb{R}^{n})$ be a reconstruction wavelet for it. Then,
\begin{equation}
\label{wnwneq11}
\mathrm{Id}_{\mathcal{S}'_0(\mathbb{R}^{n},E)}=\frac{1}{c_{\psi,\eta}}\mathcal{M}_{\eta}\mathcal{W}_{\psi} .
\end{equation}
Furthermore, we have the desingularization formula,
\begin{equation}
\label{wnwneq12}
\left\langle
\mathbf{f}(t),\rho(t)\right\rangle=\frac{1}{c_{\psi,\eta}}\int^{\infty}_{0}\int_{\mathbb{R}^{n}}\mathcal{W}_{\psi}\mathbf{f}(x,y)\mathcal{W}_{\bar{\eta}}\rho(x,y)\frac{\mathrm{d}x\mathrm{d}y}{y}\: ,
\end{equation}
for all $\mathbf{f}\in\mathcal{S}'_{0}(\mathbb{R}^{n},E)$ and $\rho\in\mathcal{S}_{0}(\mathbb{R}^{n})$.
\end{proposition}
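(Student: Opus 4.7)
My plan is to reduce the $E$-valued identities to the scalar ones by nuclearity plus continuity, exactly as in the proof of Proposition \ref{wnwp3.4}. The formula (\ref{wnwneq11}) is an identity of continuous linear operators on $\mathcal S'_0(\mathbb R^{n},E)$: indeed, $\mathcal{W}_\psi$ maps this space continuously into $\mathcal S'(\mathbb H^{n+1},E)$ by (the argument of) Proposition \ref{wnwp2}, and $\mathcal{M}_\eta$ is continuous by definition, so the composition is continuous; thus it suffices to verify (\ref{wnwneq11}) on a dense subset, and nuclearity of $\mathcal S_0(\mathbb R^{n})$ \cite{treves} guarantees that $\mathcal S'_0(\mathbb R^{n})\otimes E$ is dense in $\mathcal S'_0(\mathbb R^{n},E)$.

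So take $\mathbf{f}=f\mathbf{v}$ with $f\in\mathcal S'_0(\mathbb R^{n})$ and $\mathbf{v}\in E$, and $\rho\in\mathcal S_0(\mathbb R^{n})$. Unwinding the definition of $\mathcal M_\eta$ on $\mathcal S'_0(\mathbb H^{n+1},E)$ and using $\mathcal W_\psi (f\mathbf v)=(\mathcal W_\psi f)\mathbf v$, I get
\begin{equation*}
\langle \mathcal M_\eta \mathcal W_\psi(f\mathbf v),\rho\rangle
=\langle (\mathcal W_\psi f)\mathbf v,\mathcal W_{\bar\eta}\rho\rangle
=\langle \mathcal W_\psi f,\mathcal W_{\bar\eta}\rho\rangle\,\mathbf v,
\end{equation*}
which, by the scalar desingularization formula used on the right of (\ref{wnwneq8}) in \cite{holschneider} (equivalently, by applying Proposition \ref{wnwp3.4} to the scalar $f$ and then invoking the scalar identity $\mathcal M_\eta\mathcal W_\psi f=c_{\psi,\eta}f$), equals $c_{\psi,\eta}\langle f,\rho\rangle \mathbf v=c_{\psi,\eta}\langle f\mathbf v,\rho\rangle$. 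This gives (\ref{wnwneq11}) on the dense subspace, and continuity propagates it to all of $\mathcal S'_0(\mathbb R^{n},E)$.

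The desingularization formula (\ref{wnwneq12}) is then a direct consequence of (\ref{wnwneq11}) combined with Proposition \ref{wnwp3.4}. Namely, using the definition of $\mathcal M_\eta$ and then the slow-growth identification of $\mathcal W_\psi\mathbf f$ as an element of $\mathcal S'(\mathbb H^{n+1},E)$,
\begin{equation*}
\langle\mathbf f,\rho\rangle
=\frac{1}{c_{\psi,\eta}}\langle\mathcal M_\eta\mathcal W_\psi\mathbf f,\rho\rangle
=\frac{1}{c_{\psi,\eta}}\langle\mathcal W_\psi\mathbf f(x,y),\mathcal W_{\bar\eta}\rho(x,y)\rangle,
\end{equation*}
and the right-hand pairing is by convention the iterated integral in (\ref{wnwneq12}).

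The only subtlety I expect is bookkeeping of the constant $c_{\psi,\eta}$: one needs to check that applying the scalar reconstruction to $f$ (rather than using, say, $\bar\psi$ or $\bar\eta$) yields exactly $c_{\psi,\eta}$ and not a complex conjugate. Once one routes the argument through $\mathcal M_\eta\mathcal W_\psi f=c_{\psi,\eta}f$ on $\mathcal S'_0(\mathbb R^{n})$ (which is the dual of the scalar identity (\ref{wnwneq8})), this is automatic, so no real obstacle arises. Everything else is a density/continuity argument of the kind already carried out in Proposition \ref{wnwp3.4}.
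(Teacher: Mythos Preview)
Your proof is correct. It differs slightly from the paper's route: rather than running a fresh density argument, the paper works directly with a general $\mathbf f\in\mathcal S'_0(\mathbb R^n,E)$, applies the definition of $\mathcal M_\eta$, then invokes Proposition~\ref{wnwp3.4} (which has already absorbed the nuclearity/density step) to rewrite $\langle \mathcal W_\psi\mathbf f,\mathcal W_{\bar\eta}\rho\rangle$ as $\langle \mathbf f,\mathcal M_{\bar\psi}\mathcal W_{\bar\eta}\rho\rangle$, and finishes by applying the scalar identity (\ref{wnwneq8}) on the \emph{test-function} side, which requires the small observation $c_{\psi,\eta}=c_{\bar\eta,\bar\psi}$. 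Your approach instead repeats the density pattern of Proposition~\ref{wnwp3.4} and cites the scalar distributional reconstruction $\mathcal M_\eta\mathcal W_\psi=c_{\psi,\eta}\,\mathrm{Id}$ on $\mathcal S'_0(\mathbb R^n)$ from \cite{holschneider}; this sidesteps the constant identity but duplicates an argument already packaged in Proposition~\ref{wnwp3.4}. Either way the content is the same.
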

\begin{proof}
We apply the definition of $\mathcal{M}_{\eta}$, Proposition \ref{wnwp3.4}, (\ref{wnwneq8}), and use the fact that $c_{\psi,\eta}=c_{\bar{\eta},\bar{\psi}}$,
\begin{align*}\frac{1}{c_{\psi,\eta}}\left\langle \mathcal{M}_{\eta}\mathcal{W}_{\psi}\mathbf{f}(t),\rho(t)\right\rangle
&
=\frac{1}{c_{\psi,\eta}}\int^{\infty}_{0}\int_{\mathbb{R}^{n}}\mathcal{W}_{\psi}\mathbf{f}(x,y)\mathcal{W}_{\bar{\eta}}\rho(x,y)\frac{\mathrm{d}x\mathrm{d}y}{y}
\\
&
\\
&
=\frac{1}{c_{\psi,\eta}}\left\langle \mathcal{W}_{\psi}\mathbf{f}(x,y),\mathcal{W}_{\bar{\eta}}\rho(x,y)\right\rangle
\\
&
=\left\langle \mathbf{f}, \frac{1}{c_{\bar{\eta},\bar{\psi}}} \mathcal{M}_{\bar{\psi}}\mathcal{W}_{\bar{\eta}}\rho \right\rangle
\\
&
=\left\langle \mathbf{f}(t),\rho(t)\right\rangle,
\end{align*}
so both (\ref{wnwneq11}) and (\ref{wnwneq12}) have been established.
\end{proof}
The next result provides a second characterization of non-degenerate wavelets from $\mathcal{S}_{0}(\mathbb{R}^{n})$.

\begin{corollary}\label{wnwc2} Let $\psi\in\mathcal{S}_{0}(\mathbb{R}^{n})$. Then, the linear span of the set of dilates and translates of $\bar{\psi}$, $\left\{\bar{\psi}((\:\cdot-x)/y): (x,y)\in\mathbb{H}^{n+1}\right\}$, is dense in $\mathcal{S}_{0}(\mathbb{R}^{n})$ if and only if $\psi$ is a non-degenerate wavelet.
\end{corollary}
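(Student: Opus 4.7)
The plan is to dualize the density statement. By the Hahn--Banach theorem applied in $\mathcal{S}_{0}(\mathbb{R}^{n})$, the linear span of $\{\bar{\psi}((\:\cdot-x)/y) : (x,y)\in\mathbb{H}^{n+1}\}$ is dense in $\mathcal{S}_{0}(\mathbb{R}^{n})$ if and only if the only $f\in\mathcal{S}'_{0}(\mathbb{R}^{n})$ satisfying $\langle f(t),\bar{\psi}((t-x)/y)\rangle=0$ for every $(x,y)\in\mathbb{H}^{n+1}$ is $f=0$. Since $\langle f(t),\bar{\psi}((t-x)/y)\rangle = y^{n}\mathcal{W}_{\psi}f(x,y)$, this reduces the problem to showing that $\mathcal{W}_{\psi}:\mathcal{S}'_{0}(\mathbb{R}^{n})\to C^\infty(\mathbb{H}^{n+1})$ is injective if and only if $\psi$ is non-degenerate.

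For the ``if'' direction, suppose $\psi$ is non-degenerate. By Proposition~\ref{wnwp3} there exists a reconstruction wavelet $\eta\in\mathcal{S}_{0}(\mathbb{R}^{n})$, and applying the scalar version of the desingularization formula (\ref{wnwneq12}) from Proposition~\ref{wnwth1} yields
\[
\langle f(t),\rho(t)\rangle = \frac{1}{c_{\psi,\eta}}\int_{0}^{\infty}\!\!\int_{\mathbb{R}^{n}}\mathcal{W}_{\psi}f(x,y)\,\mathcal{W}_{\bar{\eta}}\rho(x,y)\,\frac{\mathrm{d}x\,\mathrm{d}y}{y}
\]
for every $f\in\mathcal{S}'_{0}(\mathbb{R}^{n})$ and $\rho\in\mathcal{S}_{0}(\mathbb{R}^{n})$. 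If $\mathcal{W}_{\psi}f\equiv 0$, the right side vanishes for every test function, whence $f=0$ in $\mathcal{S}'_{0}(\mathbb{R}^{n})$, and density follows.

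For the ``only if'' direction I argue by contraposition: assume $\psi$ is degenerate, so there exists $\omega_{0}\in\mathbb{S}^{n-1}$ with $\hat{\psi}(r\omega_{0})=0$ for all $r\geq 0$. Fix any $r_{0}>0$ and let $f(t)=e^{ir_{0}\omega_{0}\cdot t}\in\mathcal{S}'(\mathbb{R}^{n})$. Since $\hat{f}$ is a Dirac mass at $r_{0}\omega_{0}\neq 0$, $f$ is not a polynomial, so its image in $\mathcal{S}'_{0}(\mathbb{R}^{n})=\mathcal{S}'(\mathbb{R}^{n})/\mathcal{P}$ is nonzero. A direct Fourier computation gives
\[
\mathcal{W}_{\psi}f(x,y) = \overline{\hat{\psi}(yr_{0}\omega_{0})}\,e^{ir_{0}\omega_{0}\cdot x}=0 \quad \text{for all }(x,y)\in\mathbb{H}^{n+1},
\]
because $yr_{0}>0$. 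This contradicts the Hahn--Banach reformulation of density established above, completing the proof.

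The only delicate points are the correct identification of $\langle f(t),\bar{\psi}((t-x)/y)\rangle$ with a multiple of $\mathcal{W}_{\psi}f(x,y)$ and the verification that the exponential $e^{ir_{0}\omega_{0}\cdot t}$ represents a nontrivial class modulo polynomials; both are routine. There is no substantial obstacle, as the heavy lifting has already been done by Propositions~\ref{wnwp3} and \ref{wnwth1}.
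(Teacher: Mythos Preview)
Your proof is correct and follows essentially the same route as the paper: both directions hinge on the Hahn--Banach reformulation, with the ``if'' direction appealing to the reconstruction formula (Proposition~\ref{wnwth1}) and the ``only if'' direction producing a nonzero $f\in\mathcal{S}'_{0}(\mathbb{R}^{n})$ annihilated by all dilates and translates of $\bar{\psi}$. The only cosmetic difference is the choice of counterexample: the paper uses the distribution whose Fourier transform is integration along the ray $\{r\omega_{0}:r>0\}$, whereas you use the single plane wave $e^{ir_{0}\omega_{0}\cdot t}$, which is slightly simpler and works for exactly the same reason.
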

\begin{proof} The direct implication is a consequence of the Hahn-Banach theorem and the inversion formula (Proposition \ref{wnwth1}).
On the other hand, suppose that there is
$\omega_{0}\in\mathbb{S}^{n-1}$ such that
$\hat{\psi}(r\omega_{0})=0$ for all $r\in\mathbb{R}_{+}$. Let
$f\in\mathcal{S}'(\mathbb{R}^{n})$ be the distribution whose
Fourier transform is given by $$\langle
\hat{f},\rho\rangle=\int_{0}^{\infty}\rho(r\omega_{0})\mathrm{d}r,$$
then $\mathcal{W}_{\psi}f(x,y)=0,$ for all
$(x,y)\in\mathbb{H}^{n+1}$, which implies that $f$ identically
vanishes on the closure of the linear span of the dilates and
translates of $\bar{\psi}$. This yields the converse.
\end{proof}

In analogy to \cite[Thm. 28.0.1]{holschneider}, we can characterize the bounded sets of $\mathcal{S}'_{0}(\mathbb{R}^{n},E)$. One can also characterize some types of convergent nets. The next propositions will be very important for the subsequent sections.
\begin{proposition}
\label{wnwp4} Let $\psi\in\mathcal{S}_{0}(\mathbb{R}^{n})$ be a non-degenerate wavelet. A necessary and sufficient condition for a set $\mathfrak{B}\subset\mathcal{S}'_{0}(\mathbb{R}^{n},E)$ to be bounded for the topology of pointwise convergence (or bounded convergence) of $\mathcal{S}'_{0}(\mathbb{R}^{n},E)$ is the existence of $k,l\in\mathbb{N}$ and $C>0$ such that
\begin{equation}
\label{wnwneq13}
\left\|\mathcal{W}_{\psi}\mathbf{f}(x,y)\right\|\leq C \left( \frac{1}{y}+y\right)^{k}\left(1+\left|x\right|\right)^{l}, \ \ \ \text{for all } \mathbf{f}\in\mathfrak{B}.
\end{equation}
\end{proposition}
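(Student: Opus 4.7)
The statement is an equivalence, so I will attack each direction separately, with the necessity being a quick consequence of earlier results and the sufficiency resting on the desingularization formula (\ref{wnwneq12}).

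For the necessity, I would simply invoke Proposition \ref{wnwp2} applied to $\varphi = \check{\bar{\psi}}$. Any set $\mathfrak{B}\subset\mathcal{S}'_0(\mathbb{R}^n,E)$ that is bounded for the topology of pointwise convergence is also bounded there for bounded convergence (by Banach--Steinhaus, since $\mathcal{S}_0(\mathbb{R}^n)$ is a closed subspace of the Fréchet--Montel space $\mathcal{S}(\mathbb{R}^n)$, hence barrelled). Extending each $\mathbf{f}\in\mathfrak{B}$ to an element of $\mathcal{S}'(\mathbb{R}^n,E)$ (for example, by Hahn--Banach, or simply noting that $\mathcal{W}_\psi\mathbf{f}$ only depends on the class in $\mathcal{S}'_0$), the estimate (\ref{wnwneq5}) of Proposition \ref{wnwp2} yields (\ref{wnwneq13}) directly.

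For the sufficiency, the plan is to use the desingularization formula from Proposition \ref{wnwth1}: choose a reconstruction wavelet $\eta\in\mathcal{S}_0(\mathbb{R}^n)$ for $\psi$ (which exists by Proposition \ref{wnwp3}), so that for every $\mathbf{f}\in\mathfrak{B}$ and every $\rho\in\mathcal{S}_0(\mathbb{R}^n)$,
\begin{equation*}
\langle \mathbf{f}(t),\rho(t)\rangle = \frac{1}{c_{\psi,\eta}}\int_0^\infty\!\!\int_{\mathbb{R}^n} \mathcal{W}_\psi\mathbf{f}(x,y)\,\mathcal{W}_{\bar\eta}\rho(x,y)\,\frac{dx\,dy}{y}.
\end{equation*}
I would then fix an arbitrary bounded set $\mathfrak{C}\subset\mathcal{S}_0(\mathbb{R}^n)$ and use the continuity of the scalar wavelet analysis map $\mathcal{W}_{\bar\eta}:\mathcal{S}_0(\mathbb{R}^n)\to\mathcal{S}(\mathbb{H}^{n+1})$ (from \cite{holschneider}) to conclude that $\{\mathcal{W}_{\bar\eta}\rho:\rho\in\mathfrak{C}\}$ is bounded in $\mathcal{S}(\mathbb{H}^{n+1})$. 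This delivers, for every $k_1,k_2\in\mathbb{N}$, a constant $C_{k_1,k_2}$ with
\begin{equation*}
|\mathcal{W}_{\bar\eta}\rho(x,y)|\leq C_{k_1,k_2}\left(y+\tfrac{1}{y}\right)^{-k_1}(1+|x|)^{-k_2},\ \ \rho\in\mathfrak{C},\ (x,y)\in\mathbb{H}^{n+1}.
\end{equation*}

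Combining this bound with the hypothesis (\ref{wnwneq13}) and choosing $k_1$ and $k_2$ large enough (specifically $k_1>k+1$ and $k_2>l+n$), the integrand in the desingularization formula is dominated in $E$-norm by an integrable function independent of $\mathbf{f}\in\mathfrak{B}$ and $\rho\in\mathfrak{C}$. Hence $\sup\{\|\langle\mathbf{f},\rho\rangle\|:\mathbf{f}\in\mathfrak{B},\,\rho\in\mathfrak{C}\}<\infty$, which is exactly the statement that $\mathfrak{B}$ is bounded for the topology of bounded convergence on $\mathcal{S}'_0(\mathbb{R}^n,E)$. The only mildly delicate point, which I would spell out carefully, is the justification that $\mathcal{W}_{\bar\eta}$ actually sends bounded subsets of $\mathcal{S}_0(\mathbb{R}^n)$ into bounded subsets of $\mathcal{S}(\mathbb{H}^{n+1})$ with the seminorms used in Subsection \ref{wnwdsp}; but this is immediate from Holschneider's continuity result once the matching topologies are identified. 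No new Tauberian input is needed, so I do not anticipate a substantive obstacle.
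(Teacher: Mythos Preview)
Your approach matches the paper's: necessity via the equicontinuity argument of Proposition~\ref{wnwp2}, sufficiency via the desingularization formula (\ref{wnwneq12}) together with $\mathcal{W}_{\bar\eta}\rho\in\mathcal{S}(\mathbb{H}^{n+1})$. The paper shows only pointwise boundedness for the sufficiency and then invokes Banach--Steinhaus, whereas you work directly with a bounded set $\mathfrak{C}$; this is a cosmetic difference.

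One point deserves correction in the necessity direction. Your suggestion to extend each $\mathbf{f}\in\mathfrak{B}$ to $\mathcal{S}'(\mathbb{R}^n,E)$ ``by Hahn--Banach'' is not available: the Hahn--Banach theorem does not generally extend $E$-valued linear maps, and even when extensions exist there is no reason the resulting family should remain bounded in $\mathcal{S}'(\mathbb{R}^n,E)$. The paper avoids this entirely by saying the necessity ``can be established \emph{as in the proof of} Proposition~\ref{wnwp2}'', i.e., one reruns that argument directly in $\mathcal{S}'_0(\mathbb{R}^n,E)$: since $\mathcal{S}_0(\mathbb{R}^n)$ is barrelled, a pointwise-bounded $\mathfrak{B}$ is equicontinuous, and because $\psi\in\mathcal{S}_0(\mathbb{R}^n)$ the test functions $y^{-n}\bar\psi((\,\cdot\,-x)/y)$ lie in $\mathcal{S}_0(\mathbb{R}^n)$, so the seminorm estimate from that proof applies verbatim. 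Your parenthetical alternative (``$\mathcal{W}_\psi\mathbf{f}$ only depends on the class in $\mathcal{S}'_0$'') is the correct observation, but it should be paired with repeating the argument rather than invoking Proposition~\ref{wnwp2} as a black box.
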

\begin{proof}
The necessity can be established as in the proof of Proposition \ref{wnwp2}. For the sufficiency, we only need to show the boundedness of $\mathfrak{B}$ for the topology of pointwise convergence \cite{treves}, in view of the Banach-Steinhaus theorem. Let $\eta$ be a reconstruction wavelet for $\psi$. Let $\rho\in\mathcal{S}_{0}(\mathbb{R}^{n})$, by the wavelet desigularization formula (cf. Proposition \ref{wnwth1}) and (\ref{wnwneq13}),
$$
\left\|\left\langle \mathbf{f},\rho\right\rangle\right\|\leq  \frac{C}{c_{\psi,\eta}}\int^{\infty}_{0}\int_{\mathbb{R}^{n}}\left( \frac{1}{y}+y\right)^{k}\left(1+\left|x\right|\right)^{l}\left|\mathcal{W}_{\bar{\eta}}\rho(x,y)\right|\frac{\mathrm{d}x\mathrm{d}y}{y}\: ,
$$
and the last quantity is uniformly bounded for $\mathbf{f}\in\mathfrak{B}$ since $\mathcal{W}_{\bar{\eta}}\rho\in\mathcal{S}(\mathbb{H}^{n+1})$. This completes the proof.
\end{proof}

\begin{proposition}
\label{wnwp5} Let $\psi\in\mathcal{S}_{0}(\mathbb{R}^{n})$ be a non-degenerate wavelet. Necessary and sufficient conditions for the net $\left\{\mathbf{f}_{\lambda}\right\}_{\lambda\in\mathbb{R}_{+}}$  to be convergent ($\lambda\to\infty$), for the topology of pointwise convergence (or bounded convergence) of $\mathcal{S}'_{0}(\mathbb{R}^{n},E)$, are the existence of the limits (with respect to the norm of $E$)
\begin{equation}
\label{wnwneq14} \lim_{\lambda\to\infty}\mathcal{W}_{\psi}\mathbf{f}_{\lambda}(x,y) , \ \ \ \text{for each } (x,y)\in\mathbb{H}^{n+1},
\end{equation}
and the existence of $k,l\in\mathbb{N}$ and $C,\lambda_{0}>0$ such that
\begin{equation}
\label{wnwneq15}
\left\|\mathcal{W}_{\psi}\mathbf{f}_{\lambda}(x,y)\right\|\leq C \left( \frac{1}{y}+y\right)^{k}\left(1+\left|x\right|\right)^{l} , \ \ \ \text{for all } \lambda_{0}\leq\lambda.
\end{equation}
In such a case, the limit generalized function $\mathbf{h}=\lim_{\lambda\to\infty}\mathbf{f}_{\lambda}$ satisfies $$\mathcal{W}_{\psi}\mathbf{h}(x,y)=\lim_{\lambda\to\infty}\mathcal{W}_{\psi}\mathbf{f}_{\lambda}(x,y),$$
uniformly over compact subsets of $\mathbb{H}^{n+1}$.
\end{proposition}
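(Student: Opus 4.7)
The plan is to mimic the proof of Proposition \ref{wnwp4}, adapting the argument from boundedness to convergence by combining the desingularization formula from Proposition \ref{wnwth1} with the dominated convergence theorem.

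For necessity, suppose $\mathbf{f}_{\lambda}\to\mathbf{h}$ in $\mathcal{S}'_{0}(\mathbb{R}^{n},E)$ for the topology of pointwise (or bounded) convergence. Fix $(x,y)\in\mathbb{H}^{n+1}$; the test function $t\mapsto y^{-n}\bar{\psi}((t-x)/y)$ belongs to $\mathcal{S}_{0}(\mathbb{R}^{n})$, so evaluation at this test function yields the limit (\ref{wnwneq14}), with value $\mathcal{W}_{\psi}\mathbf{h}(x,y)$. Since every convergent net is bounded from some index onward, the set $\{\mathbf{f}_\lambda : \lambda\geq \lambda_0\}$ is bounded in $\mathcal{S}'_{0}(\mathbb{R}^{n},E)$, and Proposition \ref{wnwp4} immediately delivers the estimate (\ref{wnwneq15}).

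For sufficiency, assume (\ref{wnwneq14}) and (\ref{wnwneq15}). The estimate (\ref{wnwneq15}) together with Proposition \ref{wnwp4} already implies that $\{\mathbf{f}_{\lambda}\}_{\lambda\geq\lambda_0}$ is bounded in $\mathcal{S}'_{0}(\mathbb{R}^{n},E)$. Pick a reconstruction wavelet $\eta\in\mathcal{S}_{0}(\mathbb{R}^{n})$ for $\psi$ (Proposition \ref{wnwp3}) and apply the desingularization formula (\ref{wnwneq12}) to each $\mathbf{f}_{\lambda}$ and an arbitrary $\rho\in\mathcal{S}_{0}(\mathbb{R}^{n})$:
\begin{equation*}
\left\langle\mathbf{f}_{\lambda},\rho\right\rangle=\frac{1}{c_{\psi,\eta}}\int^{\infty}_{0}\int_{\mathbb{R}^{n}}\mathcal{W}_{\psi}\mathbf{f}_{\lambda}(x,y)\,\mathcal{W}_{\bar{\eta}}\rho(x,y)\,\frac{\mathrm{d}x\mathrm{d}y}{y}.
\end{equation*}
Since $\mathcal{W}_{\bar{\eta}}\rho\in\mathcal{S}(\mathbb{H}^{n+1})$, the function $(1/y+y)^{k}(1+|x|)^{l}|\mathcal{W}_{\bar{\eta}}\rho(x,y)|/y$ is integrable on $\mathbb{H}^{n+1}$; combining this with (\ref{wnwneq14}) and (\ref{wnwneq15}), the dominated convergence theorem (applied componentwise or via the norm after pairing with linear functionals on $E$) gives the existence of $\lim_{\lambda\to\infty}\langle\mathbf{f}_{\lambda},\rho\rangle$ in $E$. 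Hence $\{\mathbf{f}_{\lambda}\}$ converges in $\mathcal{S}'_{0}(\mathbb{R}^{n},E)$ for the topology of pointwise convergence; call the limit $\mathbf{h}$. By the Banach--Steinhaus theorem applied on the Montel space $\mathcal{S}_{0}(\mathbb{R}^{n})$, pointwise and bounded convergence coincide on the bounded set $\{\mathbf{f}_{\lambda}\}_{\lambda\geq\lambda_0}$, so $\mathbf{f}_\lambda\to\mathbf{h}$ in the topology of bounded convergence as well.

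It remains to verify that $\mathcal{W}_{\psi}\mathbf{f}_{\lambda}(x,y)\to\mathcal{W}_{\psi}\mathbf{h}(x,y)$ uniformly on each compact $K\subset\mathbb{H}^{n+1}$. The family $\{y^{-n}\bar{\psi}((\cdot-x)/y):(x,y)\in K\}$ is a bounded (in fact relatively compact) subset of $\mathcal{S}_{0}(\mathbb{R}^{n})$, and $\mathbf{f}_{\lambda}\to\mathbf{h}$ uniformly on bounded subsets of the test space; evaluating yields the claimed uniform convergence. The only delicate point is the interchange of limit and integral in the sufficiency step, but the global estimate (\ref{wnwneq15}) supplies the required integrable majorant, so no real obstacle arises.
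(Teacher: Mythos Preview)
Your proof is correct, but it takes a somewhat different route from the paper in the sufficiency direction. The paper argues as follows: the estimate (\ref{wnwneq15}) is, by Proposition \ref{wnwp4}, equivalent to equicontinuity of $\{\mathbf{f}_\lambda\}_{\lambda\geq\lambda_0}$; then a standard functional-analytic result (equicontinuous nets converge iff they converge on a dense subset) reduces the problem to checking convergence on the linear span of $\{\bar{\psi}((\cdot-x)/y):(x,y)\in\mathbb{H}^{n+1}\}$, which is dense by Corollary \ref{wnwc2}; and (\ref{wnwneq14}) is precisely this. You instead feed the desingularization formula (\ref{wnwneq12}) into the Bochner dominated convergence theorem to obtain $\lim_\lambda\langle\mathbf{f}_\lambda,\rho\rangle$ for every $\rho\in\mathcal{S}_0(\mathbb{R}^n)$ directly. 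Both arguments rely on the same two ingredients (a reconstruction wavelet and the uniform bound (\ref{wnwneq15})), and both are short; the paper's version is slightly more abstract and avoids writing down the integral, while yours is more hands-on. One minor remark: there is no need to pass ``componentwise or via linear functionals''---the dominated convergence theorem holds for Bochner integrals in any Banach space, and (\ref{wnwneq14})--(\ref{wnwneq15}) supply exactly the pointwise convergence and integrable majorant it requires.
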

\begin{proof}
By Proposition \ref{wnwp4}, (\ref{wnwneq15}) is itself equivalent
to the boundedness of $\left\{\mathbf{f}_{\lambda}\right\}$ for
large values of $\lambda$, which in turn is equivalent to the
equicontinuity of the set for large values of $\lambda$
(Banach-Steinhaus theorem). Because of the standard result
\cite[p. 356]{treves}, the convergence of
$\left\{\mathbf{f}_{\lambda}\right\}_{\lambda\in\mathbb{R}_{+}}$
is then equivalent to the pointwise convergence of the net of
linear mappings over a dense subset of
$\mathcal{S}_{0}(\mathbb{R}^{n})$. But (\ref{wnwneq14}) gives
precisely this convergence over the linear span of
$$
\left\{\bar{\psi}((\:\cdot-x)/y):
(x,y)\in\mathbb{H}^{n+1}\right\},
$$ which is actually dense
(Corollary \ref{wnwc2}). The last property follows by the
definition of convergence in $\mathcal{S}'_{0}(\mathbb{R}^{n},E)$,
since if $K\subset\mathbb{H}^{n+1}$ is a compact set, then
$\left\{y^{-n}\bar{\psi}((\:\cdot-x)/y): (x,y)\in K\right\}$ is
compact in $\mathcal{S}_{0}(\mathbb{R}^{n})$.
\end{proof}

\newpage

\section{Abelian Results}
\label{wnwa}
We present in this section an Abelian proposition for the transform $M^{\mathbf{f}}_{\varphi}$. Its Tauberian counterparts will be the main subject of the next two sections. This Abelian result is essentially due to Drozhzhinov and Zavialov \cite{drozhzhinov-z2,drozhzhinov-z3} (cf. \cite{vindas-estrada6,vindas-pilipovic-rakic}), but we refine their results by adding some information about uniformity in the asymptotics. Let $x_{0}\in\mathbb{R}^{n}$ and $0\leq\vartheta<\pi/2$, we denote by $C_{x_{0},\vartheta}$ the cone of angle
$\vartheta$ in $\mathbb{H}^{n+1}$ with vertex at $x_{0}$, namely,
$$
C_{x_{0},\vartheta}=\left\{  (x,y)\in\mathbb{H}^{n+1}:\left\vert x-x_{0}\right\vert \leq(\tan\vartheta)y\right\}=(x_{0},0)+C_{0,\vartheta}.
$$

\begin{proposition}
\label{wnwth2}
Let $L$ be slowly varying at the origin (resp. at infinity) and let $\mathbf{f}\in\mathcal{S}'(\mathbb{R}^{n},E)$.
\begin{itemize}
\item [(i)] Assume that $\mathbf{f}$ is weak-asymptotically bounded of degree $\alpha$ at the point $x_{0}$ (resp. at infinity) with respect to $L$ in $\mathcal{S}'(\mathbb{R}^{n},E)$. Then, there exist $k,l\in\mathbb{N}$, $C>0$ and $\varepsilon_{0}>0$ (resp. $\lambda_{0}>0$) such that for all $(x,y)\in\mathbb{H}^{n+1}$
\begin{equation}
\label{wnwaeq1}
\left\|M_{\varphi}^\mathbf{f}(x_{0}+\varepsilon x,\varepsilon y)\right\|\leq C \varepsilon^{\alpha}L(\varepsilon) \left( \frac{1}{y}+y\right)^{k}\left(1+\left|x\right|\right)^{l} , \ \ 0<\varepsilon\leq\varepsilon_{0},
\end{equation}
\begin{equation*}
\left(\text{resp. }\left\|M_{\varphi}^\mathbf{f}(\lambda x,\lambda y)\right\|\leq C \lambda^{\alpha}L(\lambda) \left( \frac{1}{y}+y\right)^{k}\left(1+\left|x\right|\right)^{l} , \ \ \lambda_{0}\leq\lambda\  \right).
\end{equation*}
\item [(ii)] If $\mathbf{f}\in\mathcal{S}'(\mathbb{R}^{n},E)$ has the weak-asymptotic behavior
$\mathbf{f}\left(x_0+\varepsilon t\right)\sim \varepsilon^{\alpha}L(\varepsilon)\mathbf{g}(t)$ as  $\varepsilon\to0^{+}$
(resp. $\mathbf{f}\left(\lambda t\right)\sim\lambda^{\alpha}L(\lambda)\mathbf{g}(t)$ as $\lambda\to\infty$)
in $\mathcal{S}'(\mathbb{R}^{n},E)$, and if $0\leq\vartheta<\pi/2$,
then
\begin{equation}
\label{wnwaeq2}
\lim
_{
_{\substack{\left( x,y\right)\rightarrow\left( 0,0\right)
\\ \left( x,y\right)  \in C_{0,\vartheta} }
}}
\left|(x,y)\right|^{-\alpha}\left\|\frac{1}{L\left(\left|(x,y)\right|\right)}
M_{\varphi}^{\mathbf{f}}(x_{0}+x,y)-M^{\mathbf{g}}_{\varphi}\left(x,y\right)\right\|=0
\end{equation}
$$\left(\mbox{resp.}\:
\lim _{ _{\substack{\left|\left(
x,y\right)\right|\rightarrow\infty \\ \left( x,y\right)  \in
C_{0,\vartheta} } }}
\left|(x,y)\right|^{-\alpha}\left\|\frac{1}{L\left(\left|(x,y)\right|\right)}
M_{\varphi}^{\mathbf{f}}(x,y)-M^{\mathbf{g}}_{\varphi}\left(x,y\right)\right\|=0\right)
;
$$
in particular, for each fixed $(x,y)\in\mathbb{H}^{n+1}$,
\begin{equation}
\label{wnwaeq3} \lim_{\varepsilon\to 0^{+}}
\frac{1}{\varepsilon^{\alpha}L\left(\varepsilon\right)}M_{\varphi}^{\mathbf{f}}(x_{0}+\varepsilon
x,\varepsilon y)=M^{\mathbf{g}}_{\varphi}\left(x,y\right) \ \ \
\text{in } E
\end{equation}
\begin{equation*}
\left(\mbox{resp. } \lim_{\lambda\to\infty} \frac{1}{\lambda^{\alpha}L\left(\lambda\right)}M_{\varphi}^{\mathbf{f}}(\lambda x,\lambda y)=M^{\mathbf{g}}_{\varphi}(x,y)\right).
\end{equation*}
\end{itemize}

\end{proposition}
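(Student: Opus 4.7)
The plan is to reduce both claims to the continuity and uniform-boundedness properties of the regularizing transform given in Proposition~\ref{wnwp2}, together with the Banach--Steinhaus theorem. A short change of variables in the convolution defining $M^{\mathbf{f}}_{\varphi}$ produces the scaling identity
\begin{equation*}
M^{\mathbf{f}}_{\varphi}(x_{0}+\varepsilon x,\varepsilon y)=\bigl\langle \mathbf{f}(x_{0}+\varepsilon s),\varphi_{y}(x-s)\bigr\rangle = M^{\mathbf{f}(x_{0}+\varepsilon\,\cdot)}_{\varphi}(x,y),
\end{equation*}
and analogously $M^{\mathbf{f}}_{\varphi}(\lambda x,\lambda y)=M^{\mathbf{f}(\lambda\,\cdot)}_{\varphi}(x,y)$. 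Once this identity is available, everything is driven by standard functional-analytic machinery.

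For part (i), the hypothesis is precisely that the set
$\mathfrak{B}=\{\varepsilon^{-\alpha}L(\varepsilon)^{-1}\mathbf{f}(x_{0}+\varepsilon\,\cdot):0<\varepsilon\leq1\}$
is bounded in $\mathcal{S}'(\mathbb{R}^{n},E)$ for the topology of pointwise convergence. Proposition~\ref{wnwp2} then supplies $k,l\in\mathbb{N}$ and $C>0$ with $\|M^{\mathbf{g}}_{\varphi}(x,y)\|\leq C(1/y+y)^{k}(1+|x|)^{l}$ for every $\mathbf{g}\in\mathfrak{B}$; substituting the scaling identity yields (\ref{wnwaeq1}) with $\varepsilon_{0}=1$. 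The case at infinity is handled identically with $\lambda_{0}=1$.

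For part (ii), the hypothesis reads $\varepsilon^{-\alpha}L(\varepsilon)^{-1}\mathbf{f}(x_{0}+\varepsilon\,\cdot)\to\mathbf{g}$ in $\mathcal{S}'(\mathbb{R}^{n},E)$. Convergent nets are bounded, so by Banach--Steinhaus the net is equicontinuous, and hence the convergence is uniform on compact subsets of $\mathcal{S}(\mathbb{R}^{n})$. To derive (\ref{wnwaeq2}), parametrize $(x,y)=r(u,v)$ with $r=|(x,y)|$ and $(u,v)$ ranging over the compact set $K_{\vartheta}:=\mathbb{S}^{n}\cap C_{0,\vartheta}$; on $K_{\vartheta}$ we have $v\geq\cos\vartheta>0$, so the map $(u,v)\mapsto\varphi_{v}(u-\,\cdot)$ is continuous into $\mathcal{S}(\mathbb{R}^{n})$ and its image is a compact set of test functions. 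Combining the scaling identity with $\varepsilon=r$ and the homogeneity of degree $\alpha$ of $\mathbf{g}$, which gives $M^{\mathbf{g}}_{\varphi}(ru,rv)=r^{\alpha}M^{\mathbf{g}}_{\varphi}(u,v)$, the norm appearing in (\ref{wnwaeq2}) equals
\begin{equation*}
\left\|\left\langle \tfrac{\mathbf{f}(x_{0}+rs)}{r^{\alpha}L(r)}-\mathbf{g}(s),\,\varphi_{v}(u-s)\right\rangle\right\|,
\end{equation*}
which tends to $0$ uniformly for $(u,v)\in K_{\vartheta}$ by the uniform-on-compacta convergence just established. Fixing $(x,y)\in\mathbb{H}^{n+1}$ gives (\ref{wnwaeq3}), and the case at infinity proceeds verbatim. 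The only technical step to be carefully checked is the continuity of $(u,v)\mapsto\varphi_{v}(u-\,\cdot)$ in the Fr\'echet topology of $\mathcal{S}(\mathbb{R}^{n})$; here the lower bound $v\geq\cos\vartheta>0$ on $K_{\vartheta}$ is essential (otherwise the Schwartz seminorms blow up as $v\to0^{+}$), but this is a routine seminorm estimate and is the only place where the cone constraint intervenes.
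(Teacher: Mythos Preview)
Your proof is correct and follows essentially the same strategy as the paper: part (i) is obtained by feeding the bounded family $\{\varepsilon^{-\alpha}L(\varepsilon)^{-1}\mathbf{f}(x_{0}+\varepsilon\,\cdot)\}$ into Proposition~\ref{wnwp2}, and part (ii) by observing that $\{\varphi_{v}(u-\,\cdot):(u,v)\in K_{\vartheta}\}$ is a compact set of test functions and invoking Banach--Steinhaus to upgrade pointwise to uniform convergence. Your derivation of (\ref{wnwaeq3}) is in fact slightly cleaner than the paper's: you simply evaluate the weak-asymptotic hypothesis at the single test function $\varphi_{y}(x-\,\cdot)$, whereas the paper re-parametrizes $(x,y)$ on a sphere and appeals once more to the slow variation of $L$ and the homogeneity of $\mathbf{g}$ to reach the same conclusion.
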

\begin{proof} The estimate (\ref{wnwaeq1}) from Part (i) follows immediately from Proposition \ref{wnwp2} by considering the bounded set $$\left\{\frac{1}{\varepsilon^{\alpha}L(\varepsilon)}\mathbf{f}(x_{0}+\varepsilon\: \cdot \:):0<\varepsilon\leq 1\right\}\ \
\left(\mbox{resp. }
\left\{\frac{1}{\lambda^{\alpha}/L(\lambda)}\:\mathbf{f}(\lambda\:
\cdot\:): 1\leq\lambda\right\}\right).$$ For (ii), we may assume
that $x_{0}=0$. Next, observe that $(x,y)\in C_{0,\vartheta}$ can
be written as $x=r \xi \mbox{ and } y=r \cos\theta,$ where
$r>0$, $\xi\in\mathbb{R}^{n}$, $\left|\xi\right|=\sin \theta$ and
$0\leq\theta\leq \vartheta$. So,
$$
M^{\mathbf{f}}_{\varphi}(r\xi,r\cos\theta)= \left\langle
\mathbf{f}(rt),\frac{1}{(\cos \theta)^{n}} \varphi\left(\frac{\xi-t}{\cos\theta}\right)
\right\rangle.
$$
Since  
$$
\mathfrak{C}=\left\{ \frac{1}{\cos\theta}\varphi\left(\frac{\xi-\:
\cdot\:}{\cos\theta}\right): \left|\xi\right|=\sin \theta,\ 0\leq\theta\leq\vartheta\right\}
$$ 
is a compact set in $\mathcal{S}(\mathbb{R}^{n})$, the
Banach-Steinhaus theorem implies that the weak-asymptotic behavior
of $\mathbf{f}$ holds uniformly when evaluated at test functions
of $\mathfrak{C}$. Then, as $r\to0^{+}$ (resp. $r\to\infty$),
$$\frac{1}{r^{\alpha}L(r)}M^{\mathbf{f}}_{\varphi}(r\xi,r\cos\theta)\to \left\langle
\mathbf{g}(t),\frac{1}{(\cos \theta)^{n}} \varphi\left(\frac{\xi-t}{\cos\theta}\right)
\right\rangle= M^{\mathbf{g}}_{\varphi}(\xi,\cos\theta),
$$
uniformly in $\left|\xi\right|=\sin \theta$ and $0\leq\theta\leq\vartheta$. Thus, we have shown (\ref{wnwaeq2}). On the other hand, if again $x=r \xi$ and $y=r \cos\theta$, where $r,\xi$ and $\theta$ are fixed, we have that, as $h\to0^{+}$ (resp. $h\to\infty$),
\begin{align*}
 M^{\mathbf{f}}_{\varphi}(hx,hy) &\sim (rh)^{\alpha}L(hr)\left\langle
\mathbf{g}(t),\frac{1}{(\cos \theta)^{n}} \varphi\left(\frac{\xi-t}{\cos\theta}\right)
\right\rangle
\\
&=h^{\alpha}L(hr)\left\langle
\mathbf{g}(rt),\frac{1}{(\cos \theta)^{n}} \varphi\left(\frac{\xi-t}{\cos\theta}\right)
\right\rangle
\\
&
\sim h^{\alpha}L(h)M^{\mathbf{g}}_{\varphi}(x,y)
, \ \ \ \text{in } E,
\end{align*}
because of the homogeneity of $\mathbf{g}$ and the fact that $L$ is slowly varying. Hence, (\ref{wnwaeq3}) has been proved.
\end{proof}

\newpage

\section{Wavelet Tauberian Characterization of  Weak-asymptotics in $\mathcal{S}_{0}'(\mathbb{R}^{n},E)$}
\label{wnwtc}
The purpose of this section is to characterize the weak-asymptotic behavior in the space $\mathcal{S}_{0}'(\mathbb{R}^{n},E)$ in terms of the asymptotic behavior of the wavelet transform with respect to non-degenerate wavelets from $\mathcal{S}_{0}(\mathbb{R}^{n})$. Our characterization is of Tauberian character and it is related to (\ref{wnwaeq1}) and (\ref{wnwaeq3}) for the wavelet transform. Notice that the results below extend those from \cite{vindas-pilipovic-rakic}. Related results in terms of orthogonal wavelet expansions have been considered in \cite{pilipovic-teofanov,saneva-vindas}.

We begin with a preliminary proposition which shows that the conditions (\ref{wnwaeq1}) and (\ref{wnwaeq3}) are equivalent to (apparently) weaker ones.

\begin{proposition}
\label{wnwp5.1}
Let $\mathbf{f}\in\mathcal{S}'(\mathbb{R}^{n},E)$, $\varphi\in\mathcal{S}(\mathbb{R}^{n})$, and let $L$ be slowly varying at the origin (resp. at infinity). Then,
\begin{enumerate}
\item [(i)] The estimate (\ref{wnwaeq1}) is equivalent to one of the form ($k$ may be a different exponent)
\begin{equation}
\label{wnweq5.1}
\limsup_{\varepsilon\rightarrow0^+}\sup_{\left|x\right|^2+y^2=1,\:y>0}\frac{y^k}{\varepsilon^{\alpha}L(\varepsilon)}\left\|M^{\mathbf{f}}_{\varphi}\left(x_0+\varepsilon
x,\varepsilon y\right)\right\|<\infty
\end{equation}
\begin{equation*}
\left(\mbox{resp. }\limsup_{\lambda\rightarrow\infty}\sup_{\left|x\right|^2+y^2=1,\:y>0}\frac{y^k}{\lambda^{\alpha}L(\lambda)}\left\|M^{\mathbf{f}}_{\varphi}\left(\lambda
x,\lambda y\right)\right\|<\infty\right).
\end{equation*}
\item[(ii)] If
\begin{equation}
\label{wnweq5.2}
\lim_{\varepsilon\to0^{+}}\frac{1}{\varepsilon^{\alpha}L(\varepsilon)}M_{\varphi}^\mathbf{f}(x_{0}+\varepsilon x,\varepsilon y)=M_{x,y}\in E
\end{equation}
$$
\left(\mbox{resp. } \lim_{\lambda\to\infty}\frac{1}{\lambda^{\alpha}L(\lambda)}M_{\varphi}^\mathbf{f}(\lambda x,\lambda y)=M_{x,y}\in E\right)
$$
exists for each $(x,y)\in \mathbb{H}^{n+1}\cap\mathbb{S}^{n}$, then it exists for all $(x,y)\in \mathbb{H}^{n+1}$.
\end{enumerate}
\end{proposition}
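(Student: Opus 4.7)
For part (i), the forward implication is immediate: restricting (\ref{wnwaeq1}) to the half-sphere $|x|^{2}+y^{2}=1$, $y>0$, one has $(1+|x|)^{l}\leq 2^{l/2}$ and, since $0<y\leq 1$, $(1/y+y)^{k}\leq (2/y)^{k}$, which absorbs into a bound of type (\ref{wnweq5.1}).

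For the converse direction, the plan is to use polar decomposition in $\mathbb{H}^{n+1}$ together with Karamata--Potter estimates for slowly varying functions. Given $(x,y)\in\mathbb{H}^{n+1}$, write $(x,y)=r(\xi,\eta)$ with $r=|(x,y)|$ and $(\xi,\eta)\in\mathbb{H}^{n+1}\cap\mathbb{S}^{n}$, so that $\eta=y/r$. The argument splits according to whether the rescaled parameter $\varepsilon r$ falls inside or outside the interval $(0,\varepsilon_{0}]$ where (\ref{wnweq5.1}) is effective. In the first regime $\varepsilon r\leq \varepsilon_{0}$, apply (\ref{wnweq5.1}) with $\varepsilon$ replaced by $\varepsilon r$ to get
\[
\bigl\|M^{\mathbf{f}}_{\varphi}(x_{0}+\varepsilon x,\varepsilon y)\bigr\| \leq C (\varepsilon r)^{\alpha}L(\varepsilon r)\,\eta^{-k};
\]
then use Potter's bound $L(\varepsilon r)/L(\varepsilon)\leq A_{\delta}(r^{\delta}+r^{-\delta})$, the identity $\eta^{-k}=(r/y)^{k}$, and the elementary inequalities $r\leq 2(1+|x|)(1/y+y)$ and $1/r\leq 1/y\leq 1/y+y$ to convert everything into the polynomial form required by (\ref{wnwaeq1}). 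In the second regime $\varepsilon r>\varepsilon_{0}$, fall back on the slow-growth bound from Proposition \ref{wnwp2}, namely $\|M^{\mathbf{f}}_{\varphi}(x,y)\|\leq C(1/y+y)^{k''}(1+|x|)^{l''}$; the harmful factors $\varepsilon^{-m}$ arising after dividing by $\varepsilon^{\alpha}L(\varepsilon)$ are absorbed through $\varepsilon^{-1}\leq r/\varepsilon_{0}$ (turning them into polynomial growth in $r$), while the factor $1/L(\varepsilon)$ is tamed by the companion Potter bound $1/L(\varepsilon)\leq A'_{\delta}\varepsilon^{-\delta}$. The main obstacle, and the only slightly delicate step, is this second regime: one has to verify that every occurrence of $\varepsilon^{-1}$ can indeed be traded for a power of $r$ while keeping all the remaining factors polynomial in $(1+|x|)$ and $(1/y+y)$; choosing $\delta$ small and $k_{1},l_{1}$ sufficiently large absorbs everything. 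The case at infinity is handled by the symmetric argument, swapping the roles of small and large and splitting according to whether $\lambda r\geq \lambda_{0}$ or not.

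Part (ii) is then a short radial-scaling argument. For $(x,y)\in\mathbb{H}^{n+1}$, write again $(x,y)=r(\xi,\eta)$ with $(\xi,\eta)\in\mathbb{H}^{n+1}\cap\mathbb{S}^{n}$ and use the elementary identity
\[
\frac{1}{\varepsilon^{\alpha}L(\varepsilon)}M^{\mathbf{f}}_{\varphi}(x_{0}+\varepsilon x,\varepsilon y)= r^{\alpha}\,\frac{L(\varepsilon r)}{L(\varepsilon)}\cdot\frac{1}{(\varepsilon r)^{\alpha}L(\varepsilon r)}M^{\mathbf{f}}_{\varphi}\bigl(x_{0}+(\varepsilon r)\xi,(\varepsilon r)\eta\bigr).
\]
As $\varepsilon\to 0^{+}$, the factor $L(\varepsilon r)/L(\varepsilon)\to 1$ by the very definition of slow variation, and the last factor tends to $M_{\xi,\eta}$ by hypothesis, so the limit exists and equals $r^{\alpha}M_{\xi,\eta}$; the infinity case is identical. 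This part is conceptually clean and only uses slow variation plus a change of scale.
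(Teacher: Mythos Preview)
Your proof plan is correct and follows essentially the same route as the paper: polar decomposition $(x,y)=r(\xi,\eta)$, a two-regime split according to whether $\varepsilon r$ lies in the range where (\ref{wnweq5.1}) applies, Potter-type bounds for $L(\varepsilon r)/L(\varepsilon)$ and $1/L(\varepsilon)$ in the first and second regimes respectively, and falling back on the slow-growth estimate of Proposition \ref{wnwp2} in the second regime. Part (ii) is identical to the paper's argument.
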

\begin{proof} By translating, we may assume that $x_{0}=0.$

\emph{Part (i).} We only need to show that (\ref{wnweq5.1}) implies (\ref{wnwaeq1}). Our assumption is that there are constants $C_{1},h_{0}>0$ such that
\begin{equation*}
\left\|M^{\mathbf{f}}_{\varphi}\left(h
\xi,h \cos\vartheta\right)\right\|< \frac{C_{1}}{(\cos\vartheta)^{k}}h^{\alpha}L(h),
\end{equation*}
for all $\left|\xi\right|^{2}+(\cos\vartheta)^{2}=1$ and  $0<h\leq h_{0}$ (resp. $h_{0}\leq h$).

We can assume that $1+\left|\alpha\right|\leq k$ and $h_{0}< 1$ (resp. $1<h_{0}$). Potter's estimate \cite[p. 25]{bingham} implies that we may assume that
\begin{align}
\label{wnweq5.3}
\frac{L(h r)}{L(h)}< C_{2}\frac{(1+r)^{2}}{r} ,  \ \ \  & \text{for }h,h r\in(0,h_{0}]
\\
&(\mbox{resp. } h,h r\in[h_{0},\infty)\:).\nonumber
\end{align}
In addition, since $1/L(h)=o(h^{-1})$ as $h\to0^{+}$ (resp. $1/L(h)=o(h)$, as $h\to\infty$) \cite{bingham,seneta}, we can assume
\begin{equation}
\label{wnweq5.4}
\frac{1}{L(h)}<\frac{C_{3}}{h}, \mbox{ for } 0<h\leq h_{0}
\
\left( \mbox{resp. }  \frac{1}{L(h)}<C_{3}h, \mbox{ for } h_{0}\leq h\: \right).
\end{equation}
After this preparation, we are ready to give the proof. For $(x,y)\in\mathbb{H}^{n+1}$ write $x=r\xi$ and $y=r\cos\vartheta$, with $r=\left|(x,y)\right|$. We always keep $h\leq h_{0}$ (resp. $h_{0}\leq h$). If $rh\leq h_{0}$ (resp. $h_{0}\leq rh$), we have that
\begin{align*}
\left\|M^{\mathbf{f}}_{\varphi}\left(h r
\xi,h r\cos\vartheta\right)\right\|&<\frac{C_{1}}{y^{k}}h^{\alpha} L(h r) r^{\alpha+k}
<C_{1}C_{2}h^{\alpha} L(h)\frac{(1+r)^{\alpha+k+1}}{y^{k}}
\\
&
<C_{4}h^{\alpha}L(h)\left(\frac{1}{y}+y\right)^{\alpha+2k+1} \left(1+\left|x\right|\right)^{\alpha+k+1},
\end{align*}
with $C_{4}=2^{\alpha+k+1}C_{1}C_{2}$. We now analyze the case $h_{0}<h r$ (resp. $hr<h_{0}$). Proposition \ref{wnwp2} implies the existence of $k_{1},l_{1}\in\mathbb{N}$, $k_{1}\geq k,$ and $C_{5}$ such that
\begin{align*}
\left\|M^{\mathbf{f}}_{\varphi}\left(h x,h y\right)\right\|&<C_{5}\left(\frac{1}{h y}+h y\right)^{k_{1}}\left(1+h\left|x\right|\right)^{l_{1}}
\\
&< C_{5} h^{\alpha}L(h)\left(\frac{1}{ y}+ y\right)^{k_{1}}\left(1+\left|x\right|\right)^{l_{1}} \frac{1}{h^{\alpha+k_{1}}L(h)}
\\
&\left(\mbox{resp. }< C_{5} h^{\alpha}L(h)\left(\frac{1}{ y}+ y\right)^{k_{1}}\left(1+\left|x\right|\right)^{l_{1}} \frac{h^{k_{1}+l_{1}}}{h^{\alpha}L(h)}\right)
\\
&
<C_{3}C_{5} h^{\alpha}L(h)\left(\frac{1}{ y}+ y\right)^{k_{1}}\left(1+\left|x\right|\right)^{l_{1}} \left(\frac{r}{h_{0}}\right)^{k_{1}+\alpha+1}
\\
&\left(\mbox{resp. }< C_{3}C_{5} h^{\alpha}L(h)\left(\frac{1}{ y}+ y\right)^{k_{1}}\left(1+\left|x\right|\right)^{l_{1}} \left(\frac{h_{0}}{r}\right)^{k_{1}+l_1-\alpha+1}\right)
\\
&
<C_{6}h^{\alpha}L(h)\left(\frac{1}{ y}+ y\right)^{\alpha+2k_{1}+1}\left(1+\left|x\right|\right)^{\alpha+l_{1}+k_{1}+1}
\\
&
\left(\mbox{resp. }<C_{6}h^{\alpha}L(h)\left(\frac{1}{ y}+ y\right)^{2k_{1}+l_1-\alpha+1}\left(1+\left|x\right|\right)^{l_{1}}\right),
\end{align*}
with $C_{6}=C_{3}C_{5}(2/h_{0})^{\alpha+k_{1}+1}$ (resp. $C_{6}=C_{3}C_{5}h_{0}^{k_{1}+l_1-\alpha+1}$).
Therefore, if $C=\max\left\{C_{4},C_{6}\right\}$, $k_{2}>\left|\alpha\right|+2k_{1}+l_1+1$ and $l_{2}>\alpha+l_{1}+k_{1}+1$,
$$
\left\|M^{\mathbf{f}}_{\varphi}\left(h x,h y\right)\right\|< Ch^{\alpha}L(h)\left(\frac{1}{ y}+ y\right)^{k_{2}}\left(1+\left|x\right|\right)^{l_{2}},
$$
for all $(x,y)\in\mathbb{H}^{n+1}$ and $0<h\leq h_{0}$ (resp. $h_{0}<h$).

\emph{Part (ii).} Fix $(x,y)\in\mathbb{H}^{n+1}$ and write it as $(x,y)=(r\xi,r \cos\vartheta)$, where $(\xi,\cos\vartheta)\in\mathbb{H}^{n+1}\cap\mathbb{S}^{n}$. Then, as $h\to0^{+}$ (resp. $h\to\infty$), we have
\begin{align*}\frac{1}{h^{\alpha}L(h)}
M_{\varphi}^\mathbf{f}(hr \xi,hr \cos\vartheta)&=
\frac{L(hr)}{L(h)}r^{\alpha}\left(\frac{1}{(hr)^{\alpha}L(hr)}M_{\varphi}^\mathbf{f}(hr \xi,hr \cos\vartheta)\right)
\\
&
\longrightarrow 1\cdot r^{\alpha} M_{\xi,\cos\vartheta}\:,\ \ \ \text{in } E.
\end{align*}
\end{proof}

We now state the Tauberian characterization of weak-asymptotics in the space $\mathcal{S}'_{0}(\mathbb{R}^{n},E)$. The simple proof of the following theorem is a consequence of our previous work.

\begin{theorem}
\label{wnwth5.1}
Let $\psi\in\mathcal{S}_{0}(\mathbb{R}^{n})$ be a non-degenerate wavelet and let $L$ be slowly varying at the origin (resp. at infinity).
\begin{itemize}
\item [(i)] A necessary and sufficient condition for $\mathbf{f}\in\mathcal{S}'(\mathbb{R}^{n},E)$ to be weak-asymptotically bounded of degree $\alpha$ at the point $x_{0}$ (resp. at infinity) with respect to $L$ in $\mathcal{S}'_{0}(\mathbb{R}^{n},E)$ is the existence of $k\in\mathbb{N}$ such that
\begin{equation}
\label{wnweq5.5}
\limsup_{\varepsilon\rightarrow0^+}\sup_{\left|x\right|^2+y^2=1,\:y>0}\frac{y^k}{\varepsilon^{\alpha}L(\varepsilon)}\left\|\mathcal{W}_{\psi}\mathbf{f}\left(x_0+\varepsilon
x,\varepsilon y\right)\right\|<\infty
\end{equation}
\begin{equation*}
\left(\mbox{resp. }\limsup_{\lambda\rightarrow\infty}\sup_{\left|x\right|^2+y^2=1,\:y>0}\frac{y^k}{\lambda^{\alpha}L(\lambda)}\left\|\mathcal{W}_{\psi}\mathbf{f}\left(\lambda
x,\lambda y\right)\right\|<\infty\right) .
\end{equation*}
\item [(ii)] The existence of the limits
\begin{equation}
\label{wnweq5.6}
\lim_{\varepsilon\to0^{+}}\frac{1}{\varepsilon^{\alpha}L(\varepsilon)}\mathcal{W}_{\psi}\mathbf{f}(x_{0}+\varepsilon x,\varepsilon y)=W_{x,y}\: , \ \ \ \text{for each } (x,y)\in\mathbb{H}^{n+1}\cap\mathbb{S}^{n}
\end{equation}
\begin{equation*}
\left(\mbox{resp. } \lim_{\lambda\to\infty}\frac{1}{\lambda^{\alpha}L(\lambda)}\mathcal{W}_{\psi}\mathbf{f}(\lambda x,\lambda y)=W_{x,y}\in E\right),
\end{equation*}
and the estimate (\ref{wnweq5.5}), for some $k\in\mathbb{N}$, are necessary and sufficient for $\mathbf{f}$ to have weak-asymptotic behavior of degree $\alpha$ at the point $x_{0}$ (resp. at infinity) with respect to $L$ in the space $\mathcal{S}'_{0}(\mathbb{R}^{n},E)$.
\end{itemize}
\end{theorem}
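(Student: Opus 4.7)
The plan is to reduce the theorem to the characterizations of bounded and convergent nets in $\mathcal{S}'_{0}(\mathbb{R}^{n},E)$ via their wavelet transforms, established in Propositions \ref{wnwp4} and \ref{wnwp5}. The bridge is the elementary rescaling identity
\[
\mathcal{W}_{\psi}\bigl[\mathbf{f}(x_{0}+\varepsilon\,\cdot)\bigr](x,y)
=\mathcal{W}_{\psi}\mathbf{f}(x_{0}+\varepsilon x,\varepsilon y),
\]
and its $\lambda$-analogue, which transport the weak-asymptotic hypotheses/conclusions on $\mathbf{f}$ into boundedness/convergence statements in $\mathcal{S}'_{0}(\mathbb{R}^{n},E)$ for the rescaled net
\[
\mathbf{f}_{\varepsilon}=\frac{1}{\varepsilon^{\alpha}L(\varepsilon)}\,\mathbf{f}(x_{0}+\varepsilon\,\cdot)
\quad\left(\text{resp. } \mathbf{f}_{\lambda}=\frac{1}{\lambda^{\alpha}L(\lambda)}\,\mathbf{f}(\lambda\,\cdot)\right).
\]

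For Part (i), necessity follows from Proposition \ref{wnwp4}: if $\{\mathbf{f}_{\varepsilon}\}$ is bounded in $\mathcal{S}'_{0}(\mathbb{R}^{n},E)$, the ensuing estimate on $\mathcal{W}_{\psi}\mathbf{f}_{\varepsilon}$ restricted to $|x|^{2}+y^{2}=1$ gives (\ref{wnweq5.5}). Conversely, if (\ref{wnweq5.5}) holds, Proposition \ref{wnwp5.1}(i) (applied with $\mathcal{W}_{\psi}\mathbf{f}$ in place of $M^{\mathbf{f}}_{\varphi}$) promotes it to the global estimate (\ref{wnwaeq1}) on all of $\mathbb{H}^{n+1}$; equivalently, the family $\{\mathcal{W}_{\psi}\mathbf{f}_{\varepsilon}\}$ satisfies a bound of the form (\ref{wnwneq13}). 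Proposition \ref{wnwp4} then forces $\{\mathbf{f}_{\varepsilon}\}$ to be bounded in $\mathcal{S}'_{0}(\mathbb{R}^{n},E)$, which is the sought weak-asymptotic boundedness.

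For Part (ii), necessity is handled by the Abelian Proposition \ref{wnwth2} applied with $\varphi=\bar{\psi}$: its conclusion yields angular, hence pointwise, convergence of $\varepsilon^{-\alpha}L(\varepsilon)^{-1}\mathcal{W}_{\psi}\mathbf{f}(x_{0}+\varepsilon x,\varepsilon y)$ on every $(x,y)\in\mathbb{H}^{n+1}$, so (\ref{wnweq5.6}) holds; and (\ref{wnweq5.5}) follows because weak-asymptotic behavior implies weak-asymptotic boundedness, to which Part (i) applies. For sufficiency, Part (i) first turns (\ref{wnweq5.5}) into the global wavelet bound (\ref{wnwneq15}); Proposition \ref{wnwp5.1}(ii) extends the pointwise limits (\ref{wnweq5.6}) from the half-sphere $\mathbb{H}^{n+1}\cap\mathbb{S}^{n}$ to every $(x,y)\in\mathbb{H}^{n+1}$, providing (\ref{wnwneq14}). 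Proposition \ref{wnwp5} (or its obvious analogue for $\varepsilon\to 0^{+}$, obtained by reparametrizing $\lambda=1/\varepsilon$) then produces the convergence of $\{\mathbf{f}_{\varepsilon}\}$ in $\mathcal{S}'_{0}(\mathbb{R}^{n},E)$, which is the desired weak-asymptotic behavior of $\mathbf{f}$.

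The essential point, and the only place where the non-degeneracy of $\psi$ is used, is the applicability of Propositions \ref{wnwp4} and \ref{wnwp5}, whose force comes from the wavelet desingularization formula (\ref{wnwneq12}) and the density result in Corollary \ref{wnwc2}. The other technical ingredient, Proposition \ref{wnwp5.1}, is what justifies cutting the Tauberian hypothesis down to the half-sphere $\mathbb{H}^{n+1}\cap\mathbb{S}^{n}$; its proof interpolates, via Potter's bounds on the slowly varying $L$, between the short-range bound and the \emph{a priori} slow-growth estimate of Proposition \ref{wnwp2}, and this is the main computational step in the whole argument.
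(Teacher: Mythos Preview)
Your proof is correct and follows essentially the same approach as the paper: both reduce Part (i) to Proposition~\ref{wnwp5.1} combined with Proposition~\ref{wnwp4} applied to the rescaled net $\{\varepsilon^{-\alpha}L(\varepsilon)^{-1}\mathbf{f}(x_{0}+\varepsilon\,\cdot)\}$, and Part (ii) to Proposition~\ref{wnwp5.1} combined with Proposition~\ref{wnwp5}. The only cosmetic difference is that for the necessity in Part~(ii) you invoke the Abelian Proposition~\ref{wnwth2}, whereas the paper simply cites Proposition~\ref{wnwp5}, which already contains the necessary direction; both are valid.
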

\begin{proof}
The equivalence between the weak-asymptotic boundedness and the estimate (\ref{wnweq5.5}) follows at once on combining Proposition \ref {wnwp5.1} with Proposition \ref{wnwp4} when considering the set (in $\mathcal{S}'_{0}(\mathbb{R}^{n},E)$) $$\left\{\frac{1}{\varepsilon^{\alpha}L(\varepsilon)}\mathbf{f}(x_{0}+\varepsilon\: \cdot \:):0<\varepsilon\leq 1\right\} \left(\mbox{resp. }\left\{\frac{1}{\lambda^{\alpha}L(\lambda)}\:\mathbf{f}(\lambda\: \cdot\:): 1\leq\lambda\right\}\right) ,$$
while Part (ii) follows from Proposition \ref {wnwp5.1} and Proposition \ref{wnwp5}.
\end{proof}

We will need the following proposition for future applications when studying Tauberian theorems for the non-wavelet case and wavelet transforms with respect to non-degenerate wavelets from $\mathcal{S}(\mathbb{R}^{n})\setminus\mathcal{S}_{0}(\mathbb{R}^{n})$. It tells us the weak-asymptotic properties of the projection of a tempered distribution onto $\mathcal{S}'_{0}(\mathbb{R}^{n},E)$ when its transform $M^{\mathbf{f}}_{\varphi}$ has asymptotics as in Proposition \ref{wnwp5.1}.

\begin{proposition}
\label{wnwp5.2}
Let $\varphi\in\mathcal{S}(\mathbb{R}^{n})$ be non-degenerate and let $L$ be slowly varying at the origin (resp. at infinity). Suppose that $\mathbf{f}\in\mathcal{S}'(\mathbb{R}^{n},E)$.

\begin{itemize}
\item [(i)] If there exists $k\in\mathbb{N}$ such that the estimate (\ref{wnweq5.1}) holds, then $\mathbf{f}$ is weak-asymptotically bounded of degree $\alpha$ at the point $x_{0}$ (resp. at infinity) with respect to $L$ in the space $\mathcal{S}'_{0}(\mathbb{R}^{n},E)$.
\item [(ii)] If the limit (\ref{wnweq5.2}) exists for each $(x,y)\in \mathbb{H}^{n+1}\cap\mathbb{S}^{n}$, and there is a $k\in\mathbb{N}$ such that the estimate (\ref{wnweq5.1}) is satisfied, then  $\mathbf{f}$ has weak-asymptotic behavior of degree $\alpha$ at the point $x_{0}$ (resp. at infinity) with respect to $L$ in the space $\mathcal{S}'_{0}(\mathbb{R}^{n},E)$.
\end{itemize}
\end{proposition}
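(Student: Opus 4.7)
The plan is to reduce Proposition \ref{wnwp5.2} to Theorem \ref{wnwth5.1} by constructing from $\varphi$ an auxiliary non-degenerate wavelet in $\mathcal{S}_{0}(\mathbb{R}^{n})$ whose regularizing transform can be written as an integral average of $M^{\mathbf{f}}_{\varphi}$.

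First I will fix $\eta\in\mathcal{S}_{0}(\mathbb{R}^{n})$ whose Fourier transform has no zeros off the origin; the element of Example \ref{wnwex3.3} given by $\hat\eta(u)=e^{-|u|-1/|u|}$ does the job. Setting $\psi:=\varphi\ast\eta$, the fact that $\hat\eta$ vanishes to infinite order at $0$ forces $\hat\psi=\hat\varphi\hat\eta$ to do the same, so $\psi\in\mathcal{S}_{0}(\mathbb{R}^{n})$. The factorization $\hat\psi(r\omega)=\hat\varphi(r\omega)\hat\eta(r\omega)$, combined with the non-degenerateness of $\varphi$ and the fact that $\hat\eta(r\omega)>0$ for every $r>0$, shows that $\psi$ is itself non-degenerate. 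Then $\check{\bar\psi}$ is a non-degenerate wavelet in $\mathcal{S}_{0}(\mathbb{R}^{n})$ with $\mathcal{W}_{\check{\bar\psi}}\mathbf{f}=M^{\mathbf{f}}_{\psi}$, so Theorem \ref{wnwth5.1} applies to it.

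The identity on which the whole reduction rests is the scaling relation $\psi_{y}=\varphi_{y}\ast\eta_{y}$, which yields
$$
M^{\mathbf{f}}_{\psi}(x,y)=\int_{\mathbb{R}^{n}}M^{\mathbf{f}}_{\varphi}(x-yv,y)\,\eta(v)\,\mathrm{d}v.
$$
For Part (i), I will first invoke Proposition \ref{wnwp5.1}(i) to promote (\ref{wnweq5.1}) to the equivalent global estimate (\ref{wnwaeq1}) for $M^{\mathbf{f}}_{\varphi}$, and then transport that estimate across the displayed integral using the rapid decay of $\eta$; restricting the resulting bound for $M^{\mathbf{f}}_{\psi}$ to the unit half-sphere gives (\ref{wnweq5.5}) for $\mathcal{W}_{\check{\bar\psi}}\mathbf{f}$, and Theorem \ref{wnwth5.1}(i) delivers the weak-asymptotic boundedness in $\mathcal{S}'_{0}(\mathbb{R}^{n},E)$.

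For Part (ii), Proposition \ref{wnwp5.1}(ii) upgrades the assumed limits on the unit half-sphere to the existence of limits $M_{x,y}\in E$ for every $(x,y)\in\mathbb{H}^{n+1}$. The global estimate from Part (i) supplies an integrable majorant of the form $C(1+|v|)^{l'}|\eta(v)|$ uniformly in $\varepsilon$, so dominated convergence applied to the displayed identity produces, for every $(\xi,\cos\vartheta)\in\mathbb{H}^{n+1}\cap\mathbb{S}^{n}$, the existence of $\lim_{\varepsilon\to0^{+}}\varepsilon^{-\alpha}L(\varepsilon)^{-1}M^{\mathbf{f}}_{\psi}(x_{0}+\varepsilon\xi,\varepsilon\cos\vartheta)$, which is precisely (\ref{wnweq5.6}) for $\mathcal{W}_{\check{\bar\psi}}\mathbf{f}$; Theorem \ref{wnwth5.1}(ii) then concludes. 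The large-scale case $\lambda\to\infty$ is entirely analogous. The main obstacle is ensuring the non-degenerateness of the auxiliary $\psi$, and this is exactly where the broader notion of Definition \ref{wnwd1} is indispensable: $\varphi$ may well have all of its Taylor polynomials at the origin degenerate (as with the second example in Example \ref{wnwex3.3}), and only the ray-nonvanishing of $\hat\varphi$ survives to yield the ray-nonvanishing of $\hat\psi=\hat\varphi\hat\eta$.
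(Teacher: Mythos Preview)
The proposal is correct and follows essentially the same approach as the paper: convolve $\varphi$ with the fixed wavelet $\hat\eta(u)=e^{-|u|-1/|u|}$ to obtain a non-degenerate wavelet in $\mathcal{S}_{0}(\mathbb{R}^{n})$, express its transform as an integral average of $M^{\mathbf{f}}_{\varphi}$, transfer the estimate (via Proposition \ref{wnwp5.1} and dominated convergence for the limits), and invoke Theorem \ref{wnwth5.1}. The only cosmetic difference is that the paper convolves $\check{\bar\varphi}$ (rather than $\varphi$) with the fixed wavelet so that the resulting $\mathcal{W}_{\psi_{1}}\mathbf{f}$ appears directly, whereas you obtain $M^{\mathbf{f}}_{\psi}$ and then recognize it as $\mathcal{W}_{\check{\bar\psi}}\mathbf{f}$; either route works.
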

\begin{proof}
Translating $\mathbf{f}$, we can assume that $x_{0}=0$. Consider the non-degenerate wavelet $\psi\in\mathcal{S}_{0}(\mathbb{R}^{n})$ given by $\hat{\psi}(u)=e^{-\left|u\right|-(1/\left|u\right|)}$. Set $\psi_{1}=\check{\bar{\varphi}}\ast\psi$, then, $\psi_1\in\mathcal{S}_{0}(\mathbb{R}^{n})$ is also a non-degenerate wavelet. Indeed, $\hat{\psi_{1}}=\widehat{\check{\bar{\varphi}}}\hat{\psi}$ and its partial derivatives of any order vanish at the origin.
First notice that $\mathcal{W}_{\psi_{1}}\mathbf{f}$ is given by
\begin{align*}
\mathcal{W}_{\psi_{1}}\mathbf{f}(x,y)&=\left\langle \mathbf{f}(x+yt),\check{\varphi}\ast\bar{\psi}(t)\right\rangle
\\
&=\left\langle \mathbf{f}(x+yt),\int_{\mathbb{R}^{n}}\bar{\psi}(u)\varphi(u-t)\mathrm{d}u \right\rangle
\\
&
=\int_{\mathbb{R}^{n}}\bar{\psi}(u) \left\langle \mathbf{f}(x+yt),\varphi(u-t) \right\rangle\mathrm{d}u
\\
&
=\int_{\mathbb{R}^{n}}\bar{\psi}(u)(\mathbf{f}\ast\varphi_{y})(x+yu)\mathrm{d}u
\\
&
=\int_{\mathbb{R}^{n}}M^{\mathbf{f}}_{\varphi}(x+yu,y)\bar{\psi}(u)\mathrm{d}u.
\end{align*}

\emph{Part (i)}.
By Proposition \ref{wnwp5.1}, (\ref{wnweq5.1}) is equivalent to an estimate (\ref{wnwaeq1}) ($k$ may be a different number). Our strategy will be to show that $\mathcal{W}_{\psi_{1}}\mathbf{f}$ satisfies (\ref{wnweq5.5}), and then the result would follow immediately from Theorem \ref{wnwth5.1}.
Indeed, for all $(x,y)\in\mathbb{H}^{n+1}\cap\mathbb{S}^{n},$ and $0<h\leq \varepsilon_{0}$ (resp. $\lambda_{0}\leq h$) we have the estimate
\begin{align}
\left\|M^{\mathbf{f}}_{\varphi}(h x+hyu,h y)\right\|& \leq \frac{2^{k}C}{y^{k}} h^{\alpha}L(h)\left(1+\left|x\right|+y\left|u\right|\right)^{l}  \nonumber
\\
& \label{wnweq5.7}
< \frac{C_{1}}{y^{k}}h^{\alpha}L(h) (1+\left|u\right|)^{l},
\end{align}
with $C_{1}=2^{k+l}C$. Therefore, $\mathcal{W}_{\psi_{1}}\mathbf{f}$ satisfies (\ref{wnweq5.5}), namely,
$$
\sup_{\left|x\right|^2+y^2=1,\:y>0}y^{k}\left\|\mathcal{W}_{\psi_{1}}\mathbf{f}(hx,hy)\right\|< C_{2}h^{\alpha}L(h),
$$
where $C_{2}=C_{1}\int_{\mathbb{R}^{n}}(1+\left|u\right|)^{l}\left|\bar{\psi}(u)\right|\mathrm{d}u$.

\emph{Part (ii)}. If the limit (\ref{wnweq5.2}) exists for each $(x,y)\in\mathbb{H}^{n+1}\cap\mathbb{S}^{n-1}$, then so does it for all $(x,y)\in\mathbb{H}^{n+1}$. The estimate (\ref{wnweq5.7}) allows us to use the dominated convergence theorem for Bochner integrals and conclude that, for each fixed $(x,y)\in\mathbb{H}^{n+1}\cap\mathbb{S}^{n}$,
\begin{align*}
\frac{1}{h^{\alpha}L(h)}\mathcal{W}_{\psi}\mathbf{f}(hx,hy)&
=\int_{\mathbb{R}^{n}}\frac{1}{h^{\alpha}L(h)}M^{\mathbf{f}}_{\varphi}(hx+hyu,hy)\bar{\psi}(u)\mathrm{d}u
\\
&
\longrightarrow\int_{\mathbb{R}^{n}}M_{x+yu,y}\:\bar{\psi}(u)\mathrm{d}u,
\end{align*}
as $h\to0^{+}$ (resp. $h\to\infty$). Thus, Theorem \ref{wnwth5.1} yields the result.
\end{proof}

\newpage

\section{Tauberian Theorems in $\mathcal{S}'(\mathbb{R}^{n},E)$}
\label{wnwtt}
We will state and prove in this section Tauberian theorems for weak-asymptotics of tempered $E$-valued distributions.

\subsection{Associate Asymptotically Homogeneous and Homogeneously Bounded Functions}
\label{aah} We need to introduce a class of functions which is of great importance in the study of asymptotic properties of distributions. They appear naturally in the statements and proofs of our Tauberian theorems. The terminology is from  \cite{vindas1,vindas2,vindas3,vindas4,vindas-pilipovic1} (see also de Haan theory in \cite{bingham}).

\begin{definition}
\label{wnwd5} Let  $\mathbf{c}:(0,A)\to E$ (resp. $(A,\infty)\to E$), $A>0$, be a continuous $E$-valued function and let $L$ be slowly varying function at the origin (resp. at infinity). We say that:
\begin{enumerate}
\item [(i)] $\mathbf{c}$ is associate asymptotically homogeneous of degree 0 with respect to  $L$ if for some $\mathbf{v}\in E$
\begin{equation*}
\mathbf{c}(a\varepsilon)=\mathbf{c}(\varepsilon)+L(\varepsilon)\log a\: \mathbf{v}+o(L(\varepsilon)) \ \ \ \text{as}\ \varepsilon\to0^{+}, \ \mbox{ for each } a>0
\end{equation*}
$$
\left(\text{resp. } \mathbf{c}(a\lambda)=\mathbf{c}(\lambda)+L(\lambda)\log a \:\mathbf{v}+o(L(\lambda))\ \ \ \text{as}\ \lambda\to\infty\: \right).
$$
\item [(ii)] $\mathbf{c}$ is asymptotically homogeneously bounded of degree 0 with respect to  $L$ if
\begin{equation*}
\mathbf{c}(a\varepsilon)=\mathbf{c}(\varepsilon)+O(L(\varepsilon)) \ \ \ \text{as}\ \varepsilon\to0^{+}, \ \mbox{ for each } a>0
\end{equation*}
$$
\left(\text{resp. } \mathbf{c}(a\lambda)=\mathbf{c}(\lambda)+O(L(\lambda))\ \ \ \text{as}\ \lambda\to\infty\: \right).
$$
\end{enumerate}
\end{definition}
If $\mathbf{c}$ satisfies either condition (i) or (ii) of Definition \ref{wnwd5}, one can show as in \cite[Prop. 2.3]{vindas1} that given any $\sigma>0$
\begin{equation*}
\left\|\mathbf{c}(\varepsilon)\right\|=o(\varepsilon^{-\sigma}) \ \ \ \mbox{as } \varepsilon\to0^{+} \ \ \ \left(\mbox{resp. } \left\|\mathbf{c}(\lambda)\right\|=o(\lambda^{\sigma})\  \ \ \mbox{as } \lambda\to\infty\: \right).
\end{equation*}
\subsection{Tauberian Theorem for $\phi-$transforms}
\label{wnwtphi}
The ensuing theorem characterizes weak-asymptotic boundedness in terms of the $\phi-$transform.
 \begin{theorem}
 \label{wnwth6.1}Let $\phi\in\mathcal{S}(\mathbb{R}^{n})$ be such that $\mu_{0}(\phi)=1$ and let $L$ be slowly varying at the origin (resp. at infinity).
A necessary and sufficient condition for $\mathbf{f}\in\mathcal{S}'(\mathbb{R}^{n},E)$
to be weak-asymptotically bounded of degree $\alpha\in\mathbb{R}$ at the point $x_0\in\mathbb{R}^{n}$ (resp. at infinity) with respect to $L$ is the existence of $k\in\mathbb{N}$ such that
\begin{equation}
\label{wnwtteq1}
\limsup_{\varepsilon\rightarrow0^+}\sup_{\left|x\right|^2+y^2=1,\:y>0}\frac{y^k}{\varepsilon^{\alpha}L(\varepsilon)}\left\|F_{\phi}\mathbf{f}\left(x_0+\varepsilon
x,\varepsilon y\right)\right\|<\infty
\end{equation}
$$
\left(\mbox{resp. } \limsup_{\lambda\rightarrow\infty}\sup_{\left|x\right|^2+y^2=1,\:y>0}\frac{y^k}{\lambda^{\alpha}L(\lambda)}\left\|F_{\phi}\mathbf{f}\left(\lambda
x,\lambda y\right)\right\|<\infty\:\right).
$$
\end{theorem}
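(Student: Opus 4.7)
The proof naturally splits into the necessity and sufficiency directions, and I would treat them very asymmetrically.

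The necessity is essentially immediate: since $F_\phi\mathbf{f}(x,y)=M^{\mathbf{f}}_{\check{\phi}}(x,y)$ with $\check{\phi}\in\mathcal{S}(\mathbb{R}^{n})$, I would simply invoke the Abelian Proposition~\ref{wnwth2}(i) with $\varphi=\check{\phi}$ to get the class estimate (\ref{wnwaeq1}); restricting to the unit half-sphere in $\mathbb{H}^{n+1}$ recovers (\ref{wnwtteq1}). No further work is needed here.

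The sufficiency (Tauberian) direction is the substantive content, and my plan is to combine two ingredients. Because $\mu_0(\phi)=1\neq 0$, the test function $\phi$ is automatically non-degenerate in the sense of Definition~\ref{wnwd1}. Hence Proposition~\ref{wnwp5.2}(i) applies directly and delivers that $\mathbf{f}$ is weak-asymptotically bounded of degree $\alpha$ at $x_0$ (resp.\ at infinity) with respect to $L$ in the \emph{quotient} space $\mathcal{S}'_0(\mathbb{R}^{n},E)$. Separately, specializing (\ref{wnwtteq1}) to the single point $(x,y)=(0,1)$ on the unit half-sphere yields the scalar estimate
$$
\bigl\|\langle \mathbf{f}(x_0+\varepsilon t),\phi(t)\rangle\bigr\|\;=\;\|F_\phi\mathbf{f}(x_0,\varepsilon)\|\;=\;O(\varepsilon^{\alpha}L(\varepsilon)),
$$
i.e., weak-asymptotic boundedness tested against the one test function $\phi$, whose zeroth moment is non-zero. (The resp.\ case at infinity is entirely analogous.)

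The final step, and the genuine obstacle, is to lift the $\mathcal{S}'_0$-boundedness up to boundedness in $\mathcal{S}'(\mathbb{R}^{n},E)$. This is exactly the role played by Proposition~A.1 of the Appendix: boundedness in the quotient, combined with boundedness at a single test function of non-vanishing zeroth moment, is enough to conclude boundedness against every $\rho\in\mathcal{S}(\mathbb{R}^{n})$. The difficulty this lifting must handle is the polynomial ambiguity intrinsic to the quotient $\mathcal{S}'\to\mathcal{S}'/\mathrm{(pol.)}=\mathcal{S}'_{0}$: the hardest situation is when $\alpha\in\mathbb{N}$ (the critical degree), in which case the polynomial corrections are naturally described by the asymptotically homogeneously bounded functions of Subsection~\ref{aah}, and the extra scalar estimate at $\phi$ is precisely what is required to tame them. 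Once Proposition~A.1 is invoked, the weak-asymptotic boundedness of $\mathbf{f}$ in $\mathcal{S}'(\mathbb{R}^{n},E)$ follows, completing the argument.
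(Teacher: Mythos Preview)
Your necessity argument and the first step of the sufficiency (invoking Proposition~\ref{wnwp5.2}(i) to obtain boundedness in $\mathcal{S}'_0(\mathbb{R}^n,E)$) are fine, and indeed match the paper. The gap is in the lifting step.

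First, a minor point: the Appendix result governing weak-asymptotic \emph{boundedness} is Proposition~A.2, not A.1 (the latter treats weak-asymptotic \emph{behavior}). More importantly, neither proposition says what you attribute to it. Proposition~A.2 only asserts that boundedness in $\mathcal{S}'_0$ implies the existence of an $E$-valued polynomial $\mathbf{P}$ (together with asymptotically homogeneously bounded functions $\mathbf{c}_m$ when $\alpha\in\mathbb{N}$) such that $\mathbf{f}-\mathbf{P}$ is bounded in $\mathcal{S}'$. It says nothing about $\mathbf{P}$ vanishing, and it does not take ``boundedness at a single test function of non-vanishing zeroth moment'' as an extra hypothesis. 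Showing that $\mathbf{P}=\mathbf{0}$ (and that the $\mathbf{c}_m$ are $O(L)$) is precisely the remaining work.

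Your proposed shortcut---using only the evaluation at $(x,y)=(0,1)$---cannot fill this gap. Take $n=1$, $\alpha>1$, $L\equiv 1$, $\phi$ even (so $\mu_0(\phi)=1$, $\mu_1(\phi)=0$), and $\mathbf{f}(t)=t\mathbf{w}$ with $\mathbf{w}\neq 0$. Then $\mathbf{f}=0$ in $\mathcal{S}'_0$, and $F_\phi\mathbf{f}(0,\varepsilon)=\varepsilon\mu_1(\phi)\mathbf{w}=0$, so both of your inputs are satisfied; yet $\mathbf{f}$ is not weak-asymptotically bounded of degree $\alpha$ in $\mathcal{S}'$. The paper's proofs use the \emph{full} half-sphere estimate~(\ref{wnwtteq1}) in an essential way to kill the polynomial: the first proof approximates arbitrary moment vectors by moments of dilates and translates of $\phi$ (Claim~\ref{wnwl8}, relying on Corollary~\ref{wnwc1}), while the second proof varies $x$ and sends $y\to 0^+$, then uses a Vandermonde argument for the $\mathbf{c}_m$. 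You need one of these mechanisms, or an equivalent, to complete the argument.
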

We shall present two different proofs of this theorem. The two methods of proof are applicable to both the case of behavior at infinity and the one at finite points. We concentrate in showing the sufficiency because the necessity follows at once from the Abelian result (Proposition \ref{wnwth2}).
\begin{proof}[First proof of Theorem \ref{wnwth6.1}] We show the case of behavior at the point $x_{0}$ in this first proof. We first need to prove the following claim:

\begin{claim}
\label{wnwl8} Given a set of distinct multi-indices $\left\{m_{l}\right\}_{l=1}^{q}$, a point $u=(u_{1},\cdots,u_{q})\in\mathbb{R}^{q}$, and an arbitrary positive number $\sigma$, there exists a test function $\rho$ in the linear span of $\left\{y^{-n}\phi ((\:\cdot\:-x)/y):(x,y)\in\mathbb{H}^{n+1}\right\}$ such that
$$
\left|u_{l}-\mu_{m_{l}}(\rho)\right|<\sigma, \ \ \ l=1,\cdots,q.
$$
\end{claim}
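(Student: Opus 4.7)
The plan is to exploit Corollary \ref{wnwc1} together with a finite-dimensional density argument. Set
$$
V := \operatorname*{span}\left\{y^{-n}\phi\left(\frac{\:\cdot\:-x}{y}\right) : (x,y)\in\mathbb{H}^{n+1}\right\}.
$$
Since the factor $y^{-n}$ merely rescales each element, $V$ coincides with the linear span of the translates and dilates of $\phi$. Because $\mu_{0}(\phi)=1\neq 0$, Corollary \ref{wnwc1} guarantees that $V$ is dense in $\mathcal{S}(\mathbb{R}^{n})$.

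Next, I would introduce the continuous linear map
$$
T:\mathcal{S}(\mathbb{R}^{n})\to\mathbb{R}^{q},\qquad T(\rho)=\bigl(\mu_{m_{1}}(\rho),\ldots,\mu_{m_{q}}(\rho)\bigr),
$$
whose continuity follows from the fact that each $\mu_{m}$ is a tempered functional on $\mathcal{S}(\mathbb{R}^{n})$. The central observation is that $T$ is surjective onto $\mathbb{R}^{q}$; equivalently, the functionals $\mu_{m_{1}},\ldots,\mu_{m_{q}}$ are linearly independent on $\mathcal{S}(\mathbb{R}^{n})$. Using the identity $\mu_{m}(\rho)=i^{|m|}\hat{\rho}^{(m)}(0)$, surjectivity reduces to prescribing the partial derivatives of a Schwartz function at the origin at the finitely many distinct multi-indices $m_{1},\ldots,m_{q}$, which is trivially achieved, e.g., by taking suitable compactly supported smooth functions whose Taylor coefficients at $0$ are specified.

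Having established surjectivity, I would conclude as follows: since $T$ is continuous and $V\subseteq\mathcal{S}(\mathbb{R}^{n})$ is dense, the image $T(V)$ is dense in $T\bigl(\mathcal{S}(\mathbb{R}^{n})\bigr)=\mathbb{R}^{q}$. Therefore, for the given $u=(u_{1},\ldots,u_{q})\in\mathbb{R}^{q}$ and $\sigma>0$, there is $\rho\in V$ with $|u_{l}-\mu_{m_{l}}(\rho)|<\sigma$ for every $l=1,\ldots,q$, which is exactly the conclusion of the claim.

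There is no substantial obstacle in this plan; the only points that require care are the verification of surjectivity of $T$ (an elementary Taylor-expansion construction) and the application of Corollary \ref{wnwc1} to ensure that $V$, rather than only the restricted-scale variant appearing in its statement, is dense in $\mathcal{S}(\mathbb{R}^{n})$—which is immediate since the unrestricted span contains the restricted one.
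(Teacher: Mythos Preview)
Your proposal is correct and follows essentially the same approach as the paper: define the continuous linear map $T(\rho)=(\mu_{m_1}(\rho),\ldots,\mu_{m_q}(\rho))$, verify its surjectivity onto $\mathbb{R}^q$, and then use Corollary~\ref{wnwc1} to conclude that $T(V)$ is dense in $\mathbb{R}^q$. The only cosmetic difference is that the paper cites the Borel theorem for surjectivity, whereas you argue directly via prescribing Taylor coefficients of $\hat{\rho}$ at the origin; both are equally valid.
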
\begin{proof}[Proof of Claim \ref{wnwl8}]
The linear continuous map
$$T:\eta\in\mathcal{S}(\mathbb{R}^{n})\mapsto (\mu_{m_{1}}(\eta),\dots,\mu_{m_{q}}(\eta))\in\mathbb{R}^{q} ,$$
is clearly surjective, as can be verified directly or by using general results (e.g., Borel theorem or results from \cite{duran-estrada,estrada1}). Corollary \ref{wnwc1} implies that the image under $T$ of the linear span of 
$$
\left\{\phi \left(\frac{\:\cdot\:-x}{y}\right):(x,y)\in\mathbb{H}^{n+1}\right\}
$$ is dense in $\mathbb{R}^{q}$, from where we obtain the claimed approximation property.
\end{proof}

We now divide the proof  of Theorem \ref{wnwth6.1} into two cases.

\emph{Case $\alpha\notin\mathbb{N}$}.

Proposition \ref{wnwp5.2} and Proposition A.2 imply the existence of an $E$-valued polynomial
$$\mathbf{P}(t)=\sum_{\left|m\right|\leq d}t^{m}\mathbf{w}_{m}$$
such that
$$
\mathbf{f}(x_{0}+\varepsilon t)=\mathbf{P}(\varepsilon t)+O\left(\varepsilon^{\alpha}L(\varepsilon)\right) \ \ \ \text{in}\ \mathcal{S}'(\mathbb{R}^{n},E).
$$
We must show that $\mathbf{P}(\varepsilon t)=O\left(\varepsilon^{\alpha}L(\varepsilon)\right)$. We may assume that $d<\alpha$ because: $\varepsilon^{\nu-\alpha}=O(L(\varepsilon))$ whenever $\nu>\alpha$ \cite{bingham,seneta}. On the other hand, since $L(\varepsilon)=O(\varepsilon^{-\sigma})$, for any $\sigma>0$, we obtain that
\begin{equation}
\label{wnwtteq2}
\mathbf{f}(x_{0}+\varepsilon t)=\mathbf{P}(\varepsilon t)+O\left(\varepsilon^{d+\kappa}\right) \ \ \ \text{in}\ \mathcal{S}'(\mathbb{R}^{n},E),
\end{equation}
where $\kappa$ is chosen so that $0<\kappa<\alpha-d$. Take $\rho$ in the linear span of $\left\{y^{-n}\phi ((\:\cdot\:-x)/y):(x,y)\in\mathbb{H}^{n+1}\right\}$, this test function is fixed by the moment but its properties will be appropriately chosen later. The hypothesis (\ref{wnwtteq1}) implies that $\left\|\left\langle f(x_{0}+\varepsilon t),\rho(t)\right\rangle\right\|=O(\varepsilon^{d+\kappa})$. Evaluation of (\ref{wnwtteq2}) at $\rho$ and the last fact yield
\begin{equation*}
\sum_{\nu=0}^{d}\varepsilon^{\nu}\sum_{\left|m\right|=\nu}\mu_{m}(\rho)\mathbf{w}_{m}=O(\varepsilon^{d+\kappa}) ,
\end{equation*}
which readily implies that,
\begin{equation}
\label{wnwtteq3}
\sum_{\left|m\right|=\nu}\mu_{m}(\rho)\mathbf{w}_{m}=0 , \ \ \ \text{for}\  \nu=0,1,\cdots,d .
\end{equation}
For a fixed index $0\leq\nu\leq d$, let $q=q_{\nu}$ be the number of multi-indices such that $\left|m\right|=\nu$; moreover, index such multi-indices as $\left\{m_{l}\right\}_{l=1}^{q}$. Given an arbitrary $0<\sigma<1$, we select $\rho$ as in Claim \ref{wnwl8} with $u=e_{l}\in\mathbb{R}^{q}$, the vector with 1 in the $l$th component and zeros in the other entries. Then, (\ref{wnwtteq3}) with this $\rho$ gives
$$
\left\|\mathbf{w}_{m_{l}}\right\|<\frac{\sigma}{1-\sigma}\sum_{i=1,i\neq l}^{q}\left\|\mathbf{w}_{m_{i}}\right\|,
$$
and taking $\sigma\to0^{+}$, we conclude $\mathbf{w}_{m_l}=0$. Since the argument works for all $l$ and $\nu$, it follows that $\mathbf{w}_{m}=0$, for all $\left|m\right|\leq d$. This completes the proof in the first case.

\emph{Case $\alpha=p\in\mathbb{N}$.}

In this case, Proposition \ref{wnwp5.2} and Proposition A.2 imply the existence of $\mathbf{w}_{j}$, $\left|j\right|<p$, and asymptotically homogeneously bounded functions $\mathbf{c}_{m}$, $\left|m\right|=p$, of degree 0 with respect to $L$ such that
$$
\mathbf{f}(x_{0}+\varepsilon t)=\sum_{\left|j\right|<p}\varepsilon^{\left|j\right|}t^{j}\mathbf{w}_{j} +\varepsilon^{p}\sum_{\left|m\right|=p}t^{m}\mathbf{c}_{m}(\varepsilon)+O\left(\varepsilon^{p}L(\varepsilon)\right) \ \ \ \text{in}\ \mathcal{S}'(\mathbb{R}^{n},E) .
$$
We have that each $\mathbf{c}_{m}$ satisfies $\mathbf{c}_{m}(\varepsilon)=O(\varepsilon^{-1/2})$ (cf. Subsection \ref{aah}), and thus
$$
\mathbf{f}(x_{0}+\varepsilon t)=\sum_{\left|j\right|<p}\varepsilon^{\left|j\right|}t^{j}\mathbf{w}_{j} +O\left(\varepsilon^{p-1/2}\right) \ \ \ \text{in}\ \mathcal{S}'(\mathbb{R}^{n},E).
$$
Proceeding as in the preceding case, we conclude that each $\mathbf{w}_{j}=0$. Summarizing, we have shown so far
\begin{equation}
\label{wnwtteq4}
\mathbf{f}(x_{0}+\varepsilon t)=\varepsilon^{p}\sum_{\left|m\right|=p}t^{m}\mathbf{c}_{m}(\varepsilon)+O\left(\varepsilon^{p}L(\varepsilon)\right) \ \ \ \text{in}\ \mathcal{S}'(\mathbb{R}^{n},E).
\end{equation}
Let now $q$ be the number of multi-indices such that $\left|m\right|=p$, once again, we index those multi-indices as $\left\{m_{l}\right\}_{l=1}^{q}$, and consider the vectors $e_{l}\in\mathbb{R}^{q}$ with 1 in the $l$th component and zeros in the other entries. Let $\sigma>0$ be small enough such that if the $q\times q$ matrix $A=\left(a_{l,\nu}\right)_{l,\nu}$ satisfies $\left|a_{l,\nu}-\delta_{l,\nu}\right|<\sigma$, then $A$ is invertible ($\delta_{l,\nu}$ is the Kronecker delta). For each $1 \leq l\leq q$, find $\rho_{l}$ satisfying the conclusions of Claim \ref{wnwl8} for $\sigma$ and $e_{l}$, that is, $\left|\mu_{m_{\nu}}(\rho_{l})-\delta_{l,\nu}\right|<\sigma$. Then, the matrix $A:=\left(\mu_{m_{\nu}}(\rho_{l})\right)_{l,\nu}$ is invertible. Evaluation of (\ref{wnwtteq4}) at the $\rho_{l}$ and the hypothesis (\ref{wnwtteq1}) yield the $q\times q$ system of inequalities
$$
\sum_{\nu=1}^{q} \mu_{m_{v}}(\rho_{l}){\mathbf{c}}_{m_{\nu}}(\varepsilon)=O(L(\varepsilon)) , \ \ \ l=1,\cdots,q.
$$
Multiplication by $A^{-1}$ implies that ${\mathbf{c}}_{m}(\varepsilon)=O(L(\varepsilon))$, for each $\left|m\right|=p$, which turns out to prove
$$
\mathbf{f}(x_{0}+\varepsilon t)=O\left(\varepsilon^{p}L(\varepsilon)\right) \ \ \ \text{in}\ \mathcal{S}'(\mathbb{R}^{n},E),
$$
as required.
\end{proof}
\begin{proof}[Second proof of Theorem \ref{wnwth6.1}] In this
second proof we only consider the behavior at infinity. As in the
first proof, we can conclude the existence of an $E$-valued
polynomial, which can be assumed to have the form
$\mathbf{P}(t)=\sum_{\alpha<\left|m\right|\leq
d}t^{m}\mathbf{w}_{m}$, such that $$\mathbf{f}(\lambda
t)=\mathbf{P}(\lambda t)+O(\lambda^{\alpha}L(\lambda))$$ if
$\alpha\notin\mathbb{N}$, or $$\mathbf{f}(\lambda
t)=\mathbf{P}(\lambda t)+\lambda^{p}
\sum_{\left|m\right|=p}\mathbf{c}_{m}(\lambda
)t^{m}+O(\lambda^{p}L(\lambda))$$ if $\alpha=p\in\mathbb{N}$,
where the $\mathbf{c}_{m}$ are asymptotically homogeneously
bounded of degree 0 with respect to $L$; either asymptotic formula
holding as $\lambda\to\infty$ in the space
$\mathcal{S}'(\mathbb{R}^{n},E)$. We first show that
$\mathbf{P}=\mathbf{0}$ in both cases. Select
$\alpha<\kappa<[\alpha]+1$, since both $L(\lambda)$ and the
$\mathbf{c}_{m}$ are $O(\lambda^{\kappa-\alpha})$, we obtain in
either case that
$$
\mathbf{f}(\lambda t)=\mathbf{P}(\lambda t)+O(\lambda^{\kappa}) \ \ \ \mbox{as }\lambda\to\infty\mbox{ in }\mathcal{S}'(\mathbb{R}^{n}).
$$
If we use the estimate for $F_{\phi}\mathbf{f}(\lambda x, \lambda y)$, we have that for each (fixed) $(x,y)\in\mathbb{H}^{n+1}$,
$$F_{\phi}\mathbf{P}(\lambda x, \lambda y)=\sum_{\alpha<\left|m\right|\leq d} (-\lambda i)^{\left|m\right|}\frac{\partial^{\left|m\right|}}{\partial u^{m}}\left.\left(
e^{i x\cdot u}{\hat{\phi}}( -yu)\right)\right|_{u=0}
\mathbf{w}_{m}=O(\lambda^{\kappa}),
$$
$\lambda\to\infty$. This allows us to conclude that, for each $(x,y)\in\mathbb{H}^{n+1}$ and $\alpha<\nu\leq d$,
$$\mathbf{0}=\sum_{\left|m\right|=\nu}\frac{\partial^{\left|m\right|}}{\partial u^{m}}\left.\left(
e^{i x\cdot u}{\hat{\phi}}( -yu)\right)\right|_{u=0}
\mathbf{w}_{m}=\sum_{\left|m\right|=\nu}(ix)^{m}\mathbf{w}_{m}+\sum_{q=1}^{\nu}(iy)^{q}\mathbf{R}_{q}(x),
$$
for certain $E$-valued polynomials $\mathbf{R}_{q}$. But we can
take $y\to0^{+}$ in the above equation, which implies that
$\mathbf{w}_{m}=\mathbf{0}$ for $\left|m\right|=q$, and since the
same holds for every $\alpha<\nu\leq d$, we have just shown that
$\mathbf{P}=\mathbf{0}$. Therefore, the case
$\alpha\notin\mathbb{N}$ has been established. The case
$\alpha=p\in\mathbb{N}$ would now follow if we were able to prove
that
$$\mathbf{C}(\lambda,t):=\sum_{\left|m\right|=p}t^{m}\mathbf{c}_{m}(\lambda)=O(L(\lambda))$$
as $\lambda\to\infty$ in $\mathcal{S}'(\mathbb{R}^{n},E)$.  We
keep $(x,y)\in B(0,1)\times(0,1)$, where $B(0,1)$ is the unit ball
in $\mathbb{R}^{n}$. By Proposition \ref{wnwp2}, applied to
$$\frac{1}{\lambda^{p}L(\lambda)}\left(\mathbf{f}(\lambda t)-\lambda^{p}\sum
_{\left|m\right|=p}t^{m}\mathbf{c}_{m}(\lambda)\right),$$ and
(\ref{wnwtteq1}), there are constants $\lambda_{0},C>0$ and
$l\in\mathbb{N}$ such that
$$
\left\|\lambda^{p}\sum_{\left|m\right|=p} (-\lambda i)^{p}\frac{\partial^{\left|m\right|}}{\partial u^{m}}\left.\left(
e^{i x\cdot u}{\hat{\phi}}( -yu)\right)\right|_{u=0}
\mathbf{c}_{m}(\lambda)\right\|\leq \frac{C}{y^{l}}\lambda^{p}L(\lambda),
$$
for all $(x,y)\in B(0,1)\times(0,1)$ and $\lambda_{0}\leq\lambda$, that is,
$$
\left\|\mathbf{C}(\lambda,x)+\sum_{\nu=1}^{p}y^{\nu}\mathbf{C}_{\nu}(\lambda,x)
\right\|\leq \frac{CL(\lambda)}{y^{l}},
$$
for suitable $E$-valued functions $\mathbf{C}_{\nu}(\lambda,x)$.
If we now select $p+1$ points $0<y_{1}<y_{2}\dots<y_{p}<1$, we
obtain a system of $p+1$ inequalities with Vandermonde matrix
$A=(y^{\nu}_{j})_{j,\nu}$. Multiplying by $A^{-1}$ and setting
$x=t/(1+\left|t\right|)$, we can find a constant $C_1$ such that
$$
\left\|\mathbf{C}(\lambda,t)\right\|\leq C_{1}(1+\left|t\right|)^{p}L(\lambda), \ \ \ \mbox{for all } t\in\mathbb{R}^{n}\mbox{ and }\lambda_{0}\leq\lambda.
$$
This completes the proof.

\end{proof}

We now investigate the weak-asymptotic behavior.
\begin{theorem}
 \label{wnwth6.2} Let $\phi\in\mathcal{S}(\mathbb{R}^{n})$ be such that $\mu_{0}(\phi)=1$ and let $L$ be slowly varying at the origin (resp. at infinity).
Then, the existence of the limits
\begin{equation}
\label{wnwtteq5}
\lim_{\varepsilon\to0^{+}}\frac{1}{\varepsilon^{\alpha}L(\varepsilon)}F_{\phi}\mathbf{f}(x_{0}+\varepsilon x,\varepsilon y)=F_{x,y}\: , \ \ \ \text{for each } (x,y)\in\mathbb{H}^{n+1}\cap\mathbb{S}^{n}
\end{equation}
$$\left(\mbox{resp. } \lim_{\lambda\to\infty}\frac{1}{\lambda^{\alpha}L(\lambda)}F_{\phi}\mathbf{f}(\lambda x,\lambda y)=F_{x,y}\in E\right),
$$
and the estimate (\ref{wnwtteq1}), for some $k\in\mathbb{N}$,
are necessary and sufficient for $\mathbf{f}$ to have weak-asymptotic behavior in the space $\mathcal{S}'(\mathbb{R}^{n},E)$, namely, the existence of an $E$-valued homogeneous distribution $\mathbf{g}\in\mathcal{S}'(\mathbb{R}^{n},E)$ such that
\begin{equation}
\label{wnwtteq6}
\mathbf{f}(x_{0}+\varepsilon t)\sim \varepsilon^{\alpha}L(\varepsilon)\mathbf{g}(t)\ \ \ \mbox{as}\ \varepsilon\to0^{+}\ \ \mbox{in}\ \mathcal{S}'(\mathbb{R}^{n},E)
\end{equation}
$$\left(\mbox{resp. }\mathbf{f}\left(\lambda t\right)\sim\lambda^{\alpha}L(\lambda)\mathbf{g}(t) \ \ \ \mbox{as}\ \lambda\to\infty \ \ \mbox{in}\ \mathcal{S}'(\mathbb{R}^{n},E)\:\right).
$$
In such a case, $\mathbf{g}$ is completely determined by $F_{\phi}\mathbf{g}(x,y)=F_{x,y}$.
\end{theorem}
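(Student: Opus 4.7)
The plan is to combine Theorem \ref{wnwth6.1} with Proposition \ref{wnwp5.2}(ii) and the bridge between $\mathcal{S}'_{0}(\mathbb{R}^{n},E)$ and $\mathcal{S}'(\mathbb{R}^{n},E)$ supplied by the Appendix (Propositions A.1 and A.2). The necessity part will be a direct appeal to the Abelian Proposition \ref{wnwth2}: its part (i) gives the estimate (\ref{wnwtteq1}) in the stronger form (\ref{wnwaeq1}), while (\ref{wnwaeq3}) in part (ii) yields the radial limits (\ref{wnwtteq5}) with $F_{x,y}=F_{\phi}\mathbf{g}(x,y)$. This simultaneously identifies $\mathbf{g}$ uniquely via $\{F_{x,y}\}$, since the $\phi$-transform is injective on $\mathcal{S}'(\mathbb{R}^{n},E)$ by Proposition \ref{wnwp1}.

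For the sufficiency, I would proceed in three steps. First, I would feed the bound (\ref{wnwtteq1}) into Theorem \ref{wnwth6.1} to conclude that $\mathbf{f}$ is weak-asymptotically bounded of degree $\alpha$ in $\mathcal{S}'(\mathbb{R}^{n},E)$. Second, combining that same bound with the radial limits (\ref{wnwtteq5}) and applying Proposition \ref{wnwp5.2}(ii) would produce a homogeneous distribution $\mathbf{g}_{0}\in\mathcal{S}'_{0}(\mathbb{R}^{n},E)$ of degree $\alpha$ with
\[
\mathbf{f}(x_{0}+\varepsilon t)\sim \varepsilon^{\alpha}L(\varepsilon)\mathbf{g}_{0}(t) \quad \text{in } \mathcal{S}'_{0}(\mathbb{R}^{n},E).
\]
Third, I would lift this weak-asymptotic behavior from $\mathcal{S}'_{0}$ to $\mathcal{S}'$ via the Appendix, producing an asymptotic expansion in $\mathcal{S}'(\mathbb{R}^{n},E)$ of the shape
\[
\mathbf{f}(x_{0}+\varepsilon t)=\mathbf{P}(\varepsilon t)+\varepsilon^{\alpha}L(\varepsilon)\mathbf{g}(t)+o\!\left(\varepsilon^{\alpha}L(\varepsilon)\right),
\]
with an $E$-valued polynomial correction $\mathbf{P}$ in the generic case, or — when $\alpha=p\in\mathbb{N}$ — with additional associate asymptotically homogeneous terms $\varepsilon^{p}\sum_{|m|=p}t^{m}\mathbf{c}_{m}(\varepsilon)$ of the sort introduced in Subsection \ref{aah}. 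The extension $\mathbf{g}$ of $\mathbf{g}_{0}$ must be chosen homogeneous in $\mathcal{S}'(\mathbb{R}^{n},E)$.

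The remaining task is to eliminate these correction terms. My plan is to mimic the first proof of Theorem \ref{wnwth6.1}: I would test the expansion against elements of the linear span of $\{y^{-n}\phi((\,\cdot-x)/y):(x,y)\in\mathbb{H}^{n+1}\}$, invoking Claim \ref{wnwl8} (whose proof rests on the density furnished by Corollary \ref{wnwc1}) to prescribe the moments of the test functions and isolate each $\mathbf{w}_{m}$ and $\mathbf{c}_{m}$. The pointwise convergence (not merely boundedness) in (\ref{wnwtteq5}) is the crucial new input: each evaluation $\langle F_{\phi}\mathbf{f}(x_{0}+\varepsilon\,\cdot\,,\varepsilon\,\cdot\,),\rho_{l}\rangle$ converges, which promotes the $O$-bounds on the coefficients obtained in Theorem \ref{wnwth6.1} to genuine limits. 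A degree comparison (using $\varepsilon^{\nu-\alpha}=O(L(\varepsilon))$ for $\nu>\alpha$) then forces $\mathbf{P}=\mathbf{0}$ in the non-integer case.

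The hardest step, I expect, will be the critical integer case $\alpha=p\in\mathbb{N}$: there, the associate asymptotically homogeneous coefficients $\mathbf{c}_{m}(\varepsilon)$ are a priori only bounded mod $L(\varepsilon)$, and one must show that they converge to scalars which assemble into an honest homogeneous distribution of degree $p$. I would handle this by a Vandermonde-type linear algebra argument in the spirit of the end of the second proof of Theorem \ref{wnwth6.1}: choose test functions $\rho_{1},\dots,\rho_{q}$ from the linear span above via Claim \ref{wnwl8} so that the moment matrix $(\mu_{m_{\nu}}(\rho_{l}))_{l,\nu}$ is invertible, and invert that system to transfer the convergence of each $\langle F_{\phi}\mathbf{f}(x_{0}+\varepsilon\,\cdot\,,\varepsilon\,\cdot\,),\rho_{l}\rangle$ into convergence of each $\mathbf{c}_{m}(\varepsilon)$. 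Once done, the limiting coefficients define the homogeneous $\mathbf{g}\in\mathcal{S}'(\mathbb{R}^{n},E)$, and the identification $F_{\phi}\mathbf{g}(x,y)=F_{x,y}$ follows from the Abelian direction already used, closing the proof.
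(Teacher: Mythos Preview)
Your proposal is correct, but it takes a substantially longer route than the paper. After your Step~1 (Theorem~\ref{wnwth6.1} gives weak-asymptotic boundedness in $\mathcal{S}'(\mathbb{R}^{n},E)$), the net $\{\varepsilon^{-\alpha}L(\varepsilon)^{-1}\mathbf{f}(x_{0}+\varepsilon\,\cdot\,)\}$ is already equicontinuous. The paper then simply invokes Banach--Steinhaus: convergence in $\mathcal{S}'(\mathbb{R}^{n},E)$ reduces to convergence on a dense subset of $\mathcal{S}(\mathbb{R}^{n})$. Since $\mu_{0}(\phi)\neq 0$, Corollary~\ref{wnwc1} says the linear span of $\{y^{-n}\phi((\,\cdot-x)/y):(x,y)\in\mathbb{H}^{n+1}\}$ is dense, and evaluation of the net at such a function is exactly $\varepsilon^{-\alpha}L(\varepsilon)^{-1}F_{\phi}\mathbf{f}(x_{0}+\varepsilon x,\varepsilon y)$, whose convergence is the hypothesis (\ref{wnwtteq5}) (extended from the half-sphere to all of $\mathbb{H}^{n+1}$ by Proposition~\ref{wnwp5.1}(ii)). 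That is the entire sufficiency proof.

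Your Steps~2--3 and the elimination of $\mathbf{P}$ and the $\mathbf{c}_{m}$ are all unnecessary: you pass through $\mathcal{S}'_{0}$, invoke the Appendix to lift back with correction terms, and then spend effort killing those terms by re-running the moment-matrix machinery of Theorem~\ref{wnwth6.1}. All of that work is subsumed by the single observation that equicontinuity plus convergence on a dense set implies convergence everywhere. Your approach does have the minor virtue that it would survive if the dilates/translates of $\phi$ were \emph{not} dense (as happens for wavelets, cf.\ Theorem~\ref{wnwth9}, where the Appendix route is genuinely needed and polynomial corrections are unavoidable); but for the $\phi$-transform the density shortcut is available and the paper exploits it.
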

\begin{proof} Theorem \ref{wnwth6.1} gives the equivalence of (\ref{wnwtteq1}) with the weak-asymptotic boundedness of $\mathbf{f}$. So, by the Banach-Steinhaus theorem, (\ref{wnwtteq6}) is now equivalent to the convergence of $(\varepsilon^{-\alpha}/L(\varepsilon))\mathbf{f}(x_{0}+\varepsilon \:\cdot\:)$ (resp. $(\lambda^{-\alpha}/L(\lambda))\mathbf{f}(\lambda \:\cdot\:)$) over a dense subset of $\mathcal{S}(\mathbb{R}^{n})$. By Corollary \ref{wnwc1}, the linear span of the set $\left\{y^{-n}\phi ((\:\cdot\:-x)/y):(x,y)\in\mathbb{H}^{n+1}\right\}$ is dense in $\mathcal{S}(\mathbb{R}^{n})$, it remains only to observe that (\ref{wnwtteq5}) gives precisely convergence over such a dense subset.
\end{proof}

\begin{remark} We have stated the theorems of this subsection only for $\phi-$transforms, but they are obviously true for any non-wavelet transform $M^{\mathbf{f}}_{\varphi}$ if we just assume that $\mu_{0}=\mu_{0}(\varphi)=\int_{\mathbb{R}^{n}}\varphi(t)\mathrm{d}t\neq0$. Indeed, it follows simply by considering the $\phi-$transform with kernel $\phi=\mu_{0}^{-1}\check{\varphi}$.
\end{remark}

\subsection{Tauberian Theorems for Wavelet Transforms}\label{wnwwaw}
We now present the Tauberian theorems for wavelet transforms. We begin with weak-asymptotic boundedness.

\begin{theorem}
\label{wnwth8}
Let $\mathbf{f}\in\mathcal{S}'(\mathbb{R}^{n},E)$, let $\psi\in\mathcal{S}(\mathbb{R}^{n})$ be a non-degenerate wavelet, and let $L$ be slowly varying at the origin (resp. at infinity). The estimate (\ref{wnweq5.5}), for some $k\in\mathbb{N}$, is sufficient for the existence of an $E$-valued polynomial $\mathbf{P}$, of degree less than $\alpha$ (resp. of the form $\mathbf{P}(t)=\sum_{\alpha<\left|m\right|\leq d}t^{m}\mathbf{w}_{m}$, for some $d\in\mathbb{N}$), such that:
\begin{itemize}
\item [(i)] If $\alpha\notin\mathbb{N}$,
$\mathbf{f}-\mathbf{P}$ is weak-asymptotically bounded of degree $\alpha$ at the point $x_{0}$ (at infinity) with respect to $L$ in the space $\mathcal{S}'(\mathbb{R}^{n},E)$.
\item [(ii)] If $\alpha=p\in\mathbb{N}$, there exist asymptotically homogeneously bounded $E$-valued functions $\mathbf{c}_{m}$, $\left|m\right|=p$, of degree 0 with respect to $L$ such that $\mathbf{f}$ has the following asymptotic expansion
\begin{equation*}
\mathbf{f}(x_{0}+\varepsilon t)=\mathbf{P}(\varepsilon t)+\varepsilon^{p}\sum_{\left|m\right|=p}t^{m}\mathbf{c}_{m}(\varepsilon)+O\left(\varepsilon^{p}L(\varepsilon)\right)
\end{equation*}
$$\left(\mbox{resp. }\mathbf{f}\left(\lambda t\right)=\mathbf{P}(\lambda t)+\lambda^{p}\sum_{\left|m\right|=p}t^{m}\mathbf{c}_{m}(\lambda)+O\left(\lambda^{p}L(\lambda)\right)\:\right) ,
$$
as $\varepsilon\to0^{+}$ (resp. $\lambda\to\infty$) in the space $\mathcal{S}'(\mathbb{R}^{n},E)$.
\end{itemize}

Moreover, denote by $P_{q}$ the homogeneous terms of the Taylor
polynomials of $\hat{\psi}$ at the origin, that is,
\begin{equation}
\label{wnwtteq7}
P_{q}(u)=\sum_{\left|m\right|=q}\frac{{\hat{\psi}}^{(m)}(0)u^{m}}{m!},
\ \ \ q\in\mathbb{N}.
\end{equation}
Then, the $E$-valued polynomial $\mathbf{P}$ must satisfy
\begin{equation}
\label{wnwtteq8}
\overline{P_{q}}\left(\frac{\partial}{\partial t}\right)\mathbf{P}=\mathbf{0}, \ \ \ \mbox{for all } q\in\mathbb{N}.
\end{equation}
\end{theorem}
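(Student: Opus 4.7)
My approach combines the Tauberian machinery of the previous section with Proposition A.2 of the Appendix and an explicit formula for $\mathcal{W}_{\psi}$ acting on polynomials. Since $\mathcal{W}_{\psi}\mathbf{f}=M^{\mathbf{f}}_{\check{\bar{\psi}}}$ and $\widehat{\check{\bar{\psi}}}=\overline{\hat{\psi}}$, the wavelet $\psi$ is non-degenerate exactly when the test function $\check{\bar{\psi}}$ is, so hypothesis (\ref{wnweq5.5}) is nothing other than (\ref{wnweq5.1}) for the kernel $\check{\bar{\psi}}$. Proposition \ref{wnwp5.2}(i) thus delivers weak-asymptotic boundedness of $\mathbf{f}$ of degree $\alpha$ with respect to $L$ in $\mathcal{S}'_{0}(\mathbb{R}^{n},E)$, and Proposition A.2 from the Appendix lifts this to $\mathcal{S}'(\mathbb{R}^{n},E)$: it yields an $E$-valued polynomial $\mathbf{P}$ of the prescribed form (degree $<\alpha$ at $x_{0}$; of the form $\sum_{\alpha<|m|\leq d}t^{m}\mathbf{w}_{m}$ at infinity) and, in the critical case $\alpha=p\in\mathbb{N}$, the asymptotically homogeneously bounded coefficients $\mathbf{c}_{m}$ of Subsection \ref{aah}. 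This establishes (i) and (ii).

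\textbf{Key identity.} To prove the constraint (\ref{wnwtteq8}), I would compute $\mathcal{W}_{\psi}\mathbf{P}$ directly. Taylor-expanding $\mathbf{P}(x+yt)$ in $t$ inside $\mathcal{W}_{\psi}\mathbf{P}(x,y)=\int\mathbf{P}(x+yt)\overline{\psi(t)}\,\mathrm{d}t$ and using $\overline{\mu_{k}(\psi)}=(-i)^{|k|}\overline{\hat{\psi}^{(k)}(0)}$, grouping terms by $|k|=q$ yields the clean identity
\begin{equation*}
\mathcal{W}_{\psi}\mathbf{P}(x,y)=\sum_{q\geq 0}(-iy)^{q}\mathbf{R}_{q}(x),\qquad \mathbf{R}_{q}:=\overline{P_{q}}\!\left(\partial/\partial x\right)\mathbf{P},
\end{equation*}
in which each $\mathbf{R}_{q}$ is an $E$-valued polynomial of degree $\leq d-q$. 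Consequently $\mathcal{W}_{\psi}\mathbf{P}(\varepsilon x,\varepsilon y)=\sum_{q}(-iy)^{q}\varepsilon^{q}\mathbf{R}_{q}(\varepsilon x)$ is a polynomial in $\varepsilon$ whose nonzero powers lie in $[0,d]$ at $x_{0}$ (resp.\ in $[\alpha+1,d]$ at infinity, since each $\mathbf{R}_{q}$ is then built from monomials of degree $\geq\alpha+1-q$).

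\textbf{Extracting the constraint.} Evaluating the Proposition A.2 expansion against the test function $t\mapsto y^{-n}\overline{\psi((t-x)/y)}$ and using hypothesis (\ref{wnweq5.5}) for $\mathcal{W}_{\psi}\mathbf{f}$, one obtains
\begin{equation*}
\mathcal{W}_{\psi}\mathbf{P}(\varepsilon x,\varepsilon y)+\varepsilon^{p}\!\sum_{|m|=p}\mathbf{c}_{m}(\varepsilon)\,\tilde{Q}_{m}(x,y)=O\!\left(\varepsilon^{\alpha}L(\varepsilon)/y^{k}\right),
\end{equation*}
where $\tilde{Q}_{m}(x,y)=\mathcal{W}_{\psi}[t^{m}](x,y)$ is a polynomial in $(x,y)$ (the middle sum is absent if $\alpha\notin\mathbb{N}$). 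By Subsection \ref{aah}, each $\mathbf{c}_{m}$ satisfies $\|\mathbf{c}_{m}(\varepsilon)\|=o(\varepsilon^{-\sigma})$ for every $\sigma>0$; choosing $\sigma<p-d$ makes the critical correction $o(\varepsilon^{p-\sigma})=o(\varepsilon^{d+\delta})$ for some $\delta>0$, which as $\varepsilon\to 0^{+}$ is strictly lower order than any nonzero $\varepsilon^{j}$-coefficient of the polynomial $\mathcal{W}_{\psi}\mathbf{P}(\varepsilon x,\varepsilon y)$. Comparing orders of $\varepsilon$ one by one therefore forces every coefficient of $\varepsilon^{j}$ to vanish identically in $(x,y)$; reading off successively the coefficients of $y^{q}$ and then $x^{l}$ in each such vanishing identity yields $\mathbf{R}_{q}^{(l)}(0)=\mathbf{0}$ for all $|l|\leq d-q$, so that $\mathbf{R}_{q}\equiv\mathbf{0}$ for every $q$, which is exactly (\ref{wnwtteq8}). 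The behavior-at-infinity case runs identically with $\lambda\to\infty$, the polynomial $\mathcal{W}_{\psi}\mathbf{P}(\lambda x,\lambda y)$ now having $\lambda$-degrees in $[\alpha+1,d]$ all strictly above $\alpha$, so each $\lambda^{j}$-coefficient must likewise vanish. The only genuinely delicate point is the critical case, where the $\mathbf{c}_{m}$ may be unbounded, but the crude bound $o(\varepsilon^{-\sigma})$ is just sharp enough to prevent them from cancelling the polynomial's leading term.
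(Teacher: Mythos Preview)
Your proof is correct and follows essentially the same strategy as the paper's. Both arguments first invoke Proposition~\ref{wnwp5.2}(i) to obtain weak-asymptotic boundedness in $\mathcal{S}'_{0}(\mathbb{R}^{n},E)$ and then Proposition~A.2 to produce $\mathbf{P}$ and the $\mathbf{c}_{m}$; for the constraint (\ref{wnwtteq8}) both compute $\mathcal{W}_{\psi}\mathbf{P}$ explicitly, use the growth bounds $L=o(h^{\mp\sigma})$ and $\mathbf{c}_{m}=o(h^{\mp\sigma})$ to dominate the non-polynomial terms, and then compare powers to force the coefficients to vanish. The only cosmetic differences are that the paper works out the infinity case and decomposes $\mathbf{P}=\sum_{\nu}\mathbf{Q}_{\nu}$ into homogeneous pieces (separating first by the degree $\nu$ via powers of $\lambda$, then by $q$ via powers of $y$), whereas you work at the origin with the global identity $\mathcal{W}_{\psi}\mathbf{P}(x,y)=\sum_{q}(-iy)^{q}\mathbf{R}_{q}(x)$ and separate first by total $\varepsilon$-degree, then by $y^{q}$, then by $x^{l}$; these are equivalent bookkeeping choices.
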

\begin{proof} By Proposition \ref{wnwp5.2}, (\ref{wnweq5.5}) implies that $\mathbf{f}$ is weak-asymptotically bounded in
 the space $\mathcal{S}'_{0}(\mathbb{R}^{n},E)$. The existence of the $E$-valued polynomial $\mathbf{P}$ and the $\mathbf{c}_{m}$, in case (ii),
 is then a direct consequence of Proposition A.2. The assertion about the degree of
 $\mathbf{P}$ follows from the growth properties of $L$ (in the case (ii) the terms of order $\left|m\right|=p$ can be
 assumed to be absorbed by the $\mathbf{c}_{m}$). It remains to establish that $\mathbf{P}$ satisfies the equations
 (\ref{wnwtteq8}). We show this fact only in the case of infinity; the proof of the case of behavior at finite
  points is completely analogous. Suppose the $E$-valued polynomial has the form
  $$\mathbf{P}(t)=\sum_{\alpha<\left|m\right|\leq d}t^{m}\mathbf{w}_{m}=\sum_{\nu=[\alpha]+1}^{d}\mathbf{Q}_{\nu}(t),$$
   where each $\mathbf{Q}_{\nu}$ is homogeneous of degree $\nu$. Choose $\alpha<\kappa<[\alpha]+1$.
Then, since $L(\lambda)=O(\lambda^{\kappa-\alpha})$ and $\mathbf{c}_{m}(\lambda)=O(\lambda^{\kappa-\alpha})$, we obtain that
$$
\mathbf{f}(\lambda t)=\mathbf{P}(\lambda t)+O(\lambda^{\kappa}) \ \ \ \mbox{as }\lambda\to\infty \mbox{ in }\mathcal{S}'(\mathbb{R}^{n},E).
$$
But, then, for each fixed $(x,y)\in\mathbb{H}^{n+1}$, the assumption on the size of $\mathcal{W}_{\psi}\mathbf{f}(\lambda x, \lambda y)$ implies that
$$
\mathcal{W}_{\psi}\mathbf{P}(\lambda x, \lambda y)=\sum_{\alpha<\left|m\right|\leq d} (-\lambda i)^{\left|m\right|}\frac{\partial^{\left|m\right|}}{\partial u^{m}}\left.\left(
e^{i x\cdot u}{\overline{\hat{\psi}}}( yu)\right)\right|_{u=0}
\mathbf{w}_{m}=O(\lambda^{\kappa}),
$$
$\lambda\to\infty$. Then, we infer that for each $\alpha<\nu\leq d$ and each $(x,y)\in\mathbb{H}^{n+1}$,
$$
\mathbf{0}=\sum_{\left|m\right|=\nu}\frac{\partial^{\left|m\right|}}{\partial u^{m}}\left.\left(
e^{i x\cdot u}{\overline{\hat{\psi}}}( yu)\right)\right|_{u=0}
\mathbf{w}_{m}=\sum_{q=0}^{\nu}(iy)^{q}(\overline{P_{q}\left(\partial/\partial x\right)}\mathbf{Q}_{\nu})(x).
$$
Thus
$$
\overline{P_{q}}\left(\frac{\partial}{\partial x}\right)\mathbf{Q}_{\nu}=\mathbf{0}, \ \ \ \mbox{for all } q,\nu\in\mathbb{N},
$$
as required.
\end{proof}

We now consider the weak-asymptotic behavior.
\begin{theorem}
\label{wnwth9}
Let $\mathbf{f}\in\mathcal{S}'(\mathbb{R}^{n},E)$, let $\psi\in\mathcal{S}(\mathbb{R}^{n})$ be a non-degenerate wavelet, and let $L$ be slowly varying at the origin (resp. at infinity). Suppose that the estimate (\ref{wnweq5.5}) holds for some $k\in\mathbb{N}$, and that the limits (\ref{wnweq5.6}) exist.
Then, there exist an $E$-valued tempered distribution $\mathbf{g}$, which satisfies $\mathcal{W}_{\psi}\mathbf{g}(x,y)=W_{x,y}$, and an $E$-valued polynomial $\mathbf{P}$, of degree less than $\alpha$ (resp. of the form $\mathbf{P}(t)=\sum_{\alpha<\left|m\right|\leq d}t^{m}\mathbf{w}_{m}$, for some $d\in\mathbb{N}$), such that:
\begin{itemize}
\item [(i)] If $\alpha\notin\mathbb{N}$, $\mathbf{g}$ is homogeneous of degree $\alpha$ and
\begin{equation*}
\mathbf{f}(x_{0}+\varepsilon t)-\mathbf{P}(\varepsilon t)\sim \varepsilon^{\alpha}L(\varepsilon)\mathbf{g}(t)\ \ \ \mbox{as}\ \varepsilon\to0^{+}\ \ \mbox{in}\ \mathcal{S}'(\mathbb{R}^{n},E)
\end{equation*}
$$\left(\mbox{resp. }\mathbf{f}\left(\lambda t\right)-\mathbf{P}(\lambda t)\sim\lambda^{\alpha}L(\lambda)\mathbf{g}(t) \ \ \ \mbox{as}\ \lambda\to\infty \ \ \mbox{in}\ \mathcal{S}'(\mathbb{R}^{n},E)\:\right) .
$$
\item [(ii)] If $\alpha=p\in\mathbb{N}$, there exist associate asymptotically homogeneous $E$-valued functions $\mathbf{c}_{m}$, $\left|m\right|=p$, of degree 0 with respect to $L$ such that $\mathbf{f}$ has the following weak-asymptotic expansion
\begin{equation*}
\mathbf{f}(x_{0}+\varepsilon t)=\mathbf{P}(\varepsilon t)+ \varepsilon^{p}L(\varepsilon)\mathbf{g}(t)+\varepsilon^{p}\sum_{\left|m\right|=p}t^{m}\mathbf{c}_{m}(\varepsilon)+o\left(\varepsilon^{p}L(\varepsilon)\right)
\end{equation*}
$$\left(\mbox{resp. }\mathbf{f}\left(\lambda t\right)=\mathbf{P}(\lambda t)+\lambda^{p}L(\lambda)\mathbf{g}(t)+\lambda^{p}\sum_{\left|m\right|=p}t^{m}\mathbf{c}_{m}(\lambda)+o\left(\lambda^{p}L(\lambda)\right)\:\right),
$$
as $\varepsilon\to0^{+}$ (resp. $\lambda\to\infty$) in the space $\mathcal{S}'(\mathbb{R}^{n},E)$.
\end{itemize}
Furthermore, $\mathbf{P}$ satisfies the equations (\ref{wnwtteq8}).
\end{theorem}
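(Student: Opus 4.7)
The plan is to combine three ingredients already at our disposal: Theorem \ref{wnwth8}, which under the weaker hypothesis (\ref{wnweq5.5}) alone supplies the polynomial $\mathbf{P}$ satisfying (\ref{wnwtteq8}) and (in the critical case) the asymptotically homogeneously bounded functions $\mathbf{c}_m$ with the big-$O$ remainder; Proposition \ref{wnwp5.2}(ii), which converts the pointwise convergence hypothesis (\ref{wnweq5.6}) into a weak-asymptotic behavior in $\mathcal{S}'_0(\mathbb{R}^n,E)$; and Proposition A.2 of the Appendix, which lifts that weak-asymptotic from $\mathcal{S}'_0(\mathbb{R}^n,E)$ to a sharp weak-asymptotic in $\mathcal{S}'(\mathbb{R}^n,E)$ with the polynomial $\mathbf{P}$ and, in the critical case, associate asymptotically homogeneous corrections $\mathbf{c}_m$. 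The extra ingredient beyond Theorem \ref{wnwth8} is precisely the pointwise convergence (\ref{wnweq5.6}), which is what promotes the big-$O$ to a sharp little-$o$ and produces the limit distribution $\mathbf{g}$.

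Concretely, I would first invoke Proposition \ref{wnwp5.2}(ii) to obtain $\mathbf{g}_0\in\mathcal{S}'_0(\mathbb{R}^n,E)$, automatically homogeneous of degree $\alpha$, such that
$$\mathbf{f}(x_0+\varepsilon t)\sim\varepsilon^{\alpha}L(\varepsilon)\mathbf{g}_0(t)\quad\text{in }\mathcal{S}'_0(\mathbb{R}^n,E),$$
with $\mathcal{W}_\psi\mathbf{g}_0(x,y)=W_{x,y}$ on the unit half-sphere by Theorem \ref{wnwth5.1}(ii), and therefore throughout $\mathbb{H}^{n+1}$ by the scaling of the wavelet transform under the homogeneity of $\mathbf{g}_0$. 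Proposition A.2 then yields a representative $\mathbf{g}\in\mathcal{S}'(\mathbb{R}^n,E)$ of $\mathbf{g}_0$ together with the polynomial $\mathbf{P}$, and in case (ii) the associate asymptotically homogeneous $\mathbf{c}_m$, giving the sharp expansions asserted in (i) and (ii). In case (i) the representative $\mathbf{g}$ may be taken as the unique homogeneous extension of $\mathbf{g}_0$ (so that $\mathcal{W}_\psi\mathbf{g}=\mathcal{W}_\psi\mathbf{g}_0=W_{x,y}$); in case (ii) Proposition A.2 produces $\mathbf{g}$ with the same identification. The degree restrictions on $\mathbf{P}$ and the equations (\ref{wnwtteq8}) carry over verbatim from Theorem \ref{wnwth8}, whose hypotheses are a fortiori satisfied.

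The main obstacle is the critical case $\alpha=p\in\mathbb{N}$, in which one must upgrade the $\mathbf{c}_m$ from merely asymptotically homogeneously bounded (which is all Theorem \ref{wnwth8} delivers under (\ref{wnweq5.5}) alone) to genuinely associate asymptotically homogeneous, i.e., one must identify the logarithmic leading term $L(\varepsilon)\log a\cdot\mathbf{v}_m$ in the increment $\mathbf{c}_m(a\varepsilon)-\mathbf{c}_m(\varepsilon)$. Producing this requires the full convergence afforded by (\ref{wnweq5.6}) and not merely boundedness; via the wavelet desingularization formula (Proposition \ref{wnwth1}) together with the denseness in $\mathcal{S}_0(\mathbb{R}^n)$ of the dilates and translates of $\bar\psi$ (Corollary \ref{wnwc2}), the convergence on the half-sphere propagates to convergence against every test function in $\mathcal{S}_0(\mathbb{R}^n)$, from which Proposition A.2 extracts the required logarithmic correction. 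This is the technical heart of the Appendix argument and is precisely the point where the present theorem goes beyond the mere boundedness statement of Theorem \ref{wnwth8}.
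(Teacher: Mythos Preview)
Your approach is the same as the paper's: invoke Proposition~\ref{wnwp5.2}(ii) to obtain weak-asymptotic behavior in $\mathcal{S}'_0(\mathbb{R}^n,E)$, then lift to $\mathcal{S}'(\mathbb{R}^n,E)$ via the Appendix, and finally read off (\ref{wnwtteq8}) from Theorem~\ref{wnwth8}. The only correction needed is a systematic mislabel: throughout you cite Proposition~A.2, but the result you describe and need is Proposition~A.1. Proposition~A.2 handles weak-asymptotic \emph{boundedness} and yields only asymptotically homogeneously \emph{bounded} $\mathbf{c}_m$ (this is what feeds Theorem~\ref{wnwth8}); it is Proposition~A.1 that takes weak-asymptotic \emph{behavior} in $\mathcal{S}'_0$ as input and produces $\mathbf{g}$, $\mathbf{P}$, and the \emph{associate asymptotically homogeneous} $\mathbf{c}_m$ required here. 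With that relabel your argument is correct and matches the paper's proof.
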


\begin{proof} Proposition \ref{wnwp5.2}, under the assumptions (\ref{wnweq5.5}) and (\ref{wnweq5.6}), implies that $\mathbf{f}$ has weak-asymptotic behavior in the space $\mathcal{S}' _{0}(\mathbb{R}^{n},E)$.
An application of Proposition A.1 yields now the existence of $\mathbf{g}$, $\mathbf{P}$, and the $\mathbf{c}_{m}$ in case (ii). That $\mathbf{P}$ satisfies the equations (\ref{wnwtteq8}) actually follows from  Theorem \ref{wnwth8}.
\end{proof}

When $\alpha\notin\mathbb{N}$ in Theorem \ref{wnwth9}, the condition $\mathcal{W}_{\psi}\mathbf{g}(x,y)=W_{x,y}$ uniquely determines $\mathbf{g}$, in view of its homogeneity. On the other hand, if $\alpha\in\mathbb{N}$, the prescribed values of $\mathcal{W}_\psi\mathbf{g}$ can only determine $\mathbf{g}$ modulo polynomials which are homogeneous of degree $\alpha$.

At this point it is worth to point out that the use of non-degenerate wavelets in Theorem \ref{wnwth8} and Theorem \ref{wnwth9} is absolutely imperative. Clearly, if $\hat{\psi}$ identically vanishes on a ray through the origin, then there are distributions for which $\mathcal{W}_{\psi}\mathbf{f}$ is identically zero and hence for those distributions the hypothesis (\ref{wnweq5.5}) is satisfied for all $\alpha$. However, among such distributions $\mathbf{f}$, it is easy to find explicit examples for which the conclusion of Theorem \ref{wnwth8} does not hold for a given $\alpha$.

Observe that if  $\psi\in\mathcal{S}_{0}(\mathbb{R}^{n})$ in Theorem \ref{wnwth8} and Theorem \ref{wnwth9}, then the converses are also true, as follows from the moment vanishing properties of $\psi$. This is actually the content of Theorem \ref{wnwth5.1}. The same consideration holds for the case of weak-asymptotics at finite points if we employ wavelets $\psi$ that have all vanishing moments $\mu_{m}(\psi)=0$ up to the order $[\alpha]$.

\newpage

\section{Tauberian Class Estimates}

\label{wnwce} In this section we show that the estimate of type
\begin{equation}
\label{wnwceeq1} \left\|M^{\mathbf{f}}_{\varphi}(x,y)\right\|\leq
C \frac{(1+y)^{k}\left(1+\left|x\right|\right)^{l}}{y^{k}}, \ \ \
(x,y)\in\mathbb{H}^{n+1},
\end{equation}
characterizes the space $\mathcal{S}'(\mathbb{R}^{n},E)$. We call
(\ref{wnwceeq1}) a  \emph{global class estimate}, and we
may say that it has an intrinsic Tauberian nature. Specifically, we prove
that if $\mathbf{f}$ takes values in a ``broad'' locally convex
space which contains the narrower Banach space $E$, and if
$\mathbf{f}$ satisfies (\ref{wnwceeq1}) for a non-degenerate test
function $\varphi$, then, there is a distribution $\mathbf{G}$
with values in the broad space such that
$\operatorname*{supp}\hat{\mathbf{G}}\subseteq\left\{0\right\}$
and $\mathbf{f}-\mathbf{G}\in\mathcal{S}'(\mathbb{R}^{n},E).$ In
case when the broad space is a normed one, $\mathbf{G}$ reduces
simply to a polynomial. This will be done in Subsection \ref{gce}.

We shall also investigate in Subsection \ref{lce} the consequences
of (\ref{wnwceeq1}) when it is only assumed to hold for
$(x,y)\in\mathbb{R}^{n}\times(0,1]$, we call it then a \emph{local
class estimate}. In this case the situation is slightly different
and we obtain that
$\mathbf{f}-\mathbf{G}\in\mathcal{S}'(\mathbb{R}^{n},E),$ where
$\hat{\mathbf{G}}$ has compact support but its support may not be
any longer the origin. We may take $\mathbf{G}$ with $\operatorname*{supp}\hat{\mathbf{G}}\subseteq\left\{0\right\}$ if we employ the wavelets introduced in the Example \ref{wnwex3.3} (Subsection \ref{wnwces}). For the $\phi-$transform
$\mathbf{G}$ does not occur (Subsection \ref{wnwcephi}).

We point out that the results of this section extend in several
directions those of Drozhzhinov and Zavialov from
\cite{drozhzhinov-z2,drozhzhinov-z3}.

Throughout this section, unless specified, $X$ is assumed to be a
(arbitrary) Hausdorff locally convex topological vector space such
that the Banach space $E\subset X$ and  the
embedding $E\rightarrow X$ is linear and continuous. Observe that
the transform (\ref{wnwneq1}) makes sense for $X$-valued
distributions as well. Measurability for $E$-valued functions is
meant in the sense of Bochner (i.e., a.e. pointwise limits of
$E$-valued continuous functions); likewise, integrals for
$E$-valued functions are taken in the Bochner sense.

\subsection{Global Class Estimates}
\label{gce} We begin with wavelets in
$\mathcal{S}_{0}(\mathbb{R}^{n})$.
\begin{proposition}
\label{wnwcep1} Let
$\mathbf{f}\in\mathcal{S}'_{0}(\mathbb{R}^{n},X)$ and let
$\psi\in\mathcal{S}_{0}(\mathbb{R}^{n})$ be a non-degenerate
wavelet. The following two conditions,
\begin{align}
\label{wnwceeq2} &\mathcal{W}_{\psi}\mathbf{f}(x,y)\in E, \mbox{
  for almost all value of } (x,y)\in\mathbb{H}^{n+1},
  \\
  & \nonumber
  \mbox{ and it is measurable as an } E\mbox{-valued function},
\end{align}
and there are constants $k,l\in\mathbb{N}$ and $C>0$ such that
\begin{align}
\label{wnwceeq4}
&\left\|\mathcal{W}_{\psi}\mathbf{f}(x,y)\right\|\leq C
\left(\frac{1}{y}+y\right)^{k}\left(1+\left|x\right|\right)^{l},
\\
&
 \mbox{  for almost all }(x,y)\in\mathbb{H}^{n+1},\nonumber
\end{align}
are necessary and sufficient for
$\mathbf{f}\in\mathcal{S}'_{0}(\mathbb{R}^{n},E)$.
\end{proposition}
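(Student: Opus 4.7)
The necessity is immediate from Proposition \ref{wnwp2}: if $\mathbf{f}\in\mathcal{S}'_{0}(\mathbb{R}^{n},E)$, then $\mathcal{W}_{\psi}\mathbf{f}=M^{\mathbf{f}}_{\check{\bar{\psi}}}\in C^{\infty}(\mathbb{H}^{n+1},E)$ is an $E$-valued function of slow growth on $\mathbb{H}^{n+1}$, which gives both (\ref{wnwceeq2}) and (\ref{wnwceeq4}). So the only work is in the sufficiency.

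For the sufficiency, the natural tool is the wavelet desingularization formula from Proposition \ref{wnwth1}. Let $\eta\in\mathcal{S}_{0}(\mathbb{R}^{n})$ be a reconstruction wavelet for $\psi$, whose existence is guaranteed by Proposition \ref{wnwp3}. Then for every $\rho\in\mathcal{S}_{0}(\mathbb{R}^{n})$ we have, in $X$,
\begin{equation*}
\left\langle \mathbf{f}(t),\rho(t)\right\rangle=\frac{1}{c_{\psi,\eta}}\int_{0}^{\infty}\!\!\int_{\mathbb{R}^{n}}\mathcal{W}_{\psi}\mathbf{f}(x,y)\,\mathcal{W}_{\bar{\eta}}\rho(x,y)\,\frac{\mathrm{d}x\mathrm{d}y}{y}.
\end{equation*}
The key observation is that, under the hypotheses (\ref{wnwceeq2}) and (\ref{wnwceeq4}), the integrand is a Bochner integrable $E$-valued function: indeed, it is measurable as an $E$-valued map, and by (\ref{wnwceeq4}) together with the fact that $\mathcal{W}_{\bar{\eta}}\rho\in\mathcal{S}(\mathbb{H}^{n+1})$, one has
\begin{equation*}
\int_{0}^{\infty}\!\!\int_{\mathbb{R}^{n}}\left\|\mathcal{W}_{\psi}\mathbf{f}(x,y)\right\|\left|\mathcal{W}_{\bar{\eta}}\rho(x,y)\right|\frac{\mathrm{d}x\mathrm{d}y}{y}\leq C\int_{0}^{\infty}\!\!\int_{\mathbb{R}^{n}}\!\left(\tfrac{1}{y}+y\right)^{k}\!(1+|x|)^{l}\left|\mathcal{W}_{\bar{\eta}}\rho(x,y)\right|\frac{\mathrm{d}x\mathrm{d}y}{y}<\infty.
\end{equation*}

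Hence the right-hand side of the desingularization formula defines an element of $E$ whose image under the continuous embedding $E\hookrightarrow X$ equals $\left\langle \mathbf{f},\rho\right\rangle$. Since $E\hookrightarrow X$ is injective, we may identify $\left\langle \mathbf{f},\rho\right\rangle\in E$ for every $\rho\in\mathcal{S}_{0}(\mathbb{R}^{n})$. Furthermore, the displayed estimate above shows that the linear map $\rho\mapsto\left\langle \mathbf{f},\rho\right\rangle$ from $\mathcal{S}_{0}(\mathbb{R}^{n})$ to $E$ factors through the continuous wavelet synthesis--type map $\rho\mapsto \mathcal{W}_{\bar{\eta}}\rho\in\mathcal{S}(\mathbb{H}^{n+1})$, composed with a continuous seminorm on $\mathcal{S}(\mathbb{H}^{n+1})$. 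This yields continuity of $\mathbf{f}:\mathcal{S}_{0}(\mathbb{R}^{n})\to E$, and therefore $\mathbf{f}\in\mathcal{S}'_{0}(\mathbb{R}^{n},E)$.

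There is no serious obstacle; the only subtlety worth checking carefully is the passage from equality in $X$ to membership in $E$, which rests on (i) the Bochner integrability of the $E$-valued integrand, so that the integral makes sense in $E$, (ii) the continuity of the embedding $E\hookrightarrow X$, which commutes with Bochner integration, and (iii) the injectivity of this embedding. The measurability assumption in (\ref{wnwceeq2}) is precisely what is needed to run a Bochner integral, and the polynomial bound (\ref{wnwceeq4}) against the rapid decay of $\mathcal{W}_{\bar{\eta}}\rho$ gives the required absolute integrability.
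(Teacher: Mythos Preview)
Your approach is the paper's approach---use a reconstruction wavelet $\eta$ and the desingularization formula to express $\langle\mathbf{f},\rho\rangle$ as an $E$-valued Bochner integral---but there is a genuine gap at precisely the step you flag as ``the only subtlety.'' You invoke Proposition~\ref{wnwth1} to write
\[
\langle\mathbf{f},\rho\rangle=\frac{1}{c_{\psi,\eta}}\int_{0}^{\infty}\!\!\int_{\mathbb{R}^{n}}\mathcal{W}_{\psi}\mathbf{f}(x,y)\,\mathcal{W}_{\bar{\eta}}\rho(x,y)\,\frac{\mathrm{d}x\mathrm{d}y}{y}\quad\text{in }X,
\]
but that proposition is stated and proved only for Banach space-valued distributions; here $\mathbf{f}$ is merely $X$-valued with $X$ an arbitrary Hausdorff locally convex space. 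The displayed equality---between the $X$-valued pairing on the left and (the image in $X$ of) an $E$-valued Bochner integral on the right---is exactly what must be \emph{proved}, not cited. Your items (i)--(iii) only explain why the right side makes sense in $E$ and embeds into $X$; they do not explain why its image there coincides with $\langle\mathbf{f},\rho\rangle$.

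The paper closes this gap as follows. It defines $\tilde{\mathbf{f}}:=c_{\psi,\eta}^{-1}\mathcal{M}_{\eta}\mathbf{K}\in\mathcal{S}'_{0}(\mathbb{R}^{n},E)$, where $\mathbf{K}=\mathcal{W}_{\psi}\mathbf{f}$ is viewed in $\mathcal{S}'(\mathbb{H}^{n+1},E)$ thanks to (\ref{wnwceeq2})--(\ref{wnwceeq4}), and then shows $\tilde{\mathbf{f}}=\mathbf{f}$ in $\mathcal{S}'_{0}(\mathbb{R}^{n},X)$. Using the scalar identity $\rho=c_{\psi,\eta}^{-1}\mathcal{M}_{\bar{\psi}}\mathcal{W}_{\bar{\eta}}\rho$, the question reduces to exchanging the $X$-valued pairing $\langle\mathbf{f}(t),\cdot\rangle$ with an integral over $\mathbb{H}^{n+1}$. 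This is where the local convexity of $X$ is actually used: one tests both sides against an arbitrary $\mathbf{w}^{\ast}\in X'$. On the $E$-side, $\mathbf{w}^{\ast}|_{E}\in E'$ passes inside the Bochner integral; on the $X$-side, the test-function integral is written as a limit of Riemann sums converging in $\mathcal{S}_{0}(\mathbb{R}^{n}_{t})$, through which first $\mathbf{f}$ and then $\mathbf{w}^{\ast}$ pass by continuity. Both computations yield the same scalar integral, and since $X'$ separates points this gives $\tilde{\mathbf{f}}=\mathbf{f}$. Your sketch supplies only the first of these two halves.
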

\begin{proof}
The necessity is clear (Proposition \ref{wnwp4}). We show the
sufficiency. Let $\eta$ be a reconstruction wavelet for $\psi$. We
apply the wavelet synthesis operator to
$\mathbf{K}(x,y)=\mathcal{W}_{\psi}\mathbf{f}(x,y)$, this is valid
because our assumptions (\ref{wnwceeq2}) and (\ref{wnwceeq4}) ensure
that $\mathbf{K}\in\mathcal{S}'(\mathbb{H}^{n+1},E)$. So, set
$$\tilde{\mathbf{f}}:=\mathcal{M}_{\eta}\mathbf{K}\in\mathcal{S}'_{0}(\mathbb{R}^{n},E)
\subset\mathcal{S}'_{0}(\mathbb{R}^{n},X).$$
 We must therefore
show $\tilde{\mathbf{f}}=\mathbf{f}$. Let
$\rho\in\mathcal{S}_{0}(\mathbb{R}^{n})$. We have, by definition,
(\ref{wnwneq6}), and (\ref{wnwneq8}),
\begin{equation*}
\langle
\tilde{\mathbf{f}},\rho\rangle=\frac{1}{c_{\psi,\eta}}\int_{0}^{\infty}\int_{\mathbb{R}^{n}}\left\langle
f(t),
\frac{1}{y^{n}}\bar{\psi}\left(\frac{t-x}{y}\right)\mathcal{W}_{\bar{\eta}}\rho(x,y)\right\rangle\frac{\mathrm{d}x\mathrm{d}y}{y}
\end{equation*}
and
\begin{align*}
\left\langle
\mathbf{f},\rho\right\rangle&=\frac{1}{c_{\psi,\eta}}\left\langle
\mathbf{f},\mathcal{M}_{\bar{\psi}}\mathcal{W}_{\bar{\eta}}\rho\right\rangle
\\
& =\frac{1}{c_{\psi,\eta}}\left\langle
f(t),\int_{0}^{\infty}\int_{\mathbb{R}^{n}}
\frac{1}{y^{n}}\bar{\psi}\left(\frac{t-x}{y}\right)\mathcal{W}_{\bar{\eta}}
\rho(x,y)\right\rangle\frac{\mathrm{d}x\mathrm{d}y}{y}\:
.
\end{align*}
Thus, with
$$\Phi(x,y;t)=\frac{1}{y^{n+1}}\bar{\psi}\left(\frac{t-x}{y}\right)\mathcal{W}_{\bar{\eta}}\rho(x,y),$$
our problem reduces to justify the interchange of the integrals
with the dual pairing in
\begin{align}
\label{wnwceeq5}
&
\int_{0}^{\infty}\int_{\mathbb{R}^{n}}\left\langle
f(t),\Phi(x,y;t) \right\rangle\mathrm{d}x\mathrm{d}y=
\\
& \nonumber
\left\langle
f(t),\int_{0}^{\infty}\int_{\mathbb{R}^{n}}\Phi(x,y;t)\mathrm{d}x\mathrm{d}y\right\rangle
.
\end{align}
To show (\ref{wnwceeq5}), we verify that
\begin{align}
\label{wnwceeq6} 
& \left\langle
\mathbf{w}^\ast,\int_{0}^{\infty}\int_{\mathbb{R}^{n}}\left\langle
f(t),\Phi(x,y;t)
\right\rangle\mathrm{d}x\mathrm{d}y\right\rangle=
\\
&
\left\langle \mathbf{w}^\ast,\left\langle
f(t),\int_{0}^{\infty}\int_{\mathbb{R}^{n}}\Phi(x,y;t)\mathrm{d}x\mathrm{d}y\right\rangle
\right\rangle , \nonumber
\end{align}
 for arbitrary $\mathbf{w}^\ast\in X'$ (here is
where the local convexity of $X$ plays a role). Since the integral
involved in the left hand side of the above expression is a
Bochner integral in $E$ and the restriction of $\mathbf{w}^{\ast}$
to $E$ belongs to $E'$, we obtain at once the exchange formula
\begin{align}
\label{wnwceeq7} 
&
\left\langle
\mathbf{w}^\ast,\int_{0}^{\infty}\int_{\mathbb{R}^{n}}\left\langle
f(t),\Phi(x,y;t)
\right\rangle\mathrm{d}x\mathrm{d}y\right\rangle=
\\
&
\int_{0}^{\infty}\int_{\mathbb{R}^{n}}\left\langle
\mathbf{w}^\ast,\left\langle f(t),\Phi(x,y;t)
\right\rangle\right\rangle\mathrm{d}x\mathrm{d}y.\nonumber
\end{align}
On the other
hand, we may write
$$
\int_{0}^{\infty}\int_{\mathbb{R}^{n}}\Phi(x,y;t)\mathrm{d}x\mathrm{d}y
$$
as the limit of Riemann sums, convergent in
$\mathcal{S}_{0}(\mathbb{R}^{n}_{t})$, we then easily justify the
exchanges that yield
\begin{align}
\label{wnwceeq8}
&
\left\langle \mathbf{w}^\ast,\left\langle
f(t),\int_{0}^{\infty}\int_{\mathbb{R}^{n}}\Phi(x,y;t)\mathrm{d}x\mathrm{d}y\right\rangle
\right\rangle=
\\
& \nonumber
\int_{0}^{\infty}\int_{\mathbb{R}^{n}}\left\langle
\mathbf{w}^\ast,\left\langle f(t),\Phi(x,y;t)
\right\rangle\right\rangle\mathrm{d}x\mathrm{d}y.
\end{align}
The equality (\ref{wnwceeq6}) follows now by comparing
(\ref{wnwceeq7}) and (\ref{wnwceeq8}).
\end{proof}
Proposition \ref{wnwcep1} provides a full characterization of
$\mathcal{S}'_{0}(\mathbb{R}^{n},E)$.

We now aboard the general wavelet case. The $\phi-$transform will
be studied separately in Subsection \ref{wnwcephi} because a
stronger result holds for it.
\begin{theorem}
\label{wnwceth1} Let $\mathbf{f}\in\mathcal{S}'(\mathbb{R}^{n},X)$
and let $\psi\in\mathcal{S}(\mathbb{R}^{n})$ be a non-degenerate
wavelet. Sufficient conditions for the
existence of an $X$-valued distribution $\mathbf{G}\in
\mathcal{S}'(\mathbb{R}^{n},X)$ such that
$\mathbf{f}-\mathbf{G}\in\mathcal{S}'(\mathbb{R}^{n},E)$ and
$\operatorname*{supp} \hat{\mathbf{G}}\subseteq\left\{0\right\}$
are:
\begin{enumerate}
\item[(i)] $\mathcal{W}_{\psi}\mathbf{f}(x,y)$ takes values in $E$ for almost
all $(x,y)\in\mathbb{H}^{n+1}$ and is measurable as an $E$-valued function.
\item[(ii)] There exist constants $k,l\in\mathbb{N}$ and $C>0$ such that (\ref{wnwceeq4}) holds.
\end{enumerate}
\end{theorem}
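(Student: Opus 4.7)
The plan is to reduce to Proposition \ref{wnwcep1} by constructing from $\psi$ an auxiliary non-degenerate wavelet in $\mathcal{S}_{0}(\mathbb{R}^{n})$. Fix any non-degenerate wavelet $\eta\in\mathcal{S}_{0}(\mathbb{R}^{n})$ with $\hat{\eta}$ nowhere vanishing on rays through the origin; Example \ref{wnwex3.3} supplies such an $\eta$, which one may also take real and even via $\hat{\eta}(u)=e^{-|u|-1/|u|}$. Set $\psi_{1}:=\eta\ast\psi$. Since $\hat{\psi}_{1}=\hat{\eta}\hat{\psi}$ vanishes to infinite order at the origin, $\psi_{1}\in\mathcal{S}_{0}(\mathbb{R}^{n})$; and $\hat{\psi}_{1}(r\omega)$ is not identically zero in $r$ for any $\omega\in\mathbb{S}^{n-1}$, because neither factor is. A direct change-of-variables (as in the proof of Proposition \ref{wnwp5.2}) yields the identity
\begin{equation*}
\mathcal{W}_{\psi_{1}}\mathbf{f}(x,y)=\int_{\mathbb{R}^{n}}\eta(u)\,\mathcal{W}_{\psi}\mathbf{f}(x+yu,y)\,\mathrm{d}u,\quad (x,y)\in\mathbb{H}^{n+1},
\end{equation*}
understood a priori in $X$.

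The next step is to interpret the right-hand side as a Bochner integral in $E$ and transfer the class estimate. By hypothesis, for a.e.\ $(x,y)$ the integrand $u\mapsto \eta(u)\mathcal{W}_{\psi}\mathbf{f}(x+yu,y)$ is $E$-valued, measurable, and dominated by $C(1/y+y)^{k}(1+|x|+y|u|)^{l}|\eta(u)|$, which is integrable. Hence the integral defines an element of $E$ whose norm is bounded by $C'(1/y+y)^{k+l}(1+|x|)^{l}$. A standard functional argument (test against arbitrary $\mathbf{w}^{\ast}\in X'$, invoke the scalar version of the above identity for $\mathcal{W}_{\psi_{1}}(\mathbf{w}^{\ast}\circ\mathbf{f})$, and then apply Hahn--Banach) identifies this Bochner integral with $\mathcal{W}_{\psi_{1}}\mathbf{f}(x,y)$ as elements of $X$. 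Consequently, $\mathcal{W}_{\psi_{1}}\mathbf{f}$ satisfies conditions (\ref{wnwceeq2}) and (\ref{wnwceeq4}) for the $\mathcal{S}_{0}$-wavelet $\psi_{1}$.

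Applying Proposition \ref{wnwcep1} to the restriction of $\mathbf{f}$ to $\mathcal{S}_{0}(\mathbb{R}^{n})$ with wavelet $\psi_{1}$ then shows that this restriction lies in $\mathcal{S}'_{0}(\mathbb{R}^{n},E)$. Using the surjectivity of the projection $\mathcal{S}'(\mathbb{R}^{n},E)\to\mathcal{S}'_{0}(\mathbb{R}^{n},E)$ recorded in Subsection \ref{wnwdsp}, choose any lift $\tilde{\mathbf{f}}\in\mathcal{S}'(\mathbb{R}^{n},E)$ whose restriction agrees with that of $\mathbf{f}$, and set $\mathbf{G}:=\mathbf{f}-\tilde{\mathbf{f}}\in\mathcal{S}'(\mathbb{R}^{n},X)$. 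Then $\mathbf{G}$ annihilates $\mathcal{S}_{0}(\mathbb{R}^{n})$, and since every $\sigma\in\mathcal{D}(\mathbb{R}^{n}\setminus\{0\})$ arises as the Fourier transform of an element of $\mathcal{S}_{0}(\mathbb{R}^{n})$, one obtains $\operatorname*{supp}\hat{\mathbf{G}}\subseteq\{0\}$. The main delicate step I anticipate is the functional identification in the second paragraph: one must check that the $E$-valued Bochner integral genuinely represents $\mathcal{W}_{\psi_{1}}\mathbf{f}(x,y)$ inside the larger space $X$. Everything else is a direct synthesis of the convolution trick from Proposition \ref{wnwp5.2} with Proposition \ref{wnwcep1}.
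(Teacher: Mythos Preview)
Your proof is correct and follows essentially the same route as the paper: convolve $\psi$ with a fixed non-degenerate wavelet from $\mathcal{S}_{0}(\mathbb{R}^{n})$ to obtain a non-degenerate $\mathcal{S}_{0}$-wavelet, express its wavelet transform of $\mathbf{f}$ as a Bochner integral of $\mathcal{W}_{\psi}\mathbf{f}$, and then invoke Proposition~\ref{wnwcep1}. The paper handles the Bochner-integral identification by a bare reference to the argument of Proposition~\ref{wnwcep1}, while you spell it out via $\mathbf{w}^{\ast}\in X'$; this is exactly the intended mechanism, and your anticipation of it as the one delicate point is accurate.
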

\begin{proof} Let $\psi_{1}\in\mathcal{S}_{0}(\mathbb{R}^{n})$ be
the non-degenerate wavelet given by
$\hat{\psi}(u)=e^{-\left|u\right|-(1/\left|u\right|)}$. Set
$\psi_{2}=\psi_{1}\ast\psi$, then,
$\psi_{2}\in\mathcal{S}_{0}(\mathbb{R}^{n})$ is also a
non-degenerate wavelet. Using the same argument as in the proof of
Proposition \ref{wnwcep1}, the exchange of integral and dual
paring performed in the proof of Proposition \ref{wnwp5.2} is
valid and so we have the formula
\begin{equation*}
\mathcal{W}_{\psi_{2}}\mathbf{f}(x,y)
=\int_{\mathbb{R}^{n}}\mathcal{W}_{\psi}{\mathbf{f}}(x+yu,y)\overline{\psi}_{1}(u)\mathrm{d}u
,
\end{equation*}
where the integral is taken in the sense of Bochner. Thus, the
restriction of $\mathbf{f}$ to $\mathcal{S}_{0}(\mathbb{R}^{n})$
satisfies the hypotheses of Proposition \ref{wnwcep1}, and hence
there exists $\mathbf{g}\in\mathcal{S}'(\mathbb{R}^{n},E)$ such
that $\left\langle \mathbf{f-g},\rho\right\rangle=0$ for all
$\rho\in\mathcal{S}_{0}(\mathbb{R}^{n})$. This gives at once that
$\mathbf{G}=\mathbf{f-g}$ satisfies $\operatorname*{supp}
\hat{\mathbf{G}}\subseteq\left\{0\right\}$ and
$\mathbf{f-G}\in\mathcal{S}'(\mathbb{R}^{n},E)$.
\end{proof}

When $X$ is a normed space, we obviously have that the only
$X$-valued distributions with support at the origin are precisely
those having the form 
$$
\sum_{\left|m\right|\leq
N}\delta^{(m)}\mathbf{w}_{m}, \ \ \ \mathbf{w}_{m}\in X.
$$
Thus, we
have:

\begin{corollary}
\label{wnwcec1} Let $X$ be a normed space. Then, the conditions
(i) and (ii) of Theorem \ref{wnwceth1} imply the existence of an
$X$-valued polynomial $\mathbf{P}$ such that
$\mathbf{f-P}\in\mathcal{S}'(\mathbb{R}^{n},E)$.

Moreover, if $P_{q}$ denote the homogeneous terms of the Taylor
polynomials of $\hat{\psi}$ at the origin (cf. (\ref{wnwtteq7})),
then
\begin{equation}
\label{wnwceeq10} \overline{P_{q}}\left(\frac{\partial}{\partial
t}\right)\mathbf{f}\in \mathcal{S}'(\mathbb{R}^{n},E) , \ \ \
\mbox{for all } q\in\mathbb{N}.
\end{equation}
\end{corollary}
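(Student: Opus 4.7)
The plan is to invoke Theorem \ref{wnwceth1} and then specialise its conclusion using that $X$ is normed, after which the moreover statement will follow from a Vandermonde-type interpolation applied to a Taylor expansion of $\mathcal{W}_{\psi}\mathbf{P}$.

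First I would apply Theorem \ref{wnwceth1} to obtain an $X$-valued distribution $\mathbf{G}\in\mathcal{S}'(\mathbb{R}^{n},X)$ with $\operatorname*{supp}\hat{\mathbf{G}}\subseteq\{0\}$ and $\mathbf{f}-\mathbf{G}\in\mathcal{S}'(\mathbb{R}^{n},E)$. Because $X$ is normed, continuity of $\mathbf{G}\colon\mathcal{S}(\mathbb{R}^{n})\to X$ is controlled by a single Schwartz seminorm, so $\hat{\mathbf{G}}$ has finite order. The scalar proof that a finite-order distribution supported at the origin is a finite combination of derivatives of $\delta$ (test against $\rho$ vanishing to sufficiently high order at $0$, combined with a cutoff) then carries over verbatim to the $X$-valued setting, yielding $\hat{\mathbf{G}}=\sum_{|m|\le N}\delta^{(m)}\mathbf{w}_{m}$ with $\mathbf{w}_{m}\in X$. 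Taking inverse Fourier transforms realises $\mathbf{G}$ as the desired $X$-valued polynomial $\mathbf{P}$, and I set $\mathbf{h}:=\mathbf{f}-\mathbf{P}\in\mathcal{S}'(\mathbb{R}^{n},E)$.

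For the moreover statement the key identity is the finite Taylor-type expansion
\[
\mathcal{W}_{\psi}\mathbf{P}(x,y)=\sum_{q=0}^{d}(-iy)^{q}\,\overline{P_{q}}\!\left(\frac{\partial}{\partial x}\right)\mathbf{P}(x),\qquad d:=\deg\mathbf{P},
\]
obtained by Taylor-expanding $\mathbf{P}(x+yu)$ in $u$ inside $\int \mathbf{P}(x+yu)\bar{\psi}(u)\,du$ and using $\mu_{m}(\bar{\psi})=(-i)^{|m|}\overline{\hat{\psi}^{(m)}(0)}$ together with the definition (\ref{wnwtteq7}) of $P_{q}$. Since $\mathbf{h}\in\mathcal{S}'(\mathbb{R}^{n},E)$ its wavelet transform $\mathcal{W}_{\psi}\mathbf{h}$ is everywhere $E$-valued, while $\mathcal{W}_{\psi}\mathbf{f}$ is $E$-valued almost everywhere by hypothesis (i) of Theorem \ref{wnwceth1}; subtracting, $\mathcal{W}_{\psi}\mathbf{P}(x,y)\in E$ for almost every $(x,y)\in\mathbb{H}^{n+1}$.

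The last step is a double Vandermonde interpolation. By Fubini, for almost every $x\in\mathbb{R}^{n}$ the polynomial $y\mapsto \mathcal{W}_{\psi}\mathbf{P}(x,y)$ takes values in $E$ on a set of positive measure in $y$; choosing $d+1$ such values and inverting the resulting Vandermonde system recovers each $\overline{P_{q}}(\partial/\partial x)\mathbf{P}(x)\in E$ for $0\le q\le d$ and a.e.\ $x$. Since $\overline{P_{q}}(\partial/\partial x)\mathbf{P}$ is itself an $X$-valued polynomial in $x$, a second interpolation in the $x$-variable forces all its $\mathbb{C}$-coefficients to lie in $E$, so $\overline{P_{q}}(\partial/\partial t)\mathbf{P}$ is an $E$-valued polynomial. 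Then (\ref{wnwceeq10}) follows by splitting $\overline{P_{q}}(\partial/\partial t)\mathbf{f}=\overline{P_{q}}(\partial/\partial t)\mathbf{P}+\overline{P_{q}}(\partial/\partial t)\mathbf{h}$. The subtle point, and the main thing to watch, is that $E$ need not be closed in $X$: everything must be deduced from \emph{finite} $\mathbb{C}$-linear combinations, which is exactly what the two Vandermonde inversions provide.
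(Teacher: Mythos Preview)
Your proof is correct and follows essentially the same route as the paper's own argument: invoke Theorem \ref{wnwceth1}, use that in a normed space a distribution supported at the origin must be a finite combination of $\delta^{(m)}$'s, derive the finite expansion of $\mathcal{W}_{\psi}\mathbf{P}(x,y)$ as a polynomial in $y$ with coefficients $\overline{P_{q}}(\partial/\partial x)\mathbf{P}(x)$, and then extract each coefficient. The only difference is one of presentation: the paper states the expansion via the Fourier side and then says the conclusion ``readily'' follows, whereas you spell out the two-step Vandermonde/interpolation argument (first in $y$, then in $x$) explicitly and correctly emphasise that only finite $\mathbb{C}$-linear combinations are used, which is the right point to stress given that $E$ need not be closed in $X$.
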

\begin{proof}
By Theorem \ref{wnwceth1}, one can find $\mathbf{P}\in
\mathcal{S}'(\mathbb{R}^{n},X)$ such that
$\mathbf{f}-\mathbf{P}\in\mathcal{S}'(\mathbb{R}^{n},E)$ and
$\operatorname*{supp} \hat{\mathbf{P}}\subseteq \left\{0\right\}$.
Since $X$ is normed, the point support property of
$\hat{\mathbf{P}}$ implies that $\mathbf{P}$ is a polynomial.
Next, write 
$$
\mathbf{P}(t)=\sum_{\left|m\right|\leq
N}i^{\left|m\right|} t^{m} \mathbf{w}_{m}, \ \ \ \mbox{with }\mathbf{w}_{m}\in X.
$$
The relation (\ref{wnwceeq10}) would follow
immediately if we show that
$\overline{P_{q}\left(\partial/\partial t\right)}\mathbf{P}$ is an
$E$-valued polynomial. Observe that the hypotheses imply that
$\mathcal{W}_{\psi}\mathbf{P}(x,y)\in E$, for almost all $(x,y)$.
Hence, for almost all $(x,y)$,
\begin{align*}
\mathcal{W}_{\psi}\mathbf{P}(x,y)&
=\frac{1}{(2\pi)^{n}}\left\langle \hat{\mathbf{P}}(u),e^{ix\cdot
u}{\overline{\hat{\psi}}}(yu) \right\rangle
\\
&=
\sum_{\left|m\right|\leq N}
\frac{\partial^{\left|m\right|}}{\partial u^{m}}\left.\left(
e^{ix\cdot u}{\overline{\hat{\psi}}}(yu)\right)\right|_{u=0}
\mathbf{w}_{m}
\\
& =
\sum_{q=0}^{N}y^{q}\sum_{\left|m\right|=q}\overline{{\hat{\psi}^{(m)}}(0)}\sum_{|j|\leq
N-q}\binom{m+j}{m}(ix)^{j}\mathbf{w}_{m+j}
\\
& = \sum_{q=1}^{N}(iy)^{q}(\overline{P_{q}\left(\partial/\partial
x\right)}\mathbf{P})(x)\in E.
\end{align*}
But the latter readily implies that
$(\overline{P_{q}\left(\partial/\partial
x\right)}\mathbf{P})(x)\in E,$ for all $0\leq  q\leq N$ and 
$x\in\mathbb{R}^{n}.$
\end{proof}

In general, it is not possible to replace the $\mathbf{G}$ by an
$X$-valued polynomial in Theorem \ref{wnwceth1}. However, we know
some valuable information about $\hat{\mathbf{G}}$. Since it is
supported by the origin, it is easy to show that
$$\hat{\mathbf{G}}=\sum_{m\in\mathbb{N}^{n}}\frac{(-1)^{\left|m\right|}\delta^{(m)}}{m!}\mu_{m}(\hat{\mathbf{G}})
,$$ where 
$$\mu_{m}(\hat{\mathbf{G}})=\left\langle
\hat{\mathbf{G}}(u),u^{m}\right\rangle\in X
$$ 
are its moments
and the series is convergent in $\mathcal{S}'(\mathbb{R}^{n},X)$.
This series is ``weakly finite'', in the sense that for each
$\mathbf{w}^{\ast}\in X'$ there exists
$N_{\mathbf{w}^{\ast}}\in\mathbb{N}$ such that
$$
\langle \mathbf{w}^{\ast},\langle
\hat{\mathbf{G}},\rho\rangle\rangle=\sum_{\left|m\right|\leq
N_{\mathbf{w}^{\ast}}}\frac{\rho^{(m)}(0)}{m!}\left\langle
\mathbf{w}^{\ast}, \mu_{m}(\hat{\mathbf{G}})\right\rangle, \ \ \
\mbox{for all } \rho\in\mathcal{S}(\mathbb{R}^{n}).
$$
Furthermore, given any continuous seminorm $\mathfrak{p}$ on $X$,
one can find an $N_{\mathfrak{p}}\in\mathbb{N}$ such that
$$
\mathfrak{p}\left(\langle\hat{\mathbf{G}},\rho\rangle-\sum_{\left|m\right|\leq
N_{\mathfrak{p}}}\frac{\rho^{(m)}(0)}{m!}\mu_{m}(\hat{\mathbf{G}})\right)=0,
$$
for all $\rho\in\mathcal{S}(\mathbb{R}^{n})$. Finally, we remark
that $\mathbf{G}$, its inverse Fourier transform, can be naturally
identified with an entire $X$-valued function (cf. Subsection
\ref{lce}).
\begin{example} \label{wnwceex1} We consider $X=C(\mathbb{R})$ and $E=C_{b}(\mathbb{R})$,
 the space of continuous bounded functions. Let
 $\chi_{\nu}\in C(\mathbb{R})$ be non-trivial such that
 $\operatorname*{supp }\chi_{\nu}\subset(\nu,\nu+1)$, $\nu\in\mathbb{N}$. Furthermore, for each $\nu\in\mathbb{N}$ find a harmonic homogeneous polynomial $Q_{\nu}$ of degree $\nu$, i.e., $\Delta Q_{\nu}=0$. Consider the $E$-valued distribution
$$
\mathbf{G}(t,\xi)=\sum_{\nu=0}^{\infty}Q_{\nu}(t)\chi_{\nu}(\xi)\in
\mathcal{S}'(\mathbb{R}^n_{t}, C(\mathbb{R}_{\xi}))\setminus
\mathcal{S}'(\mathbb{R}^n_{t}, C_{b}(\mathbb{R}_{\xi})) .
$$
Its Fourier transform is given by an infinite multipole series
supported at the origin, i.e.,
$$
\hat{\mathbf{G}}(u,\xi)=(2\pi)^n\sum_{\nu=0}^{\infty}
\left(Q_{\nu}\left(i\partial/\partial u\right)\delta\right)(u)\:\chi_{\nu}(\xi).
$$
Let $\mathbf{h}\in \mathcal{S}'(\mathbb{R}^{n}, C_{b}(\mathbb{R}))$ and
let $\psi\in\mathcal{S}(\mathbb{R}^{n})\setminus\mathcal{S}_{0}(\mathbb{R}^{n})$ be a non-degenerate
wavelet such that its Fourier transform satisfies $\hat{\psi}(u)=\left|u\right|^2+O(\left|u\right|^N)$ as $u\to0$, for all $N>2$. If 
$$
\mathbf{f}=\mathbf{h}+\mathbf{G}\in
\mathcal{S}'(\mathbb{R}^n, C(\mathbb{R})),
$$
then $\mathcal{W}_{\psi}\mathbf{f}(x,y)=\mathcal{W}_{\psi}\mathbf{h}(x,y)$ for all $(x,y)\in\mathbb{H}^{n+1}$. Thus,
$\mathbf{f}$ satisfies all the hypotheses of Theorem
\ref{wnwceth1}; however, there is no $C(\mathbb{R})$-valued
polynomial $\mathbf{P}$ such that $\mathbf{f-P}\in
\mathcal{S}'(\mathbb{R}^{n}, C_{b}(\mathbb{R}))$.
\end{example}
\subsection{Local Class Estimates}
\label{lce}

We now proceed to study local class estimates, namely,
(\ref{wnwceeq1}) only assumed to hold for
$(x,y)\in\mathbb{R}^{n}\times(0,1]$. Let us start by pointing out
that $M^{\mathbf{f}}_{\varphi}(x,y)$ may sometimes be trivial for
$y\in(0,1)$, this may happen even if $\varphi$ is non-degenerate:

\begin{example}
\label{wnwceex2} Let $\omega\in\mathbb{S}^{n-1}$,
$r\in\mathbb{R}_{+}$; denote $[0,r\omega]=\left\{\sigma\omega:\:
\sigma\in[0,r]\right\}$. Suppose that $\mathbf{f}\in
\mathcal{S}'(\mathbb{R}^{n},X)$ is such that
$\operatorname*{supp}\:\hat{\mathbf{f}}\subset [0,r\omega]$ and
$\psi\in \mathcal{S}_{0}(\mathbb{R}^{n})$ is any wavelet
satisfying
$\operatorname*{supp}\hat{\psi}\subset\mathbb{R}^{n}\setminus
[0,r\omega]$, then
\begin{equation*}
\mathcal{W}_{\psi}\mathbf{f}(x,y)=\frac{1}{(2\pi)^{n}}\left\langle
\hat{\mathbf{f}}(u),e^{ixu}\overline{\hat{\psi}}(yu)\right\rangle=0,
\ \ \ \mbox{for all } y\in(0,1).
\end{equation*}
\smallskip
\end{example}

Fortunately, we will show that the only distributions
$\mathbf{f}\in\mathcal{S}'(\mathbb{R}^{n},X)\setminus\mathcal{S}'(\mathbb{R}^{n},E)$
that may satisfy a local class estimate, with respect to a
non-degenerate wavelet, are those whose Fourier transforms are
compactly supported.

We need to introduce some terminology in order to move further on. We
will make use of weak integrals for $X$-valued functions as
defined, for example, in \cite[p. 77]{rudin}. We say that a
tempered $X$-valued distribution
$\mathbf{g}\in\mathcal{S}'(\mathbb{R}^{n},X)$ is \emph{weakly
regular} if there exists an $X$-valued function
$\tilde{\mathbf{g}}$ such that $\rho \tilde{\mathbf{g}}$ is weakly
integrable over $\mathbb{R}^{n}$ for all
$\rho\in\mathcal{S}(\mathbb{R}^{n})$ and
$$
\left\langle
\mathbf{g},\rho\right\rangle=\int_{\mathbb{R}^{n}}\rho(t)\tilde{\mathbf{g}}(t)\mathrm{d}t
\in X ,
$$
where the last integral is taken in the weak sense. We identify
$\mathbf{g}$ with $\tilde{\mathbf{g}}$, so, as usual, we write
$\mathbf{g}=\tilde{\mathbf{g}}$. 

Let us recall some facts about (vector valued) compactly supported
distributions \cite{schwartzv}. Let
$\mathbf{g}\in\mathcal{S}'(\mathbb{R}^{n},X)$ have support in
$\overline{B(0,r)}$, the closed ball of radius $r$. Then, the
following version of the Schwartz-Paley-Wiener theorem holds:
$$\mathbf{G}(z)=\left\langle \mathbf{g}(u),e^{-iz\cdot
u}\right\rangle, \ \ \ z\in \mathbb C^n,$$ is an $X$-valued entire
function which defines a weakly regular tempered distribution, and
$\mathbf{G}(\xi)=\hat{\mathbf{g}}(\xi),$ $\xi  \in \mathbb R^n$;
moreover, $\mathbf{G}$ is of weakly exponential type, i.e., for
all $\mathbf{w}^{\ast}\in X'$ one can find constants
$C_{\mathbf{w}^{\ast}}>0$ and $N_{\mathbf{w}^{\ast}}\in\mathbb{N}$
with
\begin{equation}
\label{wnwceeq11} \left|\left\langle
\mathbf{w}^{\ast},\mathbf{G}(z)\right\rangle\right|\leq
C_{\mathbf{w}^{\ast}}(1+\left|z\right|)^{N_{\mathbf{w}^{\ast}}}e^{r\left|\Im
m\:z\right|}, \ \ \ z\in\mathbb{C}^{n}.
\end{equation}
Conversely, if $\mathbf{G}$ is an $X$-valued entire function which
defines a weakly regular  tempered distribution and for all
$w^*\in X^*$ there exist $C_{w^*}>0$ and $ N_{w^*}\in\mathbb N$ such
that (\ref{wnwceeq11}) holds, then
$\hat{\mathbf{G}}=\mathbf{g}
,$ where
${\mathbf{g}}\in\mathcal{S}'(\mathbb R^n,X)$ and
$\operatorname*{supp}{\mathbf{g}}\subseteq \overline{B(0,r)}$.

The following concept for non-degenerate test functions is of much
relevance for the problem under consideration.

\begin{definition}
\label{wnwced1} Let $\varphi\in\mathcal{S}(\mathbb{R}^{n})$ be
non-degenerate. Given $\omega\in\mathbb{S}^{n-1}$, consider the
function of one variable $R_{\omega}(r)=\hat{\varphi}(r\omega)\in
C^{\infty}[0,\infty)$. We define the index of non-degenerateness
of $\varphi$ as the (finite) number
$$
\tau=\inf\left\{r\in\mathbb{R}_{+}:\:\operatorname*{supp}
R_{\omega}\cap[0,r]\neq \emptyset,
\forall\omega\in\mathbb{S}^{n-1}\right\} .
$$
\end{definition}
We are ready to state and prove the main Tauberian result of this
subsection. We shall consider slightly more general norm
estimates for the regularizing transform
$M^{\mathbf{f}}_{\varphi}$ in terms of functions
$\Psi:\mathbb{R}^{n}\times(0,1]\to\mathbb{R}_{+}$ which satisfy,
for some constants $C_1>0$ and $k,l\in\mathbb{N}$,
\begin{equation}
\label{wnweqgb}
\Psi(0,y)\leq \frac{C_1}{y^{k}}\ \ \mbox{ and }  \ \ \Psi(x+\xi,y)\leq C_1 \Psi(x,y)(1+\left|\xi\right|)^{l},
\end{equation}
for all $x,\xi\in\mathbb{R}^{n}$ and $y\in(0,1]$.
\begin{theorem}
\label{wnwceth2} Let $\mathbf{f}\in\mathcal{S}'(\mathbb{R}^{n},X)$
and let $\varphi\in\mathcal{S}(\mathbb{R}^{n})$ be a
non-degenerate test function with index of non-degenerateness
$\tau$. Assume:
\begin{enumerate}
\item[(i)] $M_{\varphi}^\mathbf{f}(x,y)$ takes values in $E$ for almost all $(x,y)\in\mathbb{R}^{n}\times(0,1]$ and is measurable as an $E$-valued function on $\mathbb{R}^{n}\times(0,1]$.
\item[(ii)] There is a function $\Psi:\mathbb{R}^{n}\times(0,1]$ that satisfies (\ref{wnweqgb}) and such that
$$
\left\|M^{\mathbf{f}}_{\varphi}(x,y)\right\|\leq \Psi(x,y), \ \ \  \mbox{for almost all } (x,y)\in\mathbb{R}^{n}\times(0,1].
$$
\end{enumerate}
Then, for any $r>\tau$, there exists an $X$-valued entire function
$\mathbf{G}$, which defines a weakly regular tempered $X$-valued distribution
and satisfies (\ref{wnwceeq11}), such that
$$\mathbf{f-G}\in\mathcal{S}'(\mathbb{R}^{n},E).$$
Furthermore, there exists $C$ such that
$$
\left\|M^{\mathbf{f-G}}_{\varphi}(x,y)\right\|\leq C \Psi(x,y), \ \ \  \mbox{for almost all } (x,y)\in\mathbb{R}^{n}\times(0,1],
$$
and we can choose $\mathbf{G}$ so that $\hat{\mathbf{G}}=\chi\hat{\mathbf{f}}$, where $\chi\in\mathcal{D}(\mathbb{R}^{n})$ is an arbitrary test function that satisfies $\chi(t)=1$ for $\left|t\right|\leq \tau$ and has support contained in the ball of radius $r$ and center at the origin.
\end{theorem}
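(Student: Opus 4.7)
The plan is to extract from $\mathbf{f}$ the low-frequency obstruction $\mathbf{G}$ and reduce the analysis of $\mathbf{h} := \mathbf{f} - \mathbf{G}$ to the global class-estimate theory of Proposition~\ref{wnwcep1}. First, I would define $\mathbf{G}$ by $\hat{\mathbf{G}} := \chi\hat{\mathbf{f}}$: since $\chi\in\mathcal{D}(\mathbb{R}^n)$ is supported in $\overline{B(0,r)}$, the product is a compactly supported $X$-valued tempered distribution, so the vector-valued Paley--Wiener theorem recalled in Subsection~\ref{lce} yields that $\mathbf{G}$ is an $X$-valued entire function, weakly regular as a tempered distribution and satisfying (\ref{wnwceeq11}). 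For the Fourier-support argument in the last step it is convenient to first fix a specific $\chi$ that equals $1$ on $\overline{B(0, r')}$ for some $r' \in (\tau, r)$, so that $\operatorname*{supp}\hat{\mathbf{h}} = \operatorname*{supp}((1-\chi)\hat{\mathbf{f}}) \subseteq \{|u|\geq r'\}$; the freedom to take \emph{any} admissible $\chi$ in the statement is recovered at the end by comparing two such choices, whose difference has compact Fourier support in the annulus $\{\tau<|u|\leq r\}$ and can be absorbed by a parallel Bochner-integral argument.

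Next, adapting the mollification trick from the proof of Theorem~\ref{wnwceth1}, I would construct a non-degenerate wavelet $\psi_0 \in \mathcal{S}_0(\mathbb{R}^n)$ of the form $\check{\bar{\psi_0}} = \eta_0 \ast \varphi$, where $\eta_0 \in \mathcal{S}_0(\mathbb{R}^n)$ is chosen so that $\hat{\eta_0}$ is smooth, compactly supported in $\overline{B(0, r')}$, vanishes to infinite order at the origin, and is nonzero on enough of $B(0,r')\setminus\{0\}$ to transfer non-degenerateness from $\varphi$ to $\psi_0$. A concrete choice such as $\hat{\eta_0}(u)=e^{-1/|u|^2}\kappa(u)$ with $\kappa$ a suitable radial bump works: the index-of-non-degenerateness hypothesis $\tau<r'$ supplies, for each $\omega \in \mathbb{S}^{n-1}$, a value $s_\omega \in (0, r')$ with $\hat{\varphi}(s_\omega\omega) \neq 0$, which forces $\hat{\psi_0}(s_\omega\omega)\neq 0$. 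The identity $\check{\bar{\psi_0}} = \eta_0 \ast \varphi$ yields
\begin{equation*}
\mathcal{W}_{\psi_0}\mathbf{f}(x,y) = \int_{\mathbb{R}^n} M^{\mathbf{f}}_\varphi(x-yu, y)\,\eta_0(u)\,\mathrm{d}u ,
\end{equation*}
and writing $\mathbf{G} = \omega \ast \mathbf{f}$ with $\omega := \mathcal{F}^{-1}\chi \in \mathcal{S}(\mathbb{R}^n)$ gives the parallel formula
\begin{equation*}
\mathcal{W}_{\psi_0}\mathbf{G}(x,y) = \int_{\mathbb{R}^n} M^{\mathbf{f}}_\varphi(x-s, y)(\omega \ast \eta_{0,y})(s)\,\mathrm{d}s .
\end{equation*}
For $y\in(0,1]$, both integrals converge in $E$ by hypothesis (i), and the growth bound (\ref{wnweqgb}) combined with the Schwartz decay of $\eta_0$ and $\omega$ yields $\|\mathcal{W}_{\psi_0}\mathbf{h}(x,y)\|_E \leq C\,\Psi(x,y) \leq C' y^{-k}(1+|x|)^l$ a.e.\ on $\mathbb{R}^n\times(0,1]$.

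The crucial, and what I expect to be the main, step is dispensing with the range $y>1$ by a Fourier-support disjointness argument. Since $\operatorname*{supp}\overline{\hat{\psi_0}}(y\cdot) \subseteq \overline{B(0, r'/y)}$ while $\operatorname*{supp}\hat{\mathbf{h}} \subseteq \{|u|\geq r'\}$, these supports are disjoint for every $y>1$, so $\mathcal{W}_{\psi_0}\mathbf{h}(\cdot, y) \equiv 0$ on that range. Combined with the local estimate, this gives the global $E$-valued class estimate (\ref{wnwceeq4}) for $\mathcal{W}_{\psi_0}\mathbf{h}$ on all of $\mathbb{H}^{n+1}$, and Proposition~\ref{wnwcep1} produces $\mathbf{h}\in\mathcal{S}'_0(\mathbb{R}^n, E)$. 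To promote this to $\mathcal{S}'(\mathbb{R}^n, E)$, I would use the Fourier-support gap of $\hat{\mathbf{h}}$ near the origin: for $\rho\in\mathcal{S}(\mathbb{R}^n)$, split $\rho = \mathcal{F}^{-1}(\tilde\chi\hat\rho) + \mathcal{F}^{-1}((1-\tilde\chi)\hat\rho)$ with $\tilde\chi \in C^\infty$ equal to $1$ on $\operatorname*{supp}\hat{\mathbf{h}}$ and vanishing in a neighborhood of the origin; the first summand lies in $\mathcal{S}_0(\mathbb{R}^n)$, whereas the second pairs trivially with $\mathbf{h}$ by support disjointness, so $\langle\mathbf{h},\rho\rangle\in E$. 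The final bound on $M^{\mathbf{h}}_\varphi$ is then immediate from the local estimate on $M^{\mathbf{f}}_\varphi$ and a Paley--Wiener estimate on $M^{\mathbf{G}}_\varphi$.
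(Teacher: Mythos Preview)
Your argument is correct, and the core mechanism—building an auxiliary kernel with Fourier support in $B(0,r')$ so that a suitable transform vanishes identically for $y>1$—is exactly the one the paper uses. The organization, however, is genuinely different.

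The paper works on the \emph{test-function side}: it constructs a reconstruction wavelet $\eta\in\mathcal{S}_0(\mathbb{R}^n)$ for $\varphi$ with $\operatorname{supp}\hat\eta\subset B(0,r_1)$, observes that $\mathcal{W}_{\bar\eta}\rho(x,y)=0$ for all $y\geq 1$ whenever $\operatorname{supp}\hat\rho\subset\mathbb{R}^n\setminus B(0,r_1)$, and thereby obtains directly
\[
\langle\mathbf{f},\rho\rangle=\int_0^1\!\!\int_{\mathbb{R}^n} M^{\mathbf{f}}_\varphi(x,y)\,\mathcal{W}_{\bar\eta}\rho(x,y)\,\frac{\mathrm{d}x\,\mathrm{d}y}{y}\in E.
\]
This yields $\mathbf{f}-\mathbf{G}\in\mathcal{S}'(\mathbb{R}^n,E)$ in one stroke, with no detour through $\mathcal{S}'_0$. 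You work instead on the \emph{distribution side}: you build $\psi_0\in\mathcal{S}_0$ with $\operatorname{supp}\hat\psi_0\subset\overline{B(0,r')}$, use Fourier-support disjointness to kill $\mathcal{W}_{\psi_0}\mathbf{h}$ for $y>1$, invoke Proposition~\ref{wnwcep1} to land in $\mathcal{S}'_0(\mathbb{R}^n,E)$, and then run a separate Fourier-gap argument to upgrade to $\mathcal{S}'(\mathbb{R}^n,E)$. The paper's route is shorter and avoids both the extra appeal to Proposition~\ref{wnwcep1} and the upgrade step; your route has the advantage of reusing a result already proved rather than redoing a desingularization computation.

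One small slip: your final sentence appeals to ``a Paley--Wiener estimate on $M^{\mathbf{G}}_\varphi$'', but Paley--Wiener only controls $\mathbf{G}$ in $X$, not in $E$. What you actually need (and what the paper does) is the convolution identity $M^{\mathbf{G}}_\varphi(\cdot,y)=\omega\ast M^{\mathbf{f}}_\varphi(\cdot,y)$, which you already wrote down earlier; combined with (\ref{wnweqgb}) this gives $\|M^{\mathbf{f}-\mathbf{G}}_\varphi(x,y)\|\leq C\,\Psi(x,y)$ directly.
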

\begin{proof}
Let $r_{1}$ be such that $\tau<r_{1}<r$. It is easy to find a reconstruction wavelet $\eta\in\mathcal{S}_{0}(\mathbb{R}^{n})$ for $\varphi$,
in the sense that
$$
1=\int^{\infty}_{0}\hat{\varphi}(r\omega)\hat{\eta}(r\omega)\frac{\mathrm{d}r}{r}, \ \ \ \mbox{for every }\omega\in\mathbb{S}^{n-1},
$$
with the
property $\operatorname*{supp}\hat{\eta}\subset B(0,r_{1})$. Indeed, if we choose a non-negative $\kappa\in\mathcal{D}(\mathbb{R}^{n})$ with support in $B(0,r_{1})\setminus\left\{0\right\}$ and being equal to $1$ in a neighborhood of the sphere $\tau \mathbb{S}^{n-1}=\left\{u\in\mathbb{R}^{n}: \left|u\right|=\tau\right\}$, then 
the same argument given in the proof of Proposition \ref{wnwp3} shows that 
$$
\hat{\eta}(x)= \frac{\kappa(x)\overline{\hat{\varphi}(x)}}
{\displaystyle\int^{\infty}_{0}\kappa(rx)\left|\hat{\varphi} (rx)\right| ^{2}\frac{\mathrm{d}r}{r}
}
$$
fulfills the requirements. The usual calculation \cite[p. 66]{holschneider} is valid and so, for $\rho\in\mathcal{S}_{0}(\mathbb{R}^{n})$,

$$\rho(t)=\int_{0}^{\infty}\int_{\mathbb{R}^{n}}
\frac{1}{y^{n}}\varphi\left(\frac{x-t}{y}\right)\mathcal{W}_{\bar{\eta}}
\rho(x,y)\frac{\mathrm{d}x\mathrm{d}y}{y}\:.$$
Observe now that if
$\operatorname*{supp}\hat{\rho}\subseteq\mathbb{R}^{n}\setminus
B(0,r_{1})$, then
$$
\mathcal{W}_{\bar{\eta}}\rho(x,y)=\frac{1}{(2\pi)^{n}}\int_{\mathbb{R}^{n}}e^{ix
\cdot u}\hat{\rho}(u)\hat{\eta}(-yu)\mathrm{d}u=0 , \ \ \
\mbox{for all } y\in[1,\infty).
$$
Thus, the same argument employed in Proposition \ref{wnwcep1}
applies to show
\begin{equation}
\label{wnwceeq12}
\left\langle f,\rho \right\rangle=
\int_{0}^{1}\int_{\mathbb{R}^{n}}M_{\varphi}^\mathbf{f}(x,y)\mathcal{W}_{\bar{\eta}}\rho(x,y)\:\frac{\mathrm{d}x\mathrm{d}y}{y}\:
,
\end{equation}
for all $\rho\in\mathcal{S}(\mathbb{R}^{n})$ with
$\operatorname*{supp}\hat{\rho}\subseteq\mathbb{R}^{n}\setminus
B(0,r_{1}).$ Choose $\chi_{1}\in C^{\infty}(\mathbb{R}^{n})$ such that
$\chi_1(u)=1$ for all $u\in\mathbb{R}^{n}\setminus B(0,r)$ and
$\operatorname*{supp}\chi_1\in\mathbb{R}^{n}\setminus B(0,r_{1})$.
Now, $\hat{\chi}_1\ast\mathbf{f}$ is well defined since
$\hat{\chi}_1\in \mathcal{O}'_{C}(\mathbb{R}^{n})$ (the space of
convolutors, cf. \cite{schwartz1}), and actually (\ref{wnwceeq12}) and the continuity of $\mathcal{W}_{\bar{\eta}}$ imply that
$(2\pi)^{-n}\hat{\chi}_1\ast\mathbf{f}\in\mathcal{S}'(\mathbb{R}^{n},E)$.
Therefore,
$\mathbf{G}=\mathbf{f}-(2\pi)^{-n}\hat{\chi}_1\ast\mathbf{f}$
satisfies the requirements because $\hat{\mathbf{G}}=\chi\mathbf{\hat{f}}$, where $\chi=1-\chi_1$, and so
$\operatorname*{supp}\hat{\mathbf{G}}\subseteq \overline{B(0,r)}$. Since $\hat{\chi}_1=(2\pi)^{n}\delta -\hat{\chi}$ and so
\begin{align*}
M^{\mathbf{f-G}}_{\varphi}(x,y)&=M^{\mathbf{f}}_{\varphi}(x,y)-\frac{1}{(2\pi)^{n}}\left\langle\mathbf{f}(\xi),\left\langle \frac{1}{y^{n}}\varphi\left(\frac{x-\xi-t}{y}\right)\hat{\chi}(t)\right\rangle \right\rangle
\\
&
=M^{\mathbf{f}}_{\varphi}(x,y)-\frac{1}{(2\pi)^{n}}\int_{\mathbb{R}^{n}}M^{\mathbf{f}}_{\varphi}(x-t,y)\hat{\chi}(t)\mathrm{d}t,
\end{align*}
we obtain the norm estimate for
 $M^{\mathbf{f-G}}_{\varphi}$ with the constant $C=C_{1}(1+(2\pi)^{-n}\int_{\mathbb{R}^{n}}(1+\left|t\right|)^{l}|\hat{\chi}(t)|\mathrm{d}t)$.\end{proof}

Clearly, if the estimate for $M^{\mathbf{f}}_{\varphi}(x,y)$ in Theorem \ref{wnwceth2} is satisfied for \emph{all} $(x,y)\in\mathbb{R}^{n}\times(0,1]$, so does the one for $M^{\mathbf{f-G}}_{\varphi}(x,y)$.

One may be tempted to think that in Theorem \ref{wnwceth2} it is possible to take $\mathbf{G}$ with support in $\overline{B(0,\tau)}$; however, this is not true, in general, as the following counterexample shows.
\begin{example}
\label{wnwceex3} Let $X$, $E$, and the sequence
$\left\{\chi_{\nu}\right\}_{\nu=1}^{\infty}$ be as in Example
\ref{wnwceex1}. We work in dimension $n=1$. We assume additionally
that $\sup_\xi \left|\chi_{\nu}(\xi)\right|=1$, for all
$\nu\in\mathbb{N}$. Let $\tau\geq0$, the wavelet $\psi$, given by
$$\hat{\psi}(u)=e^{-\left|u\right|-(1/(\left|u\right|-\tau))} \mbox{ for }
\left|u\right|>\tau \mbox{ and } \hat{\psi}(u)=0 \mbox{ for }
\left|u\right|\leq\tau,$$ has index of non-degenerateness $\tau$.
Consider the $C(\mathbb{R})$-valued distribution
$$\mathbf{f}(t,\xi)=\sum_{\nu=1}^{\infty}e^{\nu+i\left(\tau+\frac{1}{\nu}\right)t} \chi_{\nu}(\xi)\in\mathcal{S}'(\mathbb{R}_{t}, C(\mathbb{R}_{\xi}))\setminus
\mathcal{S}'(\mathbb{R}_{t}, C_{b}(\mathbb{R}_{\xi})) .$$
Then,
$$\mathcal{W}_{\psi}\mathbf{f}(x,y)(\xi)=\sum_{1\leq\nu<\frac{y}{\tau(1-y)}}e^{\nu+(ix-y)\left(\tau+\frac{1}{\nu}\right) -\frac{\nu}{y-\nu\tau(1-y)}}\chi_{\nu}(\xi) , \ \ \ 0<y<1.$$
and hence,
$$\left\|\mathcal{W}_{\psi}\mathbf{f}(x,y)\right\|_{C_{b}(\mathbb{R})}\leq
1, \ \ \  \mbox{for all } 0<y<1.$$ Therefore, the hypotheses of Theorem
\ref{wnwceth2} are fully satisfied, however,
$\mathbf{f}-\mathbf{G}\notin\mathcal{S}'(\mathbb{R},
C_{b}(\mathbb{R}))$, for any
$\mathbf{G}\in\mathcal{S}'(\mathbb{R}, C(\mathbb{R}))$ with
$\operatorname*{supp}\hat{\mathbf{G}}\subseteq[-\tau,\tau]$.
\end{example}

\subsection{The $\phi-$transform}
\label{wnwcephi} Theorem \ref{wnwceth2} can be greatly improved for the
$\phi-$transform. Observe that the index of non-degenerateness of
$\phi$ is now $\tau=0$. Remarkably, one gets a full
characterization of the space $\mathcal{S}'(\mathbb{R}^{n},E)$.
\begin{theorem}
\label{wnwceth3} Let $\mathbf{f}\in\mathcal{S}'(\mathbb{R}^{n},X)$
and let $\phi\in\mathcal{S}(\mathbb{R}^{n})$ be such that
$\mu_{0}(\phi)=1$. Necessary and sufficient conditions for
$\mathbf{f}$ to belong to the space
$\mathcal{S}'(\mathbb{R}^{n},E)$ are:
\begin{enumerate}
\item[(i)] $F_{\phi}\mathbf{f}(x,y)$ takes values in $E$ for almost all
$(x,y)\in\mathbb{R}^{n}\times(0,1]$ and is measurable as an $E$-valued function on $\mathbb{R}^{n}\times(0,1]$, and,
\item[(ii)] There exist constants $k,l\in\mathbb{N}$ and $C>0$ such that
\begin{equation*}
\left\|F_{\phi}\mathbf{f}(x,y)\right\|\leq
C\frac{\left(1+\left|x\right|\right)^{l}}{y^{k}}, \ \ \mbox{for
almost all }(x,y)\in\mathbb{R}^{n}\times(0,1].
\end{equation*}
\end{enumerate}
\end{theorem}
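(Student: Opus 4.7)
The necessity of (i) and (ii) is an immediate consequence of Proposition \ref{wnwp2} applied to the singleton $\{\mathbf{f}\}$, so I will focus on sufficiency. My plan is to reduce the problem to Theorem \ref{wnwceth2} and then exploit the freedom in the cutoff it provides to ensure that the correction term it produces already lies in $\mathcal{S}'(\mathbb{R}^{n},E)$. Set $\varphi:=\check\phi$, so that $M^{\mathbf{f}}_{\varphi}(x,y)=F_{\phi}\mathbf{f}(x,y)$ and $\mu_{0}(\varphi)=\mu_0(\phi)=1$. Hypotheses (i) and (ii) then match those of Theorem \ref{wnwceth2} with the weight $\Psi(x,y)=C(1+|x|)^{l}y^{-k}$, which plainly satisfies (\ref{wnweqgb}). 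Moreover, since $\hat\varphi$ is continuous and $\hat\varphi(0)=1$, every function $R_\omega(r)=\hat\varphi(r\omega)$ is nonzero at $r=0$, so $\varphi$ has index of non-degenerateness $\tau=0$.

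Consequently, for each $r>0$ and each $\chi\in\mathcal{D}(\mathbb{R}^{n})$ with $\chi(0)=1$ and $\operatorname*{supp}\chi\subseteq\overline{B(0,r)}$, Theorem \ref{wnwceth2} yields $\mathbf{G}\in\mathcal{S}'(\mathbb{R}^{n},X)$ with $\hat{\mathbf{G}}=\chi\hat{\mathbf{f}}$ and $\mathbf{f}-\mathbf{G}\in\mathcal{S}'(\mathbb{R}^{n},E)$. The plan is to choose $\chi$ so that $\mathbf{G}$ itself belongs to $\mathcal{S}'(\mathbb{R}^{n},E)$, since then $\mathbf{f}=(\mathbf{f}-\mathbf{G})+\mathbf{G}$ gives the conclusion. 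To this end I would fix $r>0$ small enough that $\hat\varphi$ does not vanish on $\overline{B(0,r)}$ (possible since $\hat\varphi$ is continuous and $\hat\varphi(0)=1$), pick any $\tilde\chi\in\mathcal{D}(\mathbb{R}^{n})$ with $\tilde\chi(0)=1$ and $\operatorname*{supp}\tilde\chi\subseteq\overline{B(0,r)}$, and set $\chi:=\hat\varphi\,\tilde\chi$. Then $\chi\in\mathcal{D}(\mathbb{R}^{n})$ with $\chi(0)=\hat\varphi(0)\tilde\chi(0)=1$ and $\operatorname*{supp}\chi\subseteq\overline{B(0,r)}$, so $\chi$ is admissible. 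With this choice, since $\mathcal{F}(\mathbf{f}*\varphi_{1})=\hat\varphi\,\hat{\mathbf{f}}$,
\[
\hat{\mathbf{G}}=\hat\varphi\,\tilde\chi\,\hat{\mathbf{f}}=\tilde\chi\cdot\mathcal{F}(\mathbf{f}*\varphi_{1}),
\]
whence $\mathbf{G}=\mathcal{F}^{-1}(\tilde\chi)*(\mathbf{f}*\varphi_{1})$.

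Now the conclusion follows directly. By hypotheses (i) and (ii), the function $\mathbf{f}*\varphi_{1}=F_{\phi}\mathbf{f}(\cdot,1)$ is an $E$-valued measurable function with $\|(\mathbf{f}*\varphi_{1})(x)\|\leq C(1+|x|)^{l}$ a.e., and $\mathcal{F}^{-1}(\tilde\chi)\in\mathcal{S}(\mathbb{R}^{n})$; so the convolution, interpreted in the Bochner sense, defines a continuous $E$-valued function of polynomial growth, and hence $\mathbf{G}\in\mathcal{S}'(\mathbb{R}^{n},E)$. Writing $\mathbf{f}=(\mathbf{f}-\mathbf{G})+\mathbf{G}$ completes the proof. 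The only delicate point is the choice of $\chi$: using the non-vanishing of $\hat\varphi$ near the origin to force the Fourier representation of $\mathbf{G}$ to factor through $\mathcal{F}(\mathbf{f}*\varphi_{1})$ is precisely what upgrades the $X$-valued conclusion of Theorem \ref{wnwceth2} to the $E$-valued one needed here; once this observation is made, the verification of the hypotheses of Theorem \ref{wnwceth2} and of the convolution identity is routine.
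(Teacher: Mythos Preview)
Your argument follows essentially the same strategy as the paper: invoke Theorem~\ref{wnwceth2} (using that $\tau=0$ since $\hat\varphi(0)=1$) and then exploit the non-vanishing of $\hat\varphi$ near the origin to factor the compactly-Fourier-supported piece through $F_\phi\mathbf{f}(\cdot,\sigma)$. The paper packages this slightly differently: it first reduces to $\operatorname*{supp}\hat{\mathbf{f}}\subseteq\overline{B(0,1)}$ via a \emph{standard} cutoff, and only afterward divides by $\hat\phi(-\sigma\,\cdot)$ to write $\mathbf{f}(x)=\int\chi_x(\xi)F_\phi\mathbf{f}(-\xi,\sigma)\,d\xi$. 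Your version front-loads the division by baking $\hat\varphi$ into the cutoff $\chi$; the underlying mechanism is the same.

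Two points you should tighten. First, hypothesis~(i) only gives $F_\phi\mathbf{f}(x,y)\in E$ for almost every $(x,y)$, so the slice $y=1$ need not be admissible; pick instead some $y_0\in(0,1]$ (existing by Fubini) for which $F_\phi\mathbf{f}(\cdot,y_0)$ is $E$-valued and measurable, and run the same argument with $\varphi_{y_0}$. Second, your cutoff $\chi=\hat\varphi\,\tilde\chi$ satisfies $\chi(0)=1$ but is not identically $1$ on any neighborhood of the origin. The literal statement of Theorem~\ref{wnwceth2} (with $\tau=0$) permits this, but its proof actually constructs $\chi$ equal to $1$ on a ball $B(0,r_1)$, and it is not obvious that the conclusion $\mathbf{f}-\mathbf{G}\in\mathcal{S}'(\mathbb{R}^n,E)$ persists for merely $\chi(0)=1$. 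The paper's proof avoids this issue by using a safe cutoff and performing the factorization afterward. You can do the same with no extra work: take $\chi_0\in\mathcal{D}(\mathbb{R}^n)$ with $\chi_0\equiv1$ near $0$ and $\operatorname*{supp}\chi_0$ small enough that $\hat\varphi(y_0\,\cdot)$ does not vanish there; then $\chi_0/\hat\varphi(y_0\,\cdot)\in\mathcal{D}(\mathbb{R}^n)$ and
\[
\mathbf{G}_0=\mathcal{F}^{-1}(\chi_0\hat{\mathbf{f}})=\mathcal{F}^{-1}\!\bigl(\chi_0/\hat\varphi(y_0\,\cdot)\bigr)\ast F_\phi\mathbf{f}(\cdot,y_0)\in\mathcal{S}'(\mathbb{R}^n,E),
\]
exactly your Bochner-convolution conclusion, now resting on the form of Theorem~\ref{wnwceth2} that its proof unambiguously delivers.
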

\begin{proof} By Theorem \ref{wnwceth2}, one may assume that $\operatorname*{supp}\hat{\mathbf{f}}
\subseteq\overline{B(0,1)}$. So, $\mathbf{f}$ is given by an
entire function of weakly exponential type which defines a weakly
regular $X$-valued tempered distribution. Let $\rho\in
\mathcal{S}(\mathbb{R}^{n})$ such that $\rho(u)=1$ for $u\in
B(0,3/2)$ and $\operatorname*{supp}\rho\subset B(0,2)$. Choose
$2\sigma<1$ such that $|\hat{\phi}(u)|>0$ for all $u\in
\overline{B(0,2\sigma)}$. For a fixed $x\in\mathbb R^n$, the
function 
$$
\hat{\chi}_{x}(u)=e^{ixu}\frac{\rho(u)}{\hat{\phi}(-\sigma
u)}
$$ defines an element of $\mathcal{S}(\mathbb{R}^{n})$. Thus,
\begin{align*}
\mathbf{f}(x)&=\frac{1}{(2\pi)^{n}}\left\langle\hat{\mathbf{f}}(u),e^{ix\cdot
u} \right\rangle
\\
&=\frac{1}{(2\pi)^{n}}\left\langle
\hat{\mathbf{f}}(u),\hat{\chi}_{x}(u)\hat{\phi}(-\sigma
u)\right\rangle
\\
& =\frac{1}{\sigma^{n}}\left\langle \mathbf{f}(t),
\int_{\mathbb{R}^{n}}\chi_{x}(\xi)\phi\left(\frac{t+\xi}{\sigma}\right)
\mathrm{d}\xi\right\rangle
\\
& 
=\int_{\mathbb{R}^{n}}\chi_{x}(\xi)
F_{\phi}\mathbf{f}(-\xi,\sigma)\mathrm{d}\xi \in E,
\end{align*}
where the exchange with the integral sign can be established as in
the proof of Proposition \ref{wnwcep1}. Hence the entire \emph{function}
$\mathbf{f}$ takes values in $E$.
Moreover,
\begin{align*}
\left\|\mathbf{f}(x)\right\|&< \frac{C}{\sigma^{k}}
\int_{\mathbb{R}^{n}}(1+\left|\xi\right|)^{l}\left|\chi_{x}(\xi)\right|\mathrm{d}\xi
\\
&
\leq C_{1}(1+\left|x\right|)^{N},
\ \ \ \mbox{for all }x\in\mathbb{R}^{n},
\end{align*} for some constants $C_{1}>0$ and
$N\in\mathbb{N}$. Clearly, the last $E$-norm estimate over the
growth of $\mathbf{f}$ implies that
$\mathbf{f}\in\mathcal{S}'(\mathbb{R}^{n},E)$, as required.
\end{proof}



\subsection{Strongly Non-degenerate Wavelets} \label{wnwces}
A strengthened version of both Theorem \ref{wnwceth1} and Theorem \ref{wnwceth2}
holds if we restrict the non-degenerate wavelets to those fulfilling the requirements of the following definition.
\begin{definition}
\label{wnwced2} Let $\psi\in\mathcal{S}(\mathbb{R}^{n})$ be a wavelet. We call $\psi$
strongly non-degenerate if there exist constants $N\in\mathbb{N}$,
$r>0$, and $C>0$ such that
\begin{equation}
\label{wnwceeq13}
C\left|u\right|^{N}\leq |\hat{\psi}(u)|\ , \ \ \ \mbox{for all
}\left|u\right|\leq r.
\end{equation}
\end{definition}

\begin{theorem}
\label{wnwceth4} Let $\mathbf{f}\in\mathcal{S}'(\mathbb{R}^{n},X)$
and let $\psi\in\mathcal{S}(\mathbb{R}^{n})$ be a
strongly non-degenerate wavelet. Assume:
\begin{enumerate}
\item[(i)] $\mathcal{W}_{\psi}\mathbf{f}(x,y)$ takes values in $E$
for almost all $(x,y)\in\mathbb{R}^{n}\times(0,1]$ and is measurable as an $E$-valued function on $\mathbb{R}^{n}\times(0,1]$.
\item[(ii)] There exist constants $k,l\in\mathbb{N}$ and $C>0$ such that
\begin{equation*}
\left\|\mathcal{W}_{\psi}\mathbf{f}(x,y)\right\|\leq
C\frac{\left(1+\left|x\right|\right)^{l}}{y^{k}}, \ \ \mbox{for
almost all }(x,y)\in\mathbb{R}^{n}\times(0,1].
\end{equation*}
\end{enumerate}
Then, there exists $\mathbf{G}\in \mathcal{S}'(\mathbb{R}^{n},X)$ such
that $\mathbf{f}-\mathbf{G}\in\mathcal{S}'(\mathbb{R}^{n},E)$ and
$\operatorname*{supp} \hat{\mathbf{G}}\subseteq\left\{0\right\}$.
\end{theorem}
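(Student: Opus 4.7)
The strategy is to strengthen Theorem \ref{wnwceth2} by exploiting the quantitative lower bound $C|u|^{N}\leq|\hat\psi(u)|$ on $B(0,r)$. Strong non-degeneracy forces the index of non-degenerateness of $\psi$ to equal $0$, since $\hat\psi$ cannot vanish on any ray inside $B(0,r)\setminus\{0\}$. Consequently Theorem \ref{wnwceth2} already gives, for every $r_{0}>0$, a distribution $\mathbf{G}_{r_{0}}\in\mathcal{S}'(\mathbb{R}^{n},X)$ with $\operatorname*{supp}\hat{\mathbf{G}}_{r_{0}}\subseteq\overline{B(0,r_{0})}$ and $\mathbf{f}-\mathbf{G}_{r_{0}}\in\mathcal{S}'(\mathbb{R}^{n},E)$ satisfying the class estimate. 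The task is to shrink $\operatorname*{supp}\hat{\mathbf{G}}$ from a small ball down to the single point $\{0\}$.

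The technical heart is a specially tailored reconstruction wavelet. Following the construction in the proof of Proposition \ref{wnwp3}, for each $\delta\in(0,r)$ I would build $\eta_{\delta}\in\mathcal{S}_{0}(\mathbb{R}^{n})$ with $c_{\psi,\eta_{\delta}}=1$ and $\operatorname*{supp}\hat\eta_{\delta}\subseteq\overline{B(0,\delta)}$: strong non-degeneracy keeps the denominator $\int_{0}^{\infty}\kappa(s\omega)|\hat\psi(s\omega)|^{2}\,ds/s$ bounded below, so $\hat\eta_{\delta}$ is Schwartz with all partial derivatives vanishing at the origin. For any $\rho\in\mathcal{S}(\mathbb{R}^{n})$ whose Fourier transform vanishes on $B(0,\delta)$, the support constraints force $\mathcal{W}_{\bar\eta_{\delta}}\rho(x,y)=0$ whenever $y>1$ (since $\hat\eta_{\delta}(-yu)\neq 0$ requires $y|u|\leq\delta$, while $|u|\geq\delta$ on $\operatorname*{supp}\hat\rho$), so the desingularization formula of Proposition \ref{wnwth1} collapses to
\[
\langle\mathbf{f},\rho\rangle=\int_{0}^{1}\int_{\mathbb{R}^{n}}\mathcal{W}_{\psi}\mathbf{f}(x,y)\,\mathcal{W}_{\bar\eta_{\delta}}\rho(x,y)\,\frac{dx\,dy}{y};
\]
hypothesis (ii) controls $\mathcal{W}_{\psi}\mathbf{f}(x,y)$ in $E$-norm on $\mathbb{R}^{n}\times(0,1]$, and $\mathcal{W}_{\bar\eta_{\delta}}\rho\in\mathcal{S}(\mathbb{H}^{n+1})$, so the integrand is Bochner integrable in $E$, giving $\langle\mathbf{f},\rho\rangle\in E$ for every such $\rho$.

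Next, split $\mathbf{G}_{r_{0}}$ in the Fourier side: let $\chi_{0}\in\mathcal{D}(\mathbb{R}^{n})$ equal $1$ on $B(0,\varepsilon/2)$ with support in $B(0,\varepsilon)$, and write $\hat{\mathbf{G}}_{r_{0}}=\chi_{0}\hat{\mathbf{G}}_{r_{0}}+(1-\chi_{0})\hat{\mathbf{G}}_{r_{0}}=:\hat{\mathbf{H}}_{\varepsilon}+\hat{\mathbf{K}}_{\varepsilon}$. For any $\rho\in\mathcal{S}(\mathbb{R}^{n})$, $\langle\mathbf{K}_{\varepsilon},\rho\rangle=\langle\mathbf{G}_{r_{0}},\tilde\rho\rangle$ where $\hat{\tilde\rho}=(1-\chi_{0})\hat\rho$ vanishes on $B(0,\varepsilon/2)$. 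Combining the preceding paragraph with $\langle\mathbf{G}_{r_{0}},\tilde\rho\rangle=\langle\mathbf{f},\tilde\rho\rangle-\langle\mathbf{f}-\mathbf{G}_{r_{0}},\tilde\rho\rangle$ places $\langle\mathbf{K}_{\varepsilon},\rho\rangle\in E$ for every $\rho$. The closed graph theorem (applicable since $\mathcal{S}(\mathbb{R}^{n})$ is Fréchet and $E\hookrightarrow X$ is continuous) upgrades the $X$-continuous $\mathbf{K}_{\varepsilon}$ to an element of $\mathcal{S}'(\mathbb{R}^{n},E)$. Hence $\mathbf{f}-\mathbf{H}_{\varepsilon}=(\mathbf{f}-\mathbf{G}_{r_{0}})+\mathbf{K}_{\varepsilon}\in\mathcal{S}'(\mathbb{R}^{n},E)$ with $\operatorname*{supp}\hat{\mathbf{H}}_{\varepsilon}\subseteq\overline{B(0,\varepsilon)}$, for arbitrary $\varepsilon>0$.

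The main obstacle is the final step: producing a single $\mathbf{G}$ with $\operatorname*{supp}\hat{\mathbf{G}}\subseteq\{0\}=\bigcap_{\varepsilon>0}\overline{B(0,\varepsilon)}$ from the $\varepsilon$-indexed family $\{\mathbf{H}_{\varepsilon}\}$. Since all $\mathbf{H}_{\varepsilon}$ represent the same coset modulo $\mathcal{S}'(\mathbb{R}^{n},E)$, the problem reduces to extracting a representative supported at $\{0\}$. I would mimic the proof of Theorem \ref{wnwceth1}: the second paragraph above shows that the restriction $\mathbf{f}|_{\mathcal{S}_{0}(\mathbb{R}^{n})}$ yields, by density of $\mathcal{S}_{\flat}:=\{\rho\in\mathcal{S}(\mathbb{R}^{n}):\hat\rho\text{ vanishes in a neighborhood of }0\}$ in $\mathcal{S}_{0}(\mathbb{R}^{n})$ together with the uniform $E$-norm estimate coming from the truncated reconstruction formula, an element of $\mathcal{S}'_{0}(\mathbb{R}^{n},E)$. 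Lifting this element back to some $\tilde{\mathbf{f}}\in\mathcal{S}'(\mathbb{R}^{n},E)$ via the surjection $\mathcal{S}'(\mathbb{R}^{n},E)\to\mathcal{S}'_{0}(\mathbb{R}^{n},E)$ described in Subsection \ref{wnwdsp}, and setting $\mathbf{G}:=\mathbf{f}-\tilde{\mathbf{f}}\in\mathcal{S}'(\mathbb{R}^{n},X)$, gives a distribution that vanishes on $\mathcal{S}_{0}(\mathbb{R}^{n})$; by Fourier duality this forces $\operatorname*{supp}\hat{\mathbf{G}}\subseteq\{0\}$.
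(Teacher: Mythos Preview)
Your overall plan is sound and you correctly identify the target: show that the restriction $\mathbf{f}|_{\mathcal{S}_{0}(\mathbb{R}^{n})}$ lands in $\mathcal{S}'_{0}(\mathbb{R}^{n},E)$, then finish exactly as in Theorem \ref{wnwceth1}. Steps 1--5 are essentially correct (step 5 is even redundant: since $\tau=0$, Theorem \ref{wnwceth2} alone already produces $\mathbf{H}_{\varepsilon}$ with $\operatorname*{supp}\hat{\mathbf{H}}_{\varepsilon}\subseteq\overline{B(0,\varepsilon)}$ for every $\varepsilon>0$). The problem is the last paragraph.

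The gap is the phrase ``uniform $E$-norm estimate coming from the truncated reconstruction formula.'' Your formula
\[
\langle\mathbf{f},\rho\rangle=\int_{0}^{1}\!\int_{\mathbb{R}^{n}}\mathcal{W}_{\psi}\mathbf{f}(x,y)\,\mathcal{W}_{\bar{\eta}_{\delta}}\rho(x,y)\,\frac{dx\,dy}{y}
\]
is valid only when $\hat{\rho}$ vanishes on $B(0,\delta)$, and the estimate it yields is $\|\langle\mathbf{f},\rho\rangle\|_{E}\leq q_{\delta}(\rho)$ for a seminorm $q_{\delta}$ that depends on $\eta_{\delta}$. To reach all of $\mathcal{S}_{\flat}=\bigcup_{\delta>0}\mathcal{S}_{\flat,\delta}$ you must let $\delta\to0$, and then the $q_{\delta}$ are not controlled by a single continuous seminorm on $\mathcal{S}_{0}(\mathbb{R}^{n})$; the family $\{\eta_{\delta}\}$ degenerates as $\delta\to0$. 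Density of $\mathcal{S}_{\flat}$ therefore gives only $X$-convergence of $\langle\mathbf{f},\rho_{j}\rangle$, and since $E$ is not assumed closed in $X$ you cannot conclude that the limit lies in $E$. The closed graph trick from step 5 does not help either, because it presupposes that $\langle\mathbf{f},\rho\rangle\in E$ for \emph{every} $\rho\in\mathcal{S}_{0}(\mathbb{R}^{n})$, which is exactly what remains to be proved.

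The paper's argument supplies precisely this missing uniform bound, and it is here that strong non-degeneracy is used in an essential (not merely qualitative) way. After reducing via Theorem \ref{wnwceth2} to $\operatorname*{supp}\hat{\mathbf{f}}\subseteq\overline{B(0,1)}$, one fixes $\sigma>0$ with $C_{1}|u|^{N}\leq|\hat{\psi}(\sigma u)|$ on $\overline{B(0,2)}$ and, for \emph{arbitrary} $\eta\in\mathcal{S}_{0}(\mathbb{R}^{n})$, sets $\hat{\chi}(u)=\rho(u)\,\hat{\eta}(-u)/\overline{\hat{\psi}}(\sigma u)$ with $\rho$ a fixed cutoff equal to $1$ near $\overline{B(0,1)}$. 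The point is that $\hat{\eta}$ vanishes to infinite order at $0$, so the quotient $\hat{\eta}(-u)/\overline{\hat{\psi}}(\sigma u)$ is Schwartz and the map $\eta\mapsto\chi$ is continuous $\mathcal{S}_{0}\to\mathcal{S}$. Then
\[
\langle\mathbf{f},\eta\rangle=\int_{\mathbb{R}^{n}}\chi(\xi)\,\mathcal{W}_{\psi}\mathbf{f}(-\xi,\sigma)\,d\xi,
\]
which immediately gives a bound $\|\langle\mathbf{f},\eta\rangle\|_{E}\leq(C/\sigma^{k})\gamma(\eta)$ by a \emph{single} continuous seminorm $\gamma$ on $\mathcal{S}_{0}(\mathbb{R}^{n})$. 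This division step---enabled by the polynomial lower bound on $|\hat{\psi}|$---is the idea you are missing; your reconstruction-wavelet machinery uses only the qualitative fact $\tau=0$ and cannot manufacture a uniform seminorm.
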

\begin{proof}
As in Theorem \ref{wnwceth3}, we may assume that
$\operatorname*{supp}\hat{\mathbf{f}} \subseteq\overline{B(0,1)}$.
Let $\rho\in\mathcal{S}(\mathbb{R}^{n})$ be the same as in the
proof of Theorem \ref{wnwceth3}. We can find $\sigma,C_1>0$ and
$N\in\mathbb{N}$ such that $2\sigma\leq 1$ and
$C_1|u|^{N}\leq|\hat{{\psi}}(u)|$, for all $u\in
\overline{B(0,2\sigma)}$. Given
$\eta\in\mathcal{S}_{0}(\mathbb{R}^{n})$, then
$$
\hat{\chi}(u)=\hat\chi_{\eta}(u)=\rho(u)\frac{\hat{\eta}(-u)}{\overline{\hat{\psi}}(\sigma
u)}
$$ defines an element of $\mathcal{S}(\mathbb{R}^{n})$ in a
continuous fashion, consequently, the mapping
$\gamma:\mathcal{S}_{0}(\mathbb{R}^{n})\mapsto [0,\infty)$ given
by
$$\gamma(\eta)=\int_{\mathbb{R}^{n}}(1+\left|\xi\right|)^{l}\left|\chi(\xi)\right|\mathrm{d}\xi$$
is a continuous seminorm over $\mathcal{S}_{0}(\mathbb{R}^{n})$.
Now, for any $\eta\in\mathcal{S}_{0}(\mathbb{R}^{n})$,
\begin{align*}
\left\langle \mathbf{f},\eta\right\rangle & =\frac{1}{(2\pi)^{n}}\left\langle \hat{\mathbf{f}}(u),\hat{\chi}(u)\overline{\hat{\psi}}(\sigma u)\right\rangle
\\
&
=\int_{\mathbb{R}^{n}}\chi(\xi)\mathcal{W}_{\psi}\mathbf{f}(-\xi,\sigma)\mathrm{d}\xi.
\end{align*}
Therefore, 
$$\left\|\left\langle \mathbf{f},\eta\right\rangle\right\|\leq (C/\sigma^{k})\gamma(\eta), \ \ \ \mbox{for all } \eta\in\mathcal{S}_{0}(\mathbb{R}^{n}),$$
and the latter implies that the restriction of $\mathbf{f}$ to $\mathcal{S}_{0}(\mathbb{R}^{n})$ belongs to $\mathcal{S}'_{0}(\mathbb{R}^{n},E)$. The standard argument (see the proof of Theorem \ref{wnwceth1}) yields the existence of $\mathbf{G}$ satisfying all the requirements.


\end{proof}
It should be noticed that the class of strongly non-degenerate wavelets coincides with that of Drozhzhinov-Zavialov wavelets, introduced in Example \ref{wnwex3.3}. Indeed, the condition (\ref{wnwceeq13}) holds if and only if the Taylor polynomial of order $N$ at the origin of $\hat{\psi}$  is non-degenerate in the sense of Example \ref{wnwex3.3}.

In dimension $n=1$, there is no distinction between non-degenerateness and strong non-degenerateness, whenever we consider wavelets in $\mathcal{S}(\mathbb{R})\setminus\mathcal{S}_{0}(\mathbb{R})$. Actually, a stronger result than Theorem \ref{wnwceth4} holds in the one-dimensional case.

\begin{proposition}
\label{wnwcep3}
Let $\mathbf{f}\in\mathcal{S}'(\mathbb{R},X)$
and let $\psi\in\mathcal{S}(\mathbb{R})$ be a
wavelet with $\mu_{d}(\psi)\neq0$, for some $d\in\mathbb{N}$. If the conditions (i) and (ii) of Theorem \ref{wnwceth4} are satisfied, then there exists an $X$-valued polynomial $\mathbf{P}$ of degree at most $d-1$ such that $\mathbf{f}-\mathbf{P}\in\mathcal{S}'(\mathbb{R},E)$.
\end{proposition}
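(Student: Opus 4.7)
My plan is to apply Theorem \ref{wnwceth4} to produce an $X$-valued distribution $\mathbf{G}$ whose Fourier transform is supported at the origin, and then to upgrade this distribution into a genuine polynomial of the required degree by exploiting the single non-vanishing moment $\mu_d(\psi)$. The first observation is that in one variable $\psi$ is automatically strongly non-degenerate in the sense of Definition \ref{wnwced2}: letting $m$ denote the smallest positive integer with $\mu_m(\psi)\neq 0$ (so $1\le m\le d$), the Taylor expansion of $\hat{\psi}$ at the origin gives $\hat{\psi}(u)=\frac{(-i)^m\mu_m(\psi)}{m!}u^m+o(u^m)$ as $u\to 0$, whence $|\hat{\psi}(u)|\ge C|u|^m$ for $|u|$ small. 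Theorem \ref{wnwceth4} therefore supplies $\mathbf{G}\in\mathcal{S}'(\mathbb{R},X)$ with $\operatorname*{supp}\hat{\mathbf{G}}\subseteq\{0\}$ and $\mathbf{f}-\mathbf{G}\in\mathcal{S}'(\mathbb{R},E)$; by the discussion following Corollary \ref{wnwcec1}, we may write $\mathbf{G}(t)=\sum_{m\ge 0}\mathbf{w}_m t^m$ as a weakly finite formal series with $\mathbf{w}_m\in X$. The proposition will follow once I show that $\mathbf{w}_m=\mathbf{0}$ for $m$ large and $\mathbf{w}_m\in E$ for every $m\ge d$, for then $\mathbf{P}(t):=\sum_{m=0}^{d-1}\mathbf{w}_m t^m$ is an $X$-valued polynomial of degree at most $d-1$, $\mathbf{G}-\mathbf{P}$ is an $E$-valued polynomial, and the identity $\mathbf{f}-\mathbf{P}=(\mathbf{f}-\mathbf{G})+(\mathbf{G}-\mathbf{P})$ places $\mathbf{f}-\mathbf{P}$ in $\mathcal{S}'(\mathbb{R},E)$.

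To bound the degree of $\mathbf{G}$, I would combine hypothesis (ii) with the slow-growth estimate on $\mathcal{W}_{\psi}(\mathbf{f}-\mathbf{G})$ supplied by Proposition \ref{wnwp2} to obtain $\|\mathcal{W}_{\psi}\mathbf{G}(x,y)\|_{E}\le C(1+|x|)^{l_0}/y^{k_0}$ for almost every $(x,y)\in\mathbb{R}\times(0,1]$. Fix $\mathbf{w}^{\ast}\in X'$; its restriction to $E$ belongs to $E'$, and since $\mathcal{W}_{\psi}\mathbf{G}(\cdot,\cdot)$ is smooth in $X$, the scalar estimate $|\langle\mathbf{w}^{\ast},\mathcal{W}_{\psi}\mathbf{G}(x,y)\rangle|\le C_{\mathbf{w}^{\ast}}(1+|x|)^{l_0}/y^{k_0}$ extends by continuity to all of $\mathbb{R}\times(0,1]$. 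Writing $P^{\ast}:=\langle\mathbf{w}^{\ast},\mathbf{G}\rangle$, a scalar polynomial, a direct computation using $\mu_0(\psi)=0$ yields
\[
\mathcal{W}_{\psi}P^{\ast}(x,y)=\sum_{k\ge 1,\,l\ge 0}\binom{k+l}{l}\overline{\mu_k(\psi)}\,\langle\mathbf{w}^{\ast},\mathbf{w}_{k+l}\rangle\,x^l y^k.
\]
For each fixed $y\in(0,1]$ this is a polynomial in $x$ of growth at most $(1+|x|)^{l_0}$, so its $x$-degree is at most $l_0$. Consequently, for every $l>l_0$ the coefficient of $x^l$ vanishes identically as a polynomial in $y$, forcing $\overline{\mu_k(\psi)}\langle\mathbf{w}^{\ast},\mathbf{w}_{k+l}\rangle=0$ for all $k\ge 1$. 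Specialising to $k=d$ gives $\langle\mathbf{w}^{\ast},\mathbf{w}_{j}\rangle=0$ whenever $j>d+l_0$; since $X$ is Hausdorff and $X'$ separates points, $\mathbf{w}_j=\mathbf{0}$ for $j>d+l_0$, and $\mathbf{G}$ is in fact a genuine $X$-valued polynomial of degree at most $d+l_0$.

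The remaining task is to show $\mathbf{w}_m\in E$ for every $d\le m\le d+l_0$. At this stage $\mathcal{W}_{\psi}\mathbf{G}(x,y)=\sum_{(k,l)\in S}\mathbf{v}_{k,l}x^l y^k$ is an honest polynomial on $\mathbb{R}^2$ with $X$-valued coefficients $\mathbf{v}_{k,l}=\binom{k+l}{l}\overline{\mu_k(\psi)}\mathbf{w}_{k+l}$ indexed by the finite set $S$ of pairs $(k,l)$ with $k\ge 1$ and $k+l\le d+l_0$. The determinant of the evaluation matrix $(x_s^l y_s^k)_{s,(k,l)\in S}$ is a non-identically-zero polynomial in the variables $(x_s,y_s)_{s=1}^{|S|}$, by linear independence of the monomials $\{x^l y^k:(k,l)\in S\}$; and since $\mathcal{W}_{\psi}\mathbf{G}$ is $E$-valued almost everywhere on $\mathbb{R}\times(0,1]$, a routine measure-theoretic selection produces $|S|$ points $(x_s,y_s)\in\mathbb{R}\times(0,1]$ that simultaneously satisfy $\mathcal{W}_{\psi}\mathbf{G}(x_s,y_s)\in E$ and make that matrix invertible. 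Linear inversion then exhibits each $\mathbf{v}_{k,l}$ as a finite linear combination of elements of $E$, so $\mathbf{v}_{k,l}\in E$ for every $(k,l)\in S$. Specialising once more to $k=d$ and using $\mu_d(\psi)\ne 0$, we conclude $\mathbf{w}_{d+l}\in E$ for every $0\le l\le l_0$, completing the argument.

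The principal delicate point, and the reason the one-dimensional hypothesis is essential, is the passage from a weakly finite point-supported $X$-valued distribution to a genuine polynomial of bounded degree with $E$-valued high-order coefficients: this upgrade is driven entirely by the single non-vanishing moment $\mu_d(\psi)$, which in one variable simultaneously furnishes strong non-degeneracy (needed to invoke Theorem \ref{wnwceth4}) and provides the selector column $k=d$ that lets one read off the $E$-valuedness of the individual $\mathbf{w}_m$ for $m\ge d$---both features that are unavailable in dimensions $n\ge 2$.
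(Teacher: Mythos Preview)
Your argument is correct, but it takes a considerably longer route than the paper's. The paper's proof is a three-line reduction to Theorem~\ref{wnwceth3}: taking (without loss of generality) $d$ to be the \emph{smallest} index with $\mu_d(\psi)\neq 0$, so that $\mu_0(\psi)=\cdots=\mu_{d-1}(\psi)=0$, one can find $\phi\in\mathcal{S}(\mathbb{R})$ with $\overline{\phi^{(d)}}=(-1)^{d}\psi$ and $\mu_0(\phi)=1$; then the identity
\[
F_{\phi}(\mathbf{f}^{(d)})(x,y)=\frac{1}{y^{d}}\,\mathcal{W}_{\psi}\mathbf{f}(x,y)
\]
transfers the class estimate to $F_{\phi}(\mathbf{f}^{(d)})$, and Theorem~\ref{wnwceth3} yields $\mathbf{f}^{(d)}\in\mathcal{S}'(\mathbb{R},E)$ directly. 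Taking $d$ primitives produces the desired $X$-valued polynomial $\mathbf{P}$ of degree at most $d-1$.

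Your approach instead invokes Theorem~\ref{wnwceth4} and then works hard to upgrade the resulting $\mathbf{G}$ (a priori only a weakly finite series supported at the origin on the Fourier side) to a genuine polynomial with $E$-valued coefficients in degrees $\ge d$. Your degree bound via the growth of $\mathcal{W}_{\psi}P^{\ast}$ in $x$ and the Vandermonde-type extraction of the $\mathbf{v}_{k,l}$ from a.e.\ evaluations are both sound. What the paper's argument buys is brevity and a cleaner explanation of why the phenomenon is one-dimensional: in $\mathbb{R}$ a wavelet whose first non-vanishing moment has order $d$ is literally the $d$-th derivative of a $\phi$-transform kernel, so the whole polynomial-correction analysis collapses to ``differentiate $d$ times and integrate back''. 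Your argument, by contrast, makes explicit the mechanism (isolating coefficients via the single selector column $k=d$) that Corollary~\ref{wnwcec1} uses in the normed case, and pushes it through for general locally convex $X$.
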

\begin{proof} There exists $\phi\in\mathcal{S}(\mathbb{R})$ such that $\overline{\phi^{(d)}}=(-1)^{d}\psi$, and we may assume that $\mu_{0}(\phi)=1$. Then,
$$
F_{\phi}(\mathbf{f}^{(d)})(x,y)=\frac{1}{y^{d}}\mathcal{W}_{\psi}\mathbf{f}(x,y).
$$
Hence, an application of Theorem \ref{wnwceth3} gives that $\mathbf{f}^{(d)}\in\mathcal{S}'(\mathbb{R},E)$, and this clearly implies the existence of $\mathbf{P}$ with the desired properties.
\end{proof}

Observe that the conclusion of Proposition \ref{wnwcep3} does not hold for multidimensional wavelets, in general, even if they are strongly non-degenerate. This fact is shown by Example \ref{wnwceex1}.

Naturally, if $X$ is a normed space in Theorem \ref{wnwceth4}, then $\mathbf{G}$ must be an $X$-valued polynomial, this fact is stated in the next corollary. Corollary \ref{wnwcec2} extends an important result of Drozhzhinov and Zavialov \cite[Thm. 2.1]{drozhzhinov-z3}.

\begin{corollary}
\label{wnwcec2} Let the hypotheses of Theorem \ref{wnwceth4} be satisfied. If $X$ is a normed space, then there is an
$X$-valued polynomial $\mathbf{P}$ such that
$\mathbf{f-P}\in\mathcal{S}'(\mathbb{R}^{n},E)$. Moreover, if $P_{q}$, $q\in\mathbb{N}$, are the homogeneous terms
of the Taylor polynomials of $\hat{\psi}$ at the origin (cf.
(\ref{wnwtteq7})), then $\overline{P_{q}}\left(\partial/\partial t\right)\mathbf{P}$ is an $E$-valued polynomial, for each $q\in\mathbb{N}$.
\end{corollary}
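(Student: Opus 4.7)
The plan is to follow the scheme of the proof of Corollary \ref{wnwcec1}, with Theorem \ref{wnwceth4} replacing Theorem \ref{wnwceth1} as the main input. The sharper hypothesis (strong non-degenerateness) exactly compensates for the fact that we only have a \emph{local} class estimate, so the argument goes through with essentially the same steps.

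First, I would apply Theorem \ref{wnwceth4} to produce $\mathbf{G}\in\mathcal{S}'(\mathbb{R}^{n},X)$ with $\mathbf{f}-\mathbf{G}\in\mathcal{S}'(\mathbb{R}^{n},E)$ and $\operatorname*{supp}\hat{\mathbf{G}}\subseteq\left\{0\right\}$. Since $X$ is a normed space, the standard structure theorem for $X$-valued distributions supported at a point (just as in Corollary \ref{wnwcec1}) forces $\hat{\mathbf{G}}$ to be a finite linear combination of derivatives of $\delta$ with coefficients in $X$; hence $\mathbf{G}$ is an $X$-valued polynomial, which I rename $\mathbf{P}$.

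For the \emph{moreover} part, I would mimic verbatim the Fourier-side computation from the proof of Corollary \ref{wnwcec1}. Writing $\mathbf{P}(t)=\sum_{|m|\leq N}i^{|m|}t^{m}\mathbf{w}_{m}$ with $\mathbf{w}_{m}\in X$ and using $\hat{\psi}(0)=0$ together with the expansion (\ref{wnwtteq7}), one obtains
\begin{equation*}
\mathcal{W}_{\psi}\mathbf{P}(x,y)=\sum_{q=1}^{N}(iy)^{q}\bigl(\overline{P_{q}(\partial/\partial x)}\,\mathbf{P}\bigr)(x),
\end{equation*}
which is a polynomial in $(x,y)$ with $X$-valued coefficients. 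Now hypothesis (i) of Theorem \ref{wnwceth4} says $\mathcal{W}_{\psi}\mathbf{f}(x,y)\in E$ for a.a.\ $(x,y)\in\mathbb{R}^{n}\times(0,1]$, while $\mathbf{f}-\mathbf{P}\in\mathcal{S}'(\mathbb{R}^{n},E)$ implies that $\mathcal{W}_{\psi}(\mathbf{f}-\mathbf{P})(x,y)$ is an $E$-valued smooth function on the whole of $\mathbb{H}^{n+1}$; subtracting, I conclude $\mathcal{W}_{\psi}\mathbf{P}(x,y)\in E$ for a.a.\ $(x,y)\in\mathbb{R}^{n}\times(0,1]$.

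The final step is to extract the $y$-coefficients. Selecting $N$ distinct values $0<y_{1}<\cdots<y_{N}\leq 1$ (off the null set), the Vandermonde matrix $(y_{j}^{q})_{j,q=1}^{N}$ is invertible, so each $(\overline{P_{q}(\partial/\partial x)}\,\mathbf{P})(x)$ lies in $E$ for almost every $x\in\mathbb{R}^{n}$; being a polynomial in $x$ with coefficients in $X$, it then lies in $E$ for \emph{every} $x$ (just read off the coefficients from values on a set of positive measure). The case $q=0$ is trivial because $P_{0}=\hat{\psi}(0)=0$, and the case $q>N$ is trivial because $\mathbf{P}$ has degree at most $N$. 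I do not anticipate any serious obstacle: the only subtle point is the passage from ``$E$-valued a.e.\ on $\mathbb{R}^{n}\times(0,1]$'' to ``coefficients in $E$,'' and the polynomial character of $\mathcal{W}_{\psi}\mathbf{P}$ in both $x$ and $y$ makes this automatic via the Vandermonde trick.
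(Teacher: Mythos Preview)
Your proposal is correct and follows essentially the same approach as the paper, which simply says the existence of $\mathbf{P}$ is clear and the remaining assertion is proved identically to Corollary~\ref{wnwcec1}. You have in fact supplied more detail than the paper does: where the proof of Corollary~\ref{wnwcec1} writes that the displayed identity for $\mathcal{W}_{\psi}\mathbf{P}(x,y)$ ``readily implies'' that each $(\overline{P_{q}(\partial/\partial x)}\,\mathbf{P})(x)\in E$, you make this explicit via a Vandermonde argument in the $y$-variable, together with the observation that a polynomial taking values in $E$ on a set of positive measure must have all its coefficients in $E$.
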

\begin{proof} The existence of the polynomial is clear. The proof of the remaining assertion is identically the same as that of Corollary \ref{wnwcec1}.
\end{proof}

In general, the degree of the the polynomial $\mathbf{P}$ occurring in Corollary \ref{wnwcec2} depends merely on $\mathbf{f}$, and not on the wavelet. However, when the Taylor polynomials of the wavelet $\hat{\psi}$ posses a rich algebraic structure, it is possible to say more about the degree of $\mathbf{P}$. This fact was already observed in \cite[Thm. 2.2]{drozhzhinov-z3} for Banach spaces $X$. We denote by $\mathbb{P}_{d}(\mathbb{R}^{n})$ the ideal of (scalar-valued) polynomials of the form $Q(t)=\sum_{d\leq\left|m\right|\leq N}a_{m}t^{m}$, for some $N\in\mathbb{N}$.

\begin{corollary}
\label{wnwcec3} Let the hypotheses of Corollary \ref{wnwcec2} be satisfied. If there exists $d\in\mathbb{N}$ such that $\mathbb{P}_{d}(\mathbb{R}^{n})$ is contained in the ideal generated by the polynomials $P_1,P_2,\dots,P_{d}$, where the $P_{q}$, $q\in\mathbb{N}$, are the homogeneous terms
of the Taylor polynomials of $\hat{\psi}$ at the origin (cf.
(\ref{wnwtteq7})), then there exists an $X$-valued polynomial $\mathbf{P}$ of degree at most $d-1$ such that $\mathbf{f}-\mathbf{P}\in\mathcal{S}'(\mathbb{R}^{n},E)$.
\end{corollary}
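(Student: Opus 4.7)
The plan is to start from the polynomial $\mathbf{P}(t)=\sum_{|m|\leq N}t^m\mathbf{w}_m$ furnished by Corollary \ref{wnwcec2}, which already satisfies $\mathbf{f}-\mathbf{P}\in\mathcal{S}'(\mathbb{R}^n,E)$ together with the key additional information that $\overline{P_q}(\partial/\partial t)\mathbf{P}$ is an $E$-valued polynomial for every $q\in\mathbb{N}$. If I can show that $\mathbf{w}_m\in E$ for every $|m|\geq d$, then truncating $\mathbf{P}$ to its terms of degree $<d$ gives an $X$-valued polynomial $\widetilde{\mathbf{P}}(t)=\sum_{|m|<d}t^m\mathbf{w}_m$ of degree at most $d-1$ with
\[
\mathbf{f}-\widetilde{\mathbf{P}}=(\mathbf{f}-\mathbf{P})+\sum_{|m|\geq d}t^m\mathbf{w}_m\in\mathcal{S}'(\mathbb{R}^n,E),
\]
which is the desired conclusion.

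To prove the claim $\mathbf{w}_m\in E$ for $|m|\geq d$, I translate the ideal hypothesis into a differential-operator identity. Since $u^m\in\mathbb{P}_d(\mathbb{R}^n)$ whenever $|m|\geq d$, the hypothesis provides polynomials $R_{q,m}\in\mathbb{C}[u_1,\dots,u_n]$ with $u^m=\sum_{q=1}^{d}P_q(u)R_{q,m}(u)$. Taking complex conjugates of the coefficients of both sides (noting that $u^m$ itself has real coefficients) yields an analogous decomposition $u^m=\sum_{q=1}^{d}\overline{P_q}(u)\overline{R_{q,m}}(u)$. Substituting the commuting derivatives $\partial/\partial t_j$ for the indeterminates $u_j$ then produces the operator identity
\[
\frac{\partial^{|m|}}{\partial t^m}=\sum_{q=1}^{d}\overline{R_{q,m}}\!\left(\frac{\partial}{\partial t}\right)\overline{P_q}\!\left(\frac{\partial}{\partial t}\right).
\]

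Applying both sides to $\mathbf{P}$, each summand on the right is a derivative of an $E$-valued polynomial (by the moreover part of Corollary \ref{wnwcec2}), and hence is itself an $E$-valued polynomial. Therefore $\partial^m\mathbf{P}/\partial t^m$ is an $E$-valued polynomial whenever $|m|\geq d$, and evaluating at $t=0$ one reads off $m!\,\mathbf{w}_m=(\partial^m\mathbf{P}/\partial t^m)(0)\in E$. The only delicate point is the bookkeeping in passing from the commutative identity in $\mathbb{C}[u]$ to the corresponding identity of commuting partial-derivative operators; this is routine since the $\partial/\partial t_j$ commute, but requires a brief verification with complex conjugation because $\hat{\psi}$ is not assumed to be real. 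All remaining steps amount to extracting coefficients at the origin, so I do not expect any analytic obstacle.
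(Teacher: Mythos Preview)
Your proof is correct and follows essentially the same route as the paper's: invoke Corollary~\ref{wnwcec2} to obtain an $X$-valued polynomial whose high-degree coefficients must be shown to lie in $E$, then use the ideal hypothesis (passed to the conjugates $\overline{P_q}$) to write each $\partial^{m}/\partial t^{m}$ with $|m|\ge d$ as a polynomial combination of the operators $\overline{P_q}(\partial/\partial t)$, and conclude by evaluating at the origin. Your treatment of the conjugation step is in fact more explicit than the paper's, which simply asserts that $\mathbb{P}_d(\mathbb{R}^n)$ is also contained in the ideal generated by $\overline{P}_1,\dots,\overline{P}_d$.
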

\begin{proof}
Corollary \ref{wnwcec2} yields the existence of an $X$-valued
polynomial $\tilde{\mathbf{P}}(t)=\mathbf{P}(t)+
\sum_{d\leq\left|m\right|\leq N}\mathbf{w}_{m}t^{m}$ such that
$\mathbf{f}-\tilde{\mathbf{P}}\in \mathcal{S}'(\mathbb{R}^{n},E)$
and $\mathbf{P}$ has degree at most $d-1$. Then, we must show that
$\mathbf{w}_{m}\in E,$ for $d\leq\left|m\right|\leq N.$ But
Corollary \ref{wnwcec2} also implies that
$\overline{P_{q}(\partial/\partial t)}\tilde{\mathbf{P}}$ is an
$E$-valued polynomial for $q=1,\dots,d$, and since
$\mathbb{P}_{d}(\mathbb{R}^{n})$ is also contained in the ideal
generated by $\overline{P}_{1},\dots,\overline{P}_{d}$, we obtain
at once that $$\mathbf{w}_{m}=m!((\partial^{|m|}/\partial
t^{m})\tilde{\mathbf{P}})(0)\in E,\ \ \ \mbox{for }
d\leq\left|m\right|\leq N.$$
\end{proof}

\newpage

\section{Regularity and Asymptotic Properties of Functions and Distributions - Several Applications}
\label{wnwap}
In this section we illustrate our ideas with several applications and examples. In Subsection \ref{wnwla} we give applications of our Tauberian theory in the analysis of pointwise and global regularity properties of distributions, where we extend or recovered a number of classical results due to Meyer, Jaffard, and Holschneider \cite{holschneider94,jaff,meyer}.
We study in Subsection \ref{wnwPDE} sufficient conditions for stabilization in time of the solution to the Cauchy problem discussed in Example \ref{wnwex3.4}. Subsection \ref{wnwCA} deals with applications to regularity theory in the setting of generalized function algebras. In Subsection \ref{wnwds}, we provide a wavelet characterization of the distributionally small distributions at infinity (cf. Example \ref{wnwex2.3}). Section \ref{wnwWRfunction} is devoted to the pointwise analysis of Riemann type distributions at the rationals. We obtain Tauberian theorems for Laplace transforms in Subsection \ref{wnwLT}. Finally, we show in Subsection \ref{wnwDS} that for tempered $E$-valued distributions the weak-asymptotics in $\mathcal{D}'(\mathbb{R}^{n},E)$ and $\mathcal{S}'(\mathbb{R}^{n},E)$ are equivalent.
\subsection{Applications to Pointwise and Local Analysis of Distributions}
\label{wnwla} In \cite{meyer} Meyer has proposed the use of
weak scaling exponents at small scale in the pointwise analysis of
functions and distributions (cf. Example \ref{wnwex2.0}).
Moreover, he introduced some useful pointwise spaces of
distributions in order to measure such scaling properties. They
are natural generalizations of the classical pointwise H\"{o}lder
spaces $C^{\alpha}(x_{0})$. Recall a function $f$ belongs to
$C^{\alpha}(x_{0})$, $\alpha>0$, if there exists a polynomial $P$
such that
$$\left|f(x_{0}+h)-P(h)\right|=O(\left|h\right|^{\alpha})$$ as
$h\to 0$. It is also worth mentioning that \cite{meyer} Meyer's spaces are
unions of the (local) 2-microlocal spaces, introduced by Bony
\cite{bony1,bony2,jaffard-m,meyer} and so useful in the study of
non-linear PDE.

We shall extend in this subsection the spaces of Meyer. Furthermore, we will consider $E$-valued distributions and make use of slowly varying functions for our scaling measurements. Interestingly, despite the fact that our spaces are essentially \emph{pointwisely} defined, they are, in turn, effective tools in the study of \emph{global} regularity as well, as we show below with several applications.

\begin{definition}
Let $\mathbf{f}\in\mathcal{S}'(\mathbb{R}^{n},E)$ and let $L$ be slowly varying at the origin. For $x_{0}\in\mathbb{R}^{n}$ and $\alpha\in\mathbb{R}$, we write:
\begin{itemize}
\item [(i)] $\mathbf{f}\in \mathcal{O}^{\alpha,L}(x_{0},E)$ if $\mathbf{f}$ is weak-asymptotically bounded of degree $\alpha$ at $x_{0}$ with respect to $L$ (in $\mathcal{S}'(\mathbb{R}^{n},E)$).
\item [(ii)] $\mathbf{f}\in C_{w}^{\alpha,L}(x_{0},E)$ if there is an $E$-valued polynomial $\mathbf{P}$ such that $\mathbf{f-P}\in\mathcal{O}^{\alpha,L}(x_{0},E)$.
\item [(iii)] $\mathbf{f}\in C_{\ast,w}^{\alpha,L}(x_{0},E)$ if $\mathbf{f}$ is weak-asymptotically bounded of degree $\alpha$ at $x_{0}$ with respect to $L$ in the space $\mathcal{S}'_{0}(\mathbb{R}^{n},E)$.
\end{itemize}
\end{definition}

We call $C_{w}^{\alpha,L}(x_{0},E)$ the \emph{pointwise weak H\"{o}lder space} of exponent $\alpha$, with respect to $L$. When dealing with scalar-valued distributions, we simply write $\mathcal{O}^{\alpha,L}(x_{0})$, $C_{w}^{\alpha,L}(x_{0})$ and $C_{\ast,w}^{\alpha,L}(x_{0})$ for these spaces; furthermore, if $L\equiv1$, we then write $C_{w}^{\alpha}(x_{0},E)=C_{w}^{\alpha,L}(x_{0},E)$, and similarly for the other two spaces. It should be mentioned that $\mathcal{O}^{\alpha}(x_{0})$ coincides with the one introduced by Meyer in \cite[p. 13] {meyer}, while $C_{\ast,w}^{\alpha}(x_{0})=\Gamma^{\alpha}(x_{0})$ in Meyer's notation.

Let us discuss some properties of our pointwise spaces. First of all, we obviously have: $$\mathcal{O}^{\alpha,L}(x_{0},E)\subseteq C_{w}^{\alpha,L}(x_{0},E)\subseteq C_{\ast,w}^{\alpha,L}(x_{0},E).$$
A more precise inclusion relation is obtained if we employ Proposition A.2, this is the content of the next proposition.
\begin{proposition}\label{wnwp9.1} If $\alpha\notin\mathbb{N}$, we then have $C_{w}^{\alpha,L}(x_{0},E)=C_{\ast,w}^{\alpha,L}(x_{0},E)$. When $\alpha<0$, we have $\mathcal{O}^{\alpha,L}(x_{0},E)=C_{w}^{\alpha,L}(x_{0},E)=C_{\ast,w}^{\alpha,L}(x_{0},E)$. If $\alpha=p\in\mathbb{N}$, then $\mathbf{f}\in C_{\ast,w}^{p,L}(x_{0},E)$ if and only if $\mathbf{f}$ has a weak-asymptotic expansion of the form (\ref{wnweqA7}) at the point $x_{0}$.
\end{proposition}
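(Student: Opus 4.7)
The plan is to derive all three assertions directly from Proposition A.2 of the Appendix, which precisely quantifies the discrepancy between weak-asymptotic boundedness in $\mathcal{S}'_{0}(\mathbb{R}^{n},E)$ and in $\mathcal{S}'(\mathbb{R}^{n},E)$. Since the trivial inclusions $\mathcal{O}^{\alpha,L}(x_{0},E)\subseteq C_{w}^{\alpha,L}(x_{0},E)\subseteq C_{\ast,w}^{\alpha,L}(x_{0},E)$ are built into the definitions, only the reverse containments require work.

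For the first assertion, suppose $\mathbf{f}\in C_{\ast,w}^{\alpha,L}(x_{0},E)$ with $\alpha\notin\mathbb{N}$. Then $\mathbf{f}(x_{0}+\varepsilon t)=O(\varepsilon^{\alpha}L(\varepsilon))$ in $\mathcal{S}'_{0}(\mathbb{R}^{n},E)$, and Proposition A.2 (in the non-critical regime) furnishes an $E$-valued polynomial $\mathbf{P}$, of degree less than $\alpha$, such that $\mathbf{f}(x_{0}+\varepsilon t)-\mathbf{P}(\varepsilon t)=O(\varepsilon^{\alpha}L(\varepsilon))$ as $\varepsilon\to0^{+}$ in $\mathcal{S}'(\mathbb{R}^{n},E)$. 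Setting $\mathbf{Q}(t):=\mathbf{P}(t-x_{0})$, this shows $\mathbf{f}-\mathbf{Q}\in\mathcal{O}^{\alpha,L}(x_{0},E)$, whence $\mathbf{f}\in C_{w}^{\alpha,L}(x_{0},E)$.

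For the second assertion, note that $\alpha<0$ implies $\alpha\notin\mathbb{N}$, so the first part already yields $C_{w}^{\alpha,L}(x_{0},E)=C_{\ast,w}^{\alpha,L}(x_{0},E)$. It only remains to show $C_{w}^{\alpha,L}(x_{0},E)\subseteq\mathcal{O}^{\alpha,L}(x_{0},E)$. Let $\mathbf{f}=\mathbf{g}+\mathbf{Q}$ with $\mathbf{g}\in\mathcal{O}^{\alpha,L}(x_{0},E)$ and $\mathbf{Q}$ an $E$-valued polynomial. For every $\rho\in\mathcal{S}(\mathbb{R}^{n})$, the evaluation $\langle \mathbf{Q}(\varepsilon t),\rho(t)\rangle$ is a polynomial in $\varepsilon$ with values in $E$ and hence is bounded as $\varepsilon\to0^{+}$; since $\varepsilon^{\alpha}L(\varepsilon)\to\infty$ for $\alpha<0$ (slow variation of $L$ makes the power dominate), we conclude $\mathbf{Q}(\varepsilon t)=O(\varepsilon^{\alpha}L(\varepsilon))$ in $\mathcal{S}'(\mathbb{R}^{n},E)$, and thus $\mathbf{f}\in\mathcal{O}^{\alpha,L}(x_{0},E)$.

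For the third assertion, the critical case $\alpha=p\in\mathbb{N}$ of Proposition A.2 states that $\mathbf{f}\in C_{\ast,w}^{p,L}(x_{0},E)$ is equivalent to the existence of an expansion of the form (\ref{wnweqA7}), i.e.\ of an $E$-valued polynomial $\mathbf{P}$ of degree at most $p-1$ and asymptotically homogeneously bounded $E$-valued functions $\mathbf{c}_{m}$ ($|m|=p$) of degree $0$ with respect to $L$ satisfying
\begin{equation*}
\mathbf{f}(x_{0}+\varepsilon t)=\mathbf{P}(\varepsilon t)+\varepsilon^{p}\sum_{|m|=p}t^{m}\mathbf{c}_{m}(\varepsilon)+O(\varepsilon^{p}L(\varepsilon))\ \ \text{in }\mathcal{S}'(\mathbb{R}^{n},E).
\end{equation*}
The forward implication is precisely Proposition A.2, while the converse is immediate: the right-hand side is manifestly $O(\varepsilon^{p}L(\varepsilon))$ after evaluation against any $\rho\in\mathcal{S}_{0}(\mathbb{R}^{n})$, because the polynomial $\mathbf{P}(\varepsilon t)$ and the monomials $\varepsilon^{p}t^{m}$ for $|m|=p$ all pair to zero with elements of $\mathcal{S}_{0}(\mathbb{R}^{n})$ by the vanishing moment property.

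The only substantive content sits in Proposition A.2; the present proposition is essentially a reorganisation of its conclusions, so no new obstacle arises beyond keeping careful track of the integrality of $\alpha$ and the sign conventions.
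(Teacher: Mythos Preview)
Your argument is correct and follows the same approach as the paper, which simply points to Proposition~A.2 as the source of all three assertions. Two inconsequential slips: Proposition~A.2 does not actually assert that the polynomial $\mathbf{P}$ has degree less than $\alpha$ (you do not use this anyway), and in the second assertion you should write $\langle \mathbf{Q}(x_{0}+\varepsilon t),\rho(t)\rangle$ rather than $\langle \mathbf{Q}(\varepsilon t),\rho(t)\rangle$, though the conclusion is unaffected since this is still a polynomial in $\varepsilon$.
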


Therefore, when $\alpha\in\mathbb{N}$, the difference between $C_{\ast,w}^{p,L}(x_{0},E)$ and $C_{w}^{\alpha,L}(x_{0},E)$ lies in the occurrence of associate asymptotically homogeneously bounded functions in the expansion (\ref{wnweqA7}). Let $\mathbf{c}:(0,A]\to E$ be asymptotically homogeneously bounded of degree 0 (at the origin) with respect to $L\equiv1$ (cf. Definition \ref{wnwd5}). Since $\mathbf{c}(a\varepsilon)=\mathbf{c}(\varepsilon)+O(1)$ holds uniformly for $a$ in compacts (see for instance \cite[Chap. 10]{vindas2}), it is not difficult to prove that $\left\|\mathbf{c}(\varepsilon)\right\|=O(\log(1/\varepsilon))$. This bound and Proposition \ref{wnwp9.1} immediately show the ensuing inclusion relation.

\begin{corollary}\label{wnwfec1} Let $p\in\mathbb{N}$. Then, $C^{p}_{\ast,w}(x_{0},E)\subsetneq C^{p,\left|\log\right|}_{w}(x_{0},E)$.
\end{corollary}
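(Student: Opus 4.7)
The plan is to address the two halves of the corollary separately: the inclusion and then its strictness. For the inclusion, I would start with $\mathbf{f}\in C^{p}_{\ast,w}(x_{0},E)$ and invoke Proposition \ref{wnwp9.1} to rewrite it in the weak-asymptotic expansion form
\begin{equation*}
\mathbf{f}(x_{0}+\varepsilon t)=\mathbf{P}(\varepsilon t)+\varepsilon^{p}\sum_{|m|=p}t^{m}\mathbf{c}_{m}(\varepsilon)+O(\varepsilon^{p})\quad\text{in }\mathcal{S}'(\mathbb{R}^{n},E),
\end{equation*}
where $\mathbf{P}$ is an $E$-valued polynomial and each $\mathbf{c}_{m}$ is asymptotically homogeneously bounded of degree $0$ with respect to $L\equiv 1$. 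The key ingredient is the growth estimate $\|\mathbf{c}_{m}(\varepsilon)\|=O(\log(1/\varepsilon))$ recalled in the paragraph immediately preceding the corollary. Inserting this bound into the expansion and absorbing the trailing $O(\varepsilon^{p})$ remainder (since $1=O(|\log\varepsilon|)$ as $\varepsilon\to 0^{+}$) yields $(\mathbf{f}-\mathbf{P})(x_{0}+\varepsilon t)=O(\varepsilon^{p}|\log\varepsilon|)$ in $\mathcal{S}'(\mathbb{R}^{n},E)$, so that $\mathbf{f}-\mathbf{P}\in\mathcal{O}^{p,|\log|}(x_{0},E)$ and hence $\mathbf{f}\in C^{p,|\log|}_{w}(x_{0},E)$.

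For strictness, I would exhibit a scalar, one-dimensional separating example with $x_{0}=0$, namely
\begin{equation*}
f(t)=t^{p}H(t)\log t,
\end{equation*}
where $H$ is the Heaviside function. A direct change of variables gives, for $\varepsilon>0$,
\begin{equation*}
f(\varepsilon t)=\varepsilon^{p}(\log\varepsilon)\,t^{p}H(t)+\varepsilon^{p}\,t^{p}H(t)\log t,
\end{equation*}
which is clearly $O(\varepsilon^{p}|\log\varepsilon|)$ in $\mathcal{S}'(\mathbb{R})$ with trivial polynomial correction $\mathbf{P}=0$, so $f\in C^{p,|\log|}_{w}(0)$. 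To rule out $f\in C^{p}_{\ast,w}(0)$, I observe that $t^{p}H(t)$ is not a polynomial, so its projection onto $\mathcal{S}'_{0}(\mathbb{R})$ is nonzero; consequently some $\rho\in\mathcal{S}_{0}(\mathbb{R})$ satisfies $\langle t^{p}H(t),\rho\rangle\neq 0$. Against such $\rho$, the quantity $\varepsilon^{-p}\langle f(\varepsilon t),\rho\rangle$ behaves like $(\log\varepsilon)\langle t^{p}H(t),\rho\rangle$ plus a constant and diverges as $\varepsilon\to 0^{+}$, precluding weak-asymptotic boundedness of degree $p$ in $\mathcal{S}'_{0}(\mathbb{R})$.

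The only genuine obstacle is locating a separating example whose $\log$-scaling coefficient is a non-polynomial distribution, so that evaluation against Lizorkin test functions detects the logarithmic divergence; with $t^{p}H(t)$ in hand, both membership assertions reduce to a one-line computation.
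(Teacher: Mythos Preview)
Your proof of the inclusion is correct and follows exactly the argument the paper sketches in the paragraph preceding the corollary: invoke Proposition~\ref{wnwp9.1} to get the expansion with asymptotically homogeneously bounded coefficients $\mathbf{c}_{m}$, then use the bound $\|\mathbf{c}_{m}(\varepsilon)\|=O(\log(1/\varepsilon))$ to absorb everything into an $O(\varepsilon^{p}|\log\varepsilon|)$ remainder.

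For the strict inclusion, you actually go further than the paper, which states $\subsetneq$ but gives no explicit separating example. Your example $f(t)=t^{p}H(t)\log t$ is correct and the verification is clean: the key observation that $t^{p}H(t)$ is a non-polynomial homogeneous distribution of degree $p$, hence nonzero in $\mathcal{S}'_{0}(\mathbb{R})$, is exactly what forces the logarithmic divergence when tested against a suitable $\rho\in\mathcal{S}_{0}(\mathbb{R})$. One minor remark: your example is written for $n=1$ and $E=\mathbb{C}$, whereas the corollary is stated for general $n$ and $E$; you might note that the example lifts trivially (e.g., replace $t^{p}H(t)\log t$ by $(t_{1})_{+}^{p}\log t_{1}$ viewed as a tempered distribution on $\mathbb{R}^{n}$, and multiply by a fixed nonzero vector in $E$).
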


Theorem \ref{wnwth8} yields the characterization of $ C_{\ast,w}^{\alpha,L}(x_{0},E)$ through the wavelet transform. We rephrase this result in the following proposition.

\begin{proposition}
\label{wnwp9.2} Let $\alpha\in\mathbb{R}$ and let $\psi\in\mathcal{S}(\mathbb{R}^{n})$ be a non-degenerate wavelet  such that its moments  satisfy $\mu_{m}(\psi)=0$ for every $\left|m\right|\leq[\alpha]$. Then, $\mathbf{f}\in C_{\ast,w}^{\alpha,L}(x_{0},E)$ if and only if there is $k\in\mathbb{N}$ such that (\ref{wnweq5.5}) holds.
\end{proposition}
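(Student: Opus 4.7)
The equivalence will be obtained by combining the converse-Abelian result of Proposition~\ref{wnwp5.2}(i) with the Abelian result of Proposition~\ref{wnwth2}, together with a moment-vanishing computation that exploits the assumption on $\psi$. For the sufficiency direction, notice that $\mathcal{W}_{\psi}\mathbf{f}(x,y)=M^{\mathbf{f}}_{\check{\bar{\psi}}}(x,y)$ and that $\check{\bar{\psi}}$ is non-degenerate whenever $\psi$ is; thus (\ref{wnweq5.5}) is literally the hypothesis of Proposition~\ref{wnwp5.2}(i), whose conclusion is exactly $\mathbf{f}\in C_{\ast,w}^{\alpha,L}(x_{0},E)$.

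For the necessity, suppose $\mathbf{f}\in C_{\ast,w}^{\alpha,L}(x_{0},E)$. I would apply Proposition A.2 of the Appendix to obtain an $E$-valued polynomial $\mathbf{P}$ of degree at most $[\alpha]$ (in fact strictly less than $\alpha$) and, in the critical case $\alpha=p\in\mathbb{N}$, associate asymptotically homogeneously bounded $E$-valued functions $\mathbf{c}_m$, $|m|=p$, of degree $0$ with respect to $L$, such that the residual
\[
\mathbf{R}_{\varepsilon}(t):=\mathbf{f}(x_0+\varepsilon t)-\mathbf{P}(\varepsilon t)-\varepsilon^{p}\sum_{|m|=p}t^{m}\mathbf{c}_{m}(\varepsilon)
\]
(omit the $\mathbf{c}_m$-term when $\alpha\notin\mathbb{N}$) satisfies $\mathbf{R}_{\varepsilon}=O(\varepsilon^{\alpha}L(\varepsilon))$ in $\mathcal{S}'(\mathbb{R}^{n},E)$. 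By the Banach--Steinhaus theorem, the family $\{(\varepsilon^{\alpha}L(\varepsilon))^{-1}\mathbf{R}_{\varepsilon}:\,0<\varepsilon\leq \varepsilon_{0}\}$ is then bounded in $\mathcal{S}'(\mathbb{R}^{n},E)$.

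The decisive ingredient is that the moment condition $\mu_{m}(\psi)=0$ for $|m|\leq[\alpha]$ makes $\mathcal{W}_{\psi}$ annihilate $\mathbf{P}$ and the homogeneous correction $t\mapsto\varepsilon^{p}\sum_{|m|=p}t^{m}\mathbf{c}_{m}(\varepsilon)$ when evaluated at the rescaled points $(x_{0}+\varepsilon x,\varepsilon y)$. Indeed, setting $\varphi_{x,y}(s):=y^{-n}\bar{\psi}((s-x)/y)\in\mathcal{S}(\mathbb{R}^{n})$, the change of variables $s=x+yt$ gives
\[
\mathcal{W}_{\psi}\mathbf{f}(x_{0}+\varepsilon x,\varepsilon y)=\langle \mathbf{f}(x_{0}+\varepsilon s),\varphi_{x,y}(s)\rangle,
\]
while the moments of $\varphi_{x,y}$ read
\[
\mu_{m}(\varphi_{x,y})=\int_{\mathbb{R}^{n}}(x+yt)^{m}\bar{\psi}(t)\,\mathrm{d}t=\sum_{j\leq m}\binom{m}{j}x^{m-j}y^{|j|}\overline{\mu_{j}(\psi)},
\]
so $\mu_{m}(\varphi_{x,y})=0$ for all $|m|\leq[\alpha]$. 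A direct substitution therefore yields $\langle\mathbf{P}(\varepsilon s),\varphi_{x,y}(s)\rangle=0$ and, in the critical case, $\varepsilon^{p}\sum_{|m|=p}\mathbf{c}_{m}(\varepsilon)\mu_{m}(\varphi_{x,y})=0$. Consequently,
\[
\mathcal{W}_{\psi}\mathbf{f}(x_{0}+\varepsilon x,\varepsilon y)=\langle \mathbf{R}_{\varepsilon}(s),\varphi_{x,y}(s)\rangle=M^{\mathbf{R}_{\varepsilon}}_{\check{\bar{\psi}}}(x,y).
\]

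Applying Proposition~\ref{wnwp2} to the bounded family $\{(\varepsilon^{\alpha}L(\varepsilon))^{-1}\mathbf{R}_{\varepsilon}\}$, one obtains constants $k,l\in\mathbb{N}$ and $C>0$ such that
\[
\|\mathcal{W}_{\psi}\mathbf{f}(x_{0}+\varepsilon x,\varepsilon y)\|\leq C\varepsilon^{\alpha}L(\varepsilon)\left(\tfrac{1}{y}+y\right)^{k}(1+|x|)^{l},
\]
for all $(x,y)\in\mathbb{H}^{n+1}$ and sufficiently small $\varepsilon$; restriction to the unit half-sphere $|x|^{2}+y^{2}=1$ immediately delivers (\ref{wnweq5.5}). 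The only non-routine point in the whole argument is the multi-index moment computation establishing that $\mathcal{W}_{\psi}$ kills the polynomial and associate-homogeneously bounded correction terms; everything else is a bookkeeping assembly of Proposition~A.2, Proposition~\ref{wnwp2}, and Proposition~\ref{wnwp5.2}(i).
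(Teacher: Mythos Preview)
Your proof is correct and follows essentially the same route as the paper. The paper does not give an explicit proof of this proposition; it merely remarks that Theorem~\ref{wnwth8} yields the characterization, together with the observation (stated right after Theorems~\ref{wnwth8} and~\ref{wnwth9}) that the converse holds at finite points whenever $\mu_{m}(\psi)=0$ for $|m|\leq[\alpha]$. Your argument spells out precisely these two ingredients: Proposition~\ref{wnwp5.2}(i) for the sufficiency, and Proposition~A.2 plus the moment computation plus Proposition~\ref{wnwp2} (equivalently, Proposition~\ref{wnwth2}(i)) for the necessity.
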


We can characterize $ \mathcal{O}^{\alpha,L}(x_{0},E)$ as well, but we should then employ $\phi-$transforms: $\mathbf{f}\in \mathcal{O}^{\alpha,L}(x_{0},E)$ if and only if (\ref{wnwtteq1}) is satisfied for some $k\in\mathbb{N}$. In the one-dimensional case, it is also possible to characterize $C_{w}^{\alpha,L}(x_{0},E)$, $\alpha\in\mathbb{N}$, via an estimate of the form (\ref{wnweq5.5}), this time the wavelet must satisfy $\mu_{0}(\psi)=\dots=\mu_{\alpha-1}(\psi)=0$ and $\mu_{\alpha}(\psi)\neq0$; however, such a characterization is no longer possible in the multidimensional case.

It is worth pointing out that if $\mathbf{f}\in C_{\ast,w}^{\alpha,L}(x_{0},E)$ and $\alpha>0$, then the \L ojasiewicz point values (cf. Example \ref{wnwex2.1}) $\mathbf{f}^{(m)}(x_{0})$ exist, distributionally, for all $\left|m\right|<\alpha$. Moreover, $\mathbf{f}\in C_{w}^{\alpha,L}(x_{0},E)$ if and only if the following ``Taylor formula'' holds,
$$
\mathbf{f}(t)-\sum_{\left|m\right|<\alpha} \frac{\mathbf{f}^{(m)}(x_{0})}{m!}(t-x_{0})^{m}\in \mathcal{O}^{\alpha,L}(x_{0},E).
$$

We now move to applications. The next result characterizes global regularity of distributions in terms of the wavelet transform, it generalizes the well known wavelet criterion for global H\"{o}lder continuity \cite{holschneider-t,hormander,stein}. Let $\alpha\in\mathbb{R}_{+}\setminus\mathbb{N}$ and let $L$ be a slowly varying function such that $L$ and $1/L$ are locally bounded on $(0,1]$. We say that $f$ belongs to $C^{\alpha,L}(\mathbb{R}^{n})$ if $f\in C^{[\alpha]}(\mathbb{R}^{n})$ and
$$
\left\|f\right\|_{\alpha,L}:=\sum_{\left|j\right|\leq [\alpha]} \sup_{t\in\mathbb{R}^{n}}\left|f^{(j)}(t)\right|+ \sum_{\left|m\right|=[\alpha]}\sup_{0<\left|t-x\right|\leq 1} \frac{\left|f^{(m)}(t)-f^{(m)}(x)\right|}{\left|t-x\right|^{\alpha-[\alpha]}L(\left|t-x\right|)}<\infty.
$$
The conditions imposed over $L$ ensure that $C^{\alpha,L}(\mathbb{R}^{n})$ depends only on the behavior of $L$ near 0; therefore, it is \emph{invariant} under dilations. When $L\equiv1$, this space reduces to $C^{\alpha,L}(\mathbb{R}^{n})=C^{\alpha}(\mathbb{R}^{n})$, the usual global H\"{o}lder space \cite{hormander,meyer}. Consequently, we call $C^{\alpha,L}(\mathbb{R}^{n})$ the \emph{global H\"{o}lder space with respect to }$L$. Note also the following inclusion relations:
$$C^{\beta}(\mathbb{R}^{n})\subset C^{\alpha,L}(\mathbb{R}^{n})\subset C^{\gamma}(\mathbb{R}^{n}), \ \ \ \mbox{whenever }0<\gamma<\alpha<\beta.$$

Observe that the essential technique in the proof of the following theorem is to turn pointwise weak regularity for vector-valued distributions into global information for scalar-valued distributions.

\begin{theorem}
\label{wnwfeth9.6} Let $\alpha\in\mathbb{R}_{+}\setminus\mathbb{N}$ and let $\psi\in\mathcal{S}(\mathbb{R}^{n})$ be a non-degenerate wavelet such that its moments $\mu_{m}(\psi)=0$ for every $\left|m\right|\leq[\alpha]$. Assume that $f\in\mathcal{S}'(\mathbb{R}^{n})$ satisfies
\begin{equation}
\label{wnweq9.4}\left|\mathcal{W}_{\psi}f(x,y)\right|\leq Cy^{\alpha}L(y) \  \ \ \mbox{for all }(x,y)\in\mathbb{R}^{n}\times (0,y_0],
\end{equation}
for some constants $C>0$ and $0<y_0\leq1$, where $L\in L^{\infty}_{loc}(0,1]$ is slowly varying at the origin and $1/L\in L^{\infty}_{loc}(0,1]$. Then, there exists an entire function of exponential type $G$ such that
$$f-G\in C^{\alpha,L}(\mathbb{R}^{n}).$$
Moreover, if $\tau$ is the index of non-degenerateness of $\psi$ and $r>\tau$, then $G$ can be chosen so that, for some constants $C_1$ and $N\in\mathbb{N}$,
\begin{equation}
\label{wnweq9.5} \left|G(z)\right|\leq C_1 (1+\left|z\right|)^{N}e^{\frac{r}{y_{0}}\left|\Im m\: z\right|}, \ \ \ z\in\mathbb{C}^{n}.
\end{equation}
\begin{proof} By rescaling, we may assume that $y_{0}=1$. Define the vector-valued distribution $\mathbf{f}\in\mathcal{S}'(\mathbb{R}_{t}^{n},C(\mathbb{R}_{\xi}^{n}))$ as $\mathbf{f}(t)(\xi)=f(t+\xi)$, i.e.,
$$\left\langle \mathbf{f}(t),\varphi(t)\right\rangle(\xi)=\left\langle f(t+\xi),\varphi(t)\right\rangle=(f\ast\check{\varphi})(\xi).
$$
Observe its wavelet transform
$\mathcal{W}_{\psi}\mathbf{f}(x,y)\in C(\mathbb{R}_{\xi}^{n})$ is
given by
$$\mathcal{W}_{\psi}\mathbf{f}(x,y)(\xi)=\mathcal{W}_{\psi}f(\xi+x,y).$$
By (\ref{wnweq9.4}) and Theorem \ref{wnwceth2}, applied with
$E=C_{b}(\mathbb{R}^{n})$, the Banach space of bounded continuous
functions, there exists an entire function
$G\in\mathcal{S}'(\mathbb{R}_{t}^{n})$,
which satisfies (\ref{wnweq9.5}), such that
$$\mathbf{g}(t)(\xi):=g(t+\xi)\in\mathcal{S}'(\mathbb{R}^{n}_{t},C_{b}(\mathbb{R}^{n}_{\xi})),$$
where $g=f-G$. Furthermore, Theorem \ref{wnwceth2} also yields
that for $(x,y)\in \mathbb{R}^{n}\times (0,1]$,
$$
\left\|\mathcal{W}_{\psi}\mathbf{g}(x,y)\right\|_{C_{b}(\mathbb{R}^{n})}
\leq C_{2} y^{\alpha}L(y),
$$
for some constant $C_2>0$, and in particular,
$$
\limsup_{\varepsilon\to0^{+}}\sup_{(x,y)\in (0,1]\times\mathbb{R}^{n}}\frac{y^{\sigma}}{\varepsilon^{\alpha}L(\varepsilon )}\left\|\mathcal{W}_{\psi}\mathbf{g}(\varepsilon x,\varepsilon y)\right\|_{C_{b}(\mathbb{R}^{n})}<\infty,
$$
for any $\sigma>0$. Employing now Proposition \ref{wnwp9.2}, we
conclude that $\mathbf{g}\in
C_{w}^{\alpha,L}(0,C_{b}(\mathbb{R}^n))$. Thus, the \L ojasiewicz
point value at the origin $\mathbf{g}^{(m)}(0)=v_{m}\in
C_{b}(\mathbb{{R}}^{n})$ exists, distributionally, for each
$\left|m\right|\leq[\alpha]$. If we fix $\phi\in
\mathcal{S}(\mathbb{R}^{n})$ with $\mu_0(\phi)=1$, the definition
of distributional point values tells us that, for each
$\left|m\right|\leq [\alpha]$,
$\lim_{\varepsilon\to0^{+}}g^{(m)}\ast
\check{\phi}_{\varepsilon}=\lim_{\varepsilon\to0^{+}}\left\langle
\mathbf{g}^{(m)}(\varepsilon t),\phi(t)\right\rangle= v_{m}$ in
$C_{b}(\mathbb{R}^{n})$, consequently $g^{(m)}=v_{m}$, is a
bounded continuous function. Finally, let
$\left|m\right|=[\alpha]$, observe that
$\mathbf{g}^{(m)}(t)(\xi)=g^{(m)}(t+\xi)\in
C^{\alpha-[\alpha],L}_{w}(0,C_{b}(\mathbb{R}_{\xi}^{n}))$, i.e.,
$$\mu_{0}(\varphi)g^{(m)}(\xi)-\int_{\mathbb{R}^{n}}g^{(m)}(\xi+\varepsilon
t)\varphi(t)\mathrm{d}t=O(\varepsilon^{\alpha-[\alpha]}L(\varepsilon))$$
in $C_{b}(\mathbb{R}_{\xi}^{n})$, for each test function
$\varphi\in\mathcal{S}(\mathbb{R}^{n})$. Hence, if
$0<\left|h\right|\leq1$, $\phi$ is as before ($\mu_{0}(\phi)=1$),
and we use the fact that
$\left\{\phi-\phi(\:\cdot\:-\omega):\omega\in\mathbb{S}^{n-1}\right\}$
is compact in $\mathcal{S}(\mathbb{R}^{n})$, we have
\begin{align*}
&\sup_{\xi\in\mathbb{R}^{n}}\left|g^{(m)}(\xi+h)-g^{(m)}(\xi)\right|
\leq2\sup_{\xi\in\mathbb{R}^{n}}\left|g^{(m)}(\xi)-\int_{\mathbb{R}^{n}} g^{(m)}(\left|h\right|t+\xi)\phi(t)\mathrm{d}t\right|
\\
&+\sup_{\xi\in\mathbb{R}^{n}}\left|\int_{\mathbb{R}^{n}}g^{(m)}(\xi+\left|h\right|t)(\phi(t)-\phi\left(t-|h\right|^{-1}h))\mathrm{d}t\right|
=O(\left|h\right|^{\alpha-[\alpha]}L(\left|h\right|)),
\end{align*}
and this completes the proof.
\end{proof}
\end{theorem}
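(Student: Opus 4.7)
The plan is to convert the global hypothesis on $\mathcal{W}_\psi f$ into a pointwise weak-asymptotic statement for a cleverly chosen vector-valued distribution, and then read off global Hölder regularity of the scalar $f$ from vanishing \L ojasiewicz point values. After a harmless rescaling we may assume $y_{0}=1$. The crucial construction is to introduce
$$\mathbf{f}\in\mathcal{S}'(\mathbb{R}^{n}_{t},C(\mathbb{R}^{n}_{\xi})), \qquad \mathbf{f}(t)(\xi)=f(t+\xi),$$
so that its $\psi$-wavelet transform is the $\xi$-shifted scalar wavelet transform, $\mathcal{W}_{\psi}\mathbf{f}(x,y)(\xi)=\mathcal{W}_{\psi}f(x+\xi,y)$. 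The hypothesis now translates into a \emph{local class estimate in the narrower Banach space} $E=C_{b}(\mathbb{R}^{n})$ inside the broader space $X=C(\mathbb{R}^{n})$, namely $\|\mathcal{W}_{\psi}\mathbf{f}(x,y)\|_{C_b}\leq Cy^{\alpha}L(y)$ for $(x,y)\in\mathbb{R}^{n}\times(0,1]$.

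With this set-up I would apply Theorem~\ref{wnwceth2}: it yields an $X$-valued entire function $\mathbf{G}$ of weakly exponential type satisfying the Paley--Wiener bound (\ref{wnwceeq11}), chosen through $\hat{\mathbf{G}}=\chi\hat{\mathbf{f}}$ with $\chi$ supported in the ball of radius $r>\tau$, and such that $\mathbf{g}:=\mathbf{f}-\mathbf{G}\in\mathcal{S}'(\mathbb{R}^{n},C_{b}(\mathbb{R}^{n}))$ retains the same wavelet-transform bound. Because the cut-off commutes with the translation structure, $\mathbf{G}$ inherits the form $\mathbf{G}(t)(\xi)=G(t+\xi)$ for a scalar entire function $G$, which is therefore of exponential type and satisfies the required bound (\ref{wnweq9.5}); set $g=f-G$, so $\mathbf{g}(t)(\xi)=g(t+\xi)$.

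Next I would upgrade the global size bound of $\mathcal{W}_{\psi}\mathbf{g}(x,y)$ on $\mathbb{R}^{n}\times(0,1]$ into the Tauberian estimate (\ref{wnweq5.5}) at the origin: indeed, for $(x,y)\in\mathbb{H}^{n+1}\cap\mathbb{S}^{n}$ and any $\sigma>0$ one has $y^{\sigma}\|\mathcal{W}_{\psi}\mathbf{g}(\varepsilon x,\varepsilon y)\|_{C_b}=O(\varepsilon^{\alpha}L(\varepsilon))$ because $\varepsilon y\leq 1$. Since $\psi$ is non-degenerate with vanishing moments up to order $[\alpha]$, Proposition~\ref{wnwp9.2} gives $\mathbf{g}\in C^{\alpha,L}_{\ast,w}(0,C_{b}(\mathbb{R}^{n}))$, and since $\alpha\notin\mathbb{N}$, Proposition~\ref{wnwp9.1} promotes this to $\mathbf{g}\in C^{\alpha,L}_{w}(0,C_{b}(\mathbb{R}^{n}))$. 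Hence all \L ojasiewicz point values $\mathbf{g}^{(m)}(0)=v_{m}\in C_{b}(\mathbb{R}^{n})$ exist for $|m|\leq[\alpha]$, and picking any $\phi\in\mathcal{S}(\mathbb{R}^{n})$ with $\mu_{0}(\phi)=1$, the definition of point value forces $g^{(m)}\ast\check{\phi}_{\varepsilon}\to v_{m}$ in $C_{b}(\mathbb{R}^{n})$, which identifies $g^{(m)}=v_{m}\in C_{b}(\mathbb{R}^{n})$ for each $|m|\leq[\alpha]$.

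The main obstacle is the last step: squeezing the \emph{top-order} Hölder estimate $|g^{(m)}(\xi+h)-g^{(m)}(\xi)|\leq C|h|^{\alpha-[\alpha]}L(|h|)$, \emph{uniform in} $\xi$, out of pointwise weak-asymptotic data at the origin. The trick I would use is to apply the weak-asymptotic boundedness of $\mathbf{g}^{(m)}$ \emph{uniformly over a compact family of test functions}; concretely, the set $\{\phi-\phi(\:\cdot\:-\omega):\omega\in\mathbb{S}^{n-1}\}$ is compact in $\mathcal{S}(\mathbb{R}^{n})$, so the Banach--Steinhaus principle (as encoded in the uniformity of the asymptotic estimate) yields a single rate $O(|h|^{\alpha-[\alpha]}L(|h|))$ in the $C_{b}$-norm, which is precisely the desired global Hölder modulus with slowly varying correction. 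Splitting $g^{(m)}(\xi+h)-g^{(m)}(\xi)$ as twice a mollification defect plus a term $\int g^{(m)}(\xi+|h|t)(\phi(t)-\phi(t-|h|^{-1}h))\mathrm{d}t$, the first comes from $\mathbf{g}^{(m)}\in \mathcal{O}^{\alpha-[\alpha],L}(0,C_b)$ and the second from the compact-set argument; both contributions are then of the required order.
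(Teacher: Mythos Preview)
Your proposal is correct and follows essentially the same route as the paper's own proof: the vector-valued lift $\mathbf{f}(t)(\xi)=f(t+\xi)$, the application of Theorem~\ref{wnwceth2} with $E=C_b(\mathbb{R}^n)\subset X=C(\mathbb{R}^n)$ to produce the entire correction $G$, the passage through Proposition~\ref{wnwp9.2} (with Proposition~\ref{wnwp9.1} to identify $C^{\alpha,L}_{\ast,w}=C^{\alpha,L}_{w}$ for non-integer $\alpha$), and the final compact-family/Banach--Steinhaus trick to extract the uniform Hölder modulus for $g^{(m)}$, $|m|=[\alpha]$, are exactly the steps the paper carries out. If anything, your justification that $\mathbf{G}(t)(\xi)=G(t+\xi)$ via the translation-equivariance of the cut-off $\hat{\mathbf{G}}=\chi\hat{\mathbf{f}}$ is a bit more explicit than the paper's one-line assertion.
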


As a second application of our pointwise spaces, we obtain some Tauberian criteria which allow us to compute pointwise H\"{o}lder exponents in terms of size estimates on the wavelet transform. The first part of the next theorem is originally due to Jaffard \cite{jaff}.

\begin{theorem}\label{wnw2mth1} Let $\alpha>0$ and let $\psi\in\mathcal{S}(\mathbb{R}^{n})$ be a non-degenerate wavelet with moments $\mu_{m}(\psi)=0$ for every $\left|m\right|\leq[\alpha]$. Assume that $f\in\mathcal{S}'(\mathbb{R}^{n})$ satisfies
\begin{equation}
\label{wnweqpw9.3}
\left|\mathcal{W}_{\psi}f(x_{0}+x,y)\right|\leq C(\left|x\right|+y)^{\alpha}, \ \ \ \mbox{for all } \left|x\right|+y<\sigma,
\end{equation}
for some constants $C,\sigma>0$.
\begin{itemize}
\item [(i)] If there are constants $C_{1},\gamma>0$ and $y_{0}\in(0,1)$ such that
\begin{equation}
\label{wnweqpw9.4}
\left|\mathcal{W}_{\psi}f(x_{0}+x,y)\right|\leq C_{1}y^{\gamma}, \ \ \ \mbox{for all } (x,y)\in \mathbb{R}^{n}\times(0,y_{0}],
\end{equation}
then there exists a polynomial $P$ such that
\begin{equation*}
f(x_{0}+t)=P(t)+O\left(\left|t\right|^{\alpha}\log(1/\left|t\right|)\right) \ \ \ \mbox{as } t\to0,
\end{equation*}
in the ordinary sense. In particular, $f\in C^{\alpha_{1}}(x_{0})$ for all $\alpha_{1}<\alpha$.

\item [(ii)] If there are constants, $N>0,$ $C_{2}>0$ and $y_{0}\in(0,1)$such that
\begin{equation}
\label{wnweqpw9.6}
\left|\mathcal{W}_{\psi}f(x_{0}+x,y)\right|\leq \frac{C_{2}}{(\log(1/y))^{N+1}}, \ \ \ \mbox{for all } (x,y)\in \mathbb{R}^{n}\times(0,y_{0}],
\end{equation}
then $f\in C(\mathbb{R}^{n})$ and $f\in C^{\alpha-\frac{\alpha}{N}}(x_{0})$.
\end{itemize}
\end{theorem}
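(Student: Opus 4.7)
The plan is to use the wavelet reconstruction formula on $\mathcal{S}'_{0}(\mathbb{R}^n)$, combined with a careful splitting of the integration region, in the spirit of Jaffard's 2-microlocal argument. Let $\eta\in\mathcal{S}_{0}(\mathbb{R}^n)$ be a reconstruction wavelet for $\psi$; its existence follows from the construction used in the proof of Theorem \ref{wnwceth2} applied to the non-degenerate wavelet $\psi$. Modulo a polynomial $P$ of degree at most $[\alpha]$ (which accounts for the polynomial ambiguity when passing from $\mathcal{S}'_{0}$ to $\mathcal{S}'$; cf.\ Theorem \ref{wnwth9} and Proposition A.1), the reconstruction formula reads
\[
 f(t)-P(t) \;=\; \frac{1}{c_{\psi,\eta}}\int_{0}^{\infty}\!\!\int_{\mathbb{R}^n}\mathcal{W}_{\psi}f(x,y)\,y^{-n}\eta\!\left(\tfrac{t-x}{y}\right)\frac{dx\,dy}{y}.
\]

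For part \textbf{(i)}, I first reduce to the case where $f$ is globally bounded and continuous. Applying Theorem \ref{wnwfeth9.6} with $L\equiv 1$ and some non-integer $\gamma'<\gamma$ yields an entire function $G$ of exponential type with $f-G\in C^{\gamma'}(\mathbb{R}^n)$; the Taylor polynomial of $G$ at $x_0$ of degree $[\alpha]$ is absorbed into $P$, with a remainder of order $|t-x_0|^{[\alpha]+1}=o(|t-x_0|^{\alpha})$. I then split the reconstruction integral into three regions: (a) $y\geq 1$, (b) $y<1$ with $|x-x_0|\geq\sigma/2$, and (c) $y<1$ with $|x-x_0|<\sigma/2$. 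In regions (a) and (b) the Schwartz decay of $\eta$ together with the polynomial growth of $\mathcal{W}_{\psi}f$ yield contributions of size $O(|t-x_0|^{N})$ for every $N\in\mathbb{N}$, absorbable into the polynomial $P$ and the error. Region (c) is the heart of the matter: invoking the identities $\partial_{x}^{m}\mathcal{W}_{\psi}f=y^{-|m|}\mathcal{W}_{(-\partial)^{m}\psi}f$ together with the pointwise estimate applied to the daughter wavelets $\partial^{m}\psi$ (which inherit more vanishing moments and, thanks to the now-global Hölder regularity of $f$, the same 2-microlocal bound via Jaffard's theorem), one uses the vanishing moments of $\eta$ to subtract from $\mathcal{W}_{\psi}f(x,y)$ its Taylor polynomial in $x$ of degree $[\alpha]$ about the point $t$; after the change of variables $x=x_0+t+yu$ the bound reduces to an integral of the form
\[
 \int_{0}^{1} y^{\alpha-1}\!\left(1+\log^{+}\tfrac{|t-x_0|}{y}\right)dy \;=\; O\!\left(|t-x_0|^{\alpha}\log(1/|t-x_0|)\right),
\]
the logarithm appearing at the critical scale $y\sim|t-x_0|$.

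For part \textbf{(ii)}, the bound (\ref{wnweqpw9.6}) fits the structural hypothesis (\ref{wnweqgb}) of Theorem \ref{wnwceth2} with $\Psi$ independent of $x$; applied to the $C_{b}(\mathbb{R}^n)$-valued distribution $\mathbf{f}(t)(\xi):=f(t+\xi)$ it yields a decomposition $f=G+h$ with $G$ entire of exponential type and $h$ globally bounded and continuous, and as before the Taylor polynomial of $G$ at $x_0$ is absorbed into $P$. The same three-region decomposition is used, but the absence of global Hölder regularity weakens Jaffard's transfer of the 2-microlocal estimate to the daughter wavelets: in region (c) one balances the pointwise estimate (\ref{wnweqpw9.3}) on $y<y^{\ast}$ against the logarithmic bound (\ref{wnweqpw9.6}) on $y\geq y^{\ast}$, and choosing the cut-off $y^{\ast}=y^{\ast}(|t-x_0|)$ so that the two contributions are of comparable size produces the exponent $\alpha-\alpha/N$ and thereby yields $f\in C^{\alpha-\alpha/N}(x_0)$.

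The main obstacle is the integration estimate in region (c) of part (i): the sharp interplay between the vanishing moments of $\eta$ and the pointwise Taylor remainder of $\mathcal{W}_{\psi}f$, necessary to obtain the precise $|t-x_0|^{\alpha}\log(1/|t-x_0|)$ rate without spurious logarithmic factors. A secondary technical difficulty is the scale-balancing argument of part (ii), which requires tracking the contributions from both (\ref{wnweqpw9.3}) and (\ref{wnweqpw9.6}) with sufficient precision to recover the exponent $\alpha-\alpha/N$.
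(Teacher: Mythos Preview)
Your overall strategy (reconstruction formula, subtraction of an entire/smooth correction via Theorems \ref{wnwfeth9.6} and \ref{wnwceth2}, three-region splitting) is reasonable and is in fact the classical Jaffard route, but it differs from the paper's argument and, more importantly, your treatment of region~(c) contains a genuine gap.

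\textbf{Where the paper's proof differs.} The paper does \emph{not} work with the full Calder\'on reconstruction over $\mathbb{H}^{n+1}$. Instead it fixes a Littlewood--Paley pair $(\phi_{1},\psi_{1})$ with $\hat\psi_{1}(u)=-u\cdot\nabla\hat\phi_{1}(u)$ and uses the one-dimensional representation
\[
f(x)\;=\;F_{\phi_{1}}f(x,|x|)\;+\;\int_{0}^{1}\mathcal W_{\psi_{1}}f(x,r|x|)\,\frac{dr}{r}\,.
\]
The hypothesis (\ref{wnweqpw9.3}), stated only for $\psi$, is transferred to $\psi_{1}$ by applying the class-estimate result Theorem \ref{wnwceth1} to the $E$-valued distribution $\mathbf f(t)(\xi,s)=f(\xi+st)$ with $E$ the Banach space of continuous functions on $\mathbb H^{n+1}$ with finite weighted sup-norm $\|v\|=\sup(|\xi|+s)^{-\alpha}|v(\xi,s)|$. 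This replaces your appeal to ``Jaffard's theorem''. The remaining scale integral is then split at the single point $r=|x|^{\alpha/\gamma}$: on $(|x|^{\alpha/\gamma},1)$ one uses the transferred 2-microlocal bound, yielding $O(|x|^{\alpha}\log(1/|x|))$; on $(0,|x|^{\alpha/\gamma})$ one uses the \emph{global H\"older} bound (\ref{wnweqpw9.4}), which gives a convergent integral of size $O(|x|^{\alpha})$. For part (ii) the same scheme is run with the split at $r=e^{-|x|^{-\alpha/N}}$.

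\textbf{The gap in your region (c).} Your plan in region (c) is to use only the 2-microlocal estimate together with the vanishing moments of $\eta$. That is not enough. After subtracting the degree-$[\alpha]$ Taylor polynomial of $\mathcal W_{\psi}f(\,\cdot\,,y)$ at $t$ and changing variables $x=t+yu$, the Taylor remainder is controlled by $|u|^{[\alpha]+1}(|t-x_{0}|+y(1+|u|))^{\alpha}$, so after integrating in $u$ you are left with
\[
\int_{0}^{1}(|t-x_{0}|+y)^{\alpha}\,\frac{dy}{y}\,,
\]
which diverges at $y=0$. The integral you actually wrote, $\int_{0}^{1}y^{\alpha-1}(1+\log^{+}(|t-x_{0}|/y))\,dy$, equals $1/\alpha+O(|t-x_{0}|^{\alpha})$, not $O(|t-x_{0}|^{\alpha}\log(1/|t-x_{0}|))$. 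The missing ingredient is precisely the global estimate (\ref{wnweqpw9.4}): for scales $y$ below a threshold of the form $|t-x_{0}|^{\alpha/\gamma}$ one must abandon the 2-microlocal bound and use $|\mathcal W_{\psi}f|\le C_{1}y^{\gamma}$ directly, which makes that portion of the integral convergent and of the right size. Without this second splitting inside region~(c), your argument for part~(i) cannot close, and the same defect propagates to the balancing argument you sketch for part~(ii).
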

We shall use a continuous Littlewood-Paley decomposition of the unity in the proof of Theorem \ref{wnw2mth1}. Let $\phi_1\in\mathcal{S}(\mathbb{R}^{n})$ be radial such that $\hat{\phi}_1$ is real-valued, $\hat{\phi}_{1}(u)=0$ for $\left|u\right|\geq 1$ and $\hat{\phi}_{1}(u)=1$ for $\left|u\right|\leq 1/2$. Furthermore, define $\psi_{1}\in \mathcal{S}_{0}(\mathbb{R}^{n})$ so that 
$$
\hat{\psi}_{1}(u)=-\displaystyle \frac{d}{dr}\hat{\phi}_{1}(ru)\left.\right|_{r=1}=- u\cdot \nabla \hat{\phi}_{1}(u).
$$ 
Then, for each fix $b>0$, in the sense of convergence in $\mathcal{S}'(\mathbb{R}^{n})$,
\begin{align*}
F_{\phi_1}f(\:\cdot\:,b)+\int_{0}^{b}\mathcal{W}_{\psi_{1}}f(\:\cdot\:,r)\frac{\mathrm{d}r}{r}
:&=\lim_{a\to0^{+}}F_{\phi_1}f(\:\cdot\:,b)+\int_{a}^{b}\mathcal{W}_{\psi_{1}}f(\:\cdot\:,r)\frac{\mathrm{d}r}{r}
\\
&=\lim_{a\to0^{+}} F_{\phi_1}f(\:\cdot\:,a),
\end{align*}
which is precisely the meaning of the formula
\begin{equation}
\label{wnweqpw9.7}
f(x)=F_{\phi_1}f(x,b)+\int_{0}^{b}\mathcal{W}_{\psi_{1}}f(x,r)\frac{\mathrm{d}r}{r}, \ \ \ \mbox{in }\mathcal{S}'(\mathbb{R}^{n}_{x}).
\end{equation}
\begin{proof}We may assume $x_{0}=0$. Let $\phi_1$ and $\psi_1$ be as defined above.
Observe first that (\ref{wnweqpw9.3}) gives at once $f\in
C_{w}^{\alpha,\left|\log\right|}(x_{0})$, by Proposition
\ref{wnwp9.2} and Corollary \ref{wnwfec1} (indeed, $f\in
C_{w}^{\alpha}(x_{0})$ if $\alpha\notin\mathbb{N}$). So, for some
polynomial $P$,
$$F_{\phi_1}f(x,\left|x\right|)=F_{\phi_1}P(x,\left|x\right|)+O(\left|x\right|^{\alpha}\log(1/\left|x\right|)),
\ \ \ \mbox{i.e.},$$
\begin{equation}
\label{wnweqpw9.8}
F_{\phi_1}f(x,\left|x\right|)=P(x)+O(\left|x\right|^{\alpha}\log(1/\left|x\right|)) \ \ \ \mbox{as }x\to0;
\end{equation}
as follows directly from the Abelian result (cf. Proposition \ref{wnwth2}).

(i) We show that we may assume $\psi=\psi_1$. In view of  Theorem \ref{wnwfeth9.6}, we may assume that $f\in C^{\gamma}(\mathbb{R}^{n})$; otherwise, we subtract an entire function and replace $f$ by the so newly obtained function. Thus, the assumption  $f\in C^{\gamma}(\mathbb{R}^{n})$ implies that we can suppose (\ref{wnweqpw9.3}) to hold actually for all $(x,y)\in\mathbb{H}^{n+1}$ and $\psi=\psi_1$ in (\ref{wnweqpw9.4}). If we consider $\mathbf{f}(t)(\xi,s)=f(\xi+st)\in\mathcal{S}'(\mathbb{R}_{t}^{n},C(\mathbb{H}^{n+1}_{(\xi,s)}))$, Theorem \ref{wnwceth1}, applied with the Banach space $E\subset C(\mathbb{H}^{n+1}_{(\xi,s)})$ of functions having finite norm
$$
\left\|v\right\|:=\sup_{(\xi,s)\in\mathbb{H}^{n+1}}\frac{\left|v(\xi,s)\right|}{\left(\left|\xi\right|+s\right)^{\alpha}}<\infty,
$$ 
and (\ref{wnweqpw9.3}) yield that $f(\xi+st)\in\mathcal{S}'_{0}(\mathbb{R}_{t}^{n},E)$; thus, we may replace $\psi$ in (\ref{wnweqpw9.3}) by any wavelet from $\mathcal{S}_{0}(\mathbb{R}^{n})$, in particular, by $\psi_1$. Obviously, we can also suppose $\gamma<\alpha$. Finally, because of (\ref{wnweqpw9.4}), we obtain that the improper integral in (\ref{wnweqpw9.7}) is actually absolutely convergent in $C_{b}(\mathbb{R})$, and hence, taking $b=\left|x\right|$, we conclude
\begin{align*}
f(x)&=F_{\phi_1}f(x,\left|x\right|)+\int_{0}^{1}\mathcal{W}_{\psi_{1}}f(x,r\left|x\right|)\frac{\mathrm{d}r}{r}
\\
&
=F_{\phi_1}f(x,\left|x\right|)+O\left(\left|x\right|^{\alpha}\int_{\left|x\right|^{\frac{\alpha}{\gamma}}}^{1}\frac{1+r^{\alpha}}{r}\mathrm{d}r+\int^{\left|x\right|^{\frac{\alpha}{\gamma}}}_{0}r^{\gamma-1}\mathrm{d}r\right)
\\
&
=P(x)+O(\left|x\right|^{\alpha}\log(1/\left|x\right|)).
\end{align*}

(ii) Suppose that we were able to show that it is possible to assume $\psi=\psi_1$ in (\ref{wnweqpw9.3}) and (\ref{wnweqpw9.6}), then the Littlewood-Paley decomposition (\ref{wnweqpw9.7}) would be an equality between continuous functions and so $f\in C^{\alpha-\frac{\alpha}{N}}(x_0)$ would be a consequence of (\ref{wnweqpw9.8}) and the estimate
\begin{align*}
\int_{0}^{1}\mathcal{W}_{\psi_{1}}f(x,r\left|x\right|)\frac{\mathrm{d}r}{r}&=O\left(\left|x\right|^{\alpha}\int_{e^{-\left|x\right|^{-\frac{\alpha}{N}}}}^{1}\frac{1+r^{\alpha}}{r}\mathrm{d}r+\int^{e^{-\left|x\right|^{-\frac{\alpha}{N}}}}_0 \frac{\mathrm{d}r}{r\left|\log r\right|^{N+1}}\right)
\\
&
=O(\left|x\right|^{\alpha-\frac{\alpha}{N}})+O(\left|x\right|^{\alpha})=O(\left|x\right|^{\alpha-\frac{\alpha}{N}}).
\end{align*}
Let us then show first that (\ref{wnweqpw9.6}) holds for $\psi_1$ in place of $\psi$. By Theorem \ref{wnwceth2} applied to $\mathbf{f}_1(t)(\xi)=f(t+\xi)\in\mathcal{S}'(\mathbb{R}^{n}_t,C(\mathbb{R}^{n}_\xi))$, we may assume that $f(t+\xi)\in\mathcal{S}'(\mathbb{R}^{n}_t,C_{b}(\mathbb{R}^{n}_\xi))$, because subtraction of an entire function does not change the hypotheses nor the conclusion. Thus, by Proposition \ref{wnwp9.2}, $\mathbf{f}_1\in C^{0,\left|\log\right|^{-N-1}}_{\ast,w}(0, C_{b}(\mathbb{R}))$, and hence, evaluation at $\psi_1$ gives precisely $\mathcal{W}_{\psi_1}f(\xi,\varepsilon)=O((\log(1/\varepsilon))^{-N-1})$ uniformly in $\xi\in\mathbb{R}^{n}$. Finally, the last relation implies that the integral in (\ref{wnweqpw9.7}) is absolutely convergent and so $f\in C_b(\mathbb{R}^{n})$; therefore, (\ref{wnweqpw9.3}) holds for all $(x,y)\in\mathbb{H}^{n+1}$, and exactly the same argument used in the proof of (i) yields (\ref{wnweqpw9.3}) for $\psi=\psi_1$.
\end{proof}
\begin{remark} Theorem \ref{wnw2mth1} may be restated in terms of $2$-microlocal spaces. Indeed, the condition (\ref{wnweqpw9.3}) is equivalent to say that $f$ coincides near $x_{0}$ with an element of the $2$-microlocal space $C^{\alpha,-\alpha}_{x_0}$, see \cite{meyer}.
\end{remark}

Concerning Part (ii) of Theorem \ref{wnw2mth1}, Meyer has considered the Tauberian condition $f\in C(\mathbb{R}^{n})$ and, for every $N>0$,
\begin{equation}
\label{wnwpweq8}
\left|f(t+h)-f(t)\right|\leq O((\log(1/\left|h\right|))^{-N}),\ \ \ \mbox{for all }t\in\mathbb{R}^{n} \mbox{ and } \left|h\right|\leq \frac{1}{2},
\end{equation}
Since (\ref{wnwpweq8}) readily implies that (\ref{wnweqpw9.6}) holds for all $N$ and suitable constants $C_{2}=C_{N}$, we have recovered the following result of Meyer \cite[Thm. 3.13]{meyer}.

\begin{corollary} If $f\in C(\mathbb{R}^{n})$ satisfies (\ref{wnweqpw9.3}) and the Tauberian condition (\ref{wnwpweq8}) for every $N\in\mathbb{N}$, then $f\in C^{s}(x_0)$ for all $s<\alpha$.
\label{wnwpwc1}
\end{corollary}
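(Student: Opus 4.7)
The plan is to reduce everything to Theorem~\ref{wnw2mth1}(ii). The key intermediate step is to show that the Tauberian hypothesis (\ref{wnwpweq8}), assumed for every $N\in\mathbb{N}$, forces the wavelet transform estimate (\ref{wnweqpw9.6}) to hold for every $N\in\mathbb{N}$ with a suitable constant $C_{2}=C_{N}$. Once this is in hand, Theorem~\ref{wnw2mth1}(ii) yields $f\in C^{\alpha-\alpha/N}(x_0)$ for each $N$, and letting $N\to\infty$ (so that $\alpha-\alpha/N\nearrow\alpha$) gives $f\in C^{s}(x_0)$ for every $s<\alpha$.

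To derive (\ref{wnweqpw9.6}) from (\ref{wnwpweq8}), I exploit the fact that $\mu_0(\psi)=0$ (which holds since $\mu_m(\psi)=0$ for $|m|\leq[\alpha]$) in order to write
$$
\mathcal{W}_\psi f(x_0+x,y)=\int_{\mathbb{R}^n}\bigl(f(x_0+x+yt)-f(x_0+x)\bigr)\overline{\psi(t)}\,\mathrm{d}t,
$$
and then split the integral at $|yt|=1/2$. Iterating (\ref{wnwpweq8}) along a chain of translates at mutual distance $\le 1/2$ shows that $f$ has at most linear growth at infinity, so the outer region $|yt|>1/2$ contributes $O(y^{K})$ for any $K>0$ by the rapid decay of $\psi$. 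On the inner region $|yt|\le 1/2$, the hypothesis (\ref{wnwpweq8}) yields the pointwise bound $C_N(\log(1/(y|t|)))^{-N}$; I then split this further at $|t|=y^{-1/2}$. For $|t|\le y^{-1/2}$ one has $y|t|\le y^{1/2}$ and therefore $\log(1/(y|t|))\ge\tfrac12\log(1/y)$, so this piece contributes $O((\log(1/y))^{-N})$ uniformly in $x$ using $\psi\in L^{1}$. For $y^{-1/2}<|t|\le(2y)^{-1}$, the rapid decay estimate $|\overline{\psi}(t)|\le C_M(1+|t|)^{-M}$ for arbitrary $M$ crushes the mild logarithmic singularity and produces a contribution of order $y^{(M-n)/2}$, which for $M$ large enough is $o((\log(1/y))^{-N})$. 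Collecting, (\ref{wnweqpw9.6}) holds uniformly in $x\in\mathbb{R}^n$ for every $N$, as required.

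The only mildly delicate step is the splitting of the inner region at the intermediate scale $|t|=y^{-1/2}$: this is precisely the balance at which the slow logarithmic decay of the modulus of continuity of $f$ can be traded against the Schwartz decay of the wavelet. Everything else is a direct invocation of Theorem~\ref{wnw2mth1}(ii), followed by the passage $N\to\infty$.
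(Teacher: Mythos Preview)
Your proof is correct and follows the same route as the paper: reduce to Theorem~\ref{wnw2mth1}(ii) by showing that the uniform modulus-of-continuity hypothesis (\ref{wnwpweq8}) for every $N$ implies the wavelet-transform estimate (\ref{wnweqpw9.6}) for every $N$, and then let $N\to\infty$. The paper simply states that (\ref{wnwpweq8}) ``readily implies'' (\ref{wnweqpw9.6}); you have supplied a clean verification of that implication via the $\mu_0(\psi)=0$ subtraction and the split at $|t|=y^{-1/2}$, which is exactly the kind of elementary estimate the paper has in mind.
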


We end this subsection by comparing Theorem \ref{wnwfeth9.6} with Holschneider's inverse theorem for global regularity \cite{holschneider94,holschneider}. His theorem is in terms of submultiplicative functions. Recall a positive function $R\in L^{\infty}_{loc}(\mathbb{R}_{+})$ is called \emph{submultiplicative} if there exists a constant $\tilde{C}>0$ such that
\begin{equation}
\label{wnwfeeq8.3}
R(ab)\leq \tilde{C} R(a)R(b), \ \ \ \mbox{for all }a,b\in\mathbb{R}_{+}.
\end{equation}
Holschneider imposes the following technical conditions on the ``growth'' of $R$ at $0$ and $\infty$,
\begin{equation}\label{holeq1}\int_{0}^{1}\frac{R(t)}{t^{k+1}}\:\mathrm{d}t<\infty \ \ \ \mbox{and} \ \ \ \int_{1}^{\infty}\frac{R(t)}{t^{k+2}}\:\mathrm{d}t<\infty,
\end{equation}
for some $k\in\mathbb{N}$. In addition, he also assumed that $R$ is monotonous, but this assumption plays no role in our discussion. Interestingly, such functions do not stay too far from regularly varying functions, more precisely:
\begin{proposition}\label{wnwap8.6} Let $R\in L^{\infty}_{loc}(\mathbb{R}_{+})$ be a positive submultiplicative function that satisfies (\ref{holeq1}). Then, there exist two numbers $k<\alpha\leq\beta<k+1$ and functions $L_1\in C(0,1]$, slowly varying at the origin, and $L_2\in C[1,\infty)$, slowly varying at infinity, such that
\begin{equation}
\label{wnweq9.8}
\tilde{C}^{-1}\varepsilon^{\alpha}\leq R(\varepsilon)\leq \varepsilon^{\alpha}L_{1}(\varepsilon),\ \ \ \mbox{for all } \varepsilon\in (0,1],
\end{equation}
\begin{equation*}
\tilde{C}^{-1}\lambda^{\beta}\leq R(\lambda)\leq \lambda^{\beta}L_{2}(\lambda),\ \ \ \mbox{for all } \lambda\in [1,\infty),
\end{equation*}
and
\begin{equation}
\label{wnweqR9.12} \limsup_{\varepsilon\to0^{+}}\frac{R(\varepsilon)}{\varepsilon^{\alpha}L_1(\varepsilon)}=1 \ \ \ \mbox{and} \ \ \ \limsup_{\lambda\to\infty}\frac{R(\lambda)}{\lambda^{\alpha}L_2(\lambda)}=1.
\end{equation}
 \end{proposition}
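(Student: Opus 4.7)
The plan is to reduce to the classical theory of subadditive functions (Hille's theorem), and then construct the slowly varying envelopes via a concave-majorant trick.

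First, setting $\tilde R(t) := \tilde C R(t)$ turns submultiplicativity into $\tilde R(ab) \leq \tilde R(a)\tilde R(b)$, so that $S(u) := \log \tilde R(e^u)$ is subadditive on $\mathbb{R}$ and locally bounded. Fekete's subadditive lemma, applied on $(0,\infty)$ both to $S$ and to $u \mapsto S(-u)$, produces
\[
\beta := \lim_{u\to+\infty}\frac{S(u)}{u} = \inf_{u>0}\frac{S(u)}{u}, \quad \alpha := -\lim_{u\to+\infty}\frac{S(-u)}{u} = \sup_{\varepsilon\in(0,1)}\frac{\log\tilde R(\varepsilon)}{\log\varepsilon}.
\]
Equivalently $\log\tilde R(\varepsilon)/\log\varepsilon \to \alpha$ as $\varepsilon \to 0^+$ and $\log\tilde R(\lambda)/\log\lambda \to \beta$ as $\lambda \to \infty$, and the extremal characterizations force $\tilde R(\varepsilon) \geq \varepsilon^{\alpha}$ on $(0,1]$ and $\tilde R(\lambda) \geq \lambda^{\beta}$ on $[1,\infty)$, which are the left-hand inequalities of (\ref{wnweq9.8}). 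For the ordering, submultiplicativity at $\varepsilon\cdot(1/\varepsilon) = 1$ gives $\tilde R(1) \leq \tilde R(\varepsilon)\tilde R(1/\varepsilon)$; dividing the logs by $\log(1/\varepsilon) > 0$ and letting $\varepsilon \to 0^+$ yields $\alpha \leq \beta$. Finally, the bounds $k<\alpha$ and $\beta<k+1$ follow from (\ref{holeq1}) combined with the lower bounds, since
\[
\infty > \int_0^1 \frac{R(t)}{t^{k+1}}\,dt \geq \frac{1}{\tilde C}\int_0^1 t^{\alpha-k-1}\,dt
\]
forces $\alpha > k$, and symmetrically $\beta < k+1$ from the integral at infinity.

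For the envelope near zero, I would introduce $f(u) := \log \tilde R(e^{-u}) + \alpha u$ on $(0,\infty)$, which is nonnegative by the lower bound and satisfies $f(u)/u \to 0$ by Hille's estimate. Let $h$ be the least concave majorant of $f$, adjusted if necessary so that the contact set $\{u : h(u)=f(u)\}$ is cofinal in $(0,\infty)$; then $h \geq f$, $h$ is concave, $h(u)/u \to 0$, and by concavity the (right) derivative $h'(u) \to 0$ as $u\to\infty$. Put
\[
L_1(\varepsilon) := \tilde C^{-1}\exp\bigl(h(-\log\varepsilon)\bigr), \quad \varepsilon \in (0,1].
\]
The inequality $R(\varepsilon) = \tilde C^{-1}\varepsilon^{\alpha} e^{f(-\log\varepsilon)} \leq \varepsilon^{\alpha} L_1(\varepsilon)$ is immediate, while
\[
\frac{L_1(a\varepsilon)}{L_1(\varepsilon)} = \exp\bigl(h(u-\log a) - h(u)\bigr) \longrightarrow 1 \quad (u = -\log\varepsilon \to \infty)
\]
because $h'(u) \to 0$, so $L_1 \in C(0,1]$ is slowly varying at $0$. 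The limsup identity in (\ref{wnweqR9.12}) then follows by evaluating $R(\varepsilon_n)/(\varepsilon_n^{\alpha} L_1(\varepsilon_n))$ along $\varepsilon_n = e^{-u_n}$ with $u_n$ in the contact set. The analogous construction applied to $g(u) := \log \tilde R(e^u) - \beta u$ on $(0,\infty)$ delivers $L_2$.

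The principal technical obstacle will be passing from submultiplicative control---which only yields $O$-regular variation---to genuine slow variation of $L_1$ and $L_2$. A naive envelope such as $\sup_{t\in[\varepsilon,1]}R(t)/t^{\alpha}$ need not be slowly varying, since multiplicative jumps across geometric scales are compatible with submultiplicativity alone; the concave-majorant construction, powered by the Hille order estimate $\log\tilde R(\varepsilon)/\log\varepsilon \to \alpha$, is what smooths out these jumps while preserving the contact points responsible for the equality in (\ref{wnweqR9.12}).
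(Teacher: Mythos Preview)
Your argument is correct, and it takes a genuinely different, more self-contained route than the paper. The paper simply invokes three external results: the Hille--Phillips limit theorem for subadditive functions to produce $\alpha,\beta$; Phillips's theorem (applied to $T_1(t)=\log(\tilde C R(e^t))-kt$ and $T_2(t)=\log(\tilde C R(e^{-t}))+kt$) to obtain the \emph{upper} growth bounds $R(\lambda)=O(\lambda^{k+1}/(\log\lambda)^2)$ and $R(1/\lambda)=O(\lambda^{-k}/(\log\lambda)^2)$, from which $k<\alpha\leq\beta<k+1$ is read off; and the approximation theorem by regularly varying functions \cite[Thm.~2.3.11]{bingham} for the existence of $L_1,L_2$. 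Your approach is more elementary on both counts: you bypass Phillips's theorem entirely by feeding the \emph{lower} bounds $\tilde R(\varepsilon)\geq\varepsilon^\alpha$, $\tilde R(\lambda)\geq\lambda^\beta$ directly into the integrals (\ref{holeq1}), which is cleaner; and your least-concave-majorant construction is essentially a hands-on proof of the relevant piece of the BGT approximation theorem. What the paper's route buys is brevity; what yours buys is transparency about \emph{why} slow variation emerges.

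One point deserves tightening. Your hedge ``adjusted if necessary so that the contact set is cofinal'' is doing real work: if $f(u)=\log\tilde R(e^{-u})+\alpha u$ happens to be bounded, the least concave majorant $h$ is eventually constant (equal to $\sup_u f(u)$), and then $\limsup e^{f-h}$ can be strictly less than $1$ when $\sup_u f(u)>\limsup_{u\to\infty}f(u)$. In that bounded case the clean fix is to use the non-increasing envelope $g(u)=\sup_{v\geq u}f(v)$ instead of the concave majorant: $g$ dominates $f$, is monotone with finite limit $M_\infty=\limsup f$, hence $g(u+c)-g(u)\to 0$ so $e^{g}$ is slowly varying, and along any sequence $u_n\to\infty$ with $f(u_n)\to M_\infty$ one gets $g(u_n)-f(u_n)\to 0$, giving the limsup identity. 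In the unbounded case your concave-majorant argument works as written (cofinality of the contact set follows because an eventually-constant $h$ would force $f$ bounded). Splitting into these two cases, or taking a suitable hybrid, makes the construction complete.
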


\begin{proof}Since $T(t)=\log(\tilde{C}R(e^{t}))$ is subadditive, an application of the well known limit Theorem for subadditive functions \cite[Thm. 7.6.2]{hille-phillips}(cf. \cite{beurling}) gives the existence of
\begin{equation*}
 \alpha=\lim_{\varepsilon\to0^{+}}\frac{\log R(\varepsilon)}{\log \varepsilon}=\sup_{\varepsilon<1}\frac{\log (\tilde{C}R(\varepsilon))}{\log \varepsilon}
\end{equation*}
and
\begin{equation*}\beta=\lim_{\lambda\to\infty} \frac{\log R(\lambda)}{\log\lambda}=\inf_{1<\lambda }\frac{\log(\tilde{C} R(\lambda))}{\log\lambda},
\end{equation*}
and also the fact $\alpha\leq\beta$. Now, Phillips's theorem
\cite[Thm. 7.4.4]{hille-phillips} (cf. \cite{phillips}), applied
to $$T_{1}(t)=\log (\tilde{C}R(e^{t}))-kt \ \ \mbox{ and } \ \ 
T_{2}(t)=\log (\tilde{C}R(e^{-t}))+kt,$$ and the relations
(\ref{holeq1}) imply the growth estimates
$$
R(\lambda)=O(\lambda^{k+1}/(\log\lambda)^{2}) \ \ \ \mbox{as }
\lambda\to\infty$$
and
$$
R(1/\lambda)=O(\lambda^{-k}/(\log\lambda)^{2}) \ \ \ \mbox{as }
\lambda\to\infty,$$
and so we infer that $k<\alpha\leq\beta<k+1$.
The existence of $L_1$ and $L_2$ with the required properties
follows from the approximation theorem by regularly varying
functions \cite[Thm. 2.3.11, p. 81]{bingham}.
\end{proof}

Therefore, Theorem \ref{wnwfeth9.6} also allows us to obtain information about the regularity of distributions when their wavelet transforms are bounded by submultiplicative functions.

\begin{corollary}
\label{wnwfec2}
Let $R\in L^{\infty}_{loc}(\mathbb{R}_{+})$ be a positive submultiplicative function that satisfies satisfies (\ref{holeq1}) for some $k\in\mathbb{N}$ and let $\psi\in\mathcal{S}(\mathbb{R}^{n})$ be a non-degenerate wavelet with moments $\mu_{m}(\psi)=0$ for $\left|m\right|\leq k$. If $f\in\mathcal{S}'(\mathbb{R}^{n})$ satisfies
\begin{equation*}
\mathcal{W}_{\psi}f(x,y)\leq CR(y) \  \ \ \mbox{for all }(x,y)\in\mathbb{R}^{n}\times(0,y_{0}],
\end{equation*}
for some constants $C>0$ and $0<y_0\leq1$, then there exists an entire function $G$ that satisfies (\ref{wnweq9.5}) such that 
$$f-G\in C^{\alpha,L_1}(\mathbb{R}^{n}),$$ 
where
$$
\alpha=\lim_{\varepsilon\to0^{+}} \frac{\log R(\varepsilon)}{\log \varepsilon}
$$
and $L_1\in C(0,1]$ is any slowly varying function that fulfills (\ref{wnweq9.8}) and (\ref{wnweqR9.12}).
\end{corollary}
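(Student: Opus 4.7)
My plan is to view the corollary as essentially a direct combination of Proposition \ref{wnwap8.6} (which converts submultiplicative bounds into regularly varying ones) and Theorem \ref{wnwfeth9.6} (the wavelet inverse theorem for global H\"{o}lder regularity with respect to a slowly varying function). The only real work will be checking that the hypotheses of Theorem \ref{wnwfeth9.6} are met.

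First I would apply Proposition \ref{wnwap8.6} to the submultiplicative function $R$. This produces the two numbers $k<\alpha\leq\beta<k+1$ together with a continuous slowly varying function $L_1\in C(0,1]$ for which
\[
R(\varepsilon)\leq \varepsilon^{\alpha}L_{1}(\varepsilon)\quad\text{for all }\varepsilon\in(0,1],
\]
and $\alpha$ admits the limit formula stated in the corollary. Crucially, the strict inequalities $k<\alpha<k+1$ force $\alpha\in\mathbb{R}_{+}\setminus\mathbb{N}$ with $[\alpha]=k$, so the vanishing moment condition $\mu_{m}(\psi)=0$ for $|m|\leq k$ assumed on $\psi$ is precisely the vanishing moment condition $\mu_{m}(\psi)=0$ for $|m|\leq[\alpha]$ required by Theorem \ref{wnwfeth9.6}. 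Also, since $L_{1}$ is continuous and positive on $(0,1]$ (being slowly varying), both $L_{1}$ and $1/L_{1}$ lie in $L^{\infty}_{\mathrm{loc}}(0,1]$, matching the remaining hypothesis on the slowly varying function in Theorem \ref{wnwfeth9.6}.

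Next I would combine the assumed bound $|\mathcal{W}_{\psi}f(x,y)|\leq CR(y)$ on $\mathbb{R}^{n}\times(0,y_{0}]$ with the estimate produced in the previous step (valid because $y_{0}\leq 1$), obtaining
\[
|\mathcal{W}_{\psi}f(x,y)|\leq C\,y^{\alpha}L_{1}(y),\qquad (x,y)\in\mathbb{R}^{n}\times(0,y_{0}].
\]
Applying Theorem \ref{wnwfeth9.6} to this inequality then yields an entire function $G$ of exponential type satisfying (\ref{wnweq9.5}) and such that $f-G\in C^{\alpha,L_{1}}(\mathbb{R}^{n})$, which is precisely the conclusion. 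Finally, the assertion that the conclusion holds for \emph{any} admissible $L_{1}$ (i.e.\ any slowly varying function fulfilling (\ref{wnweq9.8}) and (\ref{wnweqR9.12})) is automatic, since Theorem \ref{wnwfeth9.6} is applied to whichever such $L_{1}$ one chooses. There is no significant obstacle here: the content is all in Proposition \ref{wnwap8.6} and Theorem \ref{wnwfeth9.6}, and the delicate point is simply the bookkeeping that ensures $\alpha$ is non-integral and that the moment and regularity conditions on $\psi$ and $L_{1}$ transfer correctly.
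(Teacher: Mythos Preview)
Your proposal is correct and follows exactly the route the paper intends: the corollary is stated without proof, immediately after the sentence ``Therefore, Theorem \ref{wnwfeth9.6} also allows us to obtain information about the regularity of distributions when their wavelet transforms are bounded by submultiplicative functions,'' so it is meant to be read as a direct combination of Proposition \ref{wnwap8.6} and Theorem \ref{wnwfeth9.6}. Your bookkeeping---that Proposition \ref{wnwap8.6} forces $k<\alpha<k+1$, hence $\alpha\notin\mathbb{N}$ and $[\alpha]=k$, and that continuity and positivity of $L_{1}$ on $(0,1]$ give $L_{1},1/L_{1}\in L^{\infty}_{\mathrm{loc}}(0,1]$---is precisely what is needed to invoke Theorem \ref{wnwfeth9.6}.
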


\subsection{Asymptotic Stabilization in Time for Cauchy Problems}
\label{wnwPDE} We retain in this subsection the notation from Example \ref{wnwex3.4}, that is, $U$ is the unique solution to the Cauchy problem (\ref{wnweq3.5}) and  $\phi=(2\pi)^{-n}\hat{\eta}$, where $\eta\in\mathcal{S}(\mathbb{R}^{n})$ satisfies $\eta(u)=e^{P(iu)}$, $u\in\Gamma$; thus, $U$ is given by (\ref{wnweq3.8}). We apply Theorem \ref{wnwth6.2} to find sufficient geometric conditions for the stabilization in time of the solution to the Cauchy problem (\ref{wnweq3.5}), namely, we study conditions which ensure the existence of a function $T:(A,\infty)\to\mathbb{R}_{+}$ and a constant $\ell\in\mathbb{C}$ such that the following limits exist
\begin{equation}
\label{wnwPDEeq1}
\lim_{t\to\infty}\frac{U(x,t)}{T(t)}=\ell, \ \ \ \mbox{for each }x\in\mathbb{R}^{n}.
\end{equation}

Let $L$ be slowly varying at infinity and $\alpha\in\mathbb{R}$. We shall say that $U$ \emph{stabilizes along $d$-curves} (at infinity), relative to $\lambda^{\alpha}L(\lambda)$, if the following two conditions hold:
\begin{enumerate}
\item There exist the limits
\begin{equation}
\label{wnwPDEeq2}
\lim_{\lambda\to\infty}\frac{U(\lambda x, \lambda^{d} t)}{\lambda^{\alpha}L(\lambda)}=U_{0}(x,t), \ \ \ (x,t)\in\mathbb{H}^{n+1}\cap\mathbb{S}^{n};
\end{equation}
\item There are constants $M\in\mathbb{R}_{+}$ and $l\in\mathbb{N}$ such that
\begin{equation}
\label{wnwPDEeq3}
\left|\frac{U(\lambda x, \lambda^{d} t)}{\lambda^{\alpha}L(\lambda)}\right|\leq \frac{M}{t^{l}}, \ \ \ (x,t)\in\mathbb{H}^{n+1}\cap\mathbb{S}^{n}.
\end{equation}
\end{enumerate}
\begin{theorem}
\label{wnwth7.1} The solution $U$ to the Cauchy problem (\ref{wnweq3.5}) stabilizes along $d$-curves, relative to $\lambda^\alpha L(\lambda)$, if and only if $f$ has weak-asymptotic behavior of degree $\alpha$ at infinity with respect to $L$.
\begin{proof} We have that $U(x,t)=F_{\phi}f(x,y)$, with $y=t^{1/d}$, then, conditions (\ref{wnwPDEeq2}) and (\ref{wnwPDEeq3}) translate directly into conditions (\ref{wnwtteq1}) and (\ref{wnwtteq5}), with $F_{x,y}=U_{0}(x,t^{1/d})$ and $k=dl$. Therefore, Theorem \ref{wnwth6.2} yields the desired equivalence.
\end{proof}
\end{theorem}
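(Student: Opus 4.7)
The plan is to recognize that the Cauchy problem has already been solved in Example \ref{wnwex3.4} as a $\phi$-transform, and then to translate the stabilization conditions into the hypotheses of the Tauberian Theorem \ref{wnwth6.2} via the substitution $y=t^{1/d}$. Concretely, formula (\ref{wnweq3.8}) gives $U(x,t)=F_\phi f(x,y)$ with $y=t^{1/d}$, and since $(\lambda^d t)^{1/d}=\lambda\, t^{1/d}$, the diagonal scaling of $U$ matches the diagonal scaling of $F_\phi f$:
\begin{equation*}
\frac{U(\lambda x,\lambda^d t)}{\lambda^\alpha L(\lambda)}=\frac{F_\phi f(\lambda x,\lambda y)}{\lambda^\alpha L(\lambda)},\qquad y=t^{1/d}.
\end{equation*}

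Next, I would check the implication ``stabilization $\Rightarrow$ weak-asymptotic behavior''. Under $y=t^{1/d}$, condition (\ref{wnwPDEeq2}) gives existence of $\lim_{\lambda\to\infty}\lambda^{-\alpha}L(\lambda)^{-1}F_\phi f(\lambda x,\lambda y)$ for every $(x,y)$ with $|x|^2+y^{2d}=1$, $y>0$, with limit $U_0(x,y^d)$. Using the slow variation of $L$, for every $c>0$ one has
\begin{equation*}
\frac{F_\phi f(c\lambda x,c\lambda y)}{\lambda^\alpha L(\lambda)}=c^\alpha\,\frac{L(c\lambda)}{L(\lambda)}\cdot\frac{F_\phi f(c\lambda x,c\lambda y)}{(c\lambda)^\alpha L(c\lambda)}\longrightarrow c^\alpha U_0(x,y^d),
\end{equation*}
so the limit exists along every ray through the origin in $\mathbb{H}^{n+1}$, in particular at every point of the half-sphere $|x|^2+y^2=1$, which is condition (\ref{wnwtteq5}) of Theorem \ref{wnwth6.2} with $F_{x,y}:=c^\alpha U_0(x_1,y_1^d)$ where $(x,y)=c(x_1,y_1)$ and $|x_1|^2+y_1^{2d}=1$. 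Likewise, (\ref{wnwPDEeq3}) reads $\|\lambda^{-\alpha}L(\lambda)^{-1}F_\phi f(\lambda x,\lambda y)\|\le M y^{-dl}$ on $|x|^2+y^{2d}=1$; a standard scaling using Potter's bounds (cf.\ Proposition \ref{wnwp5.1}(i)) transforms this into an estimate of the form (\ref{wnwtteq1}) on $|x|^2+y^2=1$ with exponent $k=dl$ (possibly after enlarging $k$). Theorem \ref{wnwth6.2} then yields weak-asymptotic behavior of $f$ of degree $\alpha$ at infinity with respect to $L$.

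The converse direction ``weak-asymptotic behavior $\Rightarrow$ stabilization'' is in fact the Abelian part of Theorem \ref{wnwth6.2} (or Proposition \ref{wnwth2} applied to the kernel $\phi$): if $f(\lambda t)\sim\lambda^\alpha L(\lambda)\mathbf g(t)$ in $\mathcal S'(\mathbb R^n)$, then one automatically gets the angular limit $\lambda^{-\alpha}L(\lambda)^{-1}F_\phi f(\lambda x,\lambda y)\to F_\phi \mathbf g(x,y)$ uniformly in any cone $C_{0,\vartheta}$, together with the uniform bound of the form (\ref{wnwaeq1}). Rewriting through $t=y^d$ recovers (\ref{wnwPDEeq2}) with $U_0(x,t)=F_\phi\mathbf g(x,t^{1/d})$ and (\ref{wnwPDEeq3}) with $k=dl$.

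The only delicate point is the bookkeeping in the reparametrization $t\leftrightarrow y^d$: the half-sphere $|x|^2+t^2=1$ does not coincide with $|x|^2+y^2=1$, and the power $y^{-l}$ in (\ref{wnwPDEeq3}) becomes $y^{-dl}$ rather than $y^{-l}$ after substitution. This is exactly the sort of adjustment handled by Proposition \ref{wnwp5.1}, so it presents no real obstacle; otherwise, the whole argument is a direct dictionary-translation of Theorem \ref{wnwth6.2}.
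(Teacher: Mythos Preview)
Your proposal is correct and follows essentially the same approach as the paper: identify $U(x,t)=F_\phi f(x,y)$ with $y=t^{1/d}$, then invoke the Tauberian equivalence of Theorem~\ref{wnwth6.2}. The paper's proof is terser, saying the conditions ``translate directly'' with $k=dl$; you are simply more explicit about the fact that the reparametrization sends the half-sphere $|x|^2+t^2=1$ to the deformed surface $|x|^2+y^{2d}=1$ and that Proposition~\ref{wnwp5.1}(i) absorbs this discrepancy, which is an accurate and welcome clarification rather than a different method.
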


\begin{corollary}
\label{wnwPDEc1} If $U$ stabilizes along $d$-curves, relative to $\lambda^{\alpha}L(\lambda)$, then $U$ stabilizes in time with respect to $T(t)=t^{\alpha /d}L(t^{1/d})$. Moreover, the limit (\ref{wnwPDEeq1}) holds uniformly for $x$ in compacts of $\mathbb{R}^{n}$.
\end{corollary}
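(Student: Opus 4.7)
The plan is to translate the hypothesis into a statement about $f$ via Theorem \ref{wnwth7.1}, and then derive the desired in-time stabilization from the uniform angular Abelian statement of Proposition \ref{wnwth2}. Recall that
\[
U(x,t)=F_{\phi}f(x,y),\qquad y=t^{1/d},
\]
so that proving (\ref{wnwPDEeq1}) with $T(t)=t^{\alpha/d}L(t^{1/d})$ amounts to showing the existence of a constant $\ell$ such that
\[
\lim_{y\to\infty}\frac{F_{\phi}f(x,y)}{y^{\alpha}L(y)}=\ell,
\]
uniformly for $x$ in compact subsets of $\mathbb{R}^{n}$.

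By hypothesis $U$ stabilizes along $d$-curves relative to $\lambda^{\alpha}L(\lambda)$, so Theorem \ref{wnwth7.1} yields $\mathbf{f}(\lambda t)\sim\lambda^{\alpha}L(\lambda)g(t)$ as $\lambda\to\infty$ in $\mathcal{S}'(\mathbb{R}^{n})$, where $g$ is homogeneous of degree $\alpha$. Proposition \ref{wnwth2}(ii), applied to any cone $C_{0,\vartheta}$ with $0<\vartheta<\pi/2$, then gives
\[
\lim_{\substack{|(x,y)|\to\infty\\ (x,y)\in C_{0,\vartheta}}}|(x,y)|^{-\alpha}\left|\frac{F_{\phi}f(x,y)}{L(|(x,y)|)}-F_{\phi}g(x,y)\right|=0.
\]
A direct change of variables shows that $F_{\phi}g$ is homogeneous of degree $\alpha$ on $\mathbb{H}^{n+1}$, so I can rewrite this as
\[
\lim_{\substack{|(x,y)|\to\infty\\ (x,y)\in C_{0,\vartheta}}}\left|\frac{F_{\phi}f(x,y)}{|(x,y)|^{\alpha}L(|(x,y)|)}-F_{\phi}g\!\left(\frac{x}{|(x,y)|},\frac{y}{|(x,y)|}\right)\right|=0.
\]

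Now fix an arbitrary compact set $K\subset\mathbb{R}^{n}$ and any $\vartheta\in(0,\pi/2)$. For $x\in K$ and $y$ sufficiently large, $(x,y)\in C_{0,\vartheta}$, $|(x,y)|/y\to1$ uniformly in $x\in K$, and consequently $(x/|(x,y)|,y/|(x,y)|)\to(0,1)$ in $\mathbb{H}^{n+1}$ uniformly in $x\in K$. Since $L$ is slowly varying, $L(|(x,y)|)/L(y)\to1$ uniformly for $x\in K$, and since $F_{\phi}g$ is smooth (Proposition \ref{wnwp2}), continuity yields
\[
F_{\phi}g\!\left(\frac{x}{|(x,y)|},\frac{y}{|(x,y)|}\right)\longrightarrow F_{\phi}g(0,1)=\langle g(t),\phi(t)\rangle
\]
uniformly in $x\in K$ as $y\to\infty$. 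Combining this with the displayed angular limit and the identity $|(x,y)|^{\alpha}L(|(x,y)|)/(y^{\alpha}L(y))\to 1$ (again uniform in $x\in K$), we obtain
\[
\lim_{y\to\infty}\frac{F_{\phi}f(x,y)}{y^{\alpha}L(y)}=\langle g(t),\phi(t)\rangle=:\ell,
\]
uniformly for $x\in K$, which is the claim with the explicit value of $\ell$.

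There is essentially no obstacle beyond verifying these routine uniformities; the Abelian machinery of Proposition \ref{wnwth2} already provides the angular uniformity, and the only additional ingredient is that the ``radial'' direction $(x,y)\to(0,1)$ (once renormalized) occurs uniformly over compact sets of the spatial variable.
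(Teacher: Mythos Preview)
Your argument is correct. You invoke Theorem \ref{wnwth7.1} to obtain the weak-asymptotic behavior of $f$, then apply the angular Abelian statement of Proposition \ref{wnwth2}(ii) in a cone, use the homogeneity of $F_{\phi}g$, and finally pass to the limit along $(x,y)$ with $y\to\infty$ and $x$ in a compact set via continuity of $F_{\phi}g$ and the uniform convergence theorem for slowly varying functions. All of these steps are valid.

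The paper takes a shorter and more direct route. After obtaining $f(\lambda\xi)\sim\lambda^{\alpha}L(\lambda)g(\xi)$ in $\mathcal{S}'(\mathbb{R}^{n})$ from Theorem \ref{wnwth7.1}, it simply writes
\[
\frac{U(x,t)}{T(t)}=\frac{1}{t^{\alpha/d}L(t^{1/d})}\Bigl\langle f(t^{1/d}\xi),\,\phi\!\left(\xi-\frac{x}{t^{1/d}}\right)\Bigr\rangle
\]
and observes that $\phi(\xi-x/t^{1/d})\to\phi(\xi)$ in $\mathcal{S}(\mathbb{R}^{n})$ as $t\to\infty$, uniformly for $x$ in any compact set. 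The Banach--Steinhaus theorem (via the Montel property of $\mathcal{S}(\mathbb{R}^{n})$) then gives the limit $\langle g,\phi\rangle$ directly, with the desired uniformity. This avoids the detour through Proposition \ref{wnwth2}, the homogeneity of $F_{\phi}g$, and the comparison $|(x,y)|^{\alpha}L(|(x,y)|)\sim y^{\alpha}L(y)$. Your approach has the advantage of staying entirely within the Abelian machinery already developed, but the paper's argument shows that once the weak-asymptotic of $f$ is known, a one-line test-function argument suffices.
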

\begin{proof} By Theorem \ref{wnwth7.1}, there exists $g\in\mathcal{S}'(\mathbb{R}^{n})$ such that
$$
f(\lambda \xi)\sim \lambda^{\alpha}L(\lambda)g(\xi) \ \ \ \mbox{as }\lambda\to\infty\ \mbox{in }\mathcal{S}'(\mathbb{R}^{n}).
$$
If $K\subset\mathbb{R}^{n}$ is compact, then,
\begin{align*}
\lim_{t\to\infty}\frac{U(x,t)}{T(t)}&=\lim_{t\to\infty}\frac{1}{t^{\alpha /d}L(t^{1/d})}\left\langle f(t^{1/d}\xi),\phi\left(\xi-\frac{x}{t^{1/d}}\right)\right\rangle
\\
&
=\left\langle g(\xi),\phi(\xi)\right\rangle,
\end{align*}
uniformly for $x\in K$ because 
$$\phi\left(\xi-x/t^{1/d}\right)\to\phi(\xi)\ \ \ \mbox{in } \mathcal{S}(\mathbb{R}^{n}), \mbox{ as }t\to\infty.$$ 
\end{proof}
\begin{example} \emph{The heat equation.} When $\Gamma=\mathbb{R}^{n}$ and $P(\partial/\partial x)=\Delta$, we obtain that stabilization along parabolas (i.e., $d=2$) is sufficient for stabilization in time of the solution to the Cauchy problem for the heat equation. This particular case of Corollary \ref{wnwPDEc1} was studied in \cite{drozhzhinov-z2,drozhzhinov-z3,drozhzhinov-z5}.
\end{example}

\subsection{Applications to Regularity Theory in Algebras of Generalized Functions}
\label{wnwCA}
In this section we show how the Tauberian theorems for the wavelet transform can be used as a standard device to derive results in the regularity theory for algebras of generalized functions. 

First, we consider the  algebra of tempered generalized functions which contains $\mathcal S'(\mathbb R^n)$ as a proper subspace. Let $\mathcal{O}_{M}(\mathbb{R}^{n})$ be the space of multipliers of $\mathcal{S}(\mathbb{R}^{n})$ \cite{schwartz1}, that is, the space of smooth functions whose derivatives are bounded by polynomials, of possible different degrees. Colombeau \cite{colombeau} defined
the algebra of tempered generalized functions  as the quotient  $\mathcal{G}_{\tau}(\mathbb{R}^{n})=\mathcal{E}_{M,\tau}(\mathbb{R}^{n})/\mathcal{N}_{\tau}(\mathbb{R}^{n})$, where $\mathcal{E}_{M,\tau}(\mathbb{R}^{n})$  is the algebra of nets $(f_{\varepsilon})_{\varepsilon}\in\mathcal{O}_{M}(\mathbb{R}^{n})^{(0,1)}$
$$
(\forall m\in\mathbb{N}^{n})(\exists N\in\mathbb{N})(\sup_{x\in\mathbb{R}^{n}}(1+\left|x\right|)^{-N}|f^{(m)}_{\varepsilon}(x)|=O(\varepsilon^{-N}))
$$
while its ideal $\mathcal{N}_{\tau}(\mathbb{R}^{n})$ consists of those such that
$$
(\forall m\in\mathbb{N}^{n})(\exists N\in\mathbb{N})(\forall b>0)(\sup_{x\in\mathbb{R}^{n}}(1+\left|x\right|)^{-N}|f^{(m)}_{\varepsilon}(x)|=O(\varepsilon^{b})).
$$
We can embed $\mathcal{S}'(\mathbb{R}^{n})$ into $\mathcal{G}_\tau(\mathbb
R^n)$ via $\iota(f)=[(f\ast\phi_{\varepsilon})_{\varepsilon}]$, where $\phi$ satisfies the condition (\ref{wnweq3.3}).

The algebra of regular tempered generalized functions $\mathcal{G}_\tau^\infty(\mathbb
R^n)$ consists of those nets in $\mathcal{O}_{M}(\mathbb{R}^{n})^{(0,1)}$ such that
\begin{equation}
\label{wnwcoleq1}
(\exists a\in\mathbb{R})(\forall m\in\mathbb{N}^{n})(\exists N\in\mathbb{N})(\sup_{x\in\mathbb{R}^{n}}(1+\left|x\right|)^{-N}|f^{(m)}_{\varepsilon}(x)|=O(\varepsilon^{-a})).
\end{equation}
We will show the regularity theorem for $\mathcal{G}_\tau^\infty(\mathbb
R^n)$; it originally appeared in \cite{hor}. The proof is similar to that of Theorem \ref{wnwfeth9.6}.

\begin{theorem}
$\mathcal S'(\mathbb R^n)\cap\mathcal G_\tau^\infty(\mathbb
R^n)=\mathcal{O}_M(\mathbb R^n).$
\end{theorem}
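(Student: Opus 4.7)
The easy inclusion $\mathcal{O}_M(\mathbb{R}^n)\subseteq \mathcal{S}'(\mathbb{R}^n)\cap\mathcal{G}_\tau^\infty(\mathbb{R}^n)$ I would settle first: for $f\in\mathcal{O}_M$ one has $(f\ast\phi_\varepsilon)^{(m)}=f^{(m)}\ast\phi_\varepsilon$, and since $f^{(m)}$ is continuous with $|f^{(m)}(x)|\leq C_m(1+|x|)^{N_m}$, a routine estimate gives $|(f\ast\phi_\varepsilon)^{(m)}(x)|\leq C'_m(1+|x|)^{N_m}$ uniformly for $\varepsilon\in(0,1]$, confirming (\ref{wnwcoleq1}) with $a=0$.

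For the nontrivial direction, suppose $f\in\mathcal{S}'(\mathbb{R}^n)\cap\mathcal{G}_\tau^\infty(\mathbb{R}^n)$. The first step is to observe that the particular representative $(f\ast\phi_\varepsilon)_\varepsilon$ itself satisfies the defining condition of $\mathcal{E}_{M,\tau}^\infty$, because it differs from any $\mathcal{G}_\tau^\infty$-representative by an element of $\mathcal{N}_\tau$, whose $O(\varepsilon^b)$ bounds are absorbed into any power $\varepsilon^{-a}$. Hence there exists $a\in\mathbb{R}$ such that for every $m\in\mathbb{N}^n$ there is $N_m\in\mathbb{N}$ with
\begin{equation}\label{wnwcolprimary}
|(f\ast\phi_\varepsilon)^{(m)}(x)|\leq C_m(1+|x|)^{N_m}\varepsilon^{-a},\qquad (x,\varepsilon)\in\mathbb{R}^n\times(0,1].
\end{equation}
Following the strategy of the proof of Theorem \ref{wnwfeth9.6}, I would then pass to the translation-covariant vector-valued distribution $\mathbf{f}(t)(\xi):=f(t+\xi)\in\mathcal{S}'(\mathbb{R}^n_t,C(\mathbb{R}^n_\xi))$, and for each integer $k\geq 1$ introduce the wavelet $\psi_k:=\Delta^k\phi\in\mathcal{S}(\mathbb{R}^n)$; it is strongly non-degenerate in the sense of Definition \ref{wnwced2} and satisfies $\mu_m(\psi_k)=0$ for all $|m|<2k$.

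Since $\mathcal{W}_{\psi_k}f(x,y)=y^{2k}\Delta^k\bigl[F_\phi f(\cdot,y)\bigr](x)$, applying (\ref{wnwcolprimary}) to the derivatives of order $2k$ produces the scalar wavelet estimate
\begin{equation}\label{wnwwaveletbound}
|\mathcal{W}_{\psi_k}f(x,y)|\leq C'_k(1+|x|)^{N_{2k}}\,y^{2k-a},\qquad (x,y)\in\mathbb{R}^n\times(0,1].
\end{equation}
Setting $E_N:=\{h\in C(\mathbb{R}^n):\sup_\xi(1+|\xi|)^{-N}|h(\xi)|<\infty\}$, the elementary inequality $(1+|x+\xi|)^N\leq 2^N(1+|x|)^N(1+|\xi|)^N$ promotes (\ref{wnwwaveletbound}) to the vector-valued bound
$\|\mathcal{W}_{\psi_k}\mathbf{f}(x,y)\|_{E_{N_{2k}}}\leq C''_k(1+|x|)^{N_{2k}} y^{2k-a}$ on $\mathbb{R}^n\times(0,1]$. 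Localised around any $x_0\in\mathbb{R}^n$, this verifies the hypothesis of Proposition \ref{wnwp9.2} for any non-integer $\alpha<\min(2k-a,\,2k)$, whence $\mathbf{f}\in C^{\alpha,1}_{\ast,w}(x_0,E_{N_{2k}})$.

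Proposition \ref{wnwp9.1} together with Proposition A.2 then supply an $E_{N_{2k}}$-valued polynomial $\mathbf{P}_{x_0,k}$ of degree $<\alpha$ such that $\mathbf{f}-\mathbf{P}_{x_0,k}\in \mathcal{O}^{\alpha,1}(x_0,E_{N_{2k}})$; in particular, for every $|m|<\alpha$ the \L{}ojasiewicz point value $\mathbf{f}^{(m)}(x_0)\in E_{N_{2k}}$ exists. Letting $k\to\infty$ takes care of every multi-index $m$. Specialising to $x_0=0$ and invoking the translation-covariance $\mathbf{f}^{(m)}(t)(\xi)=f^{(m)}(t+\xi)$, one identifies $f^{(m)}=\mathbf{f}^{(m)}(0)$ as an element of $E_{N_m}$, so $f^{(m)}$ is a continuous polynomially bounded function for every $m$, which is exactly $f\in\mathcal{O}_M(\mathbb{R}^n)$. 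The main technical obstacle is the simultaneous calibration of the moment-vanishing of $\psi_k$ against the regularity exponent $\alpha\sim 2k-a$ demanded by Proposition \ref{wnwp9.2}, together with the exploitation of the translation-covariance of $\mathbf{f}$ to convert the abstract $E_N$-valued distributional point value into a classical pointwise value of $f^{(m)}$; this is the analogue, in the polynomially-growing setting, of the $C_b$-valued argument used in the proof of Theorem \ref{wnwfeth9.6}.
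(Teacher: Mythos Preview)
Your proof is correct and follows essentially the same route as the paper's: form a translation-covariant Banach-space-valued distribution, derive a wavelet estimate from the $\mathcal{G}_\tau^\infty$ bound via a Laplacian-power wavelet built from the mollifier, invoke the Tauberian machinery, and recover the continuity and polynomial growth of each $f^{(m)}$ from an $E$-valued \L ojasiewicz point value. The only difference is organisational rather than substantive: the paper fixes $m$ first, builds $\mathbf{h}$ from $f^{(m)}$, and applies Theorem~\ref{wnwth8} to obtain directly $\mathbf{h}(0)=f^{(m)}\in E$, whereas you work once with $\mathbf{f}(t)(\xi)=f(t+\xi)$ and then read off all $f^{(m)}$ with $|m|<\alpha$ simultaneously via Proposition~\ref{wnwp9.2} and the higher-order point values of $\mathbf{f}$; both arrive at the same place by the same mechanism.
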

This equality  means that if $f\in\mathcal S'(\mathbb R^n)$ and $f_\varepsilon=f*\phi_\varepsilon,$ $\varepsilon\in(0,1),$ determines an element of
 $\mathcal G_\tau^\infty(\mathbb
R^n)$, then $f\in\mathcal{O}_M(\mathbb R^n).$
\begin{proof} The inclusion $\mathcal{O}_M(\mathbb R^n)\subseteq \mathcal S'(\mathbb {R}^n)\cap\mathcal G_\tau^\infty(\mathbb
R^n)$ is obvious. Let $f\in\mathcal{S}'(\mathbb{R}^{n})$ such that $\iota(f)\in\mathcal G_\tau^\infty(\mathbb
R^n)$, that is, the net $f_{\varepsilon}=f\ast\phi_{\varepsilon}$ satisfies (\ref{wnwcoleq1}). We should show that $f^{(m)}$ is continuous of polynomial growth for each $m\in\mathbb{N}^{n}$. Let $\nu\in\mathbb{N}$ be such that $\beta=2\nu-a>0$. Then, there exists $N_{0}\in\mathbb{N}$ such that
\begin{equation}
\label{wnwcoleq2}
\sup_{x\in\mathbb{R}^{n}}(1+\left|x\right|)^{-N_{0}}\left|\mathcal{W}_{\psi}f^{(m)}(x,y)\right|=O(y^{\beta}), \ \ \ 0<y<1,
\end{equation}
where $\psi=\overline{\Delta^{\nu}\check{\phi}}$, a non-degenerate
wavelet.  Define $\mathbf{h}$ by $$\left\langle
\mathbf{h},\rho\right\rangle=f^{(m)}\ast \check{\rho},$$ for
$\rho\in\mathcal{S}(\mathbb{R}^{n})$. Then there exists $N>N_{0}$
such that $\mathbf{h}\in\mathcal{S}'(\mathbb{R}^{n},E)$, where $E$
is the Banach space of continuous functions $v\in
C(\mathbb{R}^{n})$ such that
$$\left\|v\right\|:=\sup_{\xi\in\mathbb{R}^{n}}(1+\left|\xi\right|)^{-N}\left|v(\xi)\right|<\infty,$$
provided with the norm $\left\|\:\cdot\:\right\|$. Since
$\mathcal{W}_{\psi}\mathbf{h}(x,y)(\xi)=\mathcal{W}_{\psi}f^{(m)}(\xi+x,y)$,
the estimate (\ref{wnwcoleq2}) gives now
$$
\limsup_{\varepsilon\to0^{+}}\sup_{\left|x\right|\leq 1,\: 0<y<1}\varepsilon^{-\beta}\left\|\mathcal{W}_{\psi}\mathbf{h}(\varepsilon x,\varepsilon y)\right\|<\infty.
$$
Theorem \ref{wnwth8} implies, in particular, that $\mathbf{h}$ has
a distributional point value at the origin (cf. Example
\ref{wnwex2.1}), say $\mathbf{h}(0)=v\in E$, distributionally,
i.e., for each test function $\rho$,
$$\lim_{\varepsilon\to0^{+}}f^{(m)}\ast\check{\rho}_{\varepsilon}=\lim_{\varepsilon\to0^{+}}\left\langle
\mathbf{h}(\varepsilon t),\rho(t)\right\rangle= v
\int_{\mathbb{R}^{n}}\rho(t)\mathrm{d}t,$$ where the limit holds
in $E$. But if we take $\rho=\check{\phi}$, we obtain in
particular that
$\lim_{\varepsilon\to0^{+}}(f^{(m)}\ast\phi_{\varepsilon})(\xi)=v(\xi)$
uniformly for $\xi$ in compacts of $\mathbb{R}^{n}$, and this
means exactly that $f^{(m)}=v$ is a continuous function of at most
polynomial growth.
\end{proof}

\begin{remark}
Recall \cite{colombeau} that the Colombeau algebra of generalized functions is defined  as
$\mathcal{G}(\Omega)=\mathcal E_M(\Omega)/\mathcal N(\Omega),$ where $\mathcal E_M(\Omega)$,  $\mathcal N(\Omega)$,
consist of nets of smooth functions in $\Omega$, $(f_\varepsilon)_{\varepsilon\in(0,1)}$, with the properties
$$
(\forall \omega\subset\subset \Omega) (\forall \nu\in\mathbb{N}) (\exists N\in\mathbb{N})
 (\sup_{|m|\leq
\nu,x\in\omega}|f^{(m)}_{\varepsilon}(x)|=O(\varepsilon^{-N})),
$$
$$
(\forall \omega\subset\subset \Omega) (\forall b\in\mathbb{R})
(\forall \nu\in\mathbb{N}) (\sup_{|m|\leq
\nu,x\in\omega}|f^{(m)}_{\varepsilon}(x)|=O(\varepsilon^{b})).
$$
The embedding of the Schwartz distribution space
$\mathcal{E}^{\prime}(\Omega)$ is realized through the sheaf
homomorphism 
$$ \mathcal{E}^{\prime}(\Omega)\ni f\mapsto\iota(f)=
[(f\ast\phi_{\varepsilon}|{\Omega})_\varepsilon]\in\mathcal{G}(\Omega),$$
where $\phi\in\mathcal{S}(\mathbb{R}^{n})$ is as before.
This sheaf homomorphism, extended over $\mathcal{D}^{\prime}$,
gives the embedding of $\mathcal{D}^{\prime}(\Omega)$ into
$\mathcal{G}(\Omega)$. The embedding respects the multiplication of smooth functions.

The generalized algebra of ``smooth generalized functions"
$\mathcal{G}^\infty(\Omega)$ is defined in \cite{ober 001} as the
quotient of the algebras $\mathcal{E}^\infty_{M}(\Omega)$ and
$\mathcal{N}(\Omega),$ where $\mathcal{E}^\infty_{M}(\Omega),$
consists of nets of smooth functions in $\Omega$
 with the property
$$
(\forall \omega\subset\subset \Omega) (\exists a\in\mathbb{R})
(\forall \nu\in\mathbb{N}) (\sup_{|m|\leq
\nu,x\in\omega}|f^{(m)}_{\varepsilon}(x)|=O(\varepsilon^{-a})),
$$

Note that $\mathcal{G}^\infty$ is a subsheaf of  $\mathcal{G}$.
Roughly speaking, it has the same role as $C^\infty$ in $\mathcal
D'.$

Similarly as above, one can prove the following well known assertion  \cite{ober 001}:
\begin{theorem}
\label{col} $\mathcal D'(\Omega)\cap\mathcal
G^\infty(\Omega)=C^\infty(\Omega).$
\end{theorem}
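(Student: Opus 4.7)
The plan is to follow the same scheme as the preceding theorem on $\mathcal S'(\mathbb R^n)\cap\mathcal G_\tau^\infty(\mathbb R^n)$, but with a localization step inserted at the beginning, since smoothness is a local property. The inclusion $C^\infty(\Omega)\subseteq\mathcal D'(\Omega)\cap\mathcal G^\infty(\Omega)$ is immediate, so I would concentrate on the reverse one. Fix $f\in\mathcal D'(\Omega)\cap\mathcal G^\infty(\Omega)$, an arbitrary point $x_0\in\Omega$, and an arbitrary multi-index $m\in\mathbb N^n$; it suffices to prove that $f^{(m)}$ is continuous in a neighborhood of $x_0$.

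The first step is the localization. Pick a compact neighborhood $K$ of $x_0$ in $\Omega$ and a cut-off $\chi\in\mathcal D(\Omega)$ with $\chi\equiv 1$ on an open set $V\supset K$. Set $g=\chi f\in\mathcal E'(\mathbb R^n)\subset\mathcal S'(\mathbb R^n)$, so that $g=f$ on $V$ and it is enough to show that $g^{(m)}$ is continuous on $K^\circ$. The hypothesis $\iota(f)\in\mathcal G^\infty(\Omega)$, combined with the sheaf embedding, gives some $a\in\mathbb R$ (depending on $\overline V$ but not on the order of the derivative) such that
$$
\sup_{|j|\le\nu,\,x\in\overline V}\left|(f\ast\phi_\varepsilon)^{(j)}(x)\right|=O(\varepsilon^{-a}), \ \ \ \text{for each } \nu\in\mathbb N.
$$
Writing $g\ast\phi_\varepsilon-f\ast\phi_\varepsilon=((\chi-1)f)\ast\phi_\varepsilon$ and using that $(\chi-1)f$ vanishes on $V$ together with the rapid decay of $\phi\in\mathcal S(\mathbb R^n)$, a routine estimate shows that $(g\ast\phi_\varepsilon)^{(j)}(x)$ and $(f\ast\phi_\varepsilon)^{(j)}(x)$ differ by $O(\varepsilon^N)$ for every $N\in\mathbb N$, uniformly for $x\in K$; therefore the same $\mathcal E^\infty_M$-type bound persists for the globally tempered net $g\ast\phi_\varepsilon$ on $K$.

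Next I would reproduce the wavelet calculation from the preceding theorem. With $\psi=\overline{\Delta^\nu\check\phi}$, a non-degenerate wavelet whose moments of order less than $2\nu$ vanish, one has the identity $\mathcal W_\psi g(x,y)=y^{2\nu}\Delta^\nu(g\ast\phi_y)(x)$, so the bound above yields
$$
|\mathcal W_\psi g^{(m)}(x,y)|\le C_\nu\,y^{2\nu-a-|m|},\ \ \ x\in K,\ 0<y\le 1.
$$
Choose $\nu$ so that $\beta:=2\nu-a-|m|>0$, and fix a compact neighborhood $K_1\subset K^\circ$ of $x_0$. I would then define the vector-valued tempered distribution $\mathbf h\in\mathcal S'(\mathbb R^n,C(K_1))$ by $\langle\mathbf h(t),\rho(t)\rangle(\xi)=(g^{(m)}\ast\check\rho)(\xi)$ for $\xi\in K_1$; a direct computation gives $\mathcal W_\psi\mathbf h(x,y)(\xi)=\mathcal W_\psi g^{(m)}(\xi+x,y)$, which translates the previous pointwise estimate into the Tauberian hypothesis
$$
\limsup_{\varepsilon\to 0^+}\sup_{|x|^2+y^2=1,\,y>0}\varepsilon^{-\beta}\|\mathcal W_\psi\mathbf h(\varepsilon x,\varepsilon y)\|_{C(K_1)}<\infty.
$$

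Applying Theorem \ref{wnwth8} with $\alpha=\beta>0$, $\mathbf h$ acquires a \L ojasiewicz distributional point value at the origin, say $\mathbf h(0)=v\in C(K_1)$. Choosing $\rho=\check\phi$ in the definition of the point value gives $(g^{(m)}\ast\phi_\varepsilon)(\xi)\to v(\xi)$ uniformly for $\xi\in K_1$, while Proposition \ref{wnwp1} identifies the distributional limit of $g^{(m)}\ast\phi_\varepsilon$ with $g^{(m)}$; consequently $g^{(m)}=v$ on $K_1^\circ$ in the sense of distributions, so $g^{(m)}$, and thus $f^{(m)}$, is continuous in a neighborhood of $x_0$. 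Since $m$ and $x_0$ are arbitrary, $f\in C^\infty(\Omega)$, as required. The main obstacle, and the only genuinely new ingredient relative to the proof of the preceding theorem, is the localization step: one must carefully verify, via the rapid decay of the (non-compactly supported) mollifier $\phi\in\mathcal S(\mathbb R^n)$, that the local $\mathcal G^\infty$-bound on $f\ast\phi_\varepsilon$ transfers to the globally tempered mollification $g\ast\phi_\varepsilon$, so that the Tauberian machinery of Section \ref{wnwtt} applies to a globally defined object.
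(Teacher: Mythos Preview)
Your proposal is correct and follows exactly the route the paper itself indicates: after stating the theorem, the paper merely says ``similarly as above, one can prove'' and then remarks that the result is also a direct consequence of Theorem~\ref{wnwfeth9.6}. Your argument is the ``similarly as above'' version, reproducing the proof of the tempered case with a localization step, which is precisely what is intended.

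One minor technical point deserves tightening. For $f\in\mathcal D'(\Omega)$ and $\phi\in\mathcal S(\mathbb R^n)$ not compactly supported, the expression $f\ast\phi_\varepsilon$ is not a priori defined, so the line $g\ast\phi_\varepsilon-f\ast\phi_\varepsilon=((\chi-1)f)\ast\phi_\varepsilon$ is formally problematic. In fact this transfer step is unnecessary: the sheaf embedding of $\mathcal D'(\Omega)$ into $\mathcal G(\Omega)$ is built so that on any $\omega\subset\subset\Omega$ a representative of $\iota(f)$ differs from $(\chi f)\ast\phi_\varepsilon|_\omega=g\ast\phi_\varepsilon|_\omega$ by a \emph{negligible} net whenever $\chi\equiv1$ near $\overline\omega$. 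Hence the $\mathcal G^\infty$-bound on $\overline V$ holds directly for $g\ast\phi_\varepsilon$ (the negligible correction being $O(\varepsilon^b)$ for every $b$), and no separate estimate of $((\chi-1)f)\ast\phi_\varepsilon$ is needed. With this adjustment the rest of your argument goes through verbatim; note also that the exponent can be taken as $2\nu-a$ rather than $2\nu-a-|m|$, since in the $\mathcal G^\infty$-condition the same $a$ controls derivatives of all orders.
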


In fact, Theorem \ref{col} is also a direct consequence of Theorem \ref{wnwfeth9.6}.
\end{remark}

\subsection{Distributionally Small Distributions at Infinity}
\label{wnwds} Estrada \cite{estrada98,estrada-kanwal2} has characterized the class of distributions which are distributionally small at infinity (cf. Example \ref{wnwex2.3}), that is, the ones which have a weak-asymptotic expansion
\begin{equation}
\label{wnwdseq1}
\mathbf{f}(\lambda t)\sim \sum_{\left|m\right|=0}^{\infty}\frac{(-1)^{\left|m\right|}}{m!\lambda^{\left|m\right|+n}} \delta^{(m)}(t)\mathbf{w}_{m}  \ \ \ \mbox{as}\ \lambda\to\infty \ \ \mbox{in}\
\mathcal{S}'(\mathbb{R}^{n},E),
\end{equation}
for some multi-sequence $\left\{\mathbf{w}_{m}\right\}_{m\in\mathbb{N}^{n}}$ in $E$.

The distributionally small distributions are precisely the elements of the space $\mathcal{K}'(\mathbb{R}^{n},E)$, where $\mathcal{K}(\mathbb{R}^{n})$ is the test function space of the so called GLS symbols \cite{gls}, defined as follows. Given $\beta\in\mathbb{R}$, the space $\mathcal{K}_{\beta}(\mathbb{R}^{n})$ is formed by those smooth functions $\rho$ such that for each $m\in\mathbb{N}^{n}$
$$\rho^{(m)}(t)=O(\left|t\right|^{\beta-\left|m\right|})\ \ \ \mbox{as} \left|t\right|\to\infty,$$
 provided with the topology generated by the seminorms
$$
\max\{\sup_{\left|t\right|\leq1}|\rho^{(m)}(t)|, \sup_{\left|t\right|\geq1}\left|t\right|^{\left|m\right|-\beta}|\rho^{(m)}(t)|\}.
$$
Then, $\mathcal{K}(\mathbb{R}^{n})=\mbox{ind lim}_{\beta\to\infty}\mathcal{K}_{\beta}(\mathbb{R}^{n})$. Observe that the elements of $\mathcal{K}(\mathbb{R}^{n})$ are indeed symbols of pseudodifferential operators.

It is important to emphasize that if (\ref{wnwdseq1}) holds, then \cite{estrada98,estrada-kanwal2} it actually holds in the space $\mathcal{K}'(\mathbb{R}^{n},E)$.

Fourier transforming (\ref{wnwdseq1}), we have that
$\mathbf{f}\in\mathcal{K}'(\mathbb{R}^{n},E)$ if and only if $\hat{\mathbf{f}}\in C_{w}^{\infty}(0,E)$, where we set, cf. Section \ref{wnwla},
\begin{equation}
\label{Cweq}C_{w}^{\infty}(x_0,E):=\bigcap_{\alpha\in\mathbb{R}}C_{\ast,w}^{\alpha}(x_0,E)=\bigcap_{\alpha\in\mathbb{R}}C_{w}^{\alpha}(x_0,E), \ \ \ \mbox{for }x_{0}\in\mathbb{R}^{n}.
\end{equation}
Therefore, using Theorem \ref{wnwth8}, we have obtained the following wavelet characterization of the space $\mathcal{K}'(\mathbb{R}^{n},E)$. Let $\psi\in\mathcal{S}_{0}(\mathbb{R}^{n})$ be a non-degenerate wavelet. \emph{Then, an $E$-valued tempered distribution $\mathbf{f}$ belongs to $\mathcal{K}'(\mathbb{R}^{n},E)$ if and only if there exists a sequence $\left\{\nu_{p}\right\}_{p=0}^{\infty}$ of non-negative integers such that for each $p\in\mathbb{N}$}
$$
\limsup_{\varepsilon\to0^{+}}\sup_{\left|x\right|^{2}+y^{2}=1,\: y>0}\frac{y^{\nu_{p}}}{\varepsilon^{p}}\left\|\mathcal{W}_{\psi}\hat{\mathbf{f}}(\varepsilon x,\varepsilon y)\right\|<\infty .
$$
\subsection{Pointwise Analysis of Riemann Type Distributions at the Rationals}\label{wnwWRfunction}
We will investigate the pointwise weak-asymptotic expansion of the family of \emph{Riemann distributions}
$$R_{\beta}(t)=\sum_{n=1}^\infty \frac{e^{i\pi n^2t}}{n^{2\beta}}\in \mathcal{S}'(\mathbb{R}_{t}), \ \ \ \beta\in\mathbb{C},$$
at points of $\mathbb{Q}$. We split $\mathbb{Q}$ into two disjoint subsets $S_{0}$ and $S_{1}$ where
$$
S_{0}=\left\{\frac{2\nu+1}{2j}:\: \nu,j\in\mathbb{Z}\right\}\cup\left\{\frac{2j}{2\nu+1}:\: \nu,j\in\mathbb{Z}\right\}
$$
and
$$
S_{1}=\left\{\frac{2\nu+1}{2j+1}:\: \nu,j\in\mathbb{Z}\right\}.
$$

When $\beta>1/2$, $R_{\beta}$ is a continuous function. The imaginary part of $R_1$ is the classical Riemann ``non-differentiable'' function. It is well known \cite{gerver,holschneider-t} that  if $\beta>3/4$, then $R_{\beta}$ is differentiable at the points of $S_{1}$ and has local cusps with differentiable remainder at points of $S_0$; for $\beta\in(1/2,5/4)$, $R_{\beta}$ is not differentiable at any irrational point, as shown essentially by Hardy and Littlewood \cite{hardy1916,hardy-littlewood1914,holschneider-t}. Jaffard and Meyer \cite{jaffard-m} showed that $\Im mR_1$ has trigonometric chirps at the points of $S_1$. Consult \cite{duistermaat,jaffard96,jaffard-m} for properties of $R_1$ at the irrationals.

We will exhibit more precise information concerning the scaling weak-asymptotic properties of $R_{\beta}$ at the rationals, in fact, we will show that $R_{\beta}$ admits a full weak-asymptotic series at points of $\mathbb{Q}$, no matter the value of $\beta$. In particular, our analysis reveals that $R_{\beta}$ has weak scaling exponent (cf. Example \ref{wnwex2.0}) equal to $\infty$ at points of $S_1$; at points of $S_{0}$, it has infinite weak scaling exponent after subtraction of an adequate term.  To this end, we will be led to the study of the analytic continuation of the zeta-type function
\begin{equation}
\label{wnwzetaeq}
\zeta_{r}(z):=\sum_{n=1}^{\infty}\frac{e^{i\pi r n^{2}}}{n^{z}}, \ \ \ \Re e\:z>1,
\end{equation}
where $r\in\mathbb{Q}$. If $r=0$, (\ref{wnwzetaeq}) reduces to $\zeta_{0}=\zeta$, the familiar Riemann zeta function. The main results of this subsection are Theorems \ref{wnwRWfunctionth1} and Theorem \ref{wnwrwth2}. The results of Dur\'{an} and Estrada \cite{duran-estrada2,duran-estrada3,estrada98} will play an important role in our arguments.

We first study the properties of $R_{0}$. We begin with the expansion at $0$. Consider the tempered distribution
$$f(t)=R_{0}(t)-\frac{\sqrt{i}}{2}(t+i0)^{-\frac{1}{2}}=\sum_{n=1}^{\infty}e^{i\pi n^{2}t}-\frac{\sqrt{i}}{2}(t+i0)^{-\frac{1}{2}},$$
where we denote by $(t+i0)^{\alpha}$ the boundary value of the analytic function $z^{\alpha}$, $\Im m\:z>0$. Its Fourier transform is given by
$$
\hat{f}(u)=2\pi\sum_{n=1}^{\infty}\delta(u-\pi n^{2})-\sqrt{\pi}u^{-\frac{1}{2}}_{+}.
$$
Then, if $\psi\in\mathcal{S}_{0}(\mathbb{R})$ and $\nu\in\mathbb{N}$,
\begin{align*}
\left|\mathcal{W}_{\psi}f(\varepsilon x, \varepsilon y)\right|&= \left|\sum_{n=1}^{\infty}e^{i\varepsilon x\pi n^{2}}\overline{\hat{\psi}}\left(\varepsilon y \pi n^{2}\right)- \int_{0}^{\infty}e^{i\varepsilon x\pi u^{2}}\overline{\hat{\psi}}\left(\varepsilon y\pi u^{2}\right)\mathrm{d}u\right|
\\
&
\leq \frac{\varepsilon^{\nu}}{y^{\nu+1}}O(1), \ \ \ \mbox{uniformly in }y,\varepsilon\in(0,1) \mbox{ and }\left|x\right|\leq 1,
\end{align*}
as shown by the Euler-Maclaurin summation formula \cite{estrada-kanwal2}.
This implies that $\hat{f}\in\mathcal{K}'(\mathbb{R})$ and thus satisfies the Estrada-Kanwal moment asymptotic expansion,
$$
\hat{f}(\lambda u)\sim \sum_{m=0}^{\infty}\frac{(-1)^{m}\mu_{m}}{m!\lambda^{m+1}} \delta^{(m)}(u)  \ \ \ \mbox{as}\ \lambda\to\infty \mbox{ in } \mathcal{K}'(\mathbb{R}_u).
$$
The moments of $\hat{f}$ can actually be evaluated in the Ces\`{a}ro sense \cite{estrada98,estrada-kanwal2}. If $H$ denotes the Heaviside function and $(\mathrm{C})$ stands for limits in the Ces\`{a}ro sense, then \cite{duran-estrada3}
\begin{align*}
\frac{1}{2\pi}\mu_{m}
&=\frac{1}{2\pi}\left\langle \hat{f}(u),u^{m}\right\rangle
=\pi^{m}\left\langle \sum_{n=1}^{\infty}\delta(\xi-n)- H(\xi),\xi^{2m}\right\rangle
\\
&
=\pi^{m} \lim_{x\to\infty} \left(\sum_{1\leq n\leq x}n^{2m}-\int_{0}^{x}\xi^{2m}\mathrm{d}\xi\right)
= \pi^{m}\zeta(-2m) \ \ \ (\mathrm{C}),
\end{align*}
and hence $\mu_{0}=2\pi\zeta(0)=-\pi$, and $\mu_{m}=\zeta(-2m)=0$ for every $m\geq1$. Consequently,
$$
\hat{f}(\lambda u)= -\pi\frac{\delta( u)}{\lambda}+o\left(\frac{1}{\lambda^{\infty}}\right) \ \ \ \mbox{as }\lambda\to\infty \mbox{ in } \mathcal{S}'(\mathbb{R}_u),
$$
where $o\left(1/\lambda^{\infty}\right)$ means $o\left(1/\lambda^{N}\right)$ for every $N\in\mathbb N.$
Taking Fourier inverse transform, we have the weak-asymptotic expansion
\begin{equation}
\label{wnwdseq8.14}
R_{0}(\varepsilon t)=\sum_{n=1}^{\infty}e^{i \pi n^{2}\varepsilon t}=\frac{\sqrt{i}}{2}\varepsilon^{-\frac{1}{2}} (t+i0)^{-\frac{1}{2}}-\frac{1}{2}+o(\varepsilon^{\infty}) \ \ \ \mbox{in }\mathcal{S}'(\mathbb{R}_t)
\end{equation}
as $\varepsilon\to0^{+}$.

We now determine the weak-asymptotic expansion of $R_0$ at $1$. Observe that $R_0(1+t)=2R_0(4t)-R_0(t)$, thus the behavior at origin implies that
\begin{equation}
\label{wnwdseq8.15}
R_{0}(1+\varepsilon t)=-\frac{1}{2}+o(\varepsilon^{\infty}) \ \ \ \mbox{ as }\varepsilon\to0^{+} \mbox{ in }\mathcal{S}'(\mathbb{R}_t).
\end{equation}

 We return to the general case. Consider the two complex transformations
$$Kz=z+1 \ \ \ \mbox{and} \ \ \ Uz=-1/z, \ \ \ \mbox{for }z\in\mathbb{C},$$
they generate the well know modular group which leaves invariant the upper half-plane and the real line. We are more interested in the \emph{theta group}, namely, the subgroup $G_{\vartheta}$ of modular transformations generated by $K^{2}$ and $U$. Then, one readily verifies that

$$
G_{\vartheta}\cdot 0=S_{0} \ \ \ \mbox{ and } \ \ \ G_{\vartheta}\cdot 1=S_{1},
$$
that is, $S_{0}$ is the orbit of 0 under $G_{\vartheta}$ while $S_{1}$ that of 1.
Let $\vartheta$ be the \emph{Jacobi theta} function given by
$$
\vartheta(z):=1+2\sum_{n=1}^{\infty}e^{i\pi n^{2}z}, \ \ \ \Im m\: z>0.
$$
We then have the following transformation laws
$$\vartheta (K^{2}z)=\vartheta(z) \ \ \ \mbox{ and } \ \ \ \vartheta (Uz)=\sqrt{-iz}\:\vartheta(z);$$
the first of them is completely obvious, while the second one follows easily from the Poisson summation formula (cf. \cite[p. 304]{holschneider}). Observe that $\vartheta$ admits a boundary tempered distribution on the real line, which we also denote by $\vartheta$, or $\vartheta(t)$.

We recall that the pointwise space $C^{\infty}_{w}(x_{0})$ was introduced in (\ref{Cweq}).

The ensuing lemma describes the scaling weak-asymptotic properties of $R_{0}$ at points of the orbit $S_1=G_{\vartheta}\cdot 1$.
\begin{lemma}
\label{wnwdsl1} Let $r\in G_{\vartheta}\cdot 1$. Then, $R_{0}\in C_{w}^{\infty}(r)$. Furthermore, at those points, $R_{0}(r)=-1/2$ and $R^{(m)}_{0}(r)=0$, distributionally, for each $m\geq1$.
\end{lemma}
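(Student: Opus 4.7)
The plan is to propagate the expansion (\ref{wnwdseq8.15}) from $r=1$ to the entire orbit $S_1 = G_\vartheta \cdot 1$ by induction on the word length in $G_\vartheta$, using the two generators $K^2: r\mapsto r+2$ and $U: r\mapsto -1/r$. The base case is precisely (\ref{wnwdseq8.15}), and invariance under $K^{\pm 2}$ is trivial because $R_0$ is $2$-periodic (each $e^{2\pi i n^2}=1$). Thus the crux is the following inductive step: if $r \in S_1$, $r \neq 0$, and $R_0(r + \varepsilon \tau)=-1/2 + o(\varepsilon^\infty)$ in $\mathcal{S}'(\mathbb{R}_\tau)$, then the same expansion holds at $r' = -1/r$.

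The key ingredient is Jacobi's transformation law $\vartheta(-1/z) = \sqrt{-iz}\,\vartheta(z)$, which for $R_0=(\vartheta-1)/2$ reads
\begin{equation*}
R_0(-1/z) \;=\; \tfrac{1}{2}\bigl(\sqrt{-iz}-1\bigr) + \sqrt{-iz}\,R_0(z),\qquad \Im z>0.
\end{equation*}
Since $U$ preserves the upper half plane, both sides are holomorphic on $\{\Im z>0\}$ and, away from $z=0$, have distributional boundary values on the real line; in particular, the identity descends to an equality of distributions in any real neighborhood of $r\neq 0$. Substituting $z = r+\varepsilon\tau$ and invoking the induction hypothesis produces the telescoping
\begin{equation*}
R_0(-1/(r+\varepsilon\tau)) \;=\; \tfrac{1}{2}\bigl(\sqrt{-i(r+\varepsilon\tau)}-1\bigr) \;-\; \tfrac{1}{2}\sqrt{-i(r+\varepsilon\tau)} \;+\; o(\varepsilon^\infty) \;=\; -\tfrac{1}{2} + o(\varepsilon^\infty)
\end{equation*}
in $\mathcal{D}'(\mathbb{R}_\tau)$, where I use that multiplication by the smooth function $\sqrt{-i(r+\varepsilon\tau)}$, which is uniformly bounded on compact $\tau$-sets, preserves $o(\varepsilon^\infty)$ decay.

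It then remains to convert this expansion in $\tau$ into one at $r'$ in the natural variable $t$. The Möbius substitution $r'+\varepsilon t = -1/(r + \varepsilon\tau)$ yields $\tau = r^2 t /(1-\varepsilon r t)$, a smooth near-identity family of diffeomorphisms of neighborhoods of $0$; dually, any $\rho\in\mathcal{D}(\mathbb{R}_t)$ pulls back to $\rho_\varepsilon(\tau) = \rho\bigl(r'^2\tau/(1-\varepsilon r'\tau)\bigr)\,r'^2/(1-\varepsilon r'\tau)^2 \in \mathcal{D}(\mathbb{R}_\tau)$ for $\varepsilon$ small enough depending on $\mathrm{supp}\,\rho$, and the family $\{\rho_\varepsilon\}_{0<\varepsilon\leq\varepsilon_0}$ is relatively compact in $\mathcal{D}(\mathbb{R}_\tau)\subset\mathcal{S}(\mathbb{R}_\tau)$. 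Banach--Steinhaus then makes the $o(\varepsilon^\infty)$ estimate uniform over this family, giving $R_0(r'+\varepsilon t)=-1/2 + o(\varepsilon^\infty)$ in $\mathcal{D}'(\mathbb{R}_t)$, and the equivalence of $\mathcal{D}'$- and $\mathcal{S}'$-weak-asymptotics at finite points (Subsection \ref{wnwDS}) lifts this to $\mathcal{S}'(\mathbb{R}_t)$, as required; the distributional values $R_0(r')=-1/2$ and $R_0^{(m)}(r')=0$ for $m\geq 1$ are then immediate from the definition of \L ojasiewicz point values. The principal technical obstacle is precisely this change-of-variable step, as the Möbius parametrization has a singularity at $\tau = 1/(\varepsilon r')$, forcing the detour through compactly supported test functions and the final appeal to the $\mathcal{D}'/\mathcal{S}'$ equivalence.
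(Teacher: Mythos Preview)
Your proof is correct, and the overall architecture (base case (\ref{wnwdseq8.15}), trivial $K^{\pm2}$-invariance, induction via Jacobi's law for the $U$-step) matches the paper's. The difference is in how the $U$-step is executed. The paper passes to the upper half plane: it realizes $\vartheta(x+iy)$ as the $\phi$-transform $F_{\phi}\vartheta(x,y)$ for a kernel with $\hat{\phi}(-u)=e^{-u}$ on $[0,\infty)$, applies the Abelian Proposition~\ref{wnwth2} to turn the induction hypothesis into uniform estimates $|\vartheta(r+\varepsilon z)|\leq C_\nu\varepsilon^{\nu+1}(\Im m\,z)^{-k_\nu}$ on the half-disc, transports these via (\ref{wnwdseq8.16}) (the M\"obius substitution being harmless for holomorphic functions on $\Im m\,z>0$), and then closes the loop with the Tauberian Theorem~\ref{wnwth6.1}. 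You instead stay on the real boundary, invoke the boundary-value version of Jacobi's identity, handle the M\"obius reparametrization by pulling back compactly supported test functions and using Banach--Steinhaus on the resulting relatively compact family, and finally upgrade from $\mathcal{D}'$ to $\mathcal{S}'$ via Subsection~\ref{wnwDS}.

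Your route is more elementary in that it avoids the Abelian--Tauberian detour, at the cost of needing the $\mathcal{D}'/\mathcal{S}'$ equivalence and a separate justification that the modular identity descends to boundary distributions away from $0$. The paper's route showcases its own machinery and sidesteps the change-of-variable issue entirely by working where the M\"obius map is a genuine biholomorphism. Both are valid; the paper's approach is the one that also powers the parallel argument for the orbit $G_\vartheta\cdot 0$ in Proposition~\ref{wnwrwl2}, where a nonzero leading term $(t+i0)^{-1/2}$ survives and the half-plane estimates are a cleaner bookkeeping device.
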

\begin{proof} Since $R_0=(\vartheta-1)/2$, it is enough to show that $\vartheta \in C^{\infty}_{w}(r)$ whenever $r\in G_{\vartheta}\cdot 1$, and $\vartheta^{(m)}(r)=0$, distributionally, for all $m\in\mathbb{N}$. If $r=1$, (\ref{wnwdseq8.15}) shows that
$$
\vartheta(1+\varepsilon t)=o(\varepsilon^{\infty})  \ \ \ \mbox{as } \varepsilon\to0^{+} \mbox{ in } \mathcal{S}'(\mathbb{R}_{t}),
$$
and hence $\vartheta\in C^{\infty}_{w}(1)$ and $\vartheta^{(m)}(1)=0$, distributionally, for all $m\in\mathbb{N}$. If the conclusion of the theorem holds at $r$, then clearly it holds at $K^{2}r=r+2$. Therefore, it remains to show that $\vartheta\in C^{\infty}_{w}(r)$ and $\vartheta^{(m)}(r)=0$, distributionally, $m\in\mathbb{N}$, implies $\vartheta\in C^{\infty}_{w}(Ur)$ and $\vartheta^{(m)}(Ur)=0$, distributionally, $m\in\mathbb{N}$. So assume that $\vartheta$ has the desired property at $r$, hence $\vartheta$ satisfies
$$
\vartheta(r+\varepsilon t)=o(\varepsilon^{\infty})\ \ \ \mbox{as }\varepsilon\to0^{+} \mbox{ in } \mathcal{S}'(\mathbb{R}_{t}),
$$
As in Example \ref{wnwex3.5}, we find $\phi\in\mathcal{S}(\mathbb{R})$ such that $\hat{\phi}(-u)=e^{-u}$ for $u\in [0,\infty)$, then $F_{\phi}\vartheta(x,y)=\vartheta(x+iy)$, because
\begin{align*}
F_{\phi}\vartheta(x,y)&=\frac{1}{2\pi}\left\langle e^{ixu}\hat{\vartheta}(u),e^{-yu}\right\rangle 
\\
&
=\left\langle  \delta(u)+2\sum_{n=1}^\infty e^{i\pi x n^2 }\delta(u-\pi n^2),e^{-yu}\right\rangle.
\end{align*}
 By Proposition \ref{wnwth2}, we can find sequences of positive reals $\left\{C_{\nu}\right\}_{\nu=0}^{\infty}$ and positive integers $\left\{k_{\nu}\right\}_{\nu=0}^{\infty}$ such that for each $\nu\in\mathbb{N}$
$$
\left|\vartheta(r+\varepsilon z)\right|\leq \frac{C_{\nu}\varepsilon^{\nu+1}}{(\Im m\: z)^{k_{\nu}}}, \ \ \ \mbox{for all } \varepsilon\in(0,1] \mbox{ and } 0<\left|z\right|\leq 1.$$
But since $\vartheta(U z)=\sqrt{-iz}\:\vartheta (z)$, we have that
\begin{equation}\label{wnwdseq8.16}
\vartheta\left(-\frac{1}{r}+\varepsilon z\right)=\sqrt{\frac{-ir}{1-\varepsilon rz}}\:\vartheta\left( r+ \frac{\varepsilon zr^{2}}{1-\varepsilon zr}\right),
\end{equation}
from where we obtain that
$$
\left|\vartheta\left(-\frac{1}{r}+\varepsilon z\right)\right|\leq \frac{M_{\nu}\varepsilon^{\nu+1}}{(\Im m\: z)^{k_{\nu}}} ,\  \mbox{for all } \nu\in\mathbb{N},\ \varepsilon\in(0,\varepsilon_0] \mbox{ and } 0<\left|z\right|\leq 1,
$$
for suitable $\varepsilon_{0}>0$ and $\left\{M_{\nu}\right\}_{\nu=0}^{\infty}$. Theorem \ref{wnwth6.1} yields immediately $\vartheta\in C^{\infty}_{w}(-1/r)$ and actually $\vartheta^{(m)}(-1/r)=0$, distributionally, for all $m\in\mathbb{N}$. This completes the proof.
\end{proof}
If we introduce the pointwise space
\begin{equation}
\label{wnwspaceO}
\mathcal{O}^{\infty}(x_{0}):=\bigcap_{\alpha\in\mathbb{R}}\mathcal{O}^{\alpha}(x_{0}), \ \ \ x_{0}\in\mathbb{R},
\end{equation}
where $\mathcal{O}^{\alpha}(x_{0})$ was defined in Subsection \ref{wnwla}, we may rephrase Lemma \ref{wnwdsl1} by saying that $R_{0}+1/2\in\mathcal{O}^{\infty}(r)$ for each $r\in G_{\vartheta}\cdot 1$.

Observe that Lemma \ref{wnwdsl1} gives then the full weak-asymptotic expansion of $R_0$ at $r=(2j+1)/(2\nu+1)$, $j,\nu\in\mathbb{Z}$, namely,
$$
R_0 \left(r+\varepsilon t\right)=-\frac{1}{2}+o(\varepsilon^{\infty}) \ \ \ \mbox{as }\varepsilon\to0^{+} \mbox{ in } \mathcal{S}'(\mathbb{R}_{t}).
$$
It then follows by Fourier transforming that
$$
\sum_{n=1}^{\infty}e^{i\pi rn^{2}}\delta(\lambda u -\pi n^{2})=-\frac{\delta(u)}{2\lambda}+o\left(\frac{1}{\lambda^{\infty}}\right)  \ \ \ \mbox{as }\lambda\to\infty \mbox{ in } \mathcal{S}'(\mathbb{R}_{u}).
$$
The above expansion yields that $$\sum_{n=1}^{\infty}e^{i\pi rn^{2}}\delta(u -\pi n^{2})\in\mathcal{K}'(\mathbb{R}_{u}).$$ Its moment series are Ces\`{a}ro summable and its moment function defines an entire function \cite{duran-estrada2,estrada98} (cf. \cite[Thm. 6.7.2 and Thm. 6.11.1]{estrada-kanwal2}). Indeed, since the support of $\sum_{n=1}^{\infty}e^{i\pi rn^{2}}\delta(u -\pi n^{2})$ is $[\pi,\infty)$, we can multiply it by $u^{-\beta}$ and so
$$\left(\frac{\pi}{u}\right)^{\beta}\sum_{n=1}^{\infty}e^{i\pi rn^{2}}\delta(u -\pi n^{2})=\sum_{n=1}^{\infty}n^{-2\beta}e^{i\pi r n^{2}}\delta(u -\pi n^{2})\in\mathcal{K}'(\mathbb{R}_{u}),$$
for each $\beta\in\mathbb{C}$. Therefore, the above distribution admits a moment asymptotic expansion at $\infty$, and, by taking inverse Fourier transform, we readily verify that its $m$-th moment is given by $(i\pi)^{-m}R^{(m)}_{\beta}(r)$, the point values interpreted naturally in the distributional sense. Summarizing, we have obtained the complete pointwise behavior of $R_{\beta}$ at the points of the orbit $G_{\vartheta}\cdot 1$.
\begin{theorem}
\label{wnwRWfunctionth1} Let $r\in G_{\vartheta}\cdot 1$. Then $R_{\beta}\in C^{\infty}_{w}(r)$ for any $\beta\in\mathbb{C}$. Moreover, the Dirichlet series
\begin{equation}
\label{wnwRWfunctioneq8.17}
\zeta_{r}(z)=\sum_{n=1}^{\infty}\frac{e^{i\pi rn^{2}}}{n^{z}} \ \ \ (\mathrm{C}), \ \ \ z\in\mathbb{C},
\end{equation}
defines an entire function in $z$, where the sums of series for $\Re e\:z<1$ are taken in the Ces\`{a}ro sense, and they are convergent on the closed half-plane $\Re e\:z\geq 1$. In particular, the \L ojasiewicz point values of the derivatives of $R_{\beta}$ at points of the orbit $G_{\vartheta}\cdot 1$ are given by
\begin{equation}
\label{wnwRWfunctioneq8.18}
R^{(m)}_{\beta}\left(r\right)= (i\pi)^{m} \zeta_{r}(2\beta-2m), \ \ \mbox{distributionally}, \ \ \ \mbox{for all } m\in\mathbb{N}.
\end{equation}
\end{theorem}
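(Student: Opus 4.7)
The plan is to bootstrap from Lemma \ref{wnwdsl1} (the case $\beta=0$) to arbitrary $\beta\in\mathbb{C}$ via a Fourier-side argument, invoking the wavelet characterization of distributionally small distributions from Subsection \ref{wnwds} together with the Estrada-Kanwal moment asymptotic expansion (Example \ref{wnwex2.3}). Once this is set up, the formula for the derivative point values and the entirety of $\zeta_r$ will follow by inverting the Fourier transform and reading off the coefficients.

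First, I would translate Lemma \ref{wnwdsl1} to the Fourier side. The lemma asserts, for $r\in G_\vartheta\cdot 1$, that $R_0(r+\varepsilon t)+1/2=o(\varepsilon^\infty)$ in $\mathcal{S}'(\mathbb{R}_t)$. By the characterization in Subsection \ref{wnwds}, a tempered distribution belongs to $\mathcal{K}'$ iff its inverse Fourier transform lies in $C^\infty_w(0)$. Since $\widehat{R_0(r+\,\cdot\,)}(u)=2\pi\sum_{n=1}^\infty e^{i\pi rn^2}\delta(u-\pi n^2)$, I conclude
\[
T_r(u):=\sum_{n=1}^\infty e^{i\pi rn^2}\delta(u-\pi n^2)\in\mathcal{K}'(\mathbb{R}).
\]

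Second, since $\operatorname{supp} T_r\subseteq[\pi,\infty)$, I would choose a smooth cutoff $\chi\in C^\infty(\mathbb{R})$ with $\chi(u)=(\pi/u)^\beta$ for $u\ge\pi/2$ and $\chi\equiv 0$ near the origin; then $\chi\in\mathcal{K}(\mathbb{R})$ is a GLS symbol, and multiplication by elements of $\mathcal{K}$ preserves $\mathcal{K}'$, yielding
\[
S_{r,\beta}(u):=\chi(u)T_r(u)=\sum_{n=1}^\infty\frac{e^{i\pi rn^2}}{n^{2\beta}}\delta(u-\pi n^2)\in\mathcal{K}'(\mathbb{R}).
\]
Applying again the wavelet characterization of Subsection \ref{wnwds} in reverse, the inverse Fourier transform of $2\pi S_{r,\beta}$, which equals $R_\beta(r+\,\cdot\,)$, lies in $C^\infty_w(0)$; that is, $R_\beta\in C^\infty_w(r)$.

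Third, I would extract the moment asymptotic expansion of $S_{r,\beta}$ guaranteed by membership in $\mathcal{K}'$:
\[
S_{r,\beta}(\lambda u)\sim\sum_{m=0}^\infty\frac{(-1)^m\mu_m}{m!\lambda^{m+1}}\delta^{(m)}(u)\qquad\text{as }\lambda\to\infty\ \text{in }\mathcal{S}'(\mathbb{R}_u),
\]
where $\mu_m=\langle S_{r,\beta}(u),u^m\rangle$ is evaluated in the Ces\`aro sense and formally equals
\[
\mu_m=\pi^m\sum_{n=1}^\infty\frac{e^{i\pi rn^2}}{n^{2\beta-2m}}=\pi^m\zeta_r(2\beta-2m)\ (\mathrm{C}).
\]
The fact that this Ces\`aro limit exists for every $z\in\mathbb{C}$ (with $z=2\beta-2m$), and that the resulting function $z\mapsto\zeta_r(z)$ is entire, is precisely the content of the Dur\'an-Estrada theory invoked in the passage preceding the theorem: the moment function of a fixed element of $\mathcal{K}'$, evaluated against the holomorphic family of symbols $u\mapsto u^{-z}\chi(u)$, depends holomorphically on $z\in\mathbb{C}$ and reduces to the absolutely convergent Dirichlet series on $\{\Re e\,z>1\}$.

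Finally, I would inverse Fourier transform the moment expansion. Using $\mathcal{F}^{-1}(\delta^{(m)})(t)=(-it)^m/(2\pi)$ and $R_\beta(r+t)=\langle S_{r,\beta}(u),e^{itu}\rangle$, a direct substitution (with $\varepsilon=1/\lambda$) yields the full weak-asymptotic series
\[
R_\beta(r+\varepsilon t)=\sum_{m=0}^\infty\frac{(i\pi)^m\zeta_r(2\beta-2m)}{m!}\,t^m\varepsilon^m+o(\varepsilon^\infty)\qquad\text{in }\mathcal{S}'(\mathbb{R}_t),
\]
which, compared with the distributional Taylor expansion encoded in $C^\infty_w(r)$ (cf.\ Proposition \ref{wnwp9.1} and the comment after), gives the \L ojasiewicz point values $R^{(m)}_\beta(r)=(i\pi)^m\zeta_r(2\beta-2m)$. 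The principal obstacle is the analytic continuation/Ces\`aro summability claim for $\zeta_r$; I would isolate it in a short lemma relying on Dur\'an-Estrada \cite{duran-estrada2,duran-estrada3,estrada98}, after which the rest of the argument is a bookkeeping exercise in Fourier duality.
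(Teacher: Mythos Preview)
Your approach is essentially the same as the paper's: start from Lemma \ref{wnwdsl1}, pass to the Fourier side to see that $T_r=\sum e^{i\pi rn^2}\delta(u-\pi n^2)\in\mathcal{K}'(\mathbb{R})$, multiply by $(\pi/u)^\beta$ (legitimate because the support is contained in $[\pi,\infty)$), invoke the moment asymptotic expansion together with the Dur\'an--Estrada results to get entirety of $\zeta_r$ and the values of the moments, and then inverse Fourier transform to read off $R_\beta\in C^\infty_w(r)$ and the formula for $R_\beta^{(m)}(r)$.

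There is, however, one genuine omission. The statement asserts that the Dirichlet series \emph{converges} (in the ordinary sense, not merely Ces\`aro) on the closed half-plane $\Re e\,z\geq 1$. Your argument delivers absolute convergence for $\Re e\,z>1$ and Ces\`aro summability everywhere, but says nothing about ordinary convergence on the line $\Re e\,z=1$. The paper handles exactly this point separately: once $\zeta_r$ is known to extend holomorphically past $\Re e\,z=1$, the convergence of $\sum_{n\geq 1} e^{i\pi rn^2}n^{-1-iy}$ for every $y\in\mathbb{R}$ follows from the Newman--Ingham Tauberian theorem for Dirichlet series \cite{ingham,korevaarbook,newman}. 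You should add this step; without it the theorem is not fully proved.
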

\begin{proof} We have already shown everything except the convergence of the series (\ref{wnwRWfunctioneq8.17}) for $\Re e\:z=1$. But since $\zeta_{r}$ admits an analytic continuation beyond $\Re e\:z=1$, the convergence of
\begin{equation}
\label{wnwserieseq8.19}
\sum_{n=1}^{\infty}\frac{e^{i\pi r n^{2}}}{n^{1+yi}}
\end{equation}
is then a consequence of the Newman-Ingham Tauberian theorem for Dirichlet series \cite{ingham,korevaarbook,newman}.
\end{proof}

It is implicit in Theorem \ref{wnwRWfunctionth1} that $R_{\beta}$ admits a weak-asymptotic series at the points of $G_{\vartheta}\cdot 1$.
\begin{corollary}
\label{wnwRWcexp} Let $r\in G_{\vartheta}\cdot 1$. Then, for any $\beta\in\mathbb{C}$,
\begin{equation*}
R_\beta \left(r+\varepsilon t\right)\sim \sum_{m=0}^{\infty} \frac{\zeta_{r}(2\beta-2m)}{m!}(i\varepsilon\pi t)^{m} \ \ \ \mbox{as } \varepsilon\to0^{+} \mbox{ in } \mathcal{S}'(\mathbb{R}_{t}).
\end{equation*}
\end{corollary}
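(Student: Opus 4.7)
The plan is to read off the expansion directly from Theorem \ref{wnwRWfunctionth1} using the ``Taylor formula'' characterization of the pointwise weak spaces $C_w^{\alpha}(r)$ stated at the end of Subsection \ref{wnwla}. Recall that theorem already supplies two essential ingredients: first, $R_{\beta} \in C_w^{\infty}(r) = \bigcap_{\alpha \in \mathbb{R}} C_w^{\alpha}(r)$, and second, each distributional derivative of $R_\beta$ at $r$ exists in the \L ojasiewicz sense and equals $R_{\beta}^{(m)}(r) = (i\pi)^m \zeta_r(2\beta - 2m)$.

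First I would fix an arbitrary $\alpha > 0$ with $\alpha \notin \mathbb{N}$. By definition of $C_w^{\infty}(r)$, we have $R_\beta \in C_w^{\alpha}(r)$, so the discussion after Proposition \ref{wnwp9.2} applies: the \L ojasiewicz values $R_\beta^{(m)}(r)$ exist for $m < \alpha$ and
\[
R_\beta(t) - \sum_{m < \alpha} \frac{R_\beta^{(m)}(r)}{m!} (t - r)^{m} \in \mathcal{O}^{\alpha}(r).
\]
Translating this into scaled coordinates $t \mapsto r + \varepsilon t$ and substituting the explicit formula (\ref{wnwRWfunctioneq8.18}) for $R_\beta^{(m)}(r)$ yields
\[
R_\beta(r + \varepsilon t) - \sum_{m < \alpha} \frac{\zeta_r(2\beta - 2m)}{m!} (i \pi \varepsilon t)^{m} = O(\varepsilon^{\alpha}) \quad \text{in } \mathcal{S}'(\mathbb{R}_t),
\]
as $\varepsilon \to 0^+$. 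Terms with $m \geq \alpha$ in the putative expansion are absorbed into the $O(\varepsilon^{\alpha})$ remainder since they are of order $\varepsilon^{m}$ with $m \geq \alpha$.

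Next I would let $\alpha \to \infty$ through non-integral values to conclude the full weak-asymptotic series, which by the notational convention introduced after Definition \ref{wnwd2} is exactly the statement
\[
R_\beta(r + \varepsilon t) \sim \sum_{m=0}^{\infty} \frac{\zeta_r(2\beta - 2m)}{m!} (i\pi \varepsilon t)^{m} \quad \text{as } \varepsilon \to 0^+ \text{ in } \mathcal{S}'(\mathbb{R}_t).
\]

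I do not anticipate a serious obstacle here: the only point requiring minor care is the identification of the coefficients of the Taylor-type polynomial with the values $(i\pi)^m \zeta_r(2\beta - 2m)/m!$, but this is immediate from (\ref{wnwRWfunctioneq8.18}) and the uniqueness of the \L ojasiewicz point values. The entire argument is essentially a repackaging of Theorem \ref{wnwRWfunctionth1} into the asymptotic expansion notation.
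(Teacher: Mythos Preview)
Your proposal is correct and is essentially the argument the paper leaves implicit: the paper gives no separate proof of this corollary, stating only that it ``is implicit in Theorem \ref{wnwRWfunctionth1},'' and your use of the Taylor-formula characterization of $C_w^{\alpha}(r)$ together with the explicit values (\ref{wnwRWfunctioneq8.18}) is precisely the natural way to unpack that implication. The restriction to non-integral $\alpha$ is harmless but unnecessary, since the Taylor-formula characterization stated after Proposition \ref{wnwp9.2} holds for all $\alpha>0$.
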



We now proceed to study the pointwise properties of $R_{\beta}$ on the orbit $G_{\vartheta}\cdot 0$. As usual, we start with $R_{0}$. We use the pointwise space defined by (\ref{wnwspaceO}) in the next proposition.

\begin{proposition}\label{wnwrwl2} At any point $r\in G_{\vartheta}\cdot 0$, there exists a constant $\mathfrak{p}_r\in\mathbb{C}$ such that
$$
R_0(t)-\frac{\sqrt{i}}{2}\mathfrak{p}_r(t-r+i0)^{-\frac{1}{2}}+1/2\in \mathcal{O}^{\infty}(r).
$$
Moreover, the constants $\mathfrak{p}_r$ are completely determined by the transformation equations:
\begin{equation}
\label{reseq}
\mathfrak{p}_0=1, \ \ \ \mathfrak{p}_{K^{2}r}=\mathfrak{p}_r,  \ \ \ \mbox{and}\ \ \ \mathfrak{p}_{Ur}=\sqrt{-\frac{i}{r}}\:\mathfrak{p}_r.
\end{equation}
\end{proposition}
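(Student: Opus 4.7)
The plan is to argue by induction on the length of a word in $K^2$ and $U$ representing $r\in G_\vartheta\cdot 0$, building on the base case $r=0$ and the two transformation laws separately. Throughout it is convenient to work with $\vartheta = 2R_0+1$ and recast the desired conclusion as
$$\vartheta(t) - \sqrt{i}\,\mathfrak{p}_r\,(t-r+i0)^{-1/2}\in \mathcal{O}^\infty(r),$$
exploiting the fact that $(t-r+i0)^{-1/2}$ is the tempered boundary value of $(z-r)^{-1/2}$ from the upper half-plane.

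For the base case, the expansion (\ref{wnwdseq8.14}) states precisely that $R_0(t) - \tfrac{\sqrt{i}}{2}(t+i0)^{-1/2} + \tfrac12\in\mathcal{O}^\infty(0)$, so $\mathfrak{p}_0=1$. For the $K^2$-step, periodicity $R_0(t+2)=R_0(t)$ immediately translates the asymptotic at $r$ into the asymptotic at $r+2$ with the same constant, yielding $\mathfrak{p}_{K^2 r} = \mathfrak{p}_r$.

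The heart of the argument is the $U$-step. Following the strategy of Lemma \ref{wnwdsl1}, I would pick $\phi\in\mathcal{S}(\mathbb{R})$ with $\hat\phi(-u)=e^{-u}$ for $u\geq 0$, so that the $\phi$-transform coincides with the analytic extension to the upper half-plane: $F_\phi\vartheta(x,y)=\vartheta(x+iy)$ and $F_\phi(t+i0)^{-1/2}(x,y) = (x+iy)^{-1/2}$. By the Abelian result Proposition \ref{wnwth2}, the assumption at $r\neq 0$ is equivalent to the family of bounds, for every $N$,
\begin{equation*}
\left|\vartheta(r+\varepsilon(x+iy)) - \sqrt{i}\,\mathfrak{p}_r\varepsilon^{-1/2}(x+iy)^{-1/2}\right|\leq \frac{C_N\,\varepsilon^N}{y^{k_N}},
\end{equation*}
uniformly for $|x|^2+y^2=1$, $y>0$, $0<\varepsilon\leq\varepsilon_0$. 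Applying the identity $\vartheta(w) = \sqrt{i/w}\,\vartheta(-1/w)$ (a repackaging of $\vartheta(Uz)=\sqrt{-iz}\,\vartheta(z)$) to $w=-1/r+\varepsilon z$ and writing $-1/w = r+\varepsilon\tilde z$ with $\tilde z = r^2z/(1-\varepsilon rz)$ gives
$$\vartheta(-1/r+\varepsilon z) = \sqrt{\frac{-ir}{1-\varepsilon rz}}\,\vartheta(r+\varepsilon\tilde z).$$
Substituting the assumed expansion at $r$ and using the principal-branch identity $\tilde z^{-1/2} = |r|^{-1}(1-\varepsilon rz)^{1/2}z^{-1/2}$, the factors $(1-\varepsilon rz)^{\pm 1/2}$ cancel in the leading term, which reduces to $\sqrt{-ir}\,|r|^{-1}\sqrt{i}\,\mathfrak{p}_r\,\varepsilon^{-1/2}z^{-1/2}$. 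A short branch-check shows $\sqrt{-ir}/|r| = \sqrt{-i/r}$ in either sign of $r$, giving the desired leading coefficient $\mathfrak{p}_{-1/r}=\sqrt{-i/r}\,\mathfrak{p}_r$. Since $\Im m\,\tilde z \geq c\,\Im m\,z$ for small $\varepsilon$, the error terms remain of order $O(\varepsilon^N/y^{k_N})$; applying Theorem \ref{wnwth6.1} in the reverse direction converts these bounds back into weak-asymptotic boundedness of every positive degree at $-1/r$. Iterating $K^2$ and $U$ then delivers the statement on the full orbit $G_\vartheta\cdot 0$.

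The main obstacle is the careful bookkeeping of branch cuts of the square roots appearing in the inversion: one must verify that the principal-branch choices of $\sqrt{-ir}$, $\sqrt{i/w}$, $z^{-1/2}$, $\tilde z^{-1/2}$, and $(1-\varepsilon rz)^{1/2}$ assemble consistently so that the leading-order identity $\sqrt{-ir}/|r|=\sqrt{-i/r}$ holds with the same sign convention in both cases $r>0$ and $r<0$, and that the nonlinear change of variables $z\mapsto\tilde z$ does not degrade the Tauberian estimates by more than multiplicative constants absorbable into new $C_N, k_N$.
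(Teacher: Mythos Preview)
Your proposal is correct and follows essentially the same route as the paper: reduce to $\vartheta$, use the $\phi$-transform to pass to the holomorphic extension, convert the weak-asymptotic hypothesis at $r$ into uniform bounds on $\vartheta(r+\varepsilon z)-b_r(\varepsilon z)^{-1/2}$ via the Abelian Proposition~\ref{wnwth2}, transport these through the identity~(\ref{wnwdseq8.16}), and then invoke Theorem~\ref{wnwth6.1} to return to the weak-asymptotic conclusion at $-1/r$. Your treatment is in fact more explicit than the paper's on the square-root bookkeeping; the paper simply writes the transformed estimate with the constant $\sqrt{-i/r}\,b_r$ without spelling out the cancellation of the $(1-\varepsilon r z)^{\pm1/2}$ factors or the identity $\sqrt{-ir}/|r|=\sqrt{-i/r}$.
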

Proposition \ref{wnwrwl2} means that, at any point of the orbit $G_{\vartheta}\cdot 0$, we have the weak-asymptotic expansion
\begin{equation}
\label{wnwrweq8.23}
R_0(r+\varepsilon t)= \frac{\sqrt{i}}{2}\mathfrak{p}_r\varepsilon^{-\frac{1}{2}}( t+i0)^{-\frac{1}{2}}-\frac{1}{2}+ o(\varepsilon^{\infty}) \ \ \ \mbox{as }\varepsilon\to0^{+} \mbox{ in }\mathcal{S}'(\mathbb{R}_{t}).
\end{equation}
\begin{proof} Invoking again $R_0=(\vartheta-1)/2$, it is enough to show that $\vartheta(t)-b_r(t-r+i0)^{-\frac{1}{2}}\in \mathcal{O}^{\infty}(r)$, $r\in G_{\vartheta}\cdot 0$, where $
b_r=\sqrt{i}\:\mathfrak{p}_r$. The property is satisfied at $r=0$ because of (\ref{wnwdseq8.14}). It then suffices to show that if $\vartheta(t)-b_r(t-r+i0)^{-\frac{1}{2}}\in \mathcal{O}^{\infty}(r)$, $r\neq0$, then $\vartheta(t)-b_{Ur}(t-Ur+i0)^{-\frac{1}{2}}\in \mathcal{O}^{\infty}(Ur)$, with
$b_{Ur}=(-i/r)^{1/2}b_r$. A similar argument to the one applied in the proof of Lemma \ref{wnwdsl1} shows that the latter assumption gives the existence of a sequence of positive numbers $\left\{C_{\nu}\right\}_{\nu=0}^{\infty}$ and a sequence of positive integers $\left\{k_{\nu}\right\}_{\nu=0}^{\infty}$ such that, for all $\nu\in\mathbb{N}$, $\varepsilon\in(0,1]$ and $0<\left|z\right|\leq1$,
$$
\left|\vartheta(r+\varepsilon z)-b_r(\varepsilon z)^{-\frac{1}{2}}\right|\leq  \frac{C_{\nu}\varepsilon^{\nu+1}}{(\Im m\:z)^{k_{\nu}}}.
$$
Consequently, because of (\ref{wnwdseq8.16}), for suitable $\varepsilon_{0}>0$ and $\left\{M_{\nu}\right\}_{\nu=0}^{\infty}$, we obtain, for all $\nu\in\mathbb{N}$, $\varepsilon\in(0,\varepsilon_0]$ and $0<\left|z\right|\leq1$,
$$
\left|\vartheta\left(-\frac{1}{r}+\varepsilon z\right)- \sqrt{-\frac{i}{r}}\:b_r(\varepsilon z)^{-\frac{1}{2}}\right|\leq  \frac{M_{\nu}\varepsilon^{\nu+1}}{(\Im m\:z)^{k_{\nu}}},
$$
and, by Theorem \ref{wnwth6.1}, we conclude $\vartheta(t)-b_{Ur}(t-Ur+i0)^{-\frac{1}{2}}\in \mathcal{O}^{\infty}(Ur)$, as required.
\end{proof}

Depending on whether $\beta=1/2$ or $\beta\neq1/2$, the distributions $R_{\beta}$ will behave differently on the orbit of $0$ under the theta group. This fact is intimately connected with the analytic continuation of $\zeta_{r}$ for $r\in G_{\vartheta}\cdot 0$, which is obtained in the next proposition.

\begin{proposition}
\label{wnwrwp1}
Let $r\in G_{\vartheta}\cdot 0$. Then, $\zeta_{r}$ admits an analytic continuation to $\mathbb{C}\setminus\left\{1\right\}$. Furthermore, $\zeta_{r}$ has a simple pole at $z=1$ with residue $\mathfrak{p}_{r}$, determined by (\ref{reseq}), and the entire function
\begin{equation}
\label{wnwrweq8.24} A_{r}(z)=\zeta_{r}(z)-\frac{\mathfrak{p}_{r}}{z-1}
\end{equation}
can be expressed as the Ces\`{a}ro limit
\begin{equation}
\label{wnwrweq8.25} A_{r}(z)= \lim_{x\to\infty} \sum_{1\leq n<x} \frac{ e^{i\pi r n^{2}}}{n^{z}}-\mathfrak{p}_{r}\int_{1}^{x}\frac{\mathrm{d}\xi}{\xi^{z}}\ \ \ \mathrm{(C)}.
\end{equation}
\end{proposition}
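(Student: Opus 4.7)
The strategy is to Fourier-transform the weak-asymptotic expansion (\ref{wnwrweq8.23}) provided by Proposition \ref{wnwrwl2} and then exploit the characterization of distributionally small distributions recalled in Subsection \ref{wnwds}. Writing $g(t):=R_{0}(r+t)$, so that $\hat g(u)=2\pi\sum_{n\ge 1}e^{i\pi rn^{2}}\delta(u-\pi n^{2})$, I Fourier-transform (\ref{wnwrweq8.23}) in $t$ using $\widehat{(t+i0)^{-1/2}}(u)=(2\sqrt{\pi}/\sqrt{i})\,u_{+}^{-1/2}$ (the same identity that produced (\ref{wnwdseq8.14})); setting $\lambda=1/\varepsilon$ and $h_{r}:=\hat g-\mathfrak{p}_{r}\sqrt{\pi}\,u_{+}^{-1/2}$, this yields
$$h_{r}(\lambda u)=-\pi\lambda^{-1}\delta(u)+o(\lambda^{-\infty})\quad\text{in }\mathcal{S}'(\mathbb{R}_{u}),\qquad \lambda\to\infty.$$
All moments of $h_{r}$ vanish except $\mu_{0}=-\pi$, so $h_{r}$ satisfies the Estrada--Kanwal moment asymptotic expansion of Example \ref{wnwex2.3}; by the material of Subsection \ref{wnwds}, $h_{r}\in\mathcal{K}'(\mathbb{R})$ and the expansion holds in $\mathcal{K}'$.

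Next, fix $\chi\in C^{\infty}(\mathbb{R})$ with $\chi\equiv 0$ on $(-\infty,\pi/2]$ and $\chi\equiv 1$ on $[\pi,\infty)$, and set $\phi_{z}(u):=\chi(u)(u/\pi)^{-z/2}$. Then $\phi_{z}\in\mathcal{K}(\mathbb{R})$ for every $z\in\mathbb{C}$, and $z\mapsto\phi_{z}$ is holomorphic from $\mathbb{C}$ into $\mathcal{K}(\mathbb{R})$. Since $\operatorname*{supp}\hat g\subseteq[\pi,\infty)\subseteq\{\chi=1\}$, for $\Re e\,z>1$
$$2\pi\zeta_{r}(z)=\langle\hat g,\phi_{z}\rangle=\mathfrak{p}_{r}\sqrt{\pi}\,\langle u_{+}^{-1/2},\phi_{z}\rangle+\langle h_{r},\phi_{z}\rangle.$$
A direct integration (splitting $[0,\infty)=[\pi/2,\pi]\cup[\pi,\infty)$) gives
$$\mathfrak{p}_{r}\sqrt{\pi}\,\langle u_{+}^{-1/2},\phi_{z}\rangle=\mathfrak{p}_{r}\pi^{(z+1)/2}\,c(z)+\frac{2\pi\mathfrak{p}_{r}}{z-1},\qquad c(z):=\int_{\pi/2}^{\pi}u^{-(z+1)/2}\chi(u)\,du,$$
with $c(z)$ entire; and $\langle h_{r},\phi_{z}\rangle$ is entire in $z$ because $h_{r}\in\mathcal{K}'$ while $\phi_{z}$ depends holomorphically on $z$ in $\mathcal{K}$. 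Therefore $\zeta_{r}$ continues meromorphically to $\mathbb{C}\setminus\{1\}$ with a simple pole of residue $\mathfrak{p}_{r}$ at $z=1$, and $A_{r}$ is entire.

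For (\ref{wnwrweq8.25}) I invoke the general fact (\cite{estrada98,estrada-kanwal2}, cf.\ Subsection \ref{wnwds}) that for $h\in\mathcal{K}'(\mathbb{R})$ supported in $[0,\infty)$ and $\phi\in\mathcal{K}$ one has $\langle h,\phi\rangle=\lim_{x\to\infty}\int_{0}^{x}h(u)\phi(u)\,du\ (\mathrm{C})$. Applying this with $h=h_{r}$, $\phi=\phi_{z}$, the $\delta$-masses contribute $2\pi\sum_{1\le n<\sqrt{x/\pi}}e^{i\pi rn^{2}}n^{-z}$, whereas the $u_{+}^{-1/2}$-part gives, after the substitution $u=\pi v^{2}$ on $[\pi,x]$, the quantity $-\mathfrak{p}_{r}\pi^{(z+1)/2}c(z)-2\pi\mathfrak{p}_{r}\int_{1}^{\sqrt{x/\pi}}v^{-z}\,dv$. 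Substituting these into the identity for $2\pi\zeta_{r}(z)$ above, the $x$-independent term $\mathfrak{p}_{r}\pi^{(z+1)/2}c(z)$ cancels; dividing by $2\pi$ and relabelling $X=\sqrt{x/\pi}$ produces exactly (\ref{wnwrweq8.25}).

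The only genuinely delicate step is the promotion of the $\mathcal{S}'$-level asymptotic for $h_{r}$ to membership of $h_{r}$ in $\mathcal{K}'$ via Subsection \ref{wnwds}; this is what licences the pairings $\langle h_{r},\phi_{z}\rangle$ and their evaluation as Cesàro-regularised integrals. Once this is granted, the remainder of the proof is an elementary bookkeeping with the cutoff $\chi$, explicit integration of $u^{-(z+1)/2}$, and the change of variables $u=\pi v^{2}$.
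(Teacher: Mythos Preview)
Your proof is correct and follows essentially the same route as the paper: Fourier-transform the weak-asymptotic expansion (\ref{wnwrweq8.23}), deduce that the resulting distribution lies in $\mathcal{K}'$ via Estrada's characterization, and then read off the analytic continuation and the Ces\`aro representation by pairing with symbol-class test functions. The only cosmetic difference is that the paper truncates the \emph{distribution} (replacing $u_+^{-1/2}$ by $u^{-1/2}H(u-\pi)$ so that the support sits in $[\pi,\infty)$ and one may pair directly with $u^{-z/2}$), whereas you leave $h_r$ supported on $[0,\infty)$ and instead insert the cutoff $\chi$ into the \emph{test function}; these are dual devices and your bookkeeping with the entire term $c(z)$ is correct.

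One small point worth tightening: your final ``relabelling $X=\sqrt{x/\pi}$'' silently converts a Ces\`aro limit in $x$ into a Ces\`aro limit in $X$, and Ces\`aro summability is not in general preserved under nonlinear reparametrisations. The paper sidesteps this by performing the change of variables $u=\pi\xi^{2}$ at the level of the distribution before invoking the Ces\`aro evaluation, so that the (C)-limit is taken directly in the $\xi$-variable. Doing the same here (or noting that $u\mapsto\sqrt{u/\pi}$ preserves $\mathcal{K}'$ on supports bounded away from $0$, so the pushforward is again in $\mathcal{K}'$ and its pairing with $\xi^{-z}$ is Ces\`aro computable in $\xi$) closes the gap cleanly.
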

\begin{proof}
Observe that the formula (\ref{wnwrweq8.24}) holds when $\Re e\:z>1$, so it suffices to prove that (\ref{wnwrweq8.25}) defines truly an entire function. Fourier transforming (\ref{wnwrweq8.23}),
we obtain the moment asymptotic expansion
$$
\sum_{n=1}^{\infty}e^{i\pi rn^{2}}\delta(\lambda u-\pi n^{2})- \frac{\mathfrak{p}_{r}}{2}(\lambda\pi u)_{+}^{-\frac{1}{2}}= -\frac{\delta(x)}{2\lambda}+o\left(\frac{1}{\lambda^{\infty}}\right) \ \ \ \mbox{as }\lambda\to\infty
$$
in $\mathcal{S}'(\mathbb{R}_{u})$, and so,
\begin{equation*}
f(u)=\sum_{n=1}^{\infty}e^{i\pi rn^{2}}\delta(\lambda u-\pi n^{2})- \frac{\mathfrak{p}_{r}}{2}(\pi u)_{+}^{-\frac{1}{2}}\in\mathcal{K}'(\mathbb{R}_{u}).
\end{equation*}
Set
$$g(u)=\sum_{n=1}^{\infty}e^{i\pi r n^{2}}\delta(u-\pi n^{2})-\frac{\mathfrak{p}_{r}}{2}(\pi u)^{-\frac{1}{2}}H(u-\pi),$$
where $H$ is the Heaviside function. We show that $g\in \mathcal{K}'(\mathbb{R})$. Indeed, $g-f\in \mathcal{E}'(\mathbb{R})\subset\mathcal{K}'(\mathbb{R})$, where $\mathcal{E}'(\mathbb{R})$ is the space of compactly supported distributions. Now, $\operatorname*{supp}g=[\pi,\infty)$, then its moment function $\left\langle g(u),u^{z}\right\rangle$ is entire and can be computed in the Ces\`{a}ro sense \cite[Chap. 6]{estrada-kanwal2}, hence the function
$$
A_{r}(z)=\pi^{-z}\left\langle g(u),u^{-\frac{z}{2}}\right\rangle$$
is entire. It coincides with (\ref{wnwrweq8.25}) because
\begin{align*}
\pi^{-z}\left\langle g(u),u^{-\frac{z}{2}}\right\rangle&= \left\langle \sum_{n=1}^{\infty}e^{i\pi rn^{2}}\delta(\xi-n)- \mathfrak{p}_{r}H(\xi-1),\xi^{-z}\right\rangle
\\
&
=\lim_{x\to\infty} \sum_{1\leq n<x} \frac{ e^{i\pi r n^{2}}}{n^{z}}-\mathfrak{p}_{r}\int_{1}^{x}\frac{\mathrm{d}\xi}{\xi^{z}}\ \ \ \mathrm{(C)}.
\end{align*}
\end{proof}
We are ready to describe the pointwise behavior of $R_{\beta}$ on $G_{\vartheta}\cdot 0$. We define the \emph{generalized gamma constant} as
$$\gamma_{r}:=A_{r}(1).$$
Observe that in fact $\gamma_{0}=\gamma$, the familiar Euler gamma constant because $\zeta_{0}=\zeta$ is the Riemann zeta function.
\begin{theorem}
\label{wnwrwth2} Let $r\in G_{\vartheta}\cdot 0$.
\begin{itemize}
\item [(i)] If $\beta\in\mathbb{C}\setminus\left\{1/2\right\}$, then
$$
R_{\beta}(r+\varepsilon t)\sim \frac{(-i\pi)^{\beta-\frac{1}{2}}\Gamma\left(\frac{1}{2}-\beta\right)\mathfrak{p}_{r}}{2} (\varepsilon t+i0)^{\beta-\frac{1}{2}}+\sum_{m=0}^{\infty} \frac{\zeta_{r}(2\beta-2m)}{m!}(i\varepsilon\pi t)^{m},
$$
as $\varepsilon\to 0^{+}$ in $\mathcal{S}'(\mathbb{R}_{t})$.
\item [(ii)] When $\beta=1/2$, we have
$$
R_{\frac{1}{2}}(r+\varepsilon t)\sim \gamma_{r}+\frac{\mathfrak{p}_{r}}{2}\left(-\log\left(\frac{\varepsilon\left| t\right|}{\pi}\right)+\frac{i\pi}{2}\mathrm{sgn}t-\gamma\right)+\sum_{m=1}^{\infty} \frac{\zeta_{r}(1-2m)}{m!}(i\varepsilon\pi t)^{m},
$$
as $\varepsilon\to 0^{+}$ in $\mathcal{S}'(\mathbb{R}_{t})$.
\end{itemize}
\end{theorem}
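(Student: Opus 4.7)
The plan is to Fourier-analyze the decomposition of Proposition \ref{wnwrwp1}. Multiplying the decomposition established there by $\pi^{\beta}u^{-\beta}$ (which is meaningful since $g$ has support in $[\pi,\infty)$) I obtain
\begin{equation*}
\sum_{n=1}^{\infty}\frac{e^{i\pi r n^{2}}}{n^{2\beta}}\delta(u-\pi n^{2})=\frac{\mathfrak{p}_{r}\pi^{\beta-1/2}}{2}\,u^{-\beta-1/2}H(u-\pi)+h_{\beta}(u),
\end{equation*}
with $h_{\beta}:=\pi^{\beta}u^{-\beta}g\in\mathcal{K}'(\mathbb{R})$. A short calculation based on (\ref{wnwrweq8.25}) yields the moments $\mu_{m}[h_{\beta}]=\pi^{m}A_{r}(2\beta-2m)$. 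Since $\widehat{R_{\beta}(r+\cdot\,)}$ is $2\pi$ times the left-hand side, taking the inverse Fourier transform reduces the problem to analyzing, in the sense of $\mathcal{S}'(\mathbb{R}_{t})$,
\begin{equation*}
R_{\beta}(r+\varepsilon t)=\mathfrak{p}_{r}\pi^{\beta-1/2}\int_{\pi}^{\infty}u^{-\beta-1/2}e^{i\varepsilon tu}\mathrm{d}u+\int h_{\beta}(u)e^{i\varepsilon tu}\mathrm{d}u.
\end{equation*}

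For Case (i) I would split the first integral as $\int_{0}^{\infty}-\int_{0}^{\pi}$. The classical identity (obtained by contour rotation)
\begin{equation*}
\int_{0}^{\infty}u^{-\beta-1/2}e^{i\tau u}\mathrm{d}u=e^{i\pi(1/2-\beta)/2}\,\Gamma(1/2-\beta)(\tau+i0)^{\beta-1/2}
\end{equation*}
reproduces the homogeneous leading term of the theorem statement, since $\pi^{\beta-1/2}e^{i\pi(1/2-\beta)/2}=(-i\pi)^{\beta-1/2}$ on the principal branch. The compactly supported correction $-\mathfrak{p}_{r}\pi^{\beta-1/2}\int_{0}^{\pi}u^{-\beta-1/2}e^{i\varepsilon tu}\mathrm{d}u$, expanded as a Taylor series in $\varepsilon t$ (by analytic continuation in $\beta$ when $\Re\beta\geq 1/2$), contributes $\mathfrak{p}_{r}\sum_{m}\frac{(i\varepsilon\pi t)^{m}}{m!(2\beta-2m-1)}$. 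For the second integral, the Estrada--Kanwal moment asymptotic expansion of $h_{\beta}$ (Example \ref{wnwex2.3}) yields the weak-asymptotic series $\sum_{m}\frac{A_{r}(2\beta-2m)(i\varepsilon\pi t)^{m}}{m!}$. Adding the two series and using the identity $\zeta_{r}(z)=A_{r}(z)+\mathfrak{p}_{r}/(z-1)$ collapses them into $\sum_{m}\frac{\zeta_{r}(2\beta-2m)(i\varepsilon\pi t)^{m}}{m!}$, giving Case (i).

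Case (ii) is obtained by letting $\beta\to 1/2$ in the formulas above. Both $\Gamma(1/2-\beta)$ in the homogeneous part and the denominator $1/(2\beta-1)$ in the $m=0$ term of the compactly supported correction develop simple poles in $\delta:=\beta-1/2$ that cancel exactly, while $A_{r}(1+2\delta)\to\gamma_{r}$. Expanding $\Gamma(-\delta)=-\delta^{-1}-\gamma+O(\delta)$, $(-i\pi)^{\delta}=1+\delta(\log\pi-i\pi/2)+O(\delta^{2})$, and $(\varepsilon t+i0)^{\delta}=1+\delta(\log(\varepsilon|t|)+i\pi H(-t))+O(\delta^{2})$, and combining via $i\pi(1/2-H(-t))=i\pi\,\mathrm{sgn}(t)/2$, the $m=0$ coefficient reduces to the stated expression, while for $m\geq 1$ the coefficients remain $\zeta_{r}(1-2m)/m!$ exactly as in Case (i). The main technical obstacle will be the careful Laurent-expansion bookkeeping in Case (ii), together with the distributional justification that the regularized Taylor expansion of the compactly supported piece and the Fourier-transformed moment expansion of $h_{\beta}$ can be combined at the level of $\mathcal{S}'(\mathbb{R}_{t})$ rather than merely termwise.
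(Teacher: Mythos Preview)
Your argument for Case (i) is correct and is the same proof as the paper's, only written in the language of Fourier integrals rather than distributional moments: your split $\int_{\pi}^{\infty}=\int_{0}^{\infty}-\int_{0}^{\pi}$ is precisely the paper's introduction of the auxiliary distribution $h_{\beta}(u)=\tfrac{\pi^{\beta-1/2}}{2}u_{+}^{-\beta-1/2}(H(u)-H(u-\pi))$, your Taylor expansion of $\int_{0}^{\pi}$ (with analytic continuation in $\beta$) is exactly the Hadamard finite-part moment computation $\langle h_{\beta},u^{m}\rangle=-\pi^{m}/(2\beta-2m-1)$, and your moment expansion of the remaining $\mathcal{K}'$ piece coincides with the paper's. (Note a missing factor $1/2$ in your displayed formula for $R_{\beta}(r+\varepsilon t)$, coming from the $1/2$ in front of $u^{-\beta-1/2}H(u-\pi)$; this is cosmetic.)

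For Case (ii) you take a genuinely different route. The paper does not pass to the limit $\beta\to 1/2$; instead it repeats the Case (i) argument with the explicit regularization $h_{1/2}(u)=\tfrac{1}{2}\big(\mathrm{Pf}(H(u)/u)-H(u-\pi)/u\big)$, computes its moments directly ($\pi^{m}/(2m)$ for $m\geq 1$ and $(\log\pi)/2$ for $m=0$), and reads off the expansion after one more inverse Fourier transform involving $\mathrm{Pf}(H(u)/u)$. Your limiting approach gives the right answer and the Laurent bookkeeping you describe is accurate, but it carries an extra burden: you must justify that the weak-asymptotic expansion in $\mathcal{S}'(\mathbb{R}_{t})$ is uniform in $\beta$ near $1/2$ so that the limit can be taken termwise. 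This is doable (the coefficients are meromorphic in $\beta$ and the $\mathcal{K}'$ estimates are locally uniform), but the paper's direct computation sidesteps the issue entirely and is cleaner.
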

\begin{proof}
Let $g\in\mathcal{K}'(\mathbb{R}_{u})$ be as in the proof of Proposition \ref{wnwrwp1}. Then it can be multiplied by $(\pi/u)^{\beta}$ and actually 
$$g_{\beta}(u):=(\pi/u)^{\beta}g(u)\in\mathcal{K}'(\mathbb{R}_{u}).$$
The moments of $g_{\beta}$ are given by $\pi^{m}A_{r}(2\beta-2m)$, $m\in\mathbb{N}.$

Assume first that $\beta\neq1/2$. The distribution $$h_{\beta}(u)=\frac{\pi^{\beta-\frac{1}{2}}}{2} u_{+}^{-\beta-1/2}(H(u)-H(u-\pi))$$ has moments
$$
\left\langle
h_{\beta}(u),u^{m}\right\rangle=\frac{\pi^{\beta-\frac{1}{2}}}{2}\mbox{F.p.}\int_{0}^{\pi}u^{m-\beta-\frac{1}{2}}\mathrm{d}u=-\frac{\pi^{m}}{2\beta-2m-1},
$$
where F.p. stands for the Hadamard finite part \cite{estrada-kanwal2}. So $g_{\beta}-\mathfrak{p}_{r}h_{\beta}$ has moments $\pi^{m}\zeta_{r}(2\beta-2m)$ which implies the moment asymptotic expansion
$$
\sum_{n=1}^{\infty}\frac{e^{i\pi rn^{2}}}{n^{2\beta}}\delta(\lambda-un^{2})\sim \frac{\pi^{\beta}\mathfrak{p}_{r}}{2\sqrt{\pi}} ( \lambda u)_{+}^{-\beta-\frac{1}{2}}+ \sum_{m=0}^{\infty} \frac{(-\pi)^{m}\zeta_{r}(2\beta-2m)} {m!\lambda^{m+1}} \delta^{(m)}(u)
$$
as $\lambda\to\infty$ in $\mathcal{K}'(\mathbb{R}_{u})$. Part (i) follows now by taking Fourier inverse transform.

Suppose now that $\beta=1/2$ and set
$$
h_{\frac{1}{2}}(u)=\frac{1}{2}\left(\mathrm{Pf}\left(\frac{H(u)}{u}\right)- \frac{H(u-\pi)}{u}\right),
$$ 
where
$\mathrm{Pf}(H(u)/u)$ is the usual \cite{estrada-kanwal2}
regularization of $H(u)/u$ by finite part. The moments of
$h_{1/2}$ are given by
$$
\left\langle
h_{\frac{1}{2}}(u),u^{m}\right\rangle=
\left\{ \begin{array}{ll}
\pi^{m}/(2m), & m=1,2,3,\dots ,\\
(\log \pi)/2, & m=0.\\
\end{array}\right.
$$
The moment asymptotic expansion of $g_{1/2}-\mathfrak{p}_{r}h_{1/2}$ yields the weak-asymptotic expansion, as $\lambda\to\infty$ in $\mathcal{K}'(\mathbb{R}_u)$,
$$
\frac{e^{i\lambda r u}}{2\pi}\hat{R}_{\frac{1}{2}}(\lambda u)
~\sim \frac{\mathfrak{p}_{r}}{2}\mathrm{Pf}\left(\frac{H(\lambda u)}{\lambda u}\right)+\frac{b\delta(u)}{\lambda} +\sum_{m=1}^{\infty} \frac{(-\pi)^{m}\zeta_{r}(1-2m)} {m!\lambda^{m+1}} \delta^{(m)}(u),
$$
where $b=(\mathfrak{p}_{r}/2)\log\pi+\gamma_{r}$,
which in turn proves Part (ii) after taking Fourier inverse transform.
\end{proof}

In the rest of this subsection we discuss some useful formulas which can be derived from our previous analysis. The next corollary provides formulas for the constants $\mathfrak{p}_{r}$.

\begin{corollary}\label{wnwrwc1}
Let $r\in G_{\vartheta}\cdot 0$. Then
\begin{equation}
\label{formulap}
\lim_{N\to\infty}\frac{1}{N}\sum_{n=1}^{N}e^{i\pi rn^{2}}=\mathfrak{p}_{r}.
\end{equation}
\end{corollary}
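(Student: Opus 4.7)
The plan is to Fourier invert the weak-asymptotic expansion (\ref{wnwrweq8.23}) and then couple the Estrada--Kanwal moment asymptotic expansion with a cut-off argument to extract a statement on the partial sums $\Phi(X):=\sum_{n\le X}e^{i\pi r n^{2}}$, for which the target (\ref{formulap}) reads $\Phi(N)/N\to\mathfrak{p}_{r}$. The central object of study is the distribution
\[
h(v) := \sum_{n=1}^{\infty}e^{i\pi r n^{2}}\delta(v-n)-\mathfrak{p}_{r}H(v-1)\in\mathcal{S}'(\mathbb{R}).
\]

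The first step is to establish $h\in\mathcal{K}'(\mathbb{R})$ with moments $\mu_{m}(h)=A_{r}(-m)$. This can be deduced from the fact, shown in the proof of Proposition \ref{wnwrwp1}, that $g\in\mathcal{K}'(\mathbb{R})$ with $\langle g(u),u^{m}\rangle=\pi^{m}A_{r}(-2m)$. Indeed, the substitution $u=\pi v^{2}$ yields the identity $\langle h,\rho\rangle=\langle g,\phi\rangle$ with $\phi(u)=\rho(\sqrt{u/\pi})$ for $u\ge 1$ (and an arbitrary smooth extension elsewhere--irrelevant since $\operatorname{supp} g\subseteq[\pi,\infty)$); a direct chain-rule computation shows that $\rho\mapsto\phi$ is continuous $\mathcal{K}_{\beta}(\mathbb{R})\to\mathcal{K}_{\beta/2}(\mathbb{R})$ for every $\beta$. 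Consequently, by the Estrada--Kanwal expansion (Example \ref{wnwex2.3}),
\[
h(Xv)\sim\sum_{m=0}^{\infty}\frac{(-1)^{m}A_{r}(-m)}{m!\, X^{m+1}}\delta^{(m)}(v)\quad\text{as } X\to\infty \text{ in } \mathcal{K}'(\mathbb{R}).
\]

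Next, fix $\eta\in(0,1)$ and pick $\chi_{\eta}\in\mathcal{D}(\mathbb{R})$ with $\chi_{\eta}\equiv 1$ on a neighbourhood of $[0,1]$ and $\operatorname{supp}\chi_{\eta}\subseteq (-\eta,1+\eta)$, so that $\chi_{\eta}(0)=1$ and $\chi_{\eta}^{(m)}(0)=0$ for every $m\ge 1$. Evaluating the expansion at $\chi_{\eta}$,
\[
\langle h(v),\chi_{\eta}(v/X)\rangle=X\langle h(Xw),\chi_{\eta}(w)\rangle = A_{r}(0)+o(1),\qquad X\to\infty.
\]
On the other hand, a direct computation yields
\[
\langle h(v),\chi_{\eta}(v/X)\rangle=\Phi(\lfloor X\rfloor)-\mathfrak{p}_{r}X+R(X,\eta),
\]
where the remainder $R(X,\eta)$ collects (i) the sum over the transition region $X<n<X(1+\eta)$, bounded in modulus by $\eta X+1$; and (ii) the boundary contribution $\mathfrak{p}_{r}\int_{1}^{\infty}\chi_{\eta}(v/X)\,dv-\mathfrak{p}_{r}X = \mathfrak{p}_{r}X\cdot O(\eta)+O(1)$. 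Thus $|R(X,\eta)|\le C(1+|\mathfrak{p}_{r}|)\,\eta X+O(1)$, and comparing both expressions, after dividing by $N:=\lfloor X\rfloor$, gives
\[
\left|\frac{\Phi(N)}{N}-\mathfrak{p}_{r}\right|\le C(1+|\mathfrak{p}_{r}|)\,\eta +o(1)\quad (N\to\infty).
\]
Since $\eta>0$ is arbitrary, $\Phi(N)/N\to\mathfrak{p}_{r}$, which is (\ref{formulap}).

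The main obstacle is the first step--rigorously verifying $h\in\mathcal{K}'(\mathbb{R})$. The change of variables $u=\pi v^{2}$ is singular at $v=0$ and $u\mapsto \sqrt{u/\pi}$ fails to be smooth at the origin, so some care is required with smooth extensions; this is harmless because both $g$ and $h$ have supports bounded away from $0$, but the continuity estimate transferring the $\mathcal{K}$-topology through the pull-back still needs to be checked seminorm by seminorm. Once $h\in\mathcal{K}'(\mathbb{R})$ is in hand, the remaining steps are a transparent coupling of the $\mathcal{K}'$ moment expansion with the cut-off $\chi_{\eta}$.
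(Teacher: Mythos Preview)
Your argument is correct. The change-of-variables identification $\langle h,\rho\rangle=\langle g,\phi\rangle$ checks out term by term (the Dirac part gives $\phi(\pi n^{2})=\rho(n)$, and the substitution $u=\pi v^{2}$ turns $\tfrac{\mathfrak{p}_{r}}{2}\int_{\pi}^{\infty}(\pi u)^{-1/2}\phi(u)\,du$ into $\mathfrak{p}_{r}\int_{1}^{\infty}\rho(v)\,dv$); the Fa\`a di Bruno estimate indeed yields the $\mathcal{K}_{\beta}\to\mathcal{K}_{\beta/2}$ continuity once a fixed smooth cut-off is used for $u<\pi$, since both $g$ and $h$ live away from the origin. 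The moment computation $\mu_{m}(h)=A_{r}(-m)$ follows directly from~(\ref{wnwrweq8.25}), and the cut-off extraction is a clean $\limsup$ sandwich. One cosmetic point: it is convenient to take $0\le\chi_{\eta}\le 1$ so that the bound ``$\eta X+1$'' on the transition sum holds without the extra factor $\|\chi_{\eta}\|_{\infty}$.

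However, this is a substantially longer route than the paper's. The paper simply observes that Proposition~\ref{wnwrwp1} gives $\zeta_{r}(z)-\mathfrak{p}_{r}/(z-1)$ entire and then invokes the Ikehara Tauberian theorem to conclude~(\ref{formulap}) in one line. Your approach stays entirely within the distributional framework of the article: you push the information $g\in\mathcal{K}'$ through the square-root change of scale to get $h\in\mathcal{K}'$, and then unpack the moment asymptotic expansion by hand. This is self-contained and avoids importing a separate Tauberian theorem, which is attractive; the price is that the cut-off argument in Step~4 essentially reproves a special case of the Ces\`aro convergence that the Ikehara theorem (or the $\mathcal{K}'$ theory itself) already packages. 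Either approach is legitimate; the paper's is shorter, yours is more intrinsic to the machinery developed here.
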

\begin{proof}
The relation (\ref{formulap}) follows directly from the Ikehara theorem \cite[p. 122]{korevaarbook} and Proposition \ref{wnwrwp1}.
\end{proof}
We now give a formula for $\gamma_{r}$.

\begin{corollary}\label{wnwrwc2}
Let $r\in G_{\vartheta}\cdot 0$. The series
\begin{equation}
\label{wnwconvs1}
\sum_{n=0}^{\infty}\frac{e^{i\pi rn^{2}}-\mathfrak{p}_{r}}{n^{1+iy}}\:,
\end{equation}
is convergent for any $y\in\mathbb{R}$. In particular,
\begin{equation}
\label{formulagamma}
\sum_{n=0}^{\infty}\frac{e^{i\pi rn^{2}}-\mathfrak{p}_{r}}{n}=\gamma_{r}-\mathfrak{p}_{r}\gamma
\end{equation}
or equivalently,
\begin{equation}
\label{formulagamma2}
\lim_{x\to\infty}\sum_{n=1}^{N}\frac{e^{i\pi rn^{2}}}{n}-\mathfrak{p}_{r}\log N=\gamma_{r}.
\end{equation}
\end{corollary}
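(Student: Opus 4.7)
The plan is to express the series in question as a combination of the functions $\zeta_r$ and the Riemann zeta $\zeta = \zeta_0$, and then invoke the Newman–Ingham Tauberian theorem for Dirichlet series, exactly in the spirit of the convergence argument used at the end of the proof of Theorem \ref{wnwRWfunctionth1}.

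More precisely, for $\Re e\, z > 1$ we write
\begin{equation*}
\sum_{n=1}^{\infty}\frac{e^{i\pi rn^{2}}-\mathfrak{p}_{r}}{n^{z}} \;=\; \zeta_{r}(z) - \mathfrak{p}_{r}\,\zeta(z).
\end{equation*}
By Proposition \ref{wnwrwp1}, $\zeta_r(z) - \mathfrak{p}_r/(z-1) = A_r(z)$ is entire, and it is classical that $\zeta(z) - 1/(z-1)$ is entire with value $\gamma$ at $z=1$. Subtracting, the pole at $z=1$ cancels and $\zeta_r(z) - \mathfrak{p}_r \zeta(z)$ extends to an entire function. In particular, it is holomorphic on the closed half-plane $\Re e\, z \ge 1$. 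Since the coefficients $e^{i\pi r n^2}-\mathfrak{p}_r$ are uniformly bounded, the Newman–Ingham theorem \cite{ingham,korevaarbook,newman} applies and yields that the Dirichlet series (\ref{wnwconvs1}) converges for every $y \in \mathbb{R}$, to the value $\zeta_r(1+iy)-\mathfrak{p}_r\zeta(1+iy)$.

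Specializing to $y=0$ and taking the limit $z \to 1$ on the analytic side gives
\begin{equation*}
\sum_{n=1}^{\infty}\frac{e^{i\pi rn^{2}}-\mathfrak{p}_{r}}{n} \;=\; \lim_{z \to 1}\left[\left(\zeta_r(z) - \frac{\mathfrak{p}_r}{z-1}\right) - \mathfrak{p}_r\left(\zeta(z)-\frac{1}{z-1}\right)\right] \;=\; \gamma_r - \mathfrak{p}_r \gamma,
\end{equation*}
which is formula (\ref{formulagamma}). The equivalent formulation (\ref{formulagamma2}) is then a routine consequence of the classical asymptotic $\sum_{n=1}^{N}\tfrac{1}{n} = \log N + \gamma + o(1)$: writing $\sum_{n\le N}e^{i\pi r n^2}/n = \sum_{n \le N}(e^{i\pi r n^2}-\mathfrak{p}_r)/n + \mathfrak{p}_r \sum_{n \le N}1/n$ and passing to the limit, the $\mathfrak{p}_r\gamma$ contributions cancel, leaving $\gamma_r$.

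No serious obstacle is expected here; the only delicate point is verifying that the hypotheses of Newman–Ingham are indeed met (bounded coefficients plus holomorphic extension across the line $\Re e\, z = 1$), both of which are immediate from the boundedness of $|e^{i\pi r n^2} - \mathfrak{p}_r|$ and Proposition \ref{wnwrwp1}. The rest is simply bookkeeping with the identity $\zeta(z)-1/(z-1) \to \gamma$ at $z=1$.
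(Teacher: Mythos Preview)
Your proof is correct and follows essentially the same approach as the paper: both argue that $\zeta_r - \mathfrak{p}_r\zeta$ is entire (via Proposition~\ref{wnwrwp1} and the classical fact about $\zeta$), invoke Newman--Ingham to obtain convergence on the critical line, evaluate at $z=1$ to get (\ref{formulagamma}), and then deduce (\ref{formulagamma2}) from the harmonic-series asymptotic. Your write-up is in fact slightly more detailed, as you explicitly note the bounded-coefficients hypothesis needed for Newman--Ingham.
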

\begin{proof}
By Proposition \ref{wnwrwp1}, $\zeta_{r}-\mathfrak{p}_{r}\zeta$ is an entire function. Thus, the convergence of (\ref{wnwconvs1}) is implied by the Newman-Ingham Tauberian theorem for Dirichlet series \cite{ingham,korevaarbook,newman}. The equality (\ref{formulagamma}) holds because $(\zeta_{r}-\zeta)(1)=\gamma_{r}-\gamma$ (Newman-Ingham theorem again), while (\ref{formulagamma2}) is an easy consequence of (\ref{formulagamma}) and the well known relation $\sum_{n=1}^{N}1/n=\log N+\gamma+o(1)$.
\end{proof}

The convergence of (\ref{wnwserieseq8.19}) is interesting by itself, so we state it in the following corollary.

\begin{corollary}
\label{wnwRWc3} The series
$$\sum_{n=1}^{\infty}\frac{e^{i\pi n^{2}\frac{2j+1}{2\nu+1}}}{n^{1+yi}}$$
is convergent for any $j,\nu\in\mathbb{Z}$ and $y\in \mathbb{R}$.
\end{corollary}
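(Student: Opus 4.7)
The plan is to observe that this corollary is essentially a direct specialization of Theorem~\ref{wnwRWfunctionth1}. Indeed, the rational $r=(2j+1)/(2\nu+1)$ belongs to the orbit $S_1=G_{\vartheta}\cdot 1$ (this is the very definition of $S_1$), so Theorem~\ref{wnwRWfunctionth1} applies to this $r$. That theorem already asserts the convergence of the Dirichlet series for $\zeta_r(z)$ on the closed half-plane $\Re e\,z\ge 1$, and Corollary~\ref{wnwRWc3} is the case $z=1+yi$.

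Concretely, I would proceed in two short steps. First, invoke Theorem~\ref{wnwRWfunctionth1} to conclude that
\[
\zeta_{r}(z)=\sum_{n=1}^{\infty}\frac{e^{i\pi r n^{2}}}{n^{z}}
\]
admits an entire extension to $\mathbb{C}$. Second, recall that if a Dirichlet series $\sum a_n/n^{z}$ converges on some right half-plane and its sum extends analytically across every point of the line $\Re e\,z=1$, then the Newman--Ingham Tauberian theorem for Dirichlet series (as cited in the proof of Theorem~\ref{wnwRWfunctionth1}; see \cite{ingham,korevaarbook,newman}) guarantees convergence at every point of that line. Applying this with $a_n=e^{i\pi rn^{2}}$ and $z=1+iy$ gives the desired convergence.

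Since Theorem~\ref{wnwRWfunctionth1} has already been established, there is no genuine obstacle here; the entire content of the corollary lies in recognizing that $(2j+1)/(2\nu+1)\in G_{\vartheta}\cdot 1$, which follows directly from the fact that $G_{\vartheta}$ is generated by $K^{2}$ and $U$ and its orbit through $1$ was computed above to be precisely $S_{1}$. Thus the proof is essentially a one-line deduction, and the only step worth verifying carefully is the identification of $r$ as an element of $G_{\vartheta}\cdot 1$.
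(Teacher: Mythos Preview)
Your proposal is correct and matches the paper's approach. In fact, the paper does not give a separate proof of this corollary at all: it simply remarks that the convergence of (\ref{wnwserieseq8.19}) was already established in the proof of Theorem~\ref{wnwRWfunctionth1} via the Newman--Ingham theorem, and restates it here because it is interesting in its own right. Your second step (re-invoking Newman--Ingham) is slightly redundant, since Theorem~\ref{wnwRWfunctionth1} already asserts convergence on the full closed half-plane $\Re e\,z\ge 1$; a direct citation of that statement with $z=1+iy$ suffices.
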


The pointwise behavior of $R_{0}$  can be used to calculate some Ces\`{a}ro sums and limits which apparently have not been given elsewhere before. That is the context of the next corollary, whose proof is obtained immediately by comparing Corollary \ref{wnwRWcexp} with Lemma \ref{wnwdsl1} and the expansion from Theorem \ref{wnwrwth2} with (\ref{wnwrweq8.23}).

\begin{corollary} For any $j,\nu\in\mathbb{Z}$,
\label{wnwRWc4}
\begin{equation*}
\sum_{n=1}^{\infty} n^{2m} e^{i\pi n^{2}\frac{2j+1}{2\nu+1}}=0 \ \ \ \mathrm{(C)}, \ \ \ m=1,2,3,\dots ,
\end{equation*}
and
\begin{equation*}
\sum_{n=1}^{\infty}  e^{i\pi n^{2}\frac{2j+1}{2\nu+1}}=-\frac{1}{2} \ \ \ \mathrm{(C)}.
\end{equation*}

If $r\in G_{\vartheta}\cdot 0$, then
\begin{equation*}
\lim_{x\to\infty}\left(\sum_{1\leq n<x} n^{2m} e^{i\pi r n^{2}}-\mathfrak{p}_{r}\int_{0}^{x}\xi^{2m}\mathrm{d}\xi\right)=0 \ \ \ \mathrm{(C)}, \ \ \ m=1,2,3,\dots ,
\end{equation*}
and
\begin{equation*}
\lim_{x\to\infty}\left(\sum_{1\leq n <x} e^{i\pi r n^{2}}- \mathfrak{p}_{r}x\right)=-\frac{1}{2} \ \ \ \mathrm{(C)}.
\end{equation*}
\end{corollary}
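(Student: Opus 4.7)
The proof is essentially a matching of coefficients in two independently derived weak-asymptotic expansions of $R_{0}$ at a given rational point. The plan is to compute $\zeta_{r}(-2m)$ at negative even integers (and at $0$) by reading it off these two representations.

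For the first two identities, I would fix $r=(2j+1)/(2\nu+1)\in G_{\vartheta}\cdot 1$ and compare the two available expansions of $R_{0}$ at $r$. Lemma \ref{wnwdsl1} yields
$$R_{0}(r+\varepsilon t)=-\tfrac{1}{2}+o(\varepsilon^{\infty})\quad\text{as}\ \varepsilon\to 0^{+}\ \text{in}\ \mathcal{S}'(\mathbb{R}_{t}),$$
while Corollary \ref{wnwRWcexp} with $\beta=0$ gives the alternative weak-asymptotic series
$$R_{0}(r+\varepsilon t)\sim\sum_{m=0}^{\infty}\frac{\zeta_{r}(-2m)}{m!}(i\varepsilon\pi t)^{m}.$$
Identifying the coefficients of $\varepsilon^{m}$ in $\mathcal{S}'(\mathbb{R}_{t})$ forces $\zeta_{r}(0)=-\tfrac{1}{2}$ and $\zeta_{r}(-2m)=0$ for every $m\geq 1$. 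The Cesàro sum representation (\ref{wnwRWfunctioneq8.17}) from Theorem \ref{wnwRWfunctionth1} then reads these identities as the two claimed $(\mathrm{C})$-summability statements.

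For the second pair of identities, with $r\in G_{\vartheta}\cdot 0$, I would carry out the same strategy, now comparing (\ref{wnwrweq8.23}) with Theorem \ref{wnwrwth2}(i) at $\beta=0$:
$$\tfrac{\sqrt{i}}{2}\mathfrak{p}_{r}\varepsilon^{-1/2}(t+i0)^{-1/2}-\tfrac{1}{2}+o(\varepsilon^{\infty})\sim\tfrac{(-i\pi)^{-1/2}\Gamma(1/2)\mathfrak{p}_{r}}{2}(\varepsilon t+i0)^{-1/2}+\sum_{m=0}^{\infty}\tfrac{\zeta_{r}(-2m)}{m!}(i\varepsilon\pi t)^{m}.$$
The non-integrable boundary-value terms match automatically (after the identity $\Gamma(1/2)(-i\pi)^{-1/2}=\sqrt{i}$), so comparing the Taylor parts again yields $\zeta_{r}(0)=-\tfrac{1}{2}$ and $\zeta_{r}(-2m)=0$ for $m\geq 1$. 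To translate these into the asserted $(\mathrm{C})$-limits on partial sums, I would invoke Proposition \ref{wnwrwp1}: since $-2m\neq 1$, $A_{r}(-2m)=\zeta_{r}(-2m)$, and the explicit formula (\ref{wnwrweq8.25}) displays $\zeta_{r}(-2m)$ as the Cesàro limit of $\sum_{1\leq n<x} n^{2m}e^{i\pi r n^{2}}-\mathfrak{p}_{r}\int_{1}^{x}\xi^{2m}\mathrm{d}\xi$, from which the versions with $\int_{0}^{x}$ follow by absorbing the constant $\mathfrak{p}_{r}\int_{0}^{1}\xi^{2m}\mathrm{d}\xi$ into the comparison.

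There is no substantive obstacle: the whole argument is a coefficient comparison in weak-asymptotic expansions, entirely legitimate because both sides lie in $\mathcal{S}'(\mathbb{R}_{t})$ and the powers $\varepsilon^{m}$ (together with the $\varepsilon^{-1/2}$ branch term) are linearly independent scales. The only piece that requires minor bookkeeping is the translation from the analytic statement $\zeta_{r}(-2m)=0$ to the precise form of the Cesàro limit stated in the corollary, which is a direct application of the Ces\`aro-evaluation of moments of $g\in\mathcal{K}'(\mathbb{R})$ established in the proof of Proposition \ref{wnwrwp1}.
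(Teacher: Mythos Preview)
Your approach is exactly the one the paper uses: compare the weak-asymptotic expansion of $R_{0}$ at $r$ coming from Lemma \ref{wnwdsl1} (resp.\ from (\ref{wnwrweq8.23})) with the one coming from Corollary \ref{wnwRWcexp} (resp.\ Theorem \ref{wnwrwth2}) at $\beta=0$, and read off $\zeta_{r}(0)=-\tfrac12$, $\zeta_{r}(-2m)=0$ for $m\ge 1$.

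There is, however, a small slip in your final bookkeeping for $r\in G_{\vartheta}\cdot 0$. You write ``since $-2m\neq 1$, $A_{r}(-2m)=\zeta_{r}(-2m)$'', but by (\ref{wnwrweq8.24}) one has
\[
A_{r}(-2m)=\zeta_{r}(-2m)-\frac{\mathfrak{p}_{r}}{-2m-1}=\zeta_{r}(-2m)+\frac{\mathfrak{p}_{r}}{2m+1},
\]
so (\ref{wnwrweq8.25}) gives the Ces\`aro limit of $\sum_{1\le n<x}n^{2m}e^{i\pi r n^{2}}-\mathfrak{p}_{r}\int_{1}^{x}\xi^{2m}\,\mathrm{d}\xi$ equal to $\mathfrak{p}_{r}/(2m+1)$, not $0$. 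The passage from $\int_{1}^{x}$ to $\int_{0}^{x}$ subtracts precisely $\mathfrak{p}_{r}\int_{0}^{1}\xi^{2m}\,\mathrm{d}\xi=\mathfrak{p}_{r}/(2m+1)$, and the two corrections cancel to give the claimed limit $0$. The same cancellation occurs for $m=0$: $A_{r}(0)=\zeta_{r}(0)+\mathfrak{p}_{r}=-\tfrac12+\mathfrak{p}_{r}$, and shifting $\int_{1}^{x}\mathrm{d}\xi=x-1$ to $x$ recovers $-\tfrac12$. Once this is corrected, your argument is complete and coincides with the paper's.
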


So, Corollary \ref{wnwRWc4} tells us that the values of all generalized zeta functions coincide at the nonpositive even integers, and actually $\zeta_{r}(0)=\zeta(0)=-1/2$ and $\zeta_{r}(-2m)=\zeta(-2m)=0$ for $m=1,2,3,\dots\ $.

\subsection{Tauberian Theorems for Laplace Transforms}
\label{wnwLT} We now apply the results from Subsection \ref{wnwtphi} to Laplace transforms. As in Example \ref{wnwex3.5}, $\Gamma$ is assumed to be a closed convex acute cone with vertex at the origin; we set $C_{\Gamma}=\operatorname*{int} \Gamma^{\ast}$ and $T^{C_{\Gamma}}=\mathbb{R}^{n}+iC_{\Gamma}$. The following Tauberian theorems for the Laplace transform were originally obtained in \cite{drozhzhinov-z0,vladimirov-d-z1} under the additional assumption that $\Gamma$ is a regular cone (i.e., its Cauchy-Szeg\"{o} kernel is a divisor of the unity in the Vladimirov algebra $H(T^{C_{\Gamma}})$ \cite{vladimirovbook,vladimirov-d-z1}); we will not make use of such a hypothesis over the cone $\Gamma$.

Given $\kappa\geq0$, we denote by $\Omega^{\kappa}\subset \mathbb{H}^{n+1}$ the set
\begin{equation}
\label{wnwLeq1}\Omega^{\kappa}=\left\{(x,\sigma)\in\mathbb{H}^{n+1}:\:\left|x\right|\leq\sigma^{\kappa}\mbox{ and }0<\sigma\leq1 \right\}.
\end{equation}
\begin{theorem}
\label{wnwLth1} Let $\mathbf{h}\in\mathcal{S}'_{\Gamma}(E)$ and let $L$ be slowly varying at infinity. Then, $\mathbf{h}$ is weak-asymptotically bounded of degree $\alpha$ at infinity with respect to $L$ if and only if there exist numbers $k\in\mathbb{N}$ and $0\leq\kappa<1$ and a vector $\omega\in C_{\Gamma}$ such that
\begin{equation}
\label{wnwLeq2}
\limsup_{\varepsilon\to 0^{+}}\sup_{(x,\sigma)\in\partial\Omega^{\kappa},\: \sigma>0} \frac{\sigma^{k}\varepsilon^{n+\alpha}}{L(1/\varepsilon)}\left\|\mathcal{L}\left\{\mathbf{h};\varepsilon\left(x+i\sigma\omega\right)\right\}\right\|<\infty.
\end{equation}
\end{theorem}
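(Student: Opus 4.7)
The strategy is to reduce the theorem to Theorem \ref{wnwth6.1} applied to the $\phi_\omega$-transform of an auxiliary distribution $\mathbf{f}$. Invoking Example \ref{wnwex3.5}, we fix $\phi_\omega=(2\pi)^{-n}\hat{\eta}_\omega$ (with $\eta_\omega\in\mathcal{S}(\mathbb{R}^n)$ such that $\eta_\omega(u)=e^{-\omega\cdot u}$ on $\Gamma$) and define $\mathbf{f}$ by $\hat{\mathbf{f}}=(2\pi)^n\mathbf{h}$, giving the identity
$$
F_{\phi_\omega}\mathbf{f}(x,\sigma)=\mathcal{L}\{\mathbf{h};x+i\sigma\omega\}, \ \ \ (x,\sigma)\in\mathbb{H}^{n+1}.
$$
Because the Fourier transform interchanges scalings, the weak-asymptotic boundedness of $\mathbf{h}$ at infinity of degree $\alpha$ with respect to $L$ is equivalent to that of $\mathbf{f}$ at the origin of degree $-n-\alpha$ with respect to $\tilde{L}(\varepsilon):=L(1/\varepsilon)$. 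By Theorem \ref{wnwth6.1}, the latter is in turn equivalent to the existence of some $k'\in\mathbb{N}$ such that
$$
\limsup_{\varepsilon\to 0^+}\sup_{|X|^2+Y^2=1,\,Y>0}\frac{Y^{k'}\varepsilon^{n+\alpha}}{L(1/\varepsilon)}\,\|\mathcal{L}\{\mathbf{h};\varepsilon(X+iY\omega)\}\|<\infty.
$$
The problem is thereby reduced to proving the equivalence of this sphere condition with (\ref{wnwLeq2}).

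The necessity of (\ref{wnwLeq2}) follows directly from the Abelian Proposition \ref{wnwth2}(i), which provides the stronger pointwise bound
$$
\|F_{\phi_\omega}\mathbf{f}(\varepsilon x,\varepsilon\sigma)\|\leq C\varepsilon^{-n-\alpha}L(1/\varepsilon)\left(\tfrac{1}{\sigma}+\sigma\right)^{k}(1+|x|)^{l}
$$
for all $(x,\sigma)\in\mathbb{H}^{n+1}$ and sufficiently small $\varepsilon$. Restricting to $(x,\sigma)\in\partial\Omega^\kappa$ with $\sigma\in(0,1]$, where $|x|\leq\sigma^\kappa\leq 1$ and $1/\sigma+\sigma\leq 2/\sigma$, immediately yields (\ref{wnwLeq2}) for every $\kappa\in[0,1)$.

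For sufficiency, one translates (\ref{wnwLeq2}) into the sphere condition by a rescaling argument. Given $(X,Y)$ on the upper half-sphere and small $\varepsilon$, we express $\varepsilon(X,Y)=\varepsilon'(x,\sigma)$ with $(x,\sigma)\in\partial\Omega^\kappa$ in two cases: (a) if $Y\geq 1/\sqrt{2}$, we use the top face by choosing $\sigma=1$, $x=X/Y$ (so $|x|\leq 1$) and $\varepsilon'=\varepsilon Y\leq \varepsilon$; (b) if $Y<1/\sqrt{2}$, we use the lateral surface by setting $\sigma=(Y/|X|)^{1/(1-\kappa)}\in(0,1]$, $x=\sigma^\kappa X/|X|$, and $\varepsilon'=\varepsilon|X|^{1/(1-\kappa)}Y^{-\kappa/(1-\kappa)}$. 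Substitution into (\ref{wnwLeq2}) at parameter $\varepsilon'$ yields an estimate for $\|\mathcal{L}\{\mathbf{h};\varepsilon(X+iY\omega)\}\|$ carrying the ratios $\varepsilon'/\varepsilon$ and $L(1/\varepsilon')/L(1/\varepsilon)$; Potter's bounds for slowly varying functions control the latter by an arbitrarily small power of the former, and the factor $Y^{k'}$ absorbs the residual negative powers of $Y$ once $k'$ is chosen sufficiently large.

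The main technical obstacle is that when $\kappa>0$, the ratio $\varepsilon'/\varepsilon$ is unbounded as $Y\to 0^+$, so (\ref{wnwLeq2}) can be invoked at a valid parameter $\varepsilon'\leq\varepsilon_0$ only while $Y\geq Y^{*}(\varepsilon):=(\varepsilon/\varepsilon_0)^{(1-\kappa)/\kappa}$, a threshold that vanishes with $\varepsilon$. On the residual range $Y<Y^{*}(\varepsilon)$ one falls back on the a priori polynomial slow-growth estimate for $F_{\phi_\omega}\mathbf{f}$ supplied by Proposition \ref{wnwp2}, which yields $\|\mathcal{L}\{\mathbf{h};\varepsilon(X+iY\omega)\}\|\leq C'(\varepsilon Y)^{-k_0}$ for some $k_0$. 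Since the complementary factor $Y^{k'}$ is correspondingly small in this regime, a careful bookkeeping of exponents (using that $\varepsilon^{s}=o(L(1/\varepsilon))$ for every $s>0$) shows that for $k'$ large enough the desired sphere bound holds uniformly in $Y$. Theorem \ref{wnwth6.1} applied in the converse direction then delivers the weak-asymptotic boundedness of $\mathbf{f}$ at the origin, and hence of $\mathbf{h}$ at infinity, completing the proof.
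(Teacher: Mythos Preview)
Your proof is correct and follows essentially the same route as the paper: reduce to the $\phi_\omega$-transform of $\mathbf{f}$ via Example~\ref{wnwex3.5}, invoke Theorem~\ref{wnwth6.1}, and for sufficiency rescale points of the half-sphere onto $\partial\Omega^\kappa$, splitting according to whether the rescaled parameter $\varepsilon'$ stays below $\varepsilon_0$ (use (\ref{wnwLeq2}) plus Potter's bounds) or not (use the a priori slow-growth estimate of Proposition~\ref{wnwp2}). Your explicit top-face case $Y\geq 1/\sqrt{2}$ is in fact a small improvement over the paper's write-up, which parametrizes only onto the lateral surface $|x'|=\sigma^\kappa$ and tacitly assumes $\sigma\leq 1$; your case (a) cleanly covers the regime $\cos\vartheta>\sin\vartheta$ where that assumption fails.
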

\begin{proof} Set $\hat{\mathbf{f}}=(2\pi)^{n}\mathbf{h}$ and keep the notation from Example \ref{wnwex3.5}.  Clearly, $\mathbf{h}$ is weak-asymptotically bounded of degree $\alpha$ at infinity with respect to $L$ if and only if $\mathbf{f}$ is weak-asymptotically bounded of degree $-\alpha-n$ at the origin with respect to $L(1/\varepsilon)$. The latter holds, by (\ref{wnweq3.9}) and Theorem \ref{wnwth6.1}, if and only if there exists $k_{1}\in\mathbb{N}$ such that
\begin{equation}
\label{wnwLeq3}
\limsup_{\varepsilon\to 0^{+}}\sup_{\stackrel{\left|x\right|^{2}+(\cos \vartheta)^2=1}{\vartheta\in[0,\pi/2)}} \frac{\left(\cos\vartheta\right)^{k_1}\varepsilon^{n+\alpha}}{L(1/\varepsilon)}\left\|F_{\phi_{\omega}}\mathbf{f}(\varepsilon x,\varepsilon \cos\vartheta)\right\|<\infty.
\end{equation}
Thus, we shall show the equivalence between (\ref{wnwLeq2}) and (\ref{wnwLeq3}). By part (i) of Proposition \ref{wnwp5.1}, (\ref{wnwLeq3}) implies (\ref{wnwLeq2}). Assume now (\ref{wnwLeq2}), namely, there exist $C_{1}$ and $0<\varepsilon_{0}<1$ such that
\begin{equation}
\label{wnwLeq4}
\left\|F_{\phi_{\omega}}\mathbf{f}(\varepsilon x',\varepsilon \sigma)\right\|< \frac{C_{1}}{\sigma^{k}}\varepsilon^{-\alpha-n}L\left(1/\varepsilon\right), \ \ \ \varepsilon\leq\varepsilon_{0},\ (x',\sigma)\in\Omega^{\kappa}.
\end{equation}
We may assume that $k\geq\alpha+n+1$ and $L$ satisfies
(\ref{wnweq5.3}) and (\ref{wnweq5.4}) (the case at infinity). We
keep arbitrary $\varepsilon<\varepsilon_{0}$,
$\vartheta\in(0,\pi/2)$ and $x\in\mathbb{R}^{n}$ with
$\left|x\right|^2+(\cos\vartheta)^{2}=1$. Set
$$r=\left|x\right|^{\frac{1}{1-\kappa}}/(\cos\vartheta)^{\frac{\kappa}{1-\kappa}},\ 
x'=x/r \mbox{ and } \sigma=(\cos\vartheta)/r.$$ Observe that
$(x',\sigma)\in\partial\Omega^{\kappa}$. Assume first that
$r\varepsilon\leq\varepsilon_{0}$, then, in view of
(\ref{wnwLeq4}) and (\ref{wnweq5.3}),
\begin{align*}
\left\|F_{\phi_{\omega}}\mathbf{f}(\varepsilon x,\varepsilon \cos\vartheta)\right\|
&<\frac{C_{1}}{(\cos\vartheta/r)^{k}}(r\varepsilon)^{-\alpha-n}L\left(1/(r\varepsilon)\right)
\\
&
\leq4C_{1}C_{2} \varepsilon^{-\alpha-n}L\left(1/\varepsilon\right)
(\cos\vartheta)^{ -k-\frac{\kappa}{1-\kappa}(k-\alpha-n+1)  };
\end{align*}
on the other hand, if now $\varepsilon_{0}<r\varepsilon$, Proposition \ref{wnwp2}  implies that for some $k_{2}\in\mathbb{N}$, $k_2\leq k$ and $C_{4}>0$,
\begin{align*}
\left\|F_{\phi_{\omega}}\mathbf{f}(\varepsilon x,\varepsilon \cos\vartheta)\right\|
&<\frac{C_{4}}{(\varepsilon\cos\vartheta)^{k_{2}}}
=\frac{C_{4}}{(\cos\vartheta)^{k_2}}\varepsilon^{-\alpha-n}L\left(1/\varepsilon\right)\frac{(1/\varepsilon)^{k_{2}-\alpha-n}}{L(1/\varepsilon)}
\\
&
<\frac{C_{4}C_{3}}{(\cos\vartheta)^{k_2}}\varepsilon^{-\alpha-n}L\left(1/\varepsilon\right)\left(\frac{r}{\varepsilon_{0}}\right)^{k_{2}+1-\alpha-n}
\\
&
<\frac{C_{4}C_{3}}{\varepsilon_{0}^{k_{2}+1-\alpha-n}}\varepsilon^{-\alpha-n}L\left(1/\varepsilon\right)(\cos\vartheta)^{ -k_2-\frac{\kappa}{1-\kappa}(k_2-\alpha-n+1)  },
\end{align*}
where we have used (\ref{wnweq5.4}). Therefore, (\ref{wnwLeq3}) is satisfied with $k_{1}\geq k_2+\kappa(k_2-\alpha-n+1)/(1-\kappa)$.
\end{proof}

We obtain as a corollary the so called general Tauberian theorem for Laplace transforms \cite[p. 84]{vladimirov-d-z1}.
\begin{corollary}\label{wnwLc1}
Let $\mathbf{h}\in\mathcal{S}'_{\Gamma}(E)$ and let $L$ be slowly varying at infinity. Then, an estimate (\ref{wnwLeq2}), for some $k\in\mathbb{N}$ and $\omega\in C_{\Gamma}$, and the existence of a solid cone $C'\subset C_{\Gamma}$ (i.e., $\operatorname*{int}C'\neq \emptyset$) such that
\begin{equation}
\label{wnwLeq5}
\lim_{\varepsilon\to0^{+}} \frac{\varepsilon^{\alpha+n}}{L(1/\varepsilon)}\mathcal{L}\left\{\mathbf{h};i\varepsilon \xi\right\}=\mathbf{G}(i\xi), \mbox{ in } E, \ \ \ \mbox{for each }\xi\in C',
\end{equation}
are necessary and sufficient for $\mathbf{h}$ to have weak-asymptotic behavior at infinity of degree $\alpha$, i.e.,
$$
\mathbf{h}(\lambda u)\sim \lambda^{\alpha}L(\lambda)\mathbf{g}(u)\ \ \ \mbox{as }\lambda\to\infty \mbox{ in } \mathcal{S'}(\mathbb{R}^{n},E), \ \  \mbox{for some }\mathbf{g}\in\mathcal{S}'_{\Gamma}(E).
$$
In such a case, $\mathbf{G}(z)=\mathcal{L}\left\{\mathbf{g};z\right\}$, $z\in T^{C_{\Gamma}}$.
\end{corollary}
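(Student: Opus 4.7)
The strategy is to deduce Corollary \ref{wnwLc1} from Theorem \ref{wnwLth1} (already in hand) combined with an analytic-uniqueness argument specific to the tube domain $T^{C_\Gamma}$. Throughout, I would use the test functions $\eta_\xi\in\mathcal{S}(\mathbb{R}^n)$ from Example \ref{wnwex3.5}, which satisfy $\eta_\xi(u)=e^{-\xi\cdot u}$ on $\Gamma$; for any $\mathbf{k}\in\mathcal{S}'_\Gamma(E)$ and $\xi\in C_\Gamma$ one then has the identification $\langle\mathbf{k}(u),\eta_\xi(u)\rangle=\mathcal{L}\{\mathbf{k};i\xi\}$. Necessity is the Abelian content of Proposition \ref{wnwth2}: if $\mathbf{h}(\lambda u)\sim\lambda^\alpha L(\lambda)\mathbf{g}(u)$ in $\mathcal{S}'(\mathbb{R}^n,E)$, the estimate (\ref{wnwLeq2}) is supplied by Theorem \ref{wnwLth1}, while (\ref{wnwLeq5}) follows by evaluating the rescaled distribution against the fixed test function $\eta_\xi$ after the substitutions $\varepsilon=1/\lambda$ and $u=\lambda v$, which yield $\mathcal{L}\{\mathbf{h};i\xi/\lambda\}=\lambda^n\langle\mathbf{h}(\lambda v),\eta_\xi(v)\rangle$ and hence $\mathbf{G}(i\xi)=\mathcal{L}\{\mathbf{g};i\xi\}$.

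For sufficiency, I first apply Theorem \ref{wnwLth1} to (\ref{wnwLeq2}) to infer that $\mathbf{h}$ is weak-asymptotically bounded, so that the rescaled net
$$
\mathbf{h}_\lambda:=\frac{\mathbf{h}(\lambda\,\cdot\,)}{\lambda^\alpha L(\lambda)}
$$
is bounded, hence equicontinuous, in $\mathcal{S}'(\mathbb{R}^n,E)$, with $\mathbf{h}_\lambda\in\mathcal{S}'_\Gamma(E)$ for every $\lambda\geq\lambda_0$. The same substitution transforms hypothesis (\ref{wnwLeq5}) into $\langle\mathbf{h}_\lambda,\eta_\xi\rangle\to\mathbf{G}(i\xi)$ as $\lambda\to\infty$ for each $\xi\in C'$. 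By equicontinuity and the fact that $\mathcal{S}'_\Gamma(E)$ is weak-$\ast$ closed, convergence of $\{\mathbf{h}_\lambda\}$ in $\mathcal{S}'(\mathbb{R}^n,E)$ will follow once I prove uniqueness of weak-$\ast$ cluster points: any such cluster point $\mathbf{g}$ lies in $\mathcal{S}'_\Gamma(E)$ and satisfies $\mathcal{L}\{\mathbf{g};i\xi\}=\mathbf{G}(i\xi)$ on $C'$, so two cluster points $\mathbf{g}_1,\mathbf{g}_2$ yield $\mathbf{k}:=\mathbf{g}_1-\mathbf{g}_2\in\mathcal{S}'_\Gamma(E)$ whose Laplace transform $F(z)=\mathcal{L}\{\mathbf{k};z\}$, holomorphic on $T^{C_\Gamma}$, vanishes identically on $iC'$. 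Once I show that this forces $F\equiv 0$ on $T^{C_\Gamma}$, Fourier inversion of the boundary value gives $\mathbf{k}=0$, which together with analytic continuation of $\mathbf{G}$ on $T^{C_\Gamma}$ furnishes the final identification $\mathbf{G}(z)=\mathcal{L}\{\mathbf{g};z\}$.

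The main obstacle is precisely this \emph{holomorphic uniqueness step}, because $iC'$ is only an $n$-dimensional real-analytic submanifold of the $2n$-dimensional tube $T^{C_\Gamma}$, so direct complex-analytic continuation is unavailable. I plan to exploit two successive analyticity arguments of different natures. First, the map $y\mapsto F(iy)$ is real-analytic on the connected convex cone $C_\Gamma$ (a restriction of a holomorphic map to a totally real submanifold) and vanishes on the nonempty open subset $C'$, so real-analytic continuation forces $F(iy)=0$ for every $y\in C_\Gamma$. Second, fix $y_0\in C_\Gamma$: since $\mathbf{k}$ is supported in $\Gamma$ and $e^{-y_0\cdot u}$ decays exponentially there, the map $z\mapsto\langle\mathbf{k}(u),e^{iz\cdot u}e^{-y_0\cdot u}\rangle$ extends to an entire function on $\mathbb{C}^n$; it vanishes at $z=i(y-y_0)$ for $y$ in a neighborhood of $y_0$ in $C_\Gamma$, hence on an open subset of $i\mathbb{R}^n$, and so vanishes identically on $\mathbb{C}^n$. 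Restricting to real $z$ shows that the Fourier transform of $e^{-y_0\cdot u}\mathbf{k}(u)$ is zero, and since $e^{-y_0\cdot u}$ is a nonvanishing smooth multiplier on $\Gamma=\operatorname{supp}\mathbf{k}$, I conclude $\mathbf{k}=0$. The passage from the scalar argument to $E$-valued $\mathbf{k}$ is immediate by testing with $\mathbf{w}^\ast\in E'$, which reduces the vanishing to the scalar case already treated.
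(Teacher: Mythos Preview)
Your analytic core---the holomorphic uniqueness lemma showing that a distribution $\mathbf{k}\in\mathcal{S}'_\Gamma(E)$ whose Laplace transform vanishes on $iC'$ must be zero---is correct and is exactly the content the paper invokes. The gap is in the logical wrapper: you pass from equicontinuity of $\{\mathbf{h}_\lambda\}$ to convergence via ``uniqueness of weak-$\ast$ cluster points'', but for a general Banach space $E$ an equicontinuous net in $\mathcal{S}'(\mathbb{R}^n,E)=L_b(\mathcal{S}(\mathbb{R}^n),E)$ need not have any cluster points at all. Pointwise, each orbit $\{\langle\mathbf{h}_\lambda,\rho\rangle\}$ is merely bounded in $E$, and bounded sets in a non-reflexive Banach space are not relatively (weakly) compact. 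So the argument ``unique cluster point $\Rightarrow$ convergence'' is not available here.

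The fix is immediate and you already have the ingredient: your uniqueness lemma, read through Hahn--Banach in the scalar case, says precisely that the linear span of $\{\eta_\xi:\xi\in C'\}$ is dense in $\mathcal{S}(\Gamma)$ (any scalar functional annihilating all $\eta_\xi$ has vanishing Laplace transform on $iC'$, hence is zero). Then equicontinuity plus convergence on this dense set forces convergence on all of $\mathcal{S}(\Gamma)$ by the standard Banach--Steinhaus argument---no compactness needed. This is exactly the paper's route. Incidentally, once you have $F(iy)=0$ for all $y\in C_\Gamma$ (your step 3), you can conclude $F\equiv0$ on $T^{C_\Gamma}$ directly: $iC_\Gamma$ is a maximal totally real submanifold, so all holomorphic derivatives of $F$ vanish at each of its points and the identity theorem applies; your step 4 is then superfluous.
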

\begin{proof} Recall \cite{vladimirovbook} that $\mathcal{S}'_{\Gamma}(E)$ is canonically isomorphic to $\mathcal{S}'(\Gamma,E)=L_{b}(\mathcal{S}(\Gamma),E)$. By the injectivity of the Laplace transform and the uniqueness property of holomorphic functions, the linear span of $$\left\{e^{i \xi\cdot u}: \xi\in C'\right\}$$ is dense in $\mathcal{S}(\Gamma)$; observe that (\ref{wnwLeq5}) gives precisely convergence of $$\frac{1}{\lambda^{\alpha}L(\lambda)}\mathbf{h}(\lambda\: \cdot)$$
over such a dense subset. To conclude the proof, it suffices to apply Theorem \ref{wnwLth1} and the Banach-Steinhaus theorem.
\end{proof}
\begin{example}\label{wnwapexl} \emph{Littlewood's Tauberian theorem.} The classical Tauberian theorem of Littlewood \cite{hardy,korevaarbook,littlewood} states that if

\begin{equation} \label{wnwLeq6} \lim_{\varepsilon \to 0^+} \sum_{n = 0}^{\infty}
c_n e^{-\varepsilon n} = \beta
\end{equation}
and if the Tauberian hypothesis $ c_n = O(1/n) $ is satisfied,
then the numerical series is convergent, i.e.,
$\sum_{n = 0}^{\infty}
c_n = \beta.$

We give a quick proof of this theorem based on Corollary \ref{wnwLc1}. We first show that the distribution $h(u)=\sum_{n=0}^{\infty}c_{n}\delta(u-n)$ has the weak-asymptotic behavior
\begin{equation}\label{wnwLeq7}
h(\lambda u)=\sum_{n=0}^{\infty}c_{n}\delta(\lambda u-n)\sim \beta \frac{\delta(u)}{\lambda} \ \ \ \mbox{as }\lambda\to\infty\ \mbox{in }\mathcal{S}'(\mathbb{R}_{u}).
\end{equation}
Observe that (\ref{wnwLeq5}) is an immediate consequence of (\ref{wnwLeq6}) (here $n=1$, $\alpha=-1$, $L\equiv1$). We verify (\ref{wnwLeq2}) with $\kappa=0$, actually, on the rectangle $\Omega^{0}=[-1,1]\times(0,1]$. Indeed, (\ref{wnwLeq6}) and the Tauberian hypothesis imply that for suitable constants $C_{1},C_{2},C_{3},C_{4}>0$, independent of $(x,\sigma)\in\Omega^{0}$,
\begin{align*}
\left|\mathcal{L}\left\{h;\varepsilon(x+i\sigma)\right\}\right|& =\left|\sum_{n=0}^{\infty}c_{n}e^{-\varepsilon \sigma n}e^{i\varepsilon x n}\right|\leq C_1+C_2\sum _{n=1}^{\infty} \frac{e^{-\varepsilon \sigma n}}{n}\left|e^{i\varepsilon x n}-1\right|
\\
& <C_{1}+C_{3} \varepsilon \sum _{n=1}^{\infty}e^{-\varepsilon \sigma n}
<\frac{C_{4}}{\sigma}, \ (x,\sigma)\in\Omega^{0},\ 0<\varepsilon\leq1.
\end{align*}
Consequently, Corollary \ref{wnwLc1} yields (\ref{wnwLeq7}). Finally, it is well known that (\ref{wnwLeq7}) and $c_{n}=O(1/n)$ imply the convergence of the series; in fact, this is true under more general Tauberian hypotheses (cf. \cite[Sec. 3]{vindas-estrada2}). We can proceed as follows. Let $\sigma>1$ be arbitrary. Choose $\rho\in\mathcal{D}(\mathbb{R})$ such that $0\leq\rho\leq1$, $\rho(u)=1$ for $u\in[0,1]$, and $\operatorname*{supp}\rho\subset[-1,\sigma]$, then, evaluation of (\ref{wnwLeq7}) at $\rho$ gives, for some constant $C_{5}$,
\begin{align*}\limsup_{\lambda\to\infty}\left|\sum_{0\leq n\leq\lambda}c_{n}-\beta\right| & \leq \limsup_{\lambda\to\infty}\left|\sum_{\lambda\leq n}c_{n}\rho\left(\frac{n}{\lambda}\right)\right|
\\
&
 < C_{5}\limsup_{\lambda\to\infty}\frac{1}{\lambda}\sum_{1<\frac{n}{\lambda}<\sigma}\frac{\lambda}{n}\:\rho\left(\frac{n}{\lambda}\right)
\\
&
=C_{5}\int_{1}^{\sigma}\frac{\rho(x)}{x}\:\mathrm{d}x<C_{5}(\sigma-1),
\end{align*}
and so, taking $\sigma\to1^{+}$, we conclude $\sum_{n=0}^{\infty}c_{n}=\beta$.
\end{example}
\begin{remark} We refer to the monograph \cite{vladimirov-d-z1} (and references therein) for the numerous applications of Corollary \ref{wnwLc1} in mathematical physics, especially in quantum field theory. Corollary \ref{wnwLc1} can also be  used to easily recover Vladimirov multidimensional generalization \cite{vladimirov1} of the Hardy-Littlewood-Karamata Tauberian theorem (cf. \cite{drozhzhinov-z0,vladimirov-d-z1}).
\end{remark}

\subsection{Relation between Weak-asymptotics in the Spaces $\mathcal{D}'(\mathbb{R}^{n},E)$ and $\mathcal{S}'(\mathbb{R}^{n},E)$} \label{wnwDS}

If a tempered $E$-valued distribution has weak-asymptotic behavior in the space $\mathcal{S}'(\mathbb{R}^{n},E)$ then, clearly, it has the same weak-asymptotic behavior in $\mathcal{D}'(\mathbb{R}^{n},E)$. The converse is also well known in the case of scalar-valued distributions, but the true of this result is less obvious. There have been several proofs of such a converse result and, remarkably, none of them is simple (cf. \cite{meyer,pilipovic3,vindas-estrada6,vindas-pilipovic1} and especially \cite[Lem. 6]{zavialov88} for the general case). We provide a new proof of this fact, which will actually be derived as an easy consequence of the results from Subsection \ref{wnwtphi}.

We begin with weak-asymptotic boundedness. Let $L$ be slowly varying at the origin (resp. at infinity).

\begin{proposition}
\label{wnwDSp1} Let $\mathbf{f}\in\mathcal{S}'(\mathbb{R}^{n},E)$. If $\mathbf{f}$ is weak-asymptotically bounded of degree $\alpha$ at the point $x_{0}$ (resp. at infinity) with respect to $L$ in the space $\mathcal{D}'(\mathbb{R}^{n},E)$, so is $\mathbf{f}$ in the space $\mathbf{f}\in\mathcal{S}'(\mathbb{R}^{n},E)$.
\end{proposition}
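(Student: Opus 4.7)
The plan is to deduce the proposition from Theorem \ref{wnwth6.1} applied to a $\phi\in\mathcal{D}(\mathbb{R}^{n})$ with $\mu_{0}(\phi)=1$, using the fact that, for such a compactly supported $\phi$, the ``Tauberian'' test functions that appear in the characterization of weak-asymptotic boundedness in $\mathcal{S}'(\mathbb{R}^{n},E)$ still lie in $\mathcal{D}(\mathbb{R}^{n})$ with supports contained in a fixed compact set.

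First, I would fix $\phi\in\mathcal{D}(\mathbb{R}^{n})$ with $\mu_{0}(\phi)=1$, say with $\operatorname*{supp}\phi\subset \overline{B(0,R)}$. Using the hypothesis and the Banach--Steinhaus theorem applied in the bornological space $\mathcal{D}'(\mathbb{R}^{n},E)$, the set
$$
\mathfrak{B}=\left\{\frac{1}{\varepsilon^{\alpha}L(\varepsilon)}\mathbf{f}(x_{0}+\varepsilon\:\cdot\:):0<\varepsilon\leq 1\right\}
$$
(respectively its analog $\lambda^{-\alpha}L(\lambda)^{-1}\mathbf{f}(\lambda\:\cdot\:)$, $1\leq\lambda$, in the case at infinity) is equicontinuous on $\mathcal{D}(\mathbb{R}^{n})$. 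In particular, taking $K=\overline{B(0,1+R)}$, there exist $C_{K}>0$ and $N_{K}\in\mathbb{N}$ such that, for every $\rho\in\mathcal{D}(K)$ and every $0<\varepsilon\leq 1$,
$$
\frac{1}{\varepsilon^{\alpha}L(\varepsilon)}\left\|\left\langle\mathbf{f}(x_{0}+\varepsilon t),\rho(t)\right\rangle\right\|\leq C_{K}\max_{|m|\leq N_{K},\: t\in K}|\rho^{(m)}(t)|.
$$

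Next I would apply this estimate to the test functions $\rho_{x,y}(t)=y^{-n}\phi((t-x)/y)$, where $(x,y)$ ranges over the upper half-sphere $|x|^{2}+y^{2}=1$, $y>0$. These have supports contained in $K$, and their derivatives satisfy $|\rho_{x,y}^{(m)}(t)|\leq y^{-n-|m|}\|\phi^{(m)}\|_{\infty}$. Since $F_{\phi}\mathbf{f}(x_{0}+\varepsilon x,\varepsilon y)=\langle \mathbf{f}(x_{0}+\varepsilon t),\rho_{x,y}(t)\rangle$ (and analogously for the behavior at infinity), the equicontinuity bound gives
$$
\sup_{|x|^{2}+y^{2}=1,\:y>0}\frac{y^{n+N_{K}}}{\varepsilon^{\alpha}L(\varepsilon)}\left\|F_{\phi}\mathbf{f}(x_{0}+\varepsilon x,\varepsilon y)\right\|\leq C_{K}\max_{|m|\leq N_{K}}\|\phi^{(m)}\|_{\infty},
$$
uniformly for $0<\varepsilon\leq 1$ (respectively $1\leq\lambda$). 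This is exactly the Tauberian estimate (\ref{wnwtteq1}) with $k=n+N_{K}$, so Theorem \ref{wnwth6.1} yields the weak-asymptotic boundedness of $\mathbf{f}$ in $\mathcal{S}'(\mathbb{R}^{n},E)$.

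There is no real obstacle to overcome here; the only delicate point is to ensure $\phi$ is compactly supported, so that the family $\{\rho_{x,y}\}$ stays in a fixed $\mathcal{D}(K)$ (the Schwartz-type bound would be insufficient since we need the equicontinuity inherited from $\mathcal{D}'$, not from $\mathcal{S}'$). Everything else is a direct combination of Banach--Steinhaus with the Tauberian characterization from Subsection \ref{wnwtphi}.
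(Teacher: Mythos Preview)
Your proof is correct and follows essentially the same route as the paper's: choose a compactly supported $\phi$ with $\mu_{0}(\phi)=1$, use Banach--Steinhaus (via barrelledness of $\mathcal{D}(\mathbb{R}^{n})$) to get an equicontinuity estimate on a fixed $\mathcal{D}(K)$, plug in the dilated and translated bumps $\rho_{x,y}$ to verify the Tauberian estimate (\ref{wnwtteq1}), and conclude by Theorem~\ref{wnwth6.1}. The only differences from the paper are cosmetic (the paper fixes $\operatorname*{supp}\phi\subset B(0,1)$ and works in $\mathcal{D}(B(0,3))$, and translates to $x_{0}=0$ first); one small remark is that the relevant hypothesis for Banach--Steinhaus is that the \emph{domain} $\mathcal{D}(\mathbb{R}^{n})$ is barrelled, not that $\mathcal{D}'(\mathbb{R}^{n},E)$ is bornological.
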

\begin{proof} We may assume that $x_{0}=0$. We will show both assertions at 0 and $\infty$ at the same time. The Banach-Steinhaus theorem implies the existence of $\nu\in\mathbb{N}$, $C>0$, and $h_{0}>0$ such that
$$
\left|\left\langle \mathbf{f}(ht),\rho(t)\right\rangle\right|\leq C h^{\alpha}L(h)\sup_{\left|t\right|\leq 1,\: \left|m\right|\leq \nu}\left|\rho^{(m)}(t)\right|, \ \ \ \mbox{for all }\rho\in\mathcal{D}(B(0,3))
$$
and all $0<h<h_{0}$ (resp.  $h_{0}<h$), where $B(0,3)$ is the ball of radius 3. Let now $\phi\in\mathcal{D}(B(0,1))$ be so that $\mu_{0}(\phi)=1$. If we take $\rho(t)=y^{-n}\phi(y^{-1}(t-x))$ in the above estimate, where $0<y<1$ and $\left|x\right|\leq1$, we then obtain at once that (\ref{wnwtteq1}) is satisfied with $k=\nu+n$, and consequently the Theorem \ref{wnwth6.1} implies the result.
\end{proof}
Proposition \ref{wnwDSp1}, the Banach-Steinhaus theorem, and the density of $\mathcal{D}(\mathbb{R}^{n})$ in $\mathcal{S}(\mathbb{R}^{n})$ immediately yield what we wanted:
\begin{corollary}
\label{wnwDSc1}
If $\mathbf{f}\in\mathcal{S}'(\mathbb{R}^{n},E)$ has  weak-asymptotic behavior in the space $\mathcal{D}'(\mathbb{R}^{n},E)$, so does $\mathbf{f}$ have the same weak-asymptotic behavior in the space $\mathcal{S}'(\mathbb{R}^{n},E)$.
\end{corollary}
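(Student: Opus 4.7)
The plan is to derive Corollary \ref{wnwDSc1} as a short consequence of Proposition \ref{wnwDSp1} combined with the Banach-Steinhaus theorem and a density argument. Suppose $\mathbf{f}\in\mathcal{S}'(\mathbb{R}^{n},E)$ has weak-asymptotic behavior of degree $\alpha$ at $x_{0}$ (resp.\ at infinity) with respect to $L$ in $\mathcal{D}'(\mathbb{R}^{n},E)$, with limit distribution $\mathbf{g}\in\mathcal{D}'(\mathbb{R}^{n},E)$. Since convergence trivially entails weak-asymptotic boundedness in $\mathcal{D}'(\mathbb{R}^{n},E)$, Proposition \ref{wnwDSp1} upgrades this to weak-asymptotic boundedness in $\mathcal{S}'(\mathbb{R}^{n},E)$; equivalently, the family
$$
\mathfrak{B} := \left\{\frac{1}{\varepsilon^{\alpha}L(\varepsilon)}\mathbf{f}(x_{0}+\varepsilon\:\cdot\:): 0<\varepsilon\leq\varepsilon_{0}\right\}
$$
(resp.\ the analogous family parameterized by $\lambda_{0}\leq\lambda$) is a bounded subset of $\mathcal{S}'(\mathbb{R}^{n},E)$ for the topology of pointwise convergence.

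Next I would invoke the Banach-Steinhaus theorem, applicable because $\mathcal{S}(\mathbb{R}^{n})$ is a Fr\'{e}chet space and hence barrelled: the pointwise-bounded family $\mathfrak{B}\subset L(\mathcal{S}(\mathbb{R}^{n}),E)$ is automatically equicontinuous. By the hypothesized weak-asymptotic behavior in $\mathcal{D}'(\mathbb{R}^{n},E)$, the net $\{\varepsilon^{-\alpha}L(\varepsilon)^{-1}\mathbf{f}(x_{0}+\varepsilon\:\cdot\:)\}$ converges in the norm of $E$ on every element of $\mathcal{D}(\mathbb{R}^{n})$, which is dense in $\mathcal{S}(\mathbb{R}^{n})$. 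The standard extension principle (an equicontinuous net of continuous linear maps into a complete locally convex space that converges on a dense subset converges everywhere, cf.\ \cite{treves}) then produces $\tilde{\mathbf{g}}\in\mathcal{S}'(\mathbb{R}^{n},E)$ such that
$$
\lim_{\varepsilon\to 0^{+}}\frac{1}{\varepsilon^{\alpha}L(\varepsilon)}\langle\mathbf{f}(x_{0}+\varepsilon t),\rho(t)\rangle = \langle\tilde{\mathbf{g}},\rho\rangle \quad \text{in } E, \qquad \rho\in\mathcal{S}(\mathbb{R}^{n}).
$$
Since $\tilde{\mathbf{g}}$ restricts to $\mathbf{g}$ on the dense subspace $\mathcal{D}(\mathbb{R}^{n})$ and $E$ is Hausdorff, $\tilde{\mathbf{g}}$ is the unique tempered extension of $\mathbf{g}$, and the original weak-asymptotic relation $\mathbf{f}(x_{0}+\varepsilon t)\sim\varepsilon^{\alpha}L(\varepsilon)\tilde{\mathbf{g}}(t)$ holds in $\mathcal{S}'(\mathbb{R}^{n},E)$.

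The only non-routine ingredient is Proposition \ref{wnwDSp1}, in whose proof the regularizing kernel $\phi\in\mathcal{D}(B(0,1))$ was used to exhibit compactly supported test functions approximating $y^{-n}\phi(y^{-1}(\:\cdot\:-x))$ and thereby convert $\mathcal{D}'$-boundedness into the half-sphere estimate (\ref{wnwtteq1}). Once the $\mathcal{S}'$-boundedness is secured, no further subtlety arises: the extension of convergence from $\mathcal{D}(\mathbb{R}^{n})$ to $\mathcal{S}(\mathbb{R}^{n})$ is a textbook Banach-Steinhaus argument. Thus the entire analytical weight of Corollary \ref{wnwDSc1} rests on Proposition \ref{wnwDSp1}, and the present corollary is merely its natural packaging in the language of weak-asymptotic behavior (rather than weak-asymptotic boundedness).
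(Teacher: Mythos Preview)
Your proof is correct and follows exactly the approach indicated in the paper: Proposition \ref{wnwDSp1} yields $\mathcal{S}'$-boundedness, then Banach--Steinhaus gives equicontinuity, and convergence on the dense subspace $\mathcal{D}(\mathbb{R}^{n})$ extends to all of $\mathcal{S}(\mathbb{R}^{n})$. You have simply spelled out the details of what the paper states in one line.
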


Corollary \ref{wnwDSc1} tells us then that the weak-asymptotics at finite points in $\mathcal{S}'(\mathbb{R}^{n},E)$ are local properties. Indeed, if $\mathbf{f_{1}}=\mathbf{f_2}$ in a neighborhood of $x_{0}\in\mathbb{R}^{n}$, then we easily deduce from Corollary \ref{wnwDSc1} that they have exactly the same weak-asymptotic properties at the point $x_{0}$ in the space $\mathcal{S}'(\mathbb{R}^{n},E)$.

\newpage

\section{Further Extensions}\label{wnwe}
We indicate in this section some useful extensions and variants of the Tauberian results from the previous sections.

\subsection{Other Tauberian Conditions} \label{wnwOT}The Tauberian conditions (\ref{wnweq5.5}) and (\ref{wnwtteq1}), occurring in Theorems \ref{wnwth5.1} -- \ref{wnwth9}, can be replaced by estimates of the form (\ref{wnwLeq2}), that is, one may use the boundary of some set $\Omega^{\kappa}$, $0\leq\kappa<1$ (cf. (\ref{wnwLeq1})), instead of the upper half sphere $\mathbb{H}^{n+1}\cap\mathbb{S}^{n}$. Specifically, the same argument given in proof of Theorem \ref{wnwLth1} applies to show that (\ref{wnweq5.1}) (and hence (\ref{wnwaeq1})) is equivalent to the estimate
$$
\limsup_{\varepsilon\rightarrow0^+}\sup_{(x,y)\in\partial\Omega^{\kappa}, \: y>0}\frac{y^k}{\varepsilon^{\alpha}L(\varepsilon)}\left\|M^{\mathbf{f}}_{\varphi}\left(x_0+\varepsilon
x,\varepsilon y\right)\right\|<\infty
$$
$$
\left(\mbox{resp. }\limsup_{\lambda\rightarrow\infty}\sup_{(x,y)\in\partial\Omega^{\kappa}, \: y>0}\frac{y^k}{\lambda^{\alpha}L(\lambda)}\left\|M^{\mathbf{f}}_{\varphi}\left(\lambda
x,\lambda y\right)\right\|<\infty\right)
$$
for some $0\leq\kappa<1$ and $k\in\mathbb{N}$ (the $k$ may be different numbers).
\subsection{Distributions with Values in DFS Spaces}\label{wnwDFS}
All the results from Sections \ref{wnwa}--\ref{wnwce} hold if we
replace the Banach space $E$ by a Silva \cite{silvas} inductive
limit of Banach spaces $E_n, n\in\mathbb N$, that is,
$$
E=\bigcup_{n=1}^{\infty}E_{n}=\mbox{ind}\lim_{n\to\infty}(E_n,||\cdot||_n) \:,
$$ where $E_{1}\subset
E_{2}\subset\dots$ and each injection $E_{n}\to E_{n+1}$ is compact.
These spaces are actually the DFS spaces (strong duals of
Fr\'{e}chet-Schwartz spaces). Particular examples are
$E=\mathcal{S}'(\mathbb{R}^{n}),\mathcal{S}'_{0}(\mathbb{R}^{n}),\mathcal{D}'(Y)$,
where $Y$ is a compact manifold, among many other important spaces
arising in applications.

In this situation $E$ is regular, namely, for any bounded set $\mathfrak{B}$ there exists
$n_0\in\mathbb N$ such that $\mathfrak{B}$ is bounded in $E_{n_{0}}$. Thus, our Abelian and Tauberian theorems from Sections \ref{wnwa}--\ref{wnwtt} for $E$-valued distributions are valid if we replace the norm estimates  by memberships in bounded sets of $E$. For instance, a condition such as (\ref{wnweq5.1}) should be replaced by one of the form: There exist $k\in\mathbb{N}$, $\varepsilon_{0}>0$, and a bounded set $\mathfrak{B}\subset E$ such that
\begin{equation}
\label{wnwDFSeq1}
\frac{y^{k}}{\varepsilon^{\alpha}L(\varepsilon)}M^{\mathbf{f}}_{\varphi}\left(x_0+\varepsilon
x,\varepsilon y\right)\in\mathfrak{B}, \ \ \ \mbox{for all } 0<\varepsilon\leq\varepsilon_{0}\mbox{ and } \left|x\right|^{2}+y^{2}=1\ ;
\end{equation}
and similarly for all other conditions occurring within these sections. As already observed, (\ref{wnwDFSeq1}) is equivalent to an estimate of the form (\ref{wnweq5.1}) in some norm $\left\|\:\cdot\:\right\|_{n_0}$, but the existence of the $n_{0}$ would be extremely hard to verify in applications and thus such a Tauberian condition would have no value in concrete situations. It is therefore desirable to have more realistic Tauberian conditions. We can achieve this if we use the Mackey theorem \cite[Thm. 36.2]{treves}, because the condition (\ref{wnwDFSeq1}) is then equivalent to the following one: There exists $k\in\mathbb{N}$ such that for each $e^{\ast}\in E'$
\begin{equation}
\label{wnwDFSeq2}
\limsup_{\varepsilon\rightarrow0^+}\sup_{\left|x\right|^2+y^2=1,\:y>0}\frac{y^k}{\varepsilon^{\alpha}L(\varepsilon)}\left|\left\langle e^{\ast},M^{\mathbf{f}}_{\varphi}\left(x_0+\varepsilon
x,\varepsilon y\right)\right\rangle\right|<\infty.
\end{equation}

 Since $E$ is a Montel space \cite{silvas,treves}, the limit condition (\ref{wnweq5.2}) can be replaced be the equivalent one: There exist the limits
\begin{equation}
\label{wnwDFSeq3}
\lim_{\varepsilon^{\alpha}\to0^{+}} \frac{1}{\varepsilon L(\varepsilon)}\left\langle e^{\ast},M^{\mathbf{f}}_{\varphi}(x_{0}+\varepsilon x,\varepsilon y)\right\rangle\in \mathbb{C},
\end{equation}
for all $e^{\ast}\in E'$ and $(x,y)\in\mathbb{H}^{n+1}$, and likewise for all other limit conditions.

Furthermore, the results from Section \ref{wnwce} are also valid in this context, if we use suitable hypotheses. For example, Theorem \ref{wnwceth1} remains true if we replace the hypotheses (i) and (ii) by:
\begin{itemize}
\item [(i)$'$] $\mathcal{W}_{\psi}\mathbf{f}(x,y)\in E$ \emph{for all} $(x,y)\in\mathbb{H}^{n+1}$ and it is continuous as an $E$-valued function.
\item [(ii)$'$] There exist $k,l\in\mathbb{N}$ such that for each $e^{\ast}\in E'$
\begin{equation*}
\sup_{(x,y)\in\mathbb{H}^{n+1}}\left(\frac{1}{y}+y\right)^{-k}\left(1+\left|x\right|\right)^{-l}\left|\left\langle e^{\ast},\mathcal{W}_{\psi}\mathbf{f}(x,y)\right\rangle\right|<\infty.
\end{equation*}
\end{itemize}
The other results are true under similar considerations.

Note that one can find in \cite{kom} an overview of results concerning regular inductive limits of Banach spaces and several conditions (extensions of Silva's results)
which ensure that they have the Montel property. Since the regularity and the Montel property of DFS spaces were the only two crucial facts used above, the comments of this subsection are also valid for more general locally convex spaces.

Let us discuss an example in order to illustrate the ideas of this subsection.
\begin{example} \label{wnwex9.1}\emph{Fixation of variables in tempered distributions.} Let $f\in\mathcal{S}'(\mathbb{R}^{n}_{t}\times\mathbb{R}^{m}_{\xi})$ and $t_{0}\in\mathbb{R}^{n}$. Following \L ojasiewicz \cite{lojasiewicz2}, we say that the variable $t=t_{0}\in\mathbb{R}^{n}$ can be fixed in $f(t,\xi)$ if there exists $g\in\mathcal{S}'(\mathbb{R}^{m}_{\xi})$ such that for each $\eta\in\mathcal{S}(\mathbb{R}^{n}_{t}\times\mathbb{R}^{m}_{\xi})$
$$
\lim_{\varepsilon\to0^{+}}\left\langle f(t_{0}+\varepsilon t,\xi),\eta(t,\xi)\right\rangle=\int_{\mathbb{R}^{n}}\left\langle g(\xi),\eta(t,\xi)\right\rangle\mathrm{d}t.
$$
We write $f(t_{0},\xi)=g(\xi)$, \emph{distributionally}. The nuclearity of the Schwartz spaces \cite{treves,silva} implies that $\mathcal{S}'(\mathbb{R}^{n}_{t}\times\mathbb{R}^{m}_{\xi})$ is isomorphic to $\mathcal{S'}(\mathbb{R}_{t}^{n},E)$, where $E=\mathcal{S}'(\mathbb{R}^{m}_{\xi})$, a DFS space. Actually, the latter tells us that fixation of variables is nothing but the notion of \L ojasiewicz point values itself for $E$-valued distributions (cf. Example \ref{wnwex2.1}). Therefore, the DSF space-valued version of Theorem \ref{wnwth6.2} implies that if $\phi\in\mathcal{S}(\mathbb{R}^{n}_{t})$ with $\mu_{0}(\phi)=1$, then the variable $t=t_{0}$ can be fixed in $f(t,\xi)$ if and only if there exists $k$ such that for each $\rho\in\mathcal{S}'(\mathbb{R}^{m}_{\xi})$
$$
\limsup_{\varepsilon\rightarrow0^+}\sup_{\stackrel{(x,y)\in\mathbb{H}^{n+1}}{\left|x\right|^2+y^2=1}}
y^k \left|\left\langle f\left(t_0+\varepsilon
x+\varepsilon yt,\xi\right),\phi(t)\rho(\xi)\right\rangle\right|<\infty,
$$
and
$$\lim_{\varepsilon\to0^{+}}\left\langle  f\left(t_0+\varepsilon
x+\varepsilon yt,\xi\right),\phi(t)\rho(\xi)\right\rangle \mbox{ exists for all }(x,y)\in\mathbb{H}^{n+1}\cap\mathbb{S}^{n}.$$
\end{example}
\begin{remark}It is well known \cite{hormander1} that  the projection
$\pi: \mathbb{R}^{n}_{t}\times\mathbb{R}^{m}\rightarrow
\left\{t_0\right\}\times\mathbb{R}^{m}$, $\pi(t,\xi)=(t_0,\xi),$
defines the pull-back
$$\mathcal{S}'(\mathbb{R}^{n}_{t}\times\mathbb{R}^{m}_{\xi})\ni
f(t,\xi)\rightarrow f(t_{0},\xi):=\pi^*f(\xi)\in
\mathcal{S}'(\mathbb{R}^{m}_{\xi})$$ if the wave front set of $f$
 satisfies $$WF(f)\cap\{(t_0,\xi,\eta,0):\:\xi\in\mathbb R^{m}, \eta\in\mathbb R^{n}\}=\emptyset.$$ Thus the result given in Example \ref{wnwex9.1} is interesting since we give a necessary and sufficient condition for the existence of this
  pull-back.
\end{remark}

\newpage

\section*{A. Appendix\\ Relation between Weak-asymptotics in $\mathcal{S}'_{0}(\mathbb{R}^{n},E)$ and $\mathcal{S}'(\mathbb{R}^{n},E)$}

\label{ap}
The purpose of this Appendix is to show two propositions which establish the precise connection between weak-asymptotics in the spaces $\mathcal{S}'_{0}(\mathbb{R}^{n},E)$ and $\mathcal{S}'(\mathbb{R}^{n},E)$. Observe that such a relation was crucial for the arguments given in Section \ref{wnwtt}.

Propositions A.1 and A.2 below are multidimensional generalizations of the results from \cite[Sec. 4]{vindas-pilipovic-rakic} and their proofs are based on recent structural theorems from \cite{vindas4}. We assume again that $E$ is a Banach space.

\begin{propositionA1}
\label{wnwpA1} Let $L$ be slowly varying at the origin (resp. at infinity) and let $\mathbf{f}\in\mathcal{S}'(\mathbb{R}^{n},E)$ have weak-asymptotic behavior of degree $\alpha$ at the point $x_0$ (resp. at infinity) with respect to $L$ in $\mathcal{S}'_{0}(\mathbb{R}^{n},E)$, i.e., for each $\varphi\in\mathcal{S}_{0}(\mathbb{R}^{n})$ the following limit exists
\begin{equation*}
\tag{A.1}\label{wnweqA1}
\lim_{\varepsilon\to0^{+}}\frac{1}{ \varepsilon^{\alpha}L(\varepsilon)}\left\langle \mathbf{f}(x_{0}+\varepsilon t),\varphi(t)\right\rangle\ \ \  \mbox{in } E
\end{equation*}
$$\left(\mbox{resp. }\lim_{\lambda\to\infty}\frac{1}{ \lambda^{\alpha}L(\lambda)}\left\langle \mathbf{f}(\lambda t),\varphi(t)\right\rangle\:\right).
$$
Then, there is $\mathbf{g}\in\mathcal{S}'(\mathbb{R}^{n},E)$ such that:
\begin{itemize}
\item [(i)] If $\alpha\notin\mathbb{N}$, $\mathbf{g}$ is homogeneous of degree $\alpha$ and there
exists an $E$-valued polynomial $\mathbf{P}$ such that
\begin{equation*}\tag{A.2}
\label{wnweqA2}
\mathbf{f}(x_{0}+\varepsilon t)-\mathbf{P}(\varepsilon t)\sim \varepsilon^{\alpha}L(\varepsilon)\mathbf{g}(t)\ \ \ \mbox{as}\ \varepsilon\to0^{+}\ \ \mbox{in}\ \mathcal{S}'(\mathbb{R}^{n},E)
\end{equation*}
$$\left(\mbox{resp. }\mathbf{f}\left(\lambda t\right)-\mathbf{P}(\lambda t)\sim\lambda^{\alpha}L(\lambda)\mathbf{g}(t) \ \ \ \mbox{as}\ \lambda\to\infty \ \ \mbox{in}\ \mathcal{S}'(\mathbb{R}^{n},E)\:\right).
$$
\item [(ii)] If $\alpha=p\in\mathbb{N}$, $\mathbf{g}$ is associate
homogeneous of order 1 and degree $p$ (cf. \cite[p. 74]{estrada-kanwal2}, \cite{shelkovich}) satisfying
\begin{equation*}
\tag{A.3}
\label{wnweqA3}
\mathbf{g}(at)= a^{p}\mathbf{g}(t)+a^{p}\log a \sum _{\left|m\right|=p}t^{m}\mathbf{v}_{m} , \ \ \ \mbox{for each }a>0,
\end{equation*}
for some vectors $\mathbf{v}_{m}\in E$, $\left|m\right|=p$, and
there exist an $E$-valued polynomial $\mathbf{P}$ and associate asymptotically homogeneous $E$-valued functions $\mathbf{c}_{m}$, $\left|m\right|=p$, of degree 0 with respect to $L$ such that for each $a>0$
\begin{equation*}
\tag{A.4}
\label{wnweqA4}
\mathbf{c}_{m}(a \varepsilon)=\mathbf{c}(\varepsilon)+ L(\varepsilon)\log a \:\mathbf{v}_{m}+o(L(\varepsilon)) \ \ \ \mbox{as }\varepsilon\to0^{+}
\end{equation*}
$$
\left(\mbox{resp. }
\mathbf{c}_{m}(a \lambda)=\mathbf{c}(\lambda)+ L(\lambda)\log a \:\mathbf{v}_{m}+o(L(\lambda)) \ \ \ \mbox{as }\lambda\to\infty\:\right)
$$
and $\mathbf{f}$ has the following weak-asymptotic expansion
\begin{equation*}
\tag{A.5}
\label{wnweqA5}
\mathbf{f}(x_{0}+\varepsilon t)=\mathbf{P}(\varepsilon t)+ \varepsilon^{p}L(\varepsilon)\mathbf{g}(t)+\varepsilon^{p}\sum_{\left|m\right|=p}t^{m}\mathbf{c}_{m}(\varepsilon)+o\left(\varepsilon^{p}L(\varepsilon)\right)
\end{equation*}
$$\left(\mbox{resp. }\mathbf{f}\left(\lambda t\right)=\mathbf{P}(\lambda t)+\lambda^{p}L(\lambda)\mathbf{g}(t)+\lambda^{p}\sum_{\left|m\right|=p}t^{m}\mathbf{c}_{m}(\lambda)+o\left(\lambda^{p}L(\lambda)\right)\:\right)
$$
as $\varepsilon\to0^{+}$ (resp. $\lambda\to\infty$) in the space $\mathcal{S}'(\mathbb{R}^{n},E)$.
\end{itemize}
\end{propositionA1}
\begin{proof}

Let $\mathcal{S}^0(\mathbb{R}^{n})$ be the image under Fourier transform of $\mathcal{S}_{0}(\mathbb{R}^{n})$. Then, $\mathcal{S}^0(\mathbb{R}^{n})$ is precisely the closed subspace of $\mathcal{S}(\mathbb{R}^{n})$ consisting of test functions which vanish at the origin together with their partial derivatives of any order. Thus, if we Fourier transform (\ref{wnweqA1}) and employ the Banach-Steinhaus theorem, we obtain the existence of $\mathbf{h}_{0}\in {\mathcal{S}^0}'(\mathbb{R}^{n},E)$ such that the restriction of $\mathbf{f}$ to $\mathcal{S}^0(\mathbb{R}^{n})$ satisfies
$$
\exp({i\varepsilon^{-1}u\cdot x_{0}})\hat{\mathbf{f}}(\varepsilon^{-1}u)\sim \varepsilon^{n+\alpha}L(\varepsilon)\mathbf{h}_{0}(u)\ \ \ \text{as}\ \varepsilon\to0^{+}\ \ \text{in} \ {\mathcal{S}^{0}}'(\mathbb{R}^{n},E)
$$
$$
\left(\text{resp. } \hat{\mathbf{f}}(\lambda^{-1}u)\sim \lambda^{n+\alpha}L(\lambda)\mathbf{h}_{0}(u)\ \ \ \text{as}\ \lambda\to\infty\ \ \text{in} \ {\mathcal{S}^{0}}'(\mathbb{R}^{n},E)\:\right).
$$
Setting $\tilde{\mathbf{f}}(u)= e^{i\:u\cdot x_{0}}\hat{\mathbf{f}}(u)$ (resp. $\tilde{\mathbf{f}}(u)=\hat{\mathbf{f}}(u)\:$), $\tilde{L}(y)=L(1/y)$, $\beta=-n-\alpha$ and replacing $\varepsilon$ by $\lambda^{-1}$, we have that the restriction of $\tilde{\mathbf{f}}$ to $\mathcal{S}^0(\mathbb{R}^{n})$ has the weak-asymptotic behavior
\begin{equation*}
\tag{A.6}
\label{wnweqA6}
\tilde{\mathbf{f}}(\lambda u)\sim \lambda^{\beta}\tilde{L}(\lambda)\mathbf{h}_{0}(u)\ \ \ \text{as}\ \lambda\to\infty\ \ \text{in} \ {\mathcal{S}^{0}}'(\mathbb{R}^{n},E)
\end{equation*}
$$
\left(\text{resp. } \tilde{\mathbf{f}}(\varepsilon u)\sim \varepsilon^{\beta}\tilde{L}(\varepsilon)\mathbf{h}_{0}(u)\ \ \ \text{as}\ \varepsilon\to0^{+}\ \ \text{in} \ {\mathcal{S}^{0}}'(\mathbb{R}^{n},E) \:\right),
$$
for some $\mathbf{h}_{0}\in{\mathcal{S}^{0}}'(\mathbb{R}^{n},E)$. We now apply the results from \cite{vindas4}.

\emph{Case (i): $\alpha\notin \mathbb{N}$.} By (\ref{wnweqA6}) and \cite[Part (i) of Thm. 3.1]{vindas4}, there are an $E$-valued distribution $\mathbf{h}\in \mathcal{S}'(\mathbb{R}^{n},E)$, which is homogeneous of degree $\beta=-n-\alpha$, an natural number $d\in\mathbb{N}$, and $\mathbf{w}_{m}\in E$, $\left|m\right|\leq d$, such that

$$
\tilde{\mathbf{f}}(\lambda u)=\lambda^{\beta}\tilde{L}(\lambda)\mathbf{h}(u)+\sum_{\left|m\right|\leq d}\frac{\delta^{(m)}(u) }{\lambda^{n+\left|m\right|}}\mathbf{w}_{m}+o\left(\lambda^{\beta}\tilde{L}(\lambda)\right)\ \ \ \text{as}\ \lambda\to\infty
$$
$$
\left(\mbox{resp. }\tilde{\mathbf{f}}(\varepsilon u)=\varepsilon^{\beta}\tilde{L}(\varepsilon)\mathbf{h}(u)+\sum_{\left|m\right|\leq d}\frac{\delta^{(m)}(u) }{\varepsilon^{n+\left|m\right|}}\mathbf{w}_{m}+o\left(\varepsilon^{\beta}\tilde{L}(\varepsilon)\right)\ \ \text{as}\ \varepsilon\to0^{+}\:\right)
$$
in $\mathcal{S}'(\mathbb{R}^{n},E)$.
Finally, by setting $\hat{\mathbf{g}}=\mathbf{h}$, taking Fourier inverse transform and replacing $\lambda$ by $\varepsilon^{-1}$ (resp. $\varepsilon$ by $\lambda^{-1}$), the last relation shows that $\mathbf{f}$ satisfies (\ref{wnweqA2}) with $\mathbf{P}(t)=(1/2\pi)^{n}\sum_{\left|m\right|\leq d}(-it)^{m}\mathbf{w}_{m}$.

\emph{Case (ii): $\beta=-n-p$, $p\in\mathbb{N}$.} The weak-asymptotics (\ref{wnweqA6}) and \cite[Part (ii) of Thm. 3.1]{vindas4}
yield the existence of $d\in\mathbb{N}$, $\mathbf{w}_{m}\in E$ (for $\left|m\right|\leq d$), $\tilde{\mathbf{v}}_{m}\in E$ (for $\left|m\right|=p$), continuous functions $\tilde{\mathbf{c}}_{m}:\mathbb{R}_{+}\to E$ (for $\left|m\right|=p$), and a tempered $E$-valued distribution $\mathbf{h}\in\mathcal{S}'(\mathbb{R}^{n},E)$ such that $\tilde{\mathbf{f}}$ has the following asymptotic expansion in $\mathcal{S}'(\mathbb{R}^{n},E)$ as $\lambda\to\infty$ (resp. $\varepsilon\to0^{+}$)
\begin{equation*}
\tilde{\mathbf{f}}(\lambda u)=\frac{\tilde{L}(\lambda)}{\lambda^{n+p}}\mathbf{h}(u)
+\sum_{\left|m\right|\leq d}
\frac{\delta^{(m)}(u)}{\lambda^{n+\left|m\right|}}{\mathbf{w}}_{m}
+\sum_{\left|m\right|= p}\frac{\delta^{(m)}(u)}{\lambda^{n+p}}\tilde{\mathbf{c}}_{m}(\lambda)+o\left(\frac{\tilde{L}(\lambda)}{\lambda^{n+p}}\right),
\end{equation*}
respectively
\begin{equation*}
\tilde{\mathbf{f}}(\varepsilon u)=\frac{\tilde{L}(\varepsilon)}{\varepsilon^{n+p}}\mathbf{h}(u)
+\sum_{\left|m\right|\leq d}
\frac{\delta^{(m)}(u)}{\varepsilon^{n+\left|m\right|}}{\mathbf{w}}_{m}
+\sum_{\left|m\right|= p}\frac{\delta^{(m)}(u)}{\varepsilon^{n+p}}\tilde{\mathbf{c}}_{m}(\varepsilon)+o\left(\frac{\tilde{L}(\varepsilon)}{\varepsilon^{n+p}}\right),
\end{equation*}
where $\mathbf{h}$ satisfies
$$
\mathbf{h}(au)= a^{-n-p}\mathbf{h}(u)+a^{-n-p}\log a\sum _{\left|m\right|=p}\delta^{(m)}(u)\tilde{\mathbf{v}}_{m},
$$
for each $a>0$, while the $\tilde{\mathbf{c}}_{m}$ fulfill
\begin{equation*}
\tilde{\mathbf{c}}_{m}(a\lambda)= \tilde{\mathbf{c}}_{m}(\lambda) +\tilde{L}(\lambda)\log a\:\tilde{\mathbf{v}}_{m}+o\left(\tilde{L}(\lambda)\right), \ \ \ \left|m\right|=p,
\end{equation*}
\begin{equation*}
\left(\mbox{resp. }\tilde{\mathbf{c}}_{m}(a\varepsilon)= \tilde{\mathbf{c}}_{m}(\varepsilon) +\tilde{L}(\varepsilon)\log a\:\tilde{\mathbf{v}}_{m}+o\left(\tilde{L}(\varepsilon)\right)\:\right).
\end{equation*}
Then, Fourier inverse transforming the weak-asymptotic expansion of $\tilde{\mathbf{f}}$, we convince ourselves that $\mathbf{f}$ satisfies (\ref{wnweqA5}) with the polynomial $\mathbf{P}(t)=(2\pi)^{-n}\sum_{\left|m\right|\leq d}(-it)^{m}\mathbf{w}_{m}$, the functions $\mathbf{c}_m(y)= (-i)^{p}(2\pi)^{-n}\tilde{\mathbf{c}}_{m}(y^{-1})$ and $\mathbf{g}$ given by $\hat{\mathbf{g}}=\mathbf{h}$. In addition, the relations (\ref{wnweqA3}) and (\ref{wnweqA4}) hold with $\mathbf{v}_{m}=-(-i)^{p}(2\pi)^{-n}\tilde{\mathbf{v}}_{m}$ , $\left|m\right|=p$.

\end{proof}

The proof of the following proposition is completely analogous to that of Proposition A.1, but now making use of \cite[Thm. 3.2]{vindas4} instead of \cite[Thm. 3.1]{vindas4}; we therefore omit it.
\begin{propositionA2}
Let $L$ be slowly varying at the origin (resp. at infinity) and let $\mathbf{f}\in\mathcal{S}'(\mathbb{R}^{n},E)$ be weak-asymptotically bounded of degree $\alpha$ at the point $x_{0}$ (resp. at infinity) with respect to $L$ in $\mathcal{S}'_{0}(\mathbb{R}^{n},E)$.
Then:
\begin{itemize}
\item [(i)] If $\alpha\notin\mathbb{N}$, there exists an $E$-valued polynomial $\mathbf{P}$ such that
$\mathbf{f}-\mathbf{P}$ is weak-asymptotically bounded of degree $\alpha$ at the point $x_{0}$ (resp. at infinity) with respect to $L$ in the space $\mathcal{S}'(\mathbb{R}^{n},E)$.
\item [(ii)] If $\alpha=p\in\mathbb{N}$, there exist an $E$-valued polynomial $\mathbf{P}$ and asymptotically homogeneously bounded $E$-valued functions $\mathbf{c}_{m}$, $\left|m\right|=p$, of degree 0 with respect to $L$ such that $\mathbf{f}$ has the following weak-asymptotic expansion
\begin{equation*}
\tag{A.7}
\label{wnweqA7}
\mathbf{f}(x_{0}+\varepsilon t)=\mathbf{P}(\varepsilon t)+\varepsilon^{p}\sum_{\left|m\right|=p}t^{m}\mathbf{c}_{m}(\varepsilon)+O\left(\varepsilon^{p}L(\varepsilon)\right)
\end{equation*}
$$\left(\mbox{resp. }\mathbf{f}\left(\lambda t\right)=\mathbf{P}(\lambda t)+\lambda^{p}\sum_{\left|m\right|=p}t^{m}\mathbf{c}_{m}(\lambda)+O\left(\lambda^{p}L(\lambda)\right)\:\right) ,
$$
as $\varepsilon\to0^{+}$ (resp. $\lambda\to\infty$) in the space $\mathcal{S}'(\mathbb{R}^{n},E)$.
\end{itemize}
\end{propositionA2}

\newpage

\end{document}